\setlist[itemize]{topsep=0ex,itemsep=0ex,parsep=0.4ex,leftmargin=1.2em}
\setlist[enumerate]{topsep=0ex,itemsep=0ex,parsep=0.4ex}
\crefname{lem}{Lemma}{Lemmas}
\crefname{thm}{Theorem}{Theorems}
\crefname{cor}{Corollary}{Corollaries}
\crefname{prop}{Proposition}{Propositions}
\crefname{conj}{Conjecture}{Conjectures}
\crefname{obs}{Observation}{Observations}
\crefname{quest}{Question}{Questions}
\crefname{open}{Open Problem}{Open Problems}
\crefname{claim}{Claim}{Claims}
\numberwithin{equation}{section}
\theoremstyle{plain}
\newtheorem{thm}[equation]{Theorem}
\newtheorem{lem}[equation]{Lemma}
\newtheorem{cor}[equation]{Corollary}
\newtheorem{obs}[equation]{Observation}
\theoremstyle{definition}
\newtheorem{conj}[thm]{Conjecture}
\newtheorem{quest}[thm]{Question}
\newcounter{MyEqNum}
\DeclarePairedDelimiter{\floor}{\lfloor}{\rfloor}
\DeclarePairedDelimiter{\ceil}{\lceil}{\rceil}
\renewcommand{\geq}{\geqslant}
\renewcommand{\leq}{\leqslant}
\DeclareMathOperator{\diam}{diam}
\DeclareMathOperator{\dist}{dist}
\DeclareMathOperator{\had}{had}
\newcommand{\PP}{\mathbb{P}}
\newcommand{\Q}{\mathbb{Q}}
\newcommand{\N}{\mathbb{N}}
\newcommand{\F}{\mathbb{F}}
\DeclareMathOperator{\HC}{HC}
\DeclareMathOperator{\srg}{srg}
\DeclareMathOperator{\SHC}{SHC}
\DeclareMathOperator{\CDM}{CDM}
\DeclareMathOperator{\icr}{icr}
\DeclareMathOperator{\Cay}{Cay}
\DeclareMathOperator{\cm}{cm}
\newcommand{\cmark}{\ding{51}}
\newcommand{\qmark}{\textbf{?}}
\renewcommand{\thefootnote}{\fnsymbol{footnote}}
\def\NAT@spacechar{~}
\newcommand{\defn}[1]{\textcolor{green!50!black}{\textit{#1}}}
\newcommand{\mathdefn}[1]{\textcolor{green!50!black}{#1}}
\newcommand{\conjdefn}[1]{\textcolor{red!50!black}{#1}}
\newcounter{propnum}
\renewcommand{\thepropnum}{\arabic{propnum}} 
\newcommand{\propertylabel}[2]{%
  (\stepcounter{propnum}\phantomsection
   \protected@edef\@currentlabel{\thepropnum}
   \label{#1}%
   \thepropnum): #2}
\newcommand{\propref}[1]{\textup{(\ref{#1})}}
\begin{document}


\author
{
    Jofre Costa\footnotemark[1] \qquad
    Eric Luu\footnotemark[2] \qquad
    David R. Wood\footnotemark[2] \qquad
    Jung Hon Yip\footnotemark[2]
}

\footnotetext[1]{Department of Mathematics, University of Bonn, Germany (\texttt{jofrecosta01@gmail.com}). Research completed at Monash University. }

\footnotetext[2]{School of Mathematics, Monash University, Melbourne, Australia, \texttt{ericluu42@gmail.com}, \texttt{David.Wood@monash.edu}, \texttt{Junghon.Yip@monash.edu}. Research of Wood is supported by the Australian Research Council and by NSERC. Yip is supported by a Monash Graduate Scholarship.}

\title{\bf\boldmath
    Verifying Hadwiger's Conjecture for \\
    Examples of Graphs with $\alpha(G)=2$}
\maketitle

\begin{abstract}
    Hadwiger's Conjecture states that every graph with chromatic number $k$ contains a complete graph on $k$ vertices as a minor. This conjecture is a tremendous strengthening of the Four-Colour Theorem and is regarded as one of the most important open problems in graph theory. The case of Hadwiger's Conjecture for graphs with $\alpha(G) = 2$ has garnered much attention. Seymour writes: ``My own belief is, if Hadwiger's Conjecture is true for graphs with stability number two then it is probably true in general, so it would be very nice to decide this case.''

    This paper presents several tools useful for proving that a graph $G$ with $\alpha(G) = 2$ satisfies Hadwiger's Conjecture. In doing so, we survey and generalise several classical results on the $\alpha(G) = 2$ case of Hadwiger's Conjecture. Further, we apply these tools to prove variants of Hadwiger's Conjecture for several noteworthy classes of graphs with $\alpha(G) = 2$. In particular, we prove Hadwiger's Conjecture for inflations of the complements of the following graphs: graphs with girth at least $5$, triangle-free Kneser graphs, and the Clebsch, Mesner, and Gewirtz graphs.
    This paper also highlights classes of graphs with $\alpha(G) = 2$ where it is unknown if Hadwiger's Conjecture holds.
\end{abstract}

\renewcommand{\thefootnote}
{\arabic{footnote}}


\section{\boldmath Introduction}
\label{Introduction}
A graph $H$ is a \defn{minor} of a graph $G$ if $H$ can be obtained from $G$ by deleting vertices and edges, and by contracting edges. The maximum integer $n$ such that the complete graph $K_n$ is a minor of $G$ is called the \defn{Hadwiger number} of $G$, denoted \defn{$\had(G)$}. A \defn{proper colouring} of a graph $G$ assigns a colour to each vertex so that adjacent vertices receive distinct colours. The \defn{chromatic number} of $G$, denoted \defn{$\chi(G)$}, is the minimum integer $k$ such that $G$ has a proper colouring with $k$ colours. For any undefined graph theory notation, see \citet{Diestel05}\footnote{Unless stated otherwise, all graphs are simple and finite.}.

Hadwiger's Conjecture is widely considered to be one of the deepest unsolved problems in graph theory~\citep{Hadwiger43}. See \citep{Kawa15,Toft96,SeymourHC,CV20} for surveys. It asserts that:
\begin{conj}
    \label{conj:hc}
    For every graph $G$, $\had(G) \geq \chi(G)$. Equivalently, if $G$ is $K_t$ minor-free, then $G$ is $(t-1)$-colourable.
\end{conj}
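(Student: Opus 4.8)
Since \cref{conj:hc} is Hadwiger's Conjecture itself --- an open problem rather than a theorem --- the only part I can genuinely prove is the stated equivalence between its two formulations; for the substantive inequality I can offer only a plan and an account of why it stalls.

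The equivalence is a routine parameter shift that I would dispose of in a sentence. If $\had(G) \ge \chi(G)$ holds universally and $G$ is $K_t$-minor-free, then by definition of the Hadwiger number $\had(G) \le t-1$, so $\chi(G) \le \had(G) \le t-1$ and $G$ is $(t-1)$-colourable. Conversely, if every $K_t$-minor-free graph is $(t-1)$-colourable, then applying this with $t = \had(G)+1$ (for which $G$ is indeed $K_t$-minor-free) gives $\chi(G) \le t-1 = \had(G)$. Hence the ``minor-free implies colourable'' phrasing and the ``$\had(G) \ge \chi(G)$'' phrasing are interchangeable, and I would record this as a one-paragraph remark.

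For the inequality $\had(G) \ge \chi(G)$ itself there is no known proof, so my ``plan'' is really a map of the terrain. I would first record the settled small cases: $t \le 3$ is elementary; $t = 4$ is classical, since $K_4$-minor-free graphs have treewidth at most $2$ and are therefore $3$-colourable; $t = 5$ is \emph{equivalent} to the Four-Colour Theorem via Wagner's structure theorem for $K_5$-minor-free graphs; and $t = 6$ follows from the Four-Colour Theorem through the Robertson--Seymour--Thomas description of $K_6$-minor-free graphs. The case $t = 7$ is the first that remains open. The generic attack is structural induction: delete or contract a carefully chosen piece, $(t-1)$-colour the remainder, and extend the colouring.

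The hard part --- and the reason the conjecture is open --- is that $K_t$-minor-freeness supports no clean decomposition theorem once $t \ge 7$, so the inductive strategy has nothing to recurse on, while purely extremal (average-degree) arguments provably cannot beat the Kostochka--Thomason bound $\had(G) = \Omega\!\left(\chi(G)/\sqrt{\log \chi(G)}\right)$, which falls short of the conjectured linear bound by a $\sqrt{\log}$ factor. A genuine proof must therefore exploit the global structure of an optimal colouring rather than local density. This is exactly why the present paper retreats to the $\alpha(G) = 2$ regime: there every colour class has size at most two, the chromatic number is governed by the clique cover structure of $\overline{G}$, and this rigidity makes the restricted problem a tractable place to begin.
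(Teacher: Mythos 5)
This statement is Hadwiger's Conjecture itself, which the paper states as an open conjecture and does not prove; the only provable content is the equivalence of the two formulations, and your one-paragraph argument for that equivalence (specialising the colourability statement to $t = \had(G)+1$ in one direction, and bounding $\chi(G) \le \had(G) \le t-1$ in the other) is correct and exactly what the paper leaves implicit. Your refusal to claim a proof of the inequality, together with the accurate survey of the known cases and obstructions, is the right call here.
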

An \defn{independent set} in a graph $G$ is a set of pairwise non-adjacent vertices in $G$. The \defn{independence number} of $G$, denoted \defn{$\alpha(G)$}, is the cardinality of the largest independent set in $G$. Since $\chi(G) \alpha(G) \geq |V(G)|$ for every graph $G$, if \cref{conj:hc} were true, then $\had(G) \geq |V(G)|/\alpha(G)$. However, this weakening is open:
\begin{conj}
    \label{conj:indep}
    For every graph $G$, $\had(G) \geq |V(G)|/\alpha(G)$.
\end{conj}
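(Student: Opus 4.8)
The plan is to prove the statement by exhibiting the clique minor explicitly. Throughout write $n=|V(G)|$ and $\alpha=\alpha(G)$, and recall that a $K_t$ minor of $G$ is precisely a family $B_1,\dots,B_t$ of pairwise disjoint nonempty vertex sets, each inducing a connected subgraph of $G$, such that for all $i\neq j$ there is an edge of $G$ joining $B_i$ and $B_j$; call such a family a \emph{pairwise-adjacent connected partition} when the $B_i$ additionally cover $V(G)$. Since $t$ branch sets cover at most $n$ vertices, a pairwise-adjacent connected partition into $t\geq\lceil n/\alpha\rceil$ parts certifies $\had(G)\geq\lceil n/\alpha\rceil$, and this is exactly the assertion that $V(G)$ can be partitioned into pairwise-adjacent connected sets of \emph{average} size at most $\alpha$. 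Since $\alpha(G)=\alpha$ is equivalent to $\overline{G}$ being $K_{\alpha+1}$-free, the target is a statement about partitioning a graph with a sparse complement, and I would construct such a partition directly.

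First I would set up the extremal configuration. Among all families realising a clique minor that cover $V(G)$, take one with the maximum number $t$ of parts and, subject to that, with $\sum_i|B_i|^2$ minimum. The parameter $\alpha$ enters through splitting and rerouting moves: a part $B$ with $|B|\geq 2$ has a spanning tree whose leaves can be peeled off while keeping $B$ connected, and if such a peeled-off vertex is adjacent to every other part (and its removal does not sever $B$ from any part) it becomes a new branch set, increasing $t$. Hence in the extremal configuration the vertices trapped inside oversized parts must be mutually non-adjacent to large stretches of the minor, and $\alpha$ bounds how many such vertices one part can contain.

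The heart of the argument, and the main obstacle, is upgrading this local analysis from the Duchet--Meyniel yield of one new part per $2\alpha-1$ vertices to the required one part per $\alpha$ vertices. I would focus on the case $\alpha=2$, which Seymour identifies as the crux and where $\overline{G}$ is merely triangle-free. Here the target is a near-perfect matching $M$ of $G$ whose edges are the branch sets: two edges $ab,cd\in M$ are minor-adjacent unless all four pairs $ac,ad,bc,bd$ are non-edges of $G$, i.e.\ unless $\{a,b,c,d\}$ spans a $C_4$ in $\overline{G}$. So for $\alpha=2$ the whole problem becomes the statement that \emph{every graph with $\alpha(G)=2$ admits a near-perfect matching no two of whose edges span a $C_4$ in $\overline{G}$}. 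I would first produce a perfect (or near-perfect) matching via Gallai--Edmonds, then repair spanning-$C_4$ pairs by local augmenting swaps, exploiting the triangle-freeness of $\overline{G}$ to bound how densely such bad $C_4$'s can cluster.

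The difficulty I expect to dominate is that these repairs interact globally: resolving one spanning-$C_4$ pair by rerouting a matching edge can create new bad pairs elsewhere, and there is no evident monovariant forcing termination at average part-size exactly $\alpha$ rather than somewhere between $\alpha$ and $2\alpha-1$. Overcoming this seems to require a genuinely global certificate --- for instance a weighting, or an application of the structure theory of triangle-free and $C_4$-free graphs, bounding the number of unavoidable oversized branch sets --- rather than the purely local contraction arguments behind the current factor-two-weaker bounds. The special classes treated in this paper (inflations of complements of graphs of girth at least $5$, triangle-free Kneser graphs, and the named strongly regular graphs) are precisely those for which this global obstruction can be resolved, and a uniform such certificate is what a full proof would need to supply.
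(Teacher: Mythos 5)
The statement you were asked to prove is \cref{conj:indep}, which the paper presents as an \emph{open conjecture}: the text introduces it with ``However, this weakening is open'', and the strongest known results in its direction are \cref{thm:dm} (the Duchet--Meyniel bound $\had(G) \geq |V(G)|/(2\alpha(G)-1)$) and \cref{thm:complete_minors_indep_no}. The paper contains no proof of this statement, so there is no argument to compare yours against; the only accurate status is that no proof exists.

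Your proposal, by its own admission, is not a proof either. The two reductions you carry out are sound but only reformulate the problem: (i) realising the bound by a covering family of pairwise-adjacent connected branch sets of average size at most $\alpha$, and (ii) for $\alpha = 2$, asking for a near-perfect matching of $G$ no two of whose edges span a $C_4$ in $\overline{G}$ --- which is precisely \nameref{conj:shc_half}, a conjecture at least as strong as the $\alpha = 2$ case of \cref{conj:indep} (namely \nameref{conj:hc_half}), and itself open. The step you flag as the ``main obstacle'' --- upgrading the local splitting/rerouting analysis from one branch set per $2\alpha - 1$ vertices to one per $\alpha$ --- is not a technical hurdle within your argument; it \emph{is} the open problem, and no monovariant, weighting, or global certificate achieving it is currently known. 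There is also a second gap independent of that one: your plan to ``focus on the case $\alpha = 2$'' would not yield the general statement even if completed, because no reduction of \cref{conj:indep} for arbitrary $\alpha$ to the $\alpha = 2$ case is known; the equivalences the paper proves (\cref{thm:equiv}) relate conjectures about graphs with $\alpha(G) = 2$ to one another, not general $\alpha$ to $\alpha = 2$ (Seymour's remark quoted in the introduction is a stated belief about Hadwiger's Conjecture, not a theorem). So the honest assessment is: your sketch correctly locates where the difficulty lies, but it proves nothing --- and nothing more should be expected, since the statement is a well-known open conjecture.
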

Several results in the direction of \cref{conj:indep} are known \citep{DM82,Fox10,BK11,bohme2011minors, KawaHadwigerOdd07, FradkinClique12}. A classical result of \citet{DM82} states that:
\begin{thm}[\citet{DM82}]
    \label{thm:dm}
    For every graph $G$, $\had(G) \geq  |V(G)|/ (2\alpha(G) -1)$.
\end{thm}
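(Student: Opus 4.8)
The plan is to prove the equivalent statement $|V(G)| \le (2\alpha(G)-1)\,\had(G)$ by strong induction on $n := |V(G)|$, first disposing of the disconnected case. If $G$ is disconnected with components $G_1,\dots,G_c$, then $\had(G) = \max_i \had(G_i)$, and writing $n_i := |V(G_i)|$, the induction hypothesis together with the mediant inequality $\max_i \frac{a_i}{b_i} \ge \frac{\sum_i a_i}{\sum_i b_i}$ (valid for positive $b_i$) gives
\[
\had(G) \ge \max_i \frac{n_i}{2\alpha(G_i)-1} \ge \frac{\sum_i n_i}{\sum_i(2\alpha(G_i)-1)} = \frac{n}{2\alpha(G)-c} \ge \frac{n}{2\alpha(G)-1}.
\]
So I may assume $G$ is connected.

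The heart of the argument is a lemma: every connected graph $G$ has a connected dominating set $C$ (so $G[C]$ is connected and every vertex lies in $C$ or has a neighbour in $C$) with $|C| \le 2\alpha(G)-1$. I would prove this greedily, building $C$ and an independent set $I\subseteq C$ in tandem so that $|C| = 2|I|-1$ is maintained. Start with $C = I = \{v\}$ for an arbitrary vertex $v$. While $C$ fails to dominate, connectivity supplies an edge between $N[C]$ and its complement, that is, a vertex $w \notin N[C]$ with a neighbour $x \in N[C]$; then $x \notin C$, so $x$ itself has a neighbour in $C$. Add both $x$ and $w$ to $C$ and add only $w$ to $I$. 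The set $C$ stays connected because the edge $wx$ attaches to $C$ through $x$'s neighbour, and $I$ stays independent because each newly chosen $w$ lies outside $N[C]$ and is therefore nonadjacent to everything already in $C$, in particular to $I$. At termination $C$ dominates and $|C| = 2|I|-1 \le 2\alpha(G)-1$ since $I$ is independent.

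With the lemma in hand I contract $C$ to a single vertex $c^{\ast}$. As $C$ dominates, $c^{\ast}$ is adjacent to every vertex of $G-C$, so a largest clique minor of $G-C$ extends by the branch set $\{c^{\ast}\}$ to yield $\had(G) \ge \had(G/C) \ge \had(G-C)+1$. Since $|C| \le 2\alpha(G)-1$ and $\alpha(G-C)\le\alpha(G)$, the induction hypothesis applied to $G-C$ gives
\[
\had(G-C) \ge \frac{n-|C|}{2\alpha(G-C)-1} \ge \frac{n-(2\alpha(G)-1)}{2\alpha(G)-1} = \frac{n}{2\alpha(G)-1}-1,
\]
and adding $1$ closes the induction (the degenerate case $C = V(G)$ being immediate, as then $n \le 2\alpha(G)-1$).

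I expect the main obstacle to be the connected-dominating-set lemma: the contraction step and the disconnected reduction are routine, but the bound $|C|\le 2\alpha(G)-1$ is precisely where the independence number must enter. The delicate point is running the greedy process so that the connectivity of $C$ and the independence of the certificate $I$ are preserved \emph{simultaneously} — concretely, verifying at every step that the new vertex $w$ lies outside $N[C]$, so that it is both undominated (justifying progress) and nonadjacent to all of $I$ (keeping the certificate independent).
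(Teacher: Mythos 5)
Your proof is correct. The paper does not prove this theorem---it is quoted as a classical result of Duchet and Meyniel \citep{DM82}---but your argument is essentially their original one: greedily construct a connected dominating set $C$ together with an independent certificate $I$ satisfying $|C|=2|I|-1\le 2\alpha(G)-1$, contract $C$ to a vertex adjacent to all of $G-C$, and induct. All the delicate points you flag (that the new vertex $w$ lies outside $N[C]$, hence is undominated and nonadjacent to $I$; that $x\notin C$ yet has a neighbour in $C$; the degenerate case $C=V(G)$; and the mediant inequality for the disconnected reduction) are handled correctly.
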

There have been small improvements to the constant factor in \cref{thm:dm}, notably by \citet{Fox10}, and then by \citet{BK11}, who proved that:
\begin{thm}[\citet{BK11}]
    \label{thm:complete_minors_indep_no}
    For any graph $G$, $\had(G) \geq |V(G)|/((2-c)\alpha(G))$, where $c = 1/19.2$.
\end{thm}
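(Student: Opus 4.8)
The plan is to sharpen the counting behind \cref{thm:dm}. Recall the Duchet--Meyniel framework: since we may assume $G$ is connected, take a partition of $V(G)$ into $t \coloneqq \had(G)$ branch sets $V_1,\dots,V_t$ that each induce a connected subgraph and are pairwise adjacent, so that $G$ contracts onto $K_t$. The factor $2\alpha-1$ arises because each branch set $V_i$ has a spanning tree whose larger bipartition class is an independent set of size at least $\lceil |V_i|/2\rceil$; as this cannot exceed $\alpha(G)=\alpha$, every branch set satisfies $|V_i|\le 2\alpha$, whence $|V(G)| = \sum_i |V_i| \le 2\alpha\, t$. To improve the leading constant to $(2-c)\alpha$ it suffices to show that the branch sets cannot all be close to this extremal size; concretely, that $\sum_i |V_i| \le (2-c)\alpha\, t$, which rearranges to the claimed bound on $\had(G)=t$.

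First I would fix an extremal partition: among all clique-minor partitions attaining $t=\had(G)$, choose one minimising $\sum_i|V_i|^2$, so that the branch sets are as balanced and as ``splittable'' as possible. The key structural leverage is maximality: no branch set $V_i$ can be partitioned into two connected pieces $A\sqcup B$ that are \emph{both} adjacent to every other branch set, for otherwise replacing $V_i$ by $A$ and $B$ would yield a $K_{t+1}$-minor. Because a connected $V_i$ admits many such splits (one per edge of any spanning tree, with different spanning trees giving different splits), non-splittability forces, for each attempted split, some other branch set $V_j$ to be non-adjacent to $A$ or to $B$. These forced non-adjacencies are exactly the raw material for building independent sets: an internal independent set of $V_i$ together with non-adjacent vertices of other branch sets extends towards a global independent set, and the bound $\alpha(G)=\alpha$ caps how much of this structure can coexist. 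Running this against the large internal independent sets guaranteed inside any branch set of size close to $2\alpha$ should produce an independent set of size exceeding $\alpha$ whenever too many branch sets are too large, contradicting the definition of $\alpha$.

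The hard part will be converting the qualitative statement ``not too many branch sets can be near-extremal'' into the explicit constant $c=1/19.2$. This requires a careful, simultaneous accounting of three competing quantities: the internal independent set of size $\approx |V_i|/2$ that a large branch set is forced to contain, the number of cross non-adjacencies that non-splittability forces between a large branch set and the others, and a Tur\'an/Caro--Wei-type bound limiting how large an independent set one can actually assemble from representatives once the cross-adjacency density is known. Note that $\overline{G}$ is $K_{\alpha+1}$-free, so $G$ is dense and cross-adjacencies are plentiful, which is precisely what makes the combination delicate. I expect the cleanest route is a weighting or discharging scheme that assigns a ``deficiency'' charge to each branch set of size exceeding $(2-c)\alpha$ and shows that the total charge would force $\alpha(G)>\alpha$; optimising the charges, together with a case split between configurations dominated by large versus small branch sets, is what pins down the numerical value of $c$.
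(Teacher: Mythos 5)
This theorem is quoted from \citet{BK11}; the paper gives no proof of it, so there is nothing internal to compare against. Judged on its own terms, your proposal is not a proof but a research plan, and the plan has a genuine gap: the entire quantitative content of the theorem --- the derivation of the explicit constant $c = 1/19.2$ --- is deferred with phrases like ``should produce'' and ``I expect the cleanest route is a weighting or discharging scheme''. The statement being proved \emph{is} that constant; everything up to the final paragraph only re-derives (at best) the Duchet--Meyniel bound of \cref{thm:dm}, and the step from ``not too many branch sets can be near-extremal'' to a concrete $\varepsilon$-improvement is exactly where Balogh and Kostochka's work lives. Asserting that a discharging scheme can be optimised to yield $1/19.2$ is not an argument; nothing in the proposal rules out that the scheme yields no improvement at all, or a different constant.

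There is also a concrete error in the setup. You claim that each branch set $V_i$ ``has a spanning tree whose larger bipartition class is an independent set of size at least $\lceil |V_i|/2\rceil$'', and deduce $|V_i| \le 2\alpha$. A bipartition class of a spanning tree of $G[V_i]$ is independent \emph{in the tree}, not in $G$, so it need not be bounded by $\alpha(G)$; indeed a branch set inducing a clique has $\alpha(G[V_i])=1$ regardless of its size. The correct route to \cref{thm:dm} is different: one iteratively extracts a connected \emph{dominating} set of size at most $2\alpha-1$ (built from an independent set of size at most $\alpha$ plus at most $\alpha-1$ connecting vertices) and uses domination to guarantee pairwise adjacency of the resulting branch sets. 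The actual improvements of \citet{Fox10} and \citet{BK11} refine this dominating-set extraction, showing that one can usually find connected dominating structures that are cheaper on average than $2\alpha-1$ vertices per branch set; they do not proceed by perturbing a maximal clique-minor partition as you propose. Your ``non-splittability'' observation about an extremal partition is a reasonable idea, but it is not developed to the point where one could check it, and it is not the mechanism behind the cited result.
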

If $\alpha(G) \geq 19$, then \cref{thm:complete_minors_indep_no} offers an improvement over \cref{thm:dm}. However, when $\alpha(G) < 19$, it appears difficult to improve the constant factor in \cref{thm:dm}. In particular, when $\alpha(G) = 2$, \cref{thm:dm} implies that $\had(G) \geq |V(G)|/3$, and \cref{thm:complete_minors_indep_no} does not offer a better bound. However, in this case \cref{conj:indep} asserts that $\had(G) \geq |V(G)|/2$. This gap is large, and thus the restriction of Hadwiger's Conjecture to graphs with independence number $2$ is interesting. \citet{SeymourHC} writes ``My own belief is, if Hadwiger's Conjecture is true for graphs with stability number two then it is probably true in general, so it would be very nice to decide this case''. For convenience, we denote this special case as \nameref{conj:hc_chi}.

\begin{conj}[\conjdefn{$\HC_{\alpha = 2}$}]
    For every graph $G$ with $\alpha(G) = 2$, $\had(G) \geq \chi(G)$. \label{conj:hc_chi}
\end{conj}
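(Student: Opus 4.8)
The plan is to pass to the complement and recast the statement in the language of matchings in triangle-free graphs. Put $H:=\overline{G}$ and $n:=|V(G)|$. Since $\alpha(G)=2$, a stable set of size three in $G$ is a triangle of $H$, so $H$ is triangle-free. A proper colouring of $G$ partitions $V(G)$ into stable sets, each of which is a clique of $H$ and hence, by triangle-freeness, an edge or a vertex of $H$; minimising the number of colour classes therefore amounts to maximising the number of size-$2$ classes, which are exactly the edges of a matching of $H$. This yields the identity $\chi(G)=n-\nu(H)$, where $\nu(H)$ is the matching number. Dually, a clique of $G$ is a stable set of $H$, so $\omega(G)=\alpha(H)=n-\tau(H)$ by Gallai's identity, where $\tau(H)$ is the vertex-cover number. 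As a clique is a minor, $\had(G)\ge\omega(G)$, and so \nameref{conj:hc_chi} holds \emph{trivially} whenever $\tau(H)=\nu(H)$; in particular whenever $H$ is bipartite, by K\"onig's theorem. Thus the whole problem lives in the deficiency $\tau(H)-\nu(H)$, and the task is to produce $\tau(H)-\nu(H)$ branch sets beyond a maximum clique of $G$.

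Concretely, I would start from a suitable maximum stable set $I$ of $H$ (a maximum clique of $G$), used as $n-\tau(H)$ singleton branch sets, and then carve the minimum cover $W:=V(G)\setminus I$, of size $\tau(H)$, into exactly $\tau(H)-\nu(H)$ further blocks, each inducing a connected subgraph of $G$. The arithmetic is then automatic: $(n-\tau(H))+(\tau(H)-\nu(H))=n-\nu(H)=\chi(G)$ branch sets. The Gallai--Edmonds decomposition of $H$ confines all of the required merging to the factor-critical components of $H$: these are precisely the obstructions to $\tau(H)=\nu(H)$, and each must be collapsed into fewer branch sets than it has vertices. The smallest instance is $H=\overline{C_5}$, where $G=C_5$ and the single factor-critical component forces us to merge a path on three vertices, giving the branch sets $\{v_1\},\{v_2\},\{v_3,v_4,v_5\}$ and hence a $K_{3}=K_{5-\nu(H)}$-minor.

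The crux is to carry out this carving so that every block is simultaneously connected in $G$ \emph{and} adjacent, in $G$, both to every vertex of $I$ and to every other block. Adjacency is cleanest to read off in $H$: two disjoint sets $B_1,B_2$ are non-adjacent in $G$ exactly when they induce a complete bipartite subgraph of $H$ with parts $B_1,B_2$; in particular a block $B$ misses a vertex $x\in I$ precisely when $x$ is an $H$-neighbour of every vertex of $B$, i.e.\ $I\cap\bigcap_{b\in B}N_H(b)\neq\varnothing$. Triangle-freeness forbids some of these complete bipartite obstructions but by no means all of them, and satisfying the entire system of adjacencies at once --- among the factor-critical blocks, and between those blocks and the stable set $I$ --- is exactly where every general argument breaks down. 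I expect this global adjacency-control step to be the decisive obstacle; it is, in effect, the full content of the open conjecture. This explains why one instead proves the statement for structured families, where extra regularity of $\overline{G}$ (bounded common neighbourhoods, or spectral constraints) forces the missing adjacencies, and why the unconditional Duchet--Meyniel bound of \cref{thm:dm} delivers only $\had(G)\ge n/3$, well short of the required $\had(G)\ge n-\nu(H)\ge n/2$.
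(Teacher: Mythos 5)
There is nothing to compare your proposal against: the statement you were assigned is \cref{conj:hc_chi}, which is a \emph{conjecture} in this paper (the $\alpha(G)=2$ case of Hadwiger's Conjecture, which Seymour singles out as a key open case), and the paper contains no proof of it. Your submission is, accordingly, not a proof either --- and to your credit you say so explicitly, identifying the ``global adjacency-control step'' as ``the full content of the open conjecture.'' That self-assessment is accurate. The genuine gap is the entire theorem: after your reductions, nothing is established beyond what is already known unconditionally.

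The reductions themselves are correct and do track material the paper proves. Your identity $\chi(G)=n-\nu(\overline{G})$ is exactly the paper's \cref{lem:chromatic_matching}; the observation that one may restrict attention to the case $|V(G)|=2\chi(G)-1$ and aim for $\had(G)\geq\lceil n/2\rceil$ is the content of \cref{thm:minimal_properties} and \cref{thm:equiv} (the equivalence of \nameref{conj:hc_chi} and \nameref{conj:hc_half}); and your ``carve the cover into connected blocks'' plan is precisely the seagull strategy of \cref{ss:seagulls}. Two refinements you should be aware of: first, when $\alpha(G)=2$ the adjacency control you worry about is \emph{automatic} for any block containing an induced $P_3$ (a seagull), since a vertex non-adjacent to both endpoints of the $P_3$ would create a stable set of size $3$; the real difficulty is packing enough disjoint seagulls, and \citet{CS12} characterise exactly when $k$ of them exist (\cref{thm:seagulls}), which yields only $\had(G)\geq(\omega(G)+n)/3$ in general (\cref{lem:first_seagull}). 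Second, your ``trivial'' case $\tau(\overline{G})=\nu(\overline{G})$ never arises for a minimal counterexample, since Property \propref{prop:omega_at_most_chi_minus_3} forces $\omega(G)\leq\chi(G)-3$ there. So your framing is a reasonable reconstruction of the standard attack, but it proves nothing new, and the paper's actual contributions are the conditional and special-case results built on top of this framework, not a proof of the conjecture itself.
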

\nameref{conj:hc_chi} is a key open case of Hadwiger's Conjecture~\citep{PST03,CS12,Blasiak07,CO08,Bosse19,NS26a}. Every graph $G$ with $\alpha(G) = 2$ has $\chi(G) \geq |V(G)|/2$, but the weakening of \nameref{conj:hc_chi} remains open:
\begin{conj}[\conjdefn{$\HC_{n/2}$}]
    For every graph $G$ with $\alpha(G) = 2$, $\had(G) \geq |V(G)|/2$. \label{conj:hc_half}
\end{conj}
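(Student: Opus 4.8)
The plan is to pass to the complement and reformulate the Hadwiger number as a branch-set packing problem. Write $H = \overline{G}$; the hypothesis $\alpha(G) = 2$ says exactly that $H$ is triangle-free, and a $K_t$ minor of $G$ is a family of $t$ pairwise-disjoint, connected, pairwise-adjacent branch sets in $G$. Because $H$ is triangle-free, hence locally sparse, I would aim for the strongest possible packing, namely branch sets of size at most $2$: a branch set $\{u,v\}$ of size $2$ is forced to be an edge of $G$ (a non-edge of $H$). The extremal way to reach $n/2$ parts is then a perfect matching $M$ of $G$ (plus one singleton branch set when $n$ is odd) whose contraction is complete---a ``good'' matching. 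Any good (near-)perfect matching immediately gives $\had(G) \ge n/2$.

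The crux is the pairwise-adjacency condition, which I would first characterise inside $H$. Two edges $e = ab$ and $f = cd$ of $G$, viewed as branch sets, fail to be adjacent precisely when each of the four cross pairs $ac, ad, bc, bd$ is a non-edge of $G$, i.e.\ an edge of $H$; since $ab, cd \notin E(H)$, this says exactly that $\{a,b,c,d\}$ induces a four-cycle of $H$ in which $e$ and $f$ are the two non-adjacent pairs. Likewise an edge $ab$ and a singleton $\{c\}$ fail to be adjacent when $c$ is $H$-adjacent to both $a$ and $b$. Thus $M$ is good if and only if no two of its edges span such an induced $C_4$, and the whole problem reduces to choosing a (near-)perfect matching of $G$ whose $C_4$-conflict graph is edgeless.

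To build such a matching I would start from the Duchet--Meyniel bound of \cref{thm:dm} (which already gives $\had(G) \ge n/3$ when $\alpha(G)=2$) or from a maximum matching of $G$, and then repair conflicts by alternating swaps in $H$: whenever two branch sets form a forbidden $C_4$, exchange endpoints along a carefully chosen alternating path to destroy the conflict. The main obstacle---and the reason $\HC_{n/2}$ remains open---is that this repair is inherently global rather than local: removing one $C_4$-conflict can create others, and no monovariant is known that drives the process to termination while keeping the matching near-perfect. Guaranteeing all $\binom{n/2}{2}$ pairwise adjacencies simultaneously is a dense constraint that the triangle-free structure of $H$ alone does not supply, and the constant-factor improvements of \cref{thm:complete_minors_indep_no} exploit sparsity that is only available when $\alpha(G)$ is large. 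A realistic expectation, therefore, is partial progress: impose extra structure on $H$ (high girth, few induced $C_4$'s, or a Ramsey--Tur\'an-type hypothesis) and establish the bound within such subclasses---which is precisely the program the remainder of this paper carries out.
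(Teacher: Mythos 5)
The statement you were asked to prove is \cref{conj:hc_half}, which the paper states as a \emph{conjecture} and does not prove; it is a well-known open problem, shown in \cref{thm:equiv} to be equivalent to \nameref{conj:hc_chi}. Your proposal correctly recognises this and stops short of claiming a proof, so there is no gap to report against a paper argument that does not exist; what you have written is an accurate account of the standard reformulation and of why it is hard.

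That reformulation does line up with the machinery the paper actually uses. A $K_t$-model with branch sets of size at most $2$ is a witness for $\had_2(G) \geq t$, and a perfect matching of $G$ whose edges are pairwise adjacent is exactly a \emph{connected matching} in the paper's terminology (the target of \nameref{conj:shc_half} and \nameref{conj:cdm}). Your observation that two matching edges $ab$ and $cd$ fail to be adjacent precisely when $\{a,b,c,d\}$ induces a $C_4$ in $H = \overline{G}$ with $ab$ and $cd$ as its diagonals is the structural reason that $C_4$ and $B_7$ appear as \nameref{conj:cdm}-unavoidable graphs in \cref{thm:c5,thm:b7}, and it is what makes \cref{thm:girth_5} work for complements of graphs of girth at least $5$. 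One refinement worth making explicit: for odd $n$ the integrality of $\had(G)$ upgrades the target to $\lceil n/2\rceil$, and it is exactly this half-integer slack, combined with $|V(G)| = 2\chi(G) - 1$ for a minimal counterexample (Property \propref{prop:size_equals_2chi_minus_1} of \cref{thm:minimal_properties}), that yields the equivalence of \nameref{conj:hc_half} and \nameref{conj:hc_chi} in \cref{thm:equiv}. The conflict-repair step you identify as the missing ingredient is indeed where all known approaches stall; the paper's partial results (\cref{thm:bla}, \cref{thm:seagullsN/4}, and the class-by-class theorems of \cref{section:results_special}) succeed only by imposing extra structure---small fractional clique covering number, large inflated clique ratio, or forbidden induced subgraphs---rather than by solving the general problem.
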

\citet{PST03} proved that a minimum counterexample (in terms of $|V(G)|$) to \nameref{conj:hc_chi} has several properties (see \cref{tab:minimal_properties}).  In particular, they proved that if $G$ is a minimum counterexample to \nameref{conj:hc_chi}, then $\diam(\overline{G}) = 2$, and $G$ is a counterexample to \nameref{conj:hc_half}. Therefore, \nameref{conj:hc_chi} and \nameref{conj:hc_half} are equivalent. (Several authors (\citep[Theorem 1.2]{Bosse19}, \citep[Theorem 1.3]{Zhou23}) have misinterpreted this statement as claiming that every graph that satisfies \nameref{conj:hc_half} also satisfies \nameref{conj:hc_chi}. This is not implied.)

Even the smallest improvement to \cref{thm:dm} in the $\alpha(G) = 2$ case would be interesting. In particular, \citet{SeymourHC} conjectured that the constant factor of $1/3$ in \cref{thm:dm} can be improved:
\begin{conj}[\conjdefn{$\HC_{\varepsilon}$}]
    \label{conj:hc_epsilon}
    There is a constant $\varepsilon > 0$ such that every graph $G$ with $\alpha(G) = 2$ satisfies $\had(G) \geq (\frac{1}{3} + \varepsilon) |V(G)|$.
\end{conj}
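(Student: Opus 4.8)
The plan is to split on the clique number of $G$, exploiting a feature special to the case $\alpha(G)=2$: for every vertex $v$, the set of non-neighbours of $v$ is a clique of $G$, since otherwise $v$ together with two non-adjacent non-neighbours would be an independent set of size $3$. Writing $\overline{G}$ for the complement (which is triangle-free precisely because $\alpha(G)=2$) and $n=|V(G)|$, the non-neighbourhood $N_{\overline{G}}(v)$ is a clique of size $\deg_{\overline{G}}(v)$, so
\[ \had(G) \ge \omega(G) \ge \Delta(\overline{G}). \]
Hence if $\Delta(\overline{G}) \ge (\tfrac13+\varepsilon)n$ we are immediately done. The whole difficulty is therefore concentrated in the regime $\Delta(\overline{G}) < (\tfrac13+\varepsilon)n$, equivalently $\delta(G) > (\tfrac23-\varepsilon)n-1$: a very dense graph whose triangle-free complement has bounded maximum degree.

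In this dense regime I would try to beat the Duchet--Meyniel baseline of \cref{thm:dm} by exhibiting a clique minor whose branch sets are as small as possible. Concretely, take a maximum clique $C$ of $G$ as $\omega(G)$ singleton branch sets, and adjoin a matching $M$ of $G-C$ whose edges serve as branch sets of size two; this family yields a $K_{|C|+|M|}$ minor provided the branch sets are pairwise adjacent, i.e.\ every edge of $M$ is adjacent in $G$ to every vertex of $C$ and to every other edge of $M$. Since $\had(G)\ge |C|+|M|$, the target $\had(G)\ge(\tfrac13+\varepsilon)n$ reduces to producing such a matching of size at least $(\tfrac13+\varepsilon)n-\omega(G)$. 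Now two edges $ab$ and $cd$ fail to be adjacent exactly when $ac,ad,bc,bd$ are all non-edges of $G$, that is, when $\{a,b,c,d\}$ induces a $4$-cycle in $\overline{G}$; call such a pair of edges a \emph{bad pair}. The density of $G$ guarantees matchings of near-maximum size, so the crux is purely the avoidance of bad pairs, i.e.\ of induced $4$-cycles of $\overline{G}$.

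The cleanest instance of this strategy is when $\overline{G}$ has no induced $4$-cycle at all, for example when $\overline{G}$ has girth at least $5$: then there are no bad pairs, an almost-perfect matching of the dense graph $G$ is automatically a clique minor, and one obtains $\had(G)\ge(1-o(1))\tfrac n2$. This already explains why several classes highlighted in the abstract are tractable — their complements are triangle-free with tightly controlled codegrees, so bad pairs are few and can be avoided greedily or using the underlying algebraic regularity. For the general statement I would attempt a local augmentation: starting from a Duchet--Meyniel model, repeatedly merge or reroute branch sets so as to destroy bad pairs while strictly increasing the branch-set count, until the count exceeds $(\tfrac13+\varepsilon)n$.

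The main obstacle I expect is the \emph{balanced regime}, where $\overline{G}$ is a triangle-free graph of diameter $2$ with maximum degree close to $\tfrac n3$ and with $\Omega(n^2)$ induced $4$-cycles (as for incidence-type or quasirandom triangle-free graphs). There the clique bound $\omega(G)\approx\tfrac n3$ and the matching bound are each only $(\tfrac13+o(1))n$, and every candidate edge lies in $\Omega(n)$ bad pairs, so any single local augmentation saves only $o(n)$ branch sets. Converting these $o(n)$ local savings into a global gain of $\varepsilon n$ appears to require a genuine structure theorem for triangle-free diameter-$2$ graphs of intermediate degree — in the spirit of the reduction of \citet{PST03} showing that a minimum counterexample to \nameref{conj:hc_chi} has $\diam(\overline{G})=2$ — and it is precisely the absence of such a theorem that leaves even the weak improvement asserted by \cref{conj:hc_epsilon} open.
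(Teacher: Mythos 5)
There is a fundamental problem here: \cref{conj:hc_epsilon} is a \emph{conjecture} of Seymour, not a theorem of this paper, and it remains open. The paper offers no proof of it; it only records (following Thomass\'e and \citet{KPT05}) that \cref{conj:hc_epsilon} is equivalent to \nameref{conj:linear_cm}, the assertion that every graph with $\alpha(G)=2$ has a connected matching of linear size, and that the best known bound in this direction (\citet{Fox10}) gives connected matchings of size only $\Omega(t^{4/5}\log^{1/5}t)$, which is sublinear. Your proposal does not close this gap either --- and to your credit, your final paragraph concedes as much. What you have written is a strategy sketch that terminates in an acknowledged impasse, not a proof.

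On the substance of the sketch: the individual observations are correct. The non-neighbourhood of any vertex is a clique when $\alpha(G)=2$, so $\had(G)\ge\omega(G)\ge\Delta(\overline{G})$, and the case $\Delta(\overline{G})\ge(\tfrac13+\varepsilon)n$ is indeed immediate. Your reduction of the remaining case to finding a large matching whose edges are pairwise adjacent (avoiding ``bad pairs'', i.e.\ induced $C_4$'s in $\overline{G}$) is essentially a rederivation of the known equivalence with \nameref{conj:linear_cm}: a matching of pairwise adjacent edges is exactly a connected matching, and since $\alpha(G)=2$ forces every vertex outside a seagull or matched pair to attach to it, the whole difficulty of \cref{conj:hc_epsilon} is concentrated in producing a connected matching of size $(\tfrac13+\varepsilon)n - \omega(G)$ in the regime $\omega(G)<(\tfrac13+\varepsilon)n$. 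That is precisely the regime you call ``balanced'', where $\overline{G}$ is triangle-free with $\Omega(n^2)$ four-cycles and no known argument converts local augmentations into a linear gain. So the step ``repeatedly merge or reroute branch sets \dots until the count exceeds $(\tfrac13+\varepsilon)n$'' is the entire content of the conjecture, and no mechanism is supplied for why such an augmentation process terminates with a linear improvement rather than stalling after $o(n)$ steps. If you want to contribute to this problem, the honest target is \nameref{conj:linear_cm} directly (or the sharper \nameref{conj:4-CM} of \citet{furedi2005connected}), not a repackaging of \cref{thm:dm} plus an unproven augmentation claim.
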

\nameref{conj:hc_epsilon} turns out to be equivalent to a conjecture about connected matchings, which we define below.

\begin{sloppypar}
    For disjoint subsets $A, B \subseteq V(G)$, $A$ and $B$ are \defn{adjacent} if some vertex in $A$ is adjacent to some vertex in $B$, otherwise $A$ and $B$ are \defn{non-adjacent}. Let $M \subseteq E(G)$. Let
    $\mathdefn{V(M)} := \{ v \in V(G) : \text{$v$ is an endpoint of an edge $e \in M$} \}$. For an edge $e \in E(G)$, let $\mathdefn{V(e)}:= V(\{e\})$.
    A matching $M \subseteq E(G)$ is \defn{connected} if for every two edges $e, f \in M$, $V(e)$ and $V(f)$ are adjacent. A
    matching $M \subseteq E(G)$ is \defn{dominating} if each vertex of $V(G) \setminus V(M)$ is adjacent to each edge $e \in M$. An edge $xy \in E(G)$ is \defn{dominating} if $N_G(x) \cup N_G(y) = V(G)$. Equivalently, an edge $xy$ is dominating if $\{xy\}$ is a connected dominating matching of size $1$. \cref{conj:linear_cm} is due to \citet{furedi2005connected} (although they note that other authors may have independently made the same conjecture).
\end{sloppypar}

\begin{conj}[\conjdefn{Linear-CM}]
    \label{conj:linear_cm}
    There is a constant $c > 0$ such that every graph $G$ with $\alpha(G) = 2$ has a connected matching of size at least $c|V(G)|$.
\end{conj}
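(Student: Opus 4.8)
The plan is to pass to the complement and exploit triangle-freeness. Since $\alpha(G)=2$, the complement $\overline{G}$ is triangle-free, and for every vertex $v$ the set of $G$-non-neighbours $N_{\overline{G}}(v)$ is independent in $\overline{G}$, hence a \emph{clique} of $G$. Two edges $e=x_1x_2$ and $f=y_1y_2$ of a matching fail to be adjacent in $G$ exactly when all four pairs $x_iy_j$ are non-edges of $G$, i.e.\ when $\{x_1,x_2,y_1,y_2\}$ induces a $4$-cycle in $\overline{G}$ with its two diagonals $x_1x_2,y_1y_2$ absent. Thus a connected matching is precisely an independent set in the ``conflict graph'' $H$ whose vertices are the edges of a fixed matching $M$ of $G$ and whose edges record such $\overline{G}$-four-cycles. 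Because $\alpha(G)=2$ forces every maximal matching to miss at most two vertices (three unmatched vertices would contain an edge, contradicting maximality), we may take $|M|\ge(|V(G)|-2)/2$, so it suffices to prove $\alpha(H)\ge c'|M|$ for an absolute constant $c'>0$.

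First I would dispose of the one genuinely clean case. If some vertex $v$ has $\deg_{\overline{G}}(v)\ge\tfrac13|V(G)|$, then $N_{\overline{G}}(v)$ is a clique of $G$ of linear size; pairing its vertices arbitrarily yields a matching in which every two edges are adjacent (any two edges inside a clique are adjacent), hence a connected matching of size at least $\tfrac16|V(G)|$, and we are done. So we may assume $\Delta(\overline{G})<\tfrac13|V(G)|$, equivalently $\delta(G)>\tfrac23|V(G)|$. The catch is that a single clique no longer suffices: by Ramsey's theorem a graph with $\alpha(G)=2$ on $n$ vertices can have $\omega(G)$ as small as $\Theta(\sqrt{n\log n})$, which is sublinear, so in the remaining regime one must \emph{stitch together many small cliques and edges into one globally pairwise-adjacent matching}.

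The stitching is the main obstacle, and it is exactly the reason the conjecture is open. The tempting move is to bound the maximum degree of $H$ and invoke the Caro--Wei/Tur\'an bound $\alpha(H)\ge|V(H)|/(\bar d(H)+1)$, but this fails: two matched vertices $x_1,x_2$ (non-adjacent in $\overline{G}$) can have a large common $\overline{G}$-neighbourhood, a $K_{2,t}$ in $\overline{G}$, so a single matching edge can conflict with linearly many others and $H$ can be dense even when $\Delta(\overline{G})$ is moderate and $\omega(G)$ is small; balanced blow-ups of $C_5$ illustrate how $\overline{G}$ can be triangle-free, have no large independent set of its own, and still spawn many such $4$-cycles. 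Consequently no local or naive averaging estimate delivers a linear independent set in $H$. This is precisely the connected-matching analogue of improving the Duchet--Meyniel bound of $\tfrac13|V(G)|$ in \cref{thm:dm} --- whose branch sets are paths --- down to branch sets of size at most $2$. I do not expect the clique dichotomy above to close the gap; a genuinely new global argument appears necessary, for instance an entropy or amortised potential argument certifying a linear independent set in every conflict graph arising from a triangle-free complement, or a direct stitching procedure that charges each conflict against structure it forces elsewhere in $\overline{G}$.
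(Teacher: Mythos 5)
Your proposal is not, and does not claim to be, a proof --- and there is no paper proof to compare it against: \cref{conj:linear_cm} is stated in this paper as an open \emph{conjecture}, due to \citet{furedi2005connected}. By the observation of Thomass\'e, proved by \citet{KPT05}, it is equivalent to \cref{conj:hc_epsilon}, i.e.\ to improving the constant $1/3$ in the Duchet--Meyniel bound (\cref{thm:dm}); the best known general bound is far from linear, namely a connected matching of size $\Omega(t^{4/5}\log^{1/5}t)$ in $t$-vertex graphs, due to \citet{Fox10}. The gap in your write-up is therefore exactly the one you flag yourself: after the clean case $\Delta(\overline{G})\geq\tfrac13|V(G)|$ (which is correct and yields a connected matching of size roughly $|V(G)|/6$), the complementary regime --- $\overline{G}$ triangle-free with all degrees below $|V(G)|/3$ and $\omega(G)$ possibly as small as $\Theta(\sqrt{n\log n})$ --- is left entirely unresolved. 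No tightening of the conflict-graph setup can close it, since producing a linear independent set in your conflict graph $H$ is literally a restatement of the conjecture. Your framing (complementation, the conflict graph on a near-perfect matching, conflicts as induced $K_{2,2}$'s in $\overline{G}$, the failure of degree-based bounds because a single edge can lie in many $K_{2,t}$'s) is sound and is indeed the standard way the problem is viewed, but it is a reformulation rather than progress.

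One factual slip in your motivational discussion: a balanced blow-up of $C_5$ on $n$ vertices has independence number $2n/5$, so it is not an example of a triangle-free graph with ``no large independent set of its own''; the graphs exhibiting the hard regime are instead those arising from Kim's $R(3,t)$ construction or the triangle-free process discussed in \cref{s:op}, whose complements have sublinear clique number. Moreover, inflations of $C_5$ are known to have linear connected matchings --- two adjacent blown-up cliques together form a single clique of size $2n/5$ --- consistent with the result of \citet{PST03} that such graphs even have connected dominating matchings. Neither point affects your main conclusion, which is correct: the statement is open, the two-case dichotomy cannot be completed by local or averaging arguments, and a genuinely new global idea would be required.
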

Thomass\'e first noted that \nameref{conj:linear_cm} and \nameref{conj:hc_epsilon} are equivalent (see \citep{furedi2005connected}). A proof is given by \citet{KPT05}. Therefore, improving the constant $1/3$ in \cref{thm:dm} is as hard as finding a connected matching of linear size. \citet{furedi2005connected} conjectured a precise version of \nameref{conj:linear_cm}.
\begin{conj}[\conjdefn{$\text{4-CM}$}]
    \label{conj:4-CM}
    For every integer $t \geq 1$, every graph $G$ with $\alpha(G) = 2$ with at least $4t - 1$ vertices has a connected matching of size at least $t$.
\end{conj}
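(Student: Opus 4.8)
The plan is to attempt an induction on $t$, under the slightly stronger hypothesis that the bound holds for every graph with $\alpha(G)\le 2$ (so that vertex-deleted subgraphs, which may degenerate into cliques, stay inside the induction). The base case $t=1$ is immediate: a graph with $\alpha(G)\le 2$ and $|V(G)|\ge 3$ cannot be edgeless, and a single edge is a connected matching of size $1$. Recall also that a connected matching of size $t$ contracts to a $K_t$-minor (connectedness makes the branch vertices pairwise adjacent), so the statement is a minor bound in disguise. Two easy reductions dispose of the non-connected case: if $\alpha(G)\le 1$ then $G$ is a clique and $\lfloor |V(G)|/2\rfloor \ge 2t-1\ge t$; and if $G$ is disconnected with $\alpha(G)=2$ then $G$ is exactly a disjoint union of two cliques, the larger of which has at least $\lceil |V(G)|/2\rceil \ge 2t$ vertices and hence a connected matching of size $\ge t$. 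So we may assume $G$ is connected.

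For the inductive step on a connected graph, I would try to peel off a single edge. For an edge $xy$ write $D(xy):=V(G)\setminus(N_G(x)\cup N_G(y)\cup\{x,y\})$ for the set of vertices dominated by neither endpoint; equivalently, passing to the triangle-free complement $\overline{G}$ (recall $\alpha(G)\le 2\iff\overline{G}$ triangle-free), $D(xy)$ is the common neighbourhood in $\overline{G}$ of the non-edge $xy$, so $|D(xy)|$ is its codegree. Suppose we can find an edge $xy$ with $|D(xy)|\le |V(G)|-4t+3$ (at the extremal size $|V(G)|=4t-1$ this asks only for $|D(xy)|\le 2$). Deleting $\{x,y\}\cup D(xy)$ leaves a graph $G'$ with $\alpha(G')\le 2$ and $|V(G')|\ge 4(t-1)-1$, so the induction hypothesis yields a connected matching $M'$ of size $t-1$. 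By construction $V(G')=N_G(x)\cup N_G(y)$, so every edge of $M'$ has an endpoint adjacent to $x$ or $y$; hence $xy$ is adjacent to each edge of $M'$ and $M'\cup\{xy\}$ is a connected matching of size $t$. This would reduce the whole conjecture to producing a single low-codegree (``low-deficiency'') edge.

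The hard part — and presumably the reason the conjecture is open — is that such an edge need not exist. The obstruction is already visible in the inflation $C_5[K_m]$ obtained from $C_5$ by replacing each vertex with a clique $K_m$ and each edge with a complete bipartite graph: it is connected with $\alpha=2$, yet every edge leaves at least $m$ vertices undominated (an edge inside a blob leaves the two opposite blobs, $2m$ vertices; an edge between consecutive blobs leaves the single antipodal blob, $m$ vertices), so for $m\ge 3$ the greedy peeling cannot even begin. Nonetheless $C_5[K_m]$ does satisfy the conjecture: a perfect matching across $I_1$--$I_2$, a perfect matching across $I_3$--$I_4$, and an internal matching of $I_5$ together form a connected matching of size $\lfloor 5m/2\rfloor$, far above the required $t\approx 5m/4$. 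The lesson is that in the dense, highly-connected regime one must build the connected matching globally rather than one dominating edge at a time, and it is exactly this global connectivity bookkeeping that resists a clean inductive treatment. Since any complete argument would also settle the equivalent \nameref{conj:linear_cm} and $\HC_{\varepsilon}$ conjectures, the remaining obstacle should be regarded as substantial rather than merely technical.
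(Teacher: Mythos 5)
You were asked to prove \nameref{conj:4-CM}, which is stated in the paper as an \emph{open conjecture} (due to F\"uredi, Gy\'arf\'as and Simonyi); the paper contains no proof of it, only its position in a web of implications: it follows from \nameref{conj:hc_half} (\cref{thm:cambie}), it implies \nameref{conj:linear_cm} and \nameref{conj:hc_epsilon}, and it is verified for $t \leq 22$, hence for all graphs with at most $87$ vertices (\cref{thm:t=16} and its improvement by Chen et al.). Your proposal correctly recognises this and stops short of claiming a complete argument, so the honest verdict is: partial progress on an open problem, not a proof, and that is the right self-assessment. The pieces you do establish are sound. The base case, the reduction to connected graphs (a disconnected graph with $\alpha=2$ is a union of two cliques, the larger of which has at least $\lceil(4t-1)/2\rceil = 2t$ vertices), and the peeling step are all correct: if $xy$ is an edge with $|D(xy)| \leq |V(G)| - 4t + 3$, then $G' := G - \{x,y\} - D(xy)$ has at least $4(t-1)-1$ vertices, every vertex of $G'$ lies in $N_G(x)\cup N_G(y)$, and adjoining $xy$ to a connected matching of size $t-1$ in $G'$ gives a connected matching of size $t$. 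Your computation that uniform inflations of $C_5$ defeat this greedy peeling (every edge leaves at least $m$ undominated vertices) while still satisfying the conjecture via a globally constructed matching of size $\lfloor 5m/2\rfloor$ is also correct, and matches the paper's repeated emphasis that inflations of $C_5$ are the canonical obstruction to edge-by-edge arguments.

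The genuine gap, then, is exactly the one you name: in the dense regime where no low-deficiency edge exists, you have no replacement argument, and none is known. To compare with what the paper actually does: rather than attacking \nameref{conj:4-CM} directly, the paper (following Cambie) proves the \emph{reverse} reduction via \cref{lem:cambie1,lem:cambie2} --- if $G$ has $4t-1$ vertices and $\cm(G)\leq t-1$, then $\omega(G)\leq\cm(G)$ and $\had_2(G)\leq\cm(G)$, so any $K_{2t}$-minor of $G$ would need too many large branch sets --- thereby deducing \nameref{conj:4-CM} from \nameref{conj:hc_half}. That argument suggests the productive direction for your dense case: a counterexample to \nameref{conj:4-CM} would have small clique number, minimum degree at least $3t-1$, and every $K$-minor model dominated by size-$\leq 2$ branch sets of total order at most $t-1$, which is a strong structural restriction even if it does not yet close the problem.
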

The bound $4t - 1$ is best possible, since the disjoint union of two copies of the complete graph $K_{2t - 1}$ does not have a connected matching of size $t$. Somewhat surprisingly,
\citet{cambie2021hadwiger} proved that \nameref{conj:hc_chi} implies \nameref{conj:4-CM}.

Let $G$ be a graph. A \defn{model} of a graph $H$ in a graph $G$ is a function $\eta: V(H) \to 2^{V(G)}$ assigning the vertices of $H$ vertex-disjoint connected subgraphs of $G$, such that if $uv \in E(H)$, then there is an edge of $G$ between $\eta(u)$ and $\eta(v)$. The sets $\{\eta(u): u \in V(H)\}$ are called the \defn{branch sets} of the model. There exists a model of a graph $H$ in $G$ if and only if $H$ is a minor of $G$. Observe that a connected matching is a complete graph model in which each branch set has size exactly $2$. \citet{SeymourHC} conjectured the following strengthening of \nameref{conj:hc_chi}.
\begin{conj}[\conjdefn{$\SHC_{\alpha = 2}$}]
    \label{conj:shc_chi}
    Every graph $G$ with $\alpha(G) = 2$ has a $K_{\chi(G)}$-model in which each branch set has size at most $2$.
\end{conj}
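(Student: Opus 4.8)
The plan is to pass to the complement $H := \overline{G}$. Since $\alpha(G)=2$, the graph $H$ is triangle-free (and has at least one edge), and a short counting argument gives $\chi(G) = n - \nu(H)$, where $n := |V(G)|$ and $\nu(H)$ is the maximum size of a matching in $H$: every colour class of $G$ is an independent set of size at most $2$, hence an edge or a single vertex of $H$, so a proper colouring corresponds to a matching of $H$ together with its uncovered vertices, and minimising the number of classes means taking a maximum matching. Thus \nameref{conj:shc_chi} asks for a $K_{n-\nu(H)}$-model of $G$ in which every branch set has size at most $2$.

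Next I would determine the shape of such a model. If it has $p$ branch sets of size $2$ and $s$ singletons, then it uses $2p+s$ vertices and realises $K_{p+s}$; demanding $p+s = n-\nu(H)$ while $2p+s\le n$ forces $p\le \nu(H)$, and the cleanest choice is to use every vertex, giving exactly $p=\nu(H)$ size-$2$ branch sets and $s=n-2\nu(H)$ singletons. The size-$2$ branch sets then form a matching $M$ of $G$ of size $\nu(H)$ (each is an edge of $G$, i.e.\ a non-edge of $H$), and the singletons form the set $S := V(G)\setminus V(M)$. Translating ``every two branch sets are adjacent in $G$'' back into $H$ yields three local conditions on $(M,S)$: (i) $S$ is independent in $H$; (ii) no vertex of $S$ is $H$-adjacent to both endpoints of any pair of $M$; and (iii) no two pairs of $M$ span a $K_{2,2}$ in $H$. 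Condition (iii) says exactly that $M$ is a \emph{connected matching} of $G$, so the target is a connected matching of size $\nu(H)$ whose uncovered vertices form a clique of $G$ (condition (i)) that moreover satisfies the cross-condition (ii).

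To build $(M,S)$ I would start from a size-$\nu(H)$ matching of $G$ and its complementary vertex set, then remove violations of (i)--(iii) by local exchanges that preserve $|M|=\nu(H)$: swapping an endpoint of an offending pair, or re-pairing a troublesome singleton. The behaviour of this process is governed entirely by the $4$-cycles of $H$, since (iii) is a forbidden-$K_{2,2}$ constraint—and this is exactly why the classes in the abstract become tractable. If $H=\overline{G}$ has girth at least $5$ then $H$ is $C_4$-free, so (iii) holds automatically for every matching and only the milder conditions (i) and (ii) remain; for the triangle-free Kneser graphs and the Clebsch, Mesner, and Gewirtz graphs the vertex-transitivity and the eigenvalue/intersection data pin the $4$-cycles down tightly enough to exhibit a suitable $M$ directly, after which the passage to inflations is handled separately.

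The main obstacle is condition (iii), and the way it couples to (i) and (ii), in full generality. There is no reason a priori that a matching of $G$ of size $\nu(H)$ can be chosen to be a connected matching while also leaving a clique of singletons obeying (i)--(ii); when $H$ is rich in $4$-cycles (as dense triangle-free complements can be) the local-exchange argument need not terminate. Even producing a connected matching of merely linear size is the content of the still-open \nameref{conj:linear_cm} and \nameref{conj:4-CM} conjectures, whereas here one needs a connected matching of size $\nu(H)$—as large as $\tfrac{n}{2}$—together with the compatible singleton set, so the bounded-branch-set model sought is stronger still. A general proof therefore seems to require a genuinely global construction rather than local repair, which is why \nameref{conj:shc_chi} remains open beyond the special classes treated in this paper.
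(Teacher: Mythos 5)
The statement you were asked about is \nameref{conj:shc_chi}, which the paper states as a \emph{conjecture}: it has no proof in the paper, and to the best of anyone's knowledge it is open. Your proposal correctly recognises this, so there is no gap to report in the usual sense --- you have not claimed a proof where none exists. Your preparatory reductions are sound and match the paper's own framework: the identity $\chi(G) = |V(G)| - \mu(\overline{G})$ is exactly \cref{lem:chromatic_matching}, and your conditions (i)--(iii) are the correct translation of ``$K_{\chi(G)}$-model with branch sets of size at most $2$'' into the language of connected matchings (condition (iii) is precisely the connectedness of the matching, and (i)--(ii) say the uncovered vertices form a clique each of whose members dominates every matching edge). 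Your assessment of the difficulty is also consistent with the paper: the authors only establish \nameref{conj:shc_chi} for special classes (e.g.\ via \cref{thm:girth_5_hc}, \cref{cor:anticycles}, \cref{thm:eberhard}), and their general-purpose route is the implication \nameref{conj:cdm} $\Rightarrow$ \nameref{conj:shc_chi} in \cref{thm:equiv}, which replaces your one-shot global construction by an inductive one (find a non-empty connected dominating matching, delete it, recurse); your conditions (i)--(iii) are the ``flattened'' version of that induction. The one point worth flagging is your remark that ``the cleanest choice is to use every vertex'': a valid model need not saturate $V(G)$ (one may take $p < \nu(H)$ pairs and correspondingly more singletons), and insisting on $p = \nu(H)$ makes the target strictly harder than the conjecture requires; but since you are describing an obstruction rather than a proof, this does not invalidate anything.
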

This would imply:
\begin{conj}[\conjdefn{$\SHC_{n/2}$}]
    \label{conj:shc_half}
    Every graph $G$ with $\alpha(G) = 2$ has a $K_{\ceil{|V(G)|/2}}$-model in which each branch set has size at most $2$.
\end{conj}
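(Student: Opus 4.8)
The plan is to establish \cref{conj:shc_half} conditionally, as an immediate consequence of \cref{conj:shc_chi}; thus the argument assumes the stronger statement rather than proving anything unconditionally. The single ingredient beyond \cref{conj:shc_chi} is the universal inequality $\chi(G)\,\alpha(G) \ge |V(G)|$, already noted in the introduction: the colour classes of an optimal proper colouring partition $V(G)$ into $\chi(G)$ independent sets, each of size at most $\alpha(G)$. For a graph with $\alpha(G) = 2$ this yields $\chi(G) \ge |V(G)|/2$, and since $\chi(G)$ is an integer it sharpens to $\chi(G) \ge \ceil{|V(G)|/2}$.

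Given this, I would apply \cref{conj:shc_chi} to obtain a $K_{\chi(G)}$-model of $G$ in which every branch set has size at most $2$. Setting $t := \ceil{|V(G)|/2} \le \chi(G)$, this model has at least $t$ branch sets, and restricting to any $t$ of them produces a $K_t$-model: the selected branch sets are still connected and pairwise adjacent, and discarding branch sets never enlarges those that remain. Each surviving branch set therefore still has size at most $2$, which is precisely the assertion of $\SHC_{n/2}$.

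Every step is a routine monotonicity observation, so I expect no genuine obstacle; all of the difficulty is absorbed into \cref{conj:shc_chi}, and \cref{conj:shc_half} merely records the weaker quantity $\ceil{|V(G)|/2}$ that the universal bound guarantees when $\alpha(G) = 2$. The only point warranting care is the passage from $|V(G)|/2$ to $\ceil{|V(G)|/2}$, which relies on integrality of $\chi(G)$ and is the reason the weaker conjecture is stated with a ceiling.
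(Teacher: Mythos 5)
Your conditional derivation is correct and matches the paper's own treatment: the statement is a conjecture, and the paper likewise only records that \nameref{conj:shc_chi} implies \nameref{conj:shc_half} (via $\chi(G)\ge\ceil{|V(G)|/2}$ when $\alpha(G)=2$ and discarding surplus branch sets), with the reverse direction handled separately in \cref{thm:equiv}. No issues.
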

The conjecture \nameref{conj:shc_chi} is known as Seymour's strengthening of Hadwiger's Conjecture in \citep{Blasiak07}, or the matching version of Hadwiger's Conjecture in \citep{CS12}. Our first contribution is to establish the equivalence of \nameref{conj:shc_chi} and \nameref{conj:shc_half}, following the proof techniques in \citet{PST03}. We conjecture the following strengthening of \nameref{conj:shc_chi} and \nameref{conj:shc_half}, which to our knowledge remains open:
\begin{conj}[\conjdefn{$\CDM$}]
    \label{conj:cdm}
    Every connected graph $G$ with $\alpha(G) = 2$ has a non-empty connected dominating matching.
\end{conj}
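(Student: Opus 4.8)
The plan is to argue by a structural reduction followed by an extremal augmentation. Since $\alpha(G)=2$, the complement $\overline{G}$ is triangle-free, and a single edge $xy$ is dominating precisely when $x,y$ are non-adjacent in $\overline{G}$ and have no common neighbour in $\overline{G}$. If $\overline{G}$ is disconnected, or has an isolated vertex, or contains two vertices at distance at least $3$, such a pair exists, and $\{xy\}$ is already a non-empty connected dominating matching. So I would first reduce to the case where $\overline{G}$ is connected, triangle-free, has minimum degree at least $1$, and $\diam(\overline{G})=2$ --- exactly the structure \citet{PST03} isolate for a minimum counterexample to \nameref{conj:hc_chi}. Already this reduction signals that \nameref{conj:cdm} lives at the same level of difficulty as the $\alpha=2$ case of Hadwiger's Conjecture.

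In this remaining case no matching of size $1$ works, so I would seek a larger connected dominating matching by a maximality argument. Choose a non-empty connected matching $M$ minimising the number of \emph{undominated} vertices, i.e.\ vertices $z\notin V(M)$ for which some edge $xy\in M$ has $z$ non-adjacent (in $G$) to both $x$ and $y$; break ties by maximising $|M|$. Supposing $M$ is not dominating, fix such a $z$ and a witnessing edge $xy\in M$. The aim is to repair $M$ near $z$ --- either by swapping $xy$ for an edge incident to $z$, or by absorbing $z$ into $V(M)$ through a new edge $zw$ --- so as to strictly decrease the number of undominated vertices while keeping $M$ both a matching and connected, contradicting the choice of $M$.

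The hypotheses feed this repair as follows. Because $z$ is non-adjacent to $x$ and $\alpha(G)=2$, every vertex non-adjacent to $z$ must be adjacent to $x$, and likewise to $y$; hence all non-neighbours of $z$ lie in $N_G(x)\cap N_G(y)$, which pins down where the remaining obstructions can hide and narrows the search for a partner $w$ with $zw\in E(G)$. I would then lean on $\diam(\overline{G})=2$ to supply the adjacencies needed to keep $M$ connected after the swap, and on the triangle-freeness of $\overline{G}$ to restrict the local configuration around $z$, $x$, $y$ and the other matched pairs --- this is where one hopes the bookkeeping closes and the monovariant strictly improves.

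The main obstacle --- and the reason \nameref{conj:cdm} remains open --- is that connectedness of a matching is a \emph{global}, pairwise condition, whereas domination is checked \emph{locally}, edge by edge, and the two requirements pull against each other: enlarging $V(M)$ to absorb an undominated vertex $z$ introduces a new edge $zw$ that every other vertex must now dominate, which can create fresh undominated vertices and defeat any naive monovariant. I do not expect a single potential to decrease monotonically under these swaps, so the crux is to show that the repair can always be \emph{localised} without cascading. That this is genuinely hard is confirmed by the fact that \nameref{conj:cdm} implies \nameref{conj:shc_half} --- via a peeling induction that repeatedly deletes a connected dominating matching and uses that $G-V(M)$ is either connected or a disjoint union of two cliques (so that the removed branch sets remain adjacent to those found by recursion) --- and hence, through the \citet{PST03}-type equivalence, implies \nameref{conj:shc_chi} and \nameref{conj:hc_chi}. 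A complete proof would therefore resolve Hadwiger's Conjecture for $\alpha=2$, which is why I expect the localisation step to be the decisive barrier.
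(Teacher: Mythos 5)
There is a fundamental problem: \nameref{conj:cdm} is stated in the paper as an open \emph{conjecture}, not a theorem. The paper offers no proof of it --- only computational verification for graphs on at most $11$ vertices, a proof that it implies \nameref{conj:shc_chi} (\cref{thm:equiv}), and proofs for special classes ($C_5$-free and $B_7$-free graphs via \cref{thm:c5,thm:b7}, graphs with $\kappa(G)\leq|V(G)|/2$ via \cref{thm:connectivity}, and the inflation families of \cref{section:results_special}). So there is no ``paper's own proof'' to compare against, and your proposal must stand on its own.

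It does not. Your opening reduction is correct and matches \cref{lem:equiv_triangle_free}: if $\overline{G}$ is disconnected or has diameter at least $3$, a single dominating edge exists, so one may assume $\overline{G}$ is connected, triangle-free, and of diameter $2$. But everything after that is a plan, not an argument, and you concede as much: you choose $M$ to minimise the number of undominated vertices, then say you ``would seek'' a repair and that you ``do not expect a single potential to decrease monotonically.'' The obstruction you name is real and is precisely why the conjecture is open --- absorbing an undominated vertex $z$ via a new edge $zw$ imposes a fresh domination requirement on every vertex of $V(G)\setminus(V(M)\cup\{z,w\})$ and a fresh connectivity requirement against every other edge of $M$, and nothing in the hypotheses controls the resulting cascade; conversely, swapping out $xy$ can disconnect $M$ from edges whose only adjacency to the rest ran through $x$ or $y$. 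Note also that \cref{thm:connectivity} already disposes of the case $\kappa(G)\leq|V(G)|/2$, so any completion of your scheme must work exactly in the highly connected regime where the local structure around $z$ is least constrained. In short: the reduction is fine, the central step is missing, and no known argument fills it --- a complete proof would, as you observe, imply \nameref{conj:hc_chi} for $\alpha=2$.
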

\noindent
\begin{minipage}{0.38\textwidth}
    We have computationally verified \nameref{conj:cdm} for all graphs up to $11$ vertices.
    In \cref{thm:equiv}, we prove that \nameref{conj:cdm} implies \nameref{conj:shc_chi} (and hence \nameref{conj:shc_half}, since \nameref{conj:shc_chi} and \nameref{conj:shc_half} are equivalent).
    We do not know if \nameref{conj:cdm} is equivalent to \nameref{conj:shc_chi}.
    Note that one cannot remove the assumption that $G$ is connected in \nameref{conj:cdm}, since the disjoint union of two non-empty complete graphs does not have a non-empty connected dominating matching.
    \cref{fig:equiv} describes the relations between the various conjectures.
\end{minipage}%
\hfill
\begin{minipage}{0.57\textwidth}
    \centering
    \begin{tikzcd}
    & \text{\nameref{conj:cdm}} \arrow[ld] \arrow[rd, "\text{\cref{thm:equiv}}"]                 &                                                                                       \\
    \text{\nameref{conj:shc_chi}} \arrow[d]                                                                   &                                                              & \text{\nameref{conj:shc_half}} \arrow[ll, <->, "\text{\cref{thm:equiv}}"] \arrow[d] \\
    \text{\nameref{conj:hc_chi}} \arrow[rd] \arrow[rr, <->, "\text{\cref{thm:equiv}}"] &                                                              & \text{\nameref{conj:hc_half}}\arrow[ld, "\text{\citet{cambie2021hadwiger}}"]                                   \\
    & \text{\nameref{conj:4-CM}} \arrow[ld] \arrow[rd] &                                                                                       \\
    \text{\nameref{conj:linear_cm}}                                                             &                                                              & \text{\nameref{conj:hc_epsilon}} \arrow[ll, <->, "\text{\citet{KPT05}}"]
\end{tikzcd}
    
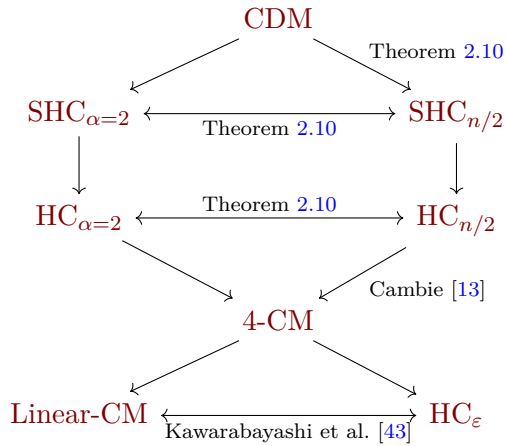
\captionof{figure}{Relations between the conjectures. An arrow from one conjecture to another indicates that the former implies the latter.}
    \label{fig:equiv}
\end{minipage}

\subsection{Inflations and Blow-Ups}
\label{ss:inflations}
Let $H$ be a graph and assign to each vertex $x \in V(H)$ an integer $c_x \geq 0$. Construct a new graph $G$ from $H$ by replacing each vertex $x \in V(H)$ with a clique $C_x$ of order $c_x$. Each vertex of $C_x$ is adjacent to each vertex of $C_y$ if and only if $xy \in E(H)$, otherwise $C_x$ and $C_y$ are non-adjacent. We call $G$ an \defn{inflation} of $H$. We define the \defn{projection} map $\mathdefn{p}: V(G) \to V(H)$ by $p(v) = x$ if $v \in C_x$. Note that $G$ could be an induced subgraph of $H$ by taking some $c_x = 0$. An inflation of $H$ is \defn{$k$-uniform} if $c_x = k$ for each $x \in V(H)$, and \defn{proper} if $c_x \geq 1$ for each $x \in V(H)$. Note that $p$ is surjective if and only if $G$ is a proper inflation of $H$.
Two vertices $u, v \in V(G)$ are \defn{adjacent-twins} if $uv \in E(G)$ and $N_G(u) \setminus \{v\} = N_G(v) \setminus \{u\}$. $G$ is \defn{adjacent-twin-free} if there are no adjacent-twins.
Observe that for every graph $G$, there is a unique smallest induced subgraph $H$ of $G$ such that $G$ is a proper inflation of $H$.

\begin{obs}
    \label{obs:preserve}
    If $G$ is a proper inflation of $H$, then $\alpha(G) = \alpha(H)$ and $\diam(\overline{G}) = \diam(\overline{H})$.
\end{obs}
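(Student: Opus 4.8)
The plan is to treat the two claims separately and to pass to the complement for the diameter statement. Let $p\colon V(G)\to V(H)$ be the projection, and recall that each fibre $C_x = p^{-1}(x)$ is a clique of $G$ of order $c_x \ge 1$, the lower bound being exactly the properness hypothesis. For $\alpha(G)=\alpha(H)$ I would argue both inequalities directly. To get $\alpha(G)\ge\alpha(H)$, take a maximum independent set $I$ of $H$ and choose one vertex from each $C_x$ with $x\in I$ (possible since $c_x\ge1$); distinct $x,y\in I$ are non-adjacent in $H$, so $C_x,C_y$ are non-adjacent in $G$, and the chosen transversal is independent of size $|I|$. For $\alpha(H)\ge\alpha(G)$, observe that a clique meets an independent set in at most one vertex, so a maximum independent set $J$ of $G$ satisfies $|J\cap C_x|\le 1$; hence $p$ is injective on $J$, and $p(J)$ is independent in $H$ because any two of its elements have non-adjacent representatives in $G$, forcing the corresponding pair to be a non-edge of $H$. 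The only place properness is used is the lifting in the first inequality.

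For the diameter I would first record the structure of $\overline{G}$: fibres are independent in $\overline{G}$, and for $x\ne y$ the pair $C_x,C_y$ is completely joined in $\overline{G}$ exactly when $xy\in E(\overline{H})$; thus $\overline{G}$ is the independent-set blow-up of $\overline{H}$ along the fibres. The core step is a distance-preservation lemma: for $u,v\in V(G)$ with $p(u)\ne p(v)$,
\[
\dist_{\overline{G}}(u,v)=\dist_{\overline{H}}(p(u),p(v)).
\]
The inequality ``$\le$'' comes from lifting a shortest $\overline{H}$-path $p(u)=x_0x_1\cdots x_k=p(v)$ to a walk $u=u_0,u_1,\dots,u_k=v$ with $u_i\in C_{x_i}$ chosen arbitrarily (the fibres being non-empty by properness): consecutive $x_i$ are distinct and $\overline{H}$-adjacent, so consecutive $u_i$ form $\overline{G}$-edges. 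The inequality ``$\ge$'' comes from projecting a shortest $\overline{G}$-path: every $\overline{G}$-edge has its endpoints in distinct fibres, so $p$ carries it to an $\overline{H}$-edge, yielding an $\overline{H}$-walk of the same length.

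It then remains to handle same-fibre pairs $u\ne v$ with $p(u)=p(v)=x$: these are non-adjacent in $\overline{G}$, and routing $u\to w\to v$ through any $w\in C_y$ with $y$ an $\overline{H}$-neighbour of $x$ gives $\dist_{\overline{G}}(u,v)=2$ (and $\infty$ if $x$ is isolated in $\overline{H}$). To assemble the diameter, note that since $p$ is surjective every ordered pair of $V(H)$ is realised by some cross-fibre pair, so the cross-fibre distances of $\overline{G}$ realise exactly the distance set of $\overline{H}$ and contribute $\diam(\overline{H})$; the same-fibre distances contribute at most $2$. Hence $\diam(\overline{G})=\diam(\overline{H})$ once $\diam(\overline{H})\ge 2$, which is precisely the regime this paper cares about (where $\diam(\overline{G})=2$); the disconnected case $\diam(\overline{H})=\infty$ is immediate, since the blow-up of a disconnected graph is disconnected.

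The main obstacle is exactly the same-fibre pairs. The distance-preservation lemma disposes of all cross-fibre pairs cleanly, but inflating a fibre introduces pairs of adjacent-twins of $G$ whose mutual $\overline{G}$-distance is forced up to $2$, and one must verify this does not exceed $\diam(\overline{H})$ — which is where the hypothesis $\diam(\overline{H})\ge 2$ enters. (In the degenerate boundary cases $\diam(\overline{H})\in\{0,1\}$, i.e.\ $H$ edgeless, a non-trivial inflation can raise $\diam(\overline{G})$ to $2$, so one should either restrict to $\diam(\overline{H})\ge 2$ or treat these separately.)
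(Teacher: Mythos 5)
The paper offers no proof of \cref{obs:preserve} at all---it is stated as a bare observation---so there is nothing to compare against; your argument is correct and is the natural one (transversals of independent sets for $\alpha$, and the distance-preservation lemma for cross-fibre pairs plus the distance-$2$ routing for same-fibre pairs in $\overline{G}$). Your closing caveat is a genuine, if minor, point: as literally stated the diameter claim fails when $H$ is edgeless (e.g.\ $H=\overline{K_2}$ gives $\overline{G}\cong K_{a,b}$ with $\diam(\overline{G})=2\neq 1=\diam(\overline{H})$, and $H=K_1$ inflated to $K_2$ gives $\diam(\overline{G})=\infty\neq 0$), so the statement implicitly assumes $\diam(\overline{H})\geq 2$ or $\diam(\overline{H})=\infty$. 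This is harmless for the paper, which only invokes the observation for graphs with $\alpha(H)=2$ that contain an edge, but it is worth having flagged.
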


By \cref{obs:preserve}, if a graph $H$ has $\alpha(H) = 2$, then any inflation $G$ of $H$ also has $\alpha(G) \leq 2$. A natural question arises:

\begin{conj}
    \label{conj:inflation}
    Let $G$ be a graph with $\alpha(G) = 2$, and suppose $G$ satisfies $\mathfrak{C}$, where
    $
        \mathfrak{C} \in \{\text{\nameref{conj:hc_chi}, \nameref{conj:hc_half}, \nameref{conj:hc_epsilon}, \nameref{conj:linear_cm}, \nameref{conj:4-CM}, \nameref{conj:shc_chi}, \nameref{conj:shc_half}, \nameref{conj:cdm}}\}.
    $
    Then any inflation of $G$ satisfies $\mathfrak{C}$.
\end{conj}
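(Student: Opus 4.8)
The plan is to prove the statement for \emph{proper} inflations ($c_x \geq 1$) by induction on the number of added vertices, reducing everything to one structural step: the insertion of a single adjacent-twin. (An inflation with some $c_x = 0$ is a proper inflation of an induced subgraph $H$ of $G$, and deducing $\mathfrak{C}$ for $H$ from $\mathfrak{C}$ for $G$ amounts to induced-subgraph closure, which does not follow from the hypotheses; so I treat the blow-up case.) Any proper inflation $G'$ of $G$ is obtained from $G$ by repeatedly choosing a clique $C_x$ and inserting a new vertex $v$ adjacent to the rest of $C_x$ and to every $C_y$ with $xy \in E(G)$; such a $v$ is an adjacent-twin of any existing $u \in C_x$. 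Every intermediate graph is again an inflation of $G$ with $\alpha = 2$ (by \cref{obs:preserve}), so the inductive hypothesis applies to it, with base case $G$ itself. Hence it suffices to show that adding one adjacent-twin $v$ of a vertex $u$ preserves each property $\mathfrak{C}$. Write $G'$ for the result of adding $v$ to $G''$, and $N := |V(G')| = |V(G'')| + 1$.

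Two monotonicity facts do most of the work. First, $G''$ is an induced subgraph of $G'$ (delete $v$), so every $K_t$-model in $G''$ is a $K_t$-model in $G'$; in particular $\had(G'') \leq \had(G')$ and $\cm(G'') \leq \cm(G')$, and every bounded-branch-set model survives. Second, since $\alpha(G') = 2$ the complement $\overline{G'}$ is triangle-free, so a minimum colouring partitions $V(G')$ into singletons and non-edges, giving $\chi(G') = N - \nu(\overline{G'})$ where $\nu$ is the matching number; moreover $v$ is a (non-adjacent) twin of $u$ in $\overline{G'}$, whence $\nu(\overline{G'}) \in \{\nu(\overline{G''}),\, \nu(\overline{G''})+1\}$ and therefore $\chi(G') \in \{\chi(G''),\, \chi(G'')+1\}$. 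Consequently each conjecture's inequality is inherited automatically unless its target strictly increases at this step. For \nameref{conj:hc_chi} and \nameref{conj:shc_chi} this means the only problematic case is $\chi(G') = \chi(G'')+1$, i.e.\ $\nu(\overline{G'}) = \nu(\overline{G''})$. For \nameref{conj:hc_half} and \nameref{conj:shc_half} the target is $\ceil{N/2}$, and since $\ceil{(N-1)/2} = \ceil{N/2}$ when $N$ is even, only odd $N$ is problematic. For \nameref{conj:4-CM}, \nameref{conj:linear_cm} and \nameref{conj:hc_epsilon} the inherited connected matching of $G''$ already has the required size except exactly at the thresholds where the target integer jumps (e.g.\ $N = 4t-1$ for \nameref{conj:4-CM}).

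The main obstacle is precisely these threshold steps, where the target increases by one and the inherited structure falls one short. There one must enlarge a complete-minor model (or connected matching) of $G''$ by a single branch set built from the new vertex $v$: typically by adding $\{v\}$, or $\{v,w\}$ for a suitable $w$, as a new branch set. For this to work the new branch set must be adjacent to \emph{every} existing branch set, i.e.\ the neighbourhood $N_{G'}(v) = (C_x \setminus \{v\}) \cup \bigcup_{y \sim x} C_y$ must meet all of them. This is not guaranteed: the branch sets of an optimal model of $G''$ need not be dominated by a single clique's neighbourhood, so in general one cannot extend the old model and must rebuild it using the extra room inside $C_x$. For \nameref{conj:cdm} the difficulty appears already before any threshold, since ``dominating'' is not monotone: the newly uncovered vertex $v$ must be adjacent to every edge of a connected dominating matching of $G''$, which can fail outright.

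I therefore expect the threshold/augmentation step to be the crux, and it is the reason the statement is posed as a conjecture rather than proved. Controlling the rebuild is exactly the kind of difficulty that makes the underlying problems hard: producing a connected matching (respectively a bounded-branch-set $K_{\chi}$-model, or a connected dominating matching) that grows in lock-step with $|V|$ is essentially the content of \nameref{conj:linear_cm}, \nameref{conj:shc_chi} and \nameref{conj:cdm} themselves. A natural line of attack, which I would pursue to chip away at the threshold cases, is to restrict to the combinatorially cleanest instance \nameref{conj:4-CM}, where the obstruction occurs only at $N = 4t-1$, and to attempt a direct construction of a size-$t$ connected matching in $G'$ from the weighted structure $(G, (c_x))$ rather than by augmenting the model of $G''$ vertex-by-vertex.
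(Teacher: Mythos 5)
The statement you were asked to prove is \cref{conj:inflation}, which the paper itself poses as an \emph{open conjecture}: immediately after stating it, the authors write that it ``remains open because the behaviour of minors under inflations is not well understood.'' There is therefore no proof in the paper to compare your attempt against, and your proposal --- correctly --- does not claim to be one. What you have written is a reduction of the conjecture to a single structural step (inserting one adjacent-twin into a proper inflation), together with an accurate diagnosis of where that step breaks down. The reductions themselves are sound and consistent with machinery the paper does establish: your identity $\chi(G') = N - \nu(\overline{G'})$ is exactly \cref{lem:chromatic_matching}; your vertex-by-vertex decomposition of a proper inflation into adjacent-twin insertions is the same induction the paper uses in \cref{lem:cc2}; and your observations that $\had$, $\had_2$ and $\cm$ are monotone under adding a vertex, that $\chi$ increases by at most one per insertion, and that the case $c_x=0$ requires induced-subgraph closure not granted by the hypothesis, are all correct. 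Your isolation of the problematic cases (the chromatic number jumping, $N$ passing an odd threshold, and the non-monotonicity of domination for \nameref{conj:cdm}) is also accurate.

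The genuine gap is the one you name yourself: the augmentation step at the thresholds. Nothing in the hypothesis guarantees that the new vertex $v$ (or a pair $\{v,w\}$) can be attached to every branch set of an optimal model of $G''$, and ``rebuilding'' the model from the weighted structure $(G,(c_x))$ is precisely the unsolved problem --- as you note, doing this in lock-step with $|V|$ is essentially the content of \nameref{conj:linear_cm}, \nameref{conj:shc_chi} and \nameref{conj:cdm} themselves. So the proposal should be read as a correct framing of why \cref{conj:inflation} is hard, not as a proof; since the paper proves the conjecture only for specific host graphs (via inflated clique ratios, fractional clique covers, and adjacent-twin-free unavoidable subgraphs in \cref{ss:seagulls,ss:induced}), there is no complete argument on either side for the general statement.
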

\cref{conj:inflation} remains open because the behaviour of minors under inflations is not well understood. See \citep{Pedersen12,CP16,Pedersen11} on Hadwiger's Conjecture for inflations of special classes of graphs.
\citet{Catlin79} showed that uniform inflations of the $5$-cycle are counterexamples to Hajós' Conjecture, which asserts that a graph with chromatic number $k$ contains $K_k$ as a topological minor. While inflations of the $5$-cycle have a non-empty connected dominating matching \citep{PST03}, it is possible an inflation of another graph could yield a counterexample to \nameref{conj:cdm}.

It is often easier to study inflations through their complements. Let $G$ and $H$ be graphs, and let $\overline{G}$ and $\overline{H}$ be their complements. $\overline{G}$ is a \defn{blow-up} of $\overline{H}$ if $G$ is an inflation of $H$. More precisely, if $G$ is an inflation of $H$ with $x \in V(H)$ replaced with a clique $C_x$ of order $c_x \geq 0$, then $\overline{G}$ is obtained from $\overline{H}$ by replacing each vertex $x \in V(\overline{H})$ with an independent set $A_x$ of order $c_x$ (so that each vertex of $A_x$ is adjacent to each vertex of $A_y$ if and only if $xy \in E(\overline{H})$). We use natural analogues of the terms \defn{proper}, \defn{$k$-uniform} and the \defn{projection} map for blow-ups. Two vertices $u, v \in V(G)$ are \defn{twins} if $uv \notin E(G)$ and $N_G(u) = N_G(v)$. $G$ is \defn{twin-free} if no two vertices of $G$ are twins.
We frequently use the following:
\begin{obs}
    $G$ is adjacent-twin-free if and only if $\overline{G}$ is twin-free.
\end{obs}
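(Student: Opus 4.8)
The plan is to reduce the global statement to a pointwise one: I would show that for any two distinct vertices $u,v \in V(G)$, the pair $\{u,v\}$ forms a pair of adjacent-twins in $G$ if and only if it forms a pair of twins in $\overline{G}$. Since ``adjacent-twin-free'' means ``no pair of adjacent-twins'' and ``twin-free'' means ``no pair of twins'', this equivalence at the level of pairs immediately yields the claimed equivalence at the level of graphs by contraposition.

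First I would dispatch the adjacency part of both definitions. By definition of the complement, $uv \in E(G)$ if and only if $uv \notin E(\overline{G})$, so the requirement ``$uv \in E(G)$'' in the definition of adjacent-twins matches exactly the requirement ``$uv \notin E(\overline{G})$'' in the definition of twins. It therefore remains only to match the two neighbourhood conditions under the standing assumption that $uv \in E(G)$.

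Next I would translate neighbourhoods through complementation. For every vertex $w$ we have $N_{\overline{G}}(w) = V(G) \setminus (\{w\} \cup N_G(w))$, so $N_{\overline{G}}(u) = N_{\overline{G}}(v)$ holds if and only if the closed neighbourhoods agree, i.e.\ $\{u\} \cup N_G(u) = \{v\} \cup N_G(v)$. Using $uv \in E(G)$, which gives $v \in N_G(u)$ and $u \in N_G(v)$, I would rewrite these two sets as $\{u,v\} \cup (N_G(u) \setminus \{v\})$ and $\{u,v\} \cup (N_G(v) \setminus \{u\})$ respectively. Since $G$ is simple, the sets $N_G(u) \setminus \{v\}$ and $N_G(v) \setminus \{u\}$ are both disjoint from $\{u,v\}$, so cancelling the common term $\{u,v\}$ shows that the closed neighbourhoods coincide precisely when $N_G(u) \setminus \{v\} = N_G(v) \setminus \{u\}$, which is exactly the neighbourhood condition in the definition of adjacent-twins.

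This is a routine unwinding of definitions, so there is no real obstacle; the only point requiring a little care is the last step, where one must check that the ``open'' neighbourhood condition appearing in the definition of adjacent-twins and the ``closed'' neighbourhood condition arising from complementation genuinely agree. This hinges on the simplicity of $G$ (no loops), which guarantees $u \notin N_G(u)$ and $v \notin N_G(v)$, so that the adjoined vertices $u,v$ cancel cleanly on both sides.
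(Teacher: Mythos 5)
Your proof is correct, and it is exactly the routine definition-unwinding that the paper intends: the paper states this as an \emph{Observation} with no proof at all, treating it as immediate. Your pointwise reduction and the careful cancellation of $\{u,v\}$ from the closed neighbourhoods (valid since $G$ is simple) is the right way to make the omitted verification explicit.
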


\subsection{Goals and Outline}
\label{ss:goals_outline}
The primary aim of this paper is to identify several noteworthy classes of graphs with independence number $2$ and to demonstrate that Hadwiger's Conjecture holds for their inflations; this is done in \cref{section:results_special} and summarised below. These graphs are chosen because they have many properties of a hypothetical counterexample to one of the conjectures above. The proofs rely on general-purpose techniques first surveyed in \cref{s:lit_review}, where we review and generalise several classical results on the $\alpha(G) = 2$ case of Hadwiger's Conjecture. Lastly, in \cref{s:op}, we present open problems and specific classes of graphs for which Hadwiger's Conjecture is open, and therefore might be possible counterexamples.

In each of the following graph classes, we state the strongest variant of Hadwiger's Conjecture that we are able to prove. See the respective sections for a more detailed discussion. Unproven cases are listed in \cref{ss:potential_counteg}.
\begin{enumerate}[label={},leftmargin=0em]
    \item Complements of Graphs with Girth at least $5$ (\cref{ss:girth5}):
          \begin{itemize}
              \item Any inflation of the complement of a graph with girth at least $5$ satisfies \nameref{conj:cdm} (\cref{thm:girth_5}).
              \item Let $G$ be the complement of the $5$-cycle, or the complement of the Petersen graph, or the complement of the Hoffman-Singleton graph, or the complement of a hypothetical $57$-regular graph with diameter $2$ and girth $5$. Then any inflation of $G$ satisfies \nameref{conj:cdm} (\cref{thm:girth_5_eg}).
          \end{itemize}
    \item Complements of Triangle-Free Generalised Kneser Graphs (\cref{ss:kneser_graphs}): For integers $n, k \geq 1$, let \defn{$\binom{[n]}{k}$} denote all $k$-subsets of $\{1, \dots, n\}$. For $t \geq 1$, let $K(n,k, \geq t)$ be the graph with vertex set $V(G) = \binom{[n]}{k}$, where two vertices are adjacent if and only if their corresponding subsets intersect in at least $t$ elements. Define $K(n,k, \leq t):= \overline{K(n,k, \geq t+1)}$ and $K(n,k,\leq 0):= K(n,k)$, known as the \defn{Kneser graph}. The graph $K(n, k, \leq t)$ is known as a \defn{generalised Kneser graph} in the literature.
          \begin{itemize}
              \item For integers $n,k \geq 1$ such that $2k \leq n \leq 3k - 1$, any inflation of $\overline{K(n,k)}$ with an even number of vertices satisfies \nameref{conj:shc_half} (\cref{lem:kneser_graphs}).
              \item For integers $n,k \geq 1$ such that $2k \leq n \leq 3k - 1$, any inflation of $\overline{K(n,k)}$ satisfies \nameref{conj:hc_chi} (\cref{lem:kneser_graphs2}).
              \item For integers $n,k,t \geq 1$ with $2k - t \leq n < \frac{5}{2} (k-t)$, any inflation $G$ of $K(n, k,\geq t+1)$ satisfies \nameref{conj:cdm} (\cref{thm:generalised_kneser_graphs2}).
          \end{itemize}
    \item Complements of Strongly Regular Triangle-Free Graphs (\cref{ss:srg}):
          \begin{itemize}
              \item Let $G$ be the complement of the Clebsch graph, or the complement of the Mesner graph, or the complement of the Gewirtz graph. Then any inflation of $G$ satisfies \nameref{conj:hc_chi} (\cref{lem:inflation_clebsch,lem:inflation_m22,cor:gewirtz}).
              \item The complement of the Higman-Sims graph satisfies \nameref{conj:shc_half} (\cref{thm:higman_sims}).
          \end{itemize}
    \item Complements of Eberhard Graphs (\cref{ss:eberhard}): The complement of the Eberhard graph with parameter $p$ for each prime $p$ with $p \equiv 11 \pmod{12}$ satisfies \nameref{conj:shc_chi} (\cref{thm:eberhard}).
\end{enumerate}

\section{Properties of Counterexamples}
\label{s:lit_review}
For $\mathfrak{C} \in \{\text{\nameref{conj:hc_chi}, \nameref{conj:hc_half}, \nameref{conj:hc_epsilon}, \nameref{conj:linear_cm}, \nameref{conj:4-CM}, \nameref{conj:shc_chi}, \nameref{conj:shc_half}, \nameref{conj:cdm}}\}$, this section describes properties of a potential counterexample to $\mathfrak{C}$. In doing so, we survey and generalise several classical results on the $\alpha(G) = 2$ case of Hadwiger's Conjecture. The existing results fall into three broad categories: minimum and minimal counterexamples (\cref{ss:minimal}), induced subgraphs (\cref{ss:induced}), and clique ratios (\cref{ss:seagulls}).

\subsection{Minimal and Minimum Counterexamples}
\label{ss:minimal}
Let $\mathfrak{C} \in \{\text{\nameref{conj:hc_chi}, \nameref{conj:hc_half}, \nameref{conj:4-CM}, \nameref{conj:shc_chi}, \nameref{conj:shc_half}, \nameref{conj:cdm}}\}$.
A graph $G$ is a \defn{minimal counterexample} to $\mathfrak{C}$ if $G$ is a counterexample to $\mathfrak{C}$, but any proper induced subgraph of $G$ satisfies $\mathfrak{C}$. A graph $G$ is a \defn{minimum counterexample} to $\mathfrak{C}$ if $G$ is a counterexample to $\mathfrak{C}$, but every graph $H$ with $|V(H)|
    < |V(G)|$ satisfies $\mathfrak{C}$.
\citet*{PST03} showed that if $G$ is a minimum counterexample to \nameref{conj:hc_chi}, then $G$ satisfies a long list of properties (see \cref{tab:minimal_properties}). This section establishes similar properties for minimal and minimum counterexamples for $\mathfrak{C} \in \{\text{\nameref{conj:hc_chi}, \nameref{conj:shc_chi}}\}$. The proof is based on ideas from \citep{PST03}. We use $\mathdefn{\omega(G)}$ to denote the \defn{clique number}, $\mathdefn{\mu(G)}$ to denote the \defn{matching number}, $\mathdefn{\kappa(G)}$ to denote the \defn{vertex-connectivity}, $\mathdefn{\delta(G)}$ to denote the \defn{minimum degree}, and $\mathdefn{\diam(G)}$ to denote the \defn{diameter} of $G$. For distinct $u, v \in V(G)$, let \defn{$\dist_G(u,v)$} denote the \defn{distance} between $u$ and $v$ in $G$. For a vertex $v \in V(G)$, let \defn{$N_G(v)$} denote the set of neighbours of $v$ in $G$. For a subset $S \subseteq V(G)$, let \defn{$G[S]$} be the subgraph of $G$ induced by $S$. Let \defn{$G - S$} be the graph obtained from $G$ by deleting vertices of $S$ from $G$. Let $\mathdefn{G - v}$ denote $G - \{v\}$, and for an edge $xy \in E(G)$, let $\mathdefn{G - xy}$ (resp. $\mathdefn{G + xy}$) be the graph obtained by deleting (resp. adding) $xy$ from $G$. Let $\mathdefn{\overline{G}}$ denote the \defn{complement} of $G$.

\begin{lem}
    \label{lem:chromatic_matching}
    For each graph $G$ with $\alpha(G) = 2$, $\chi(G) = |V(G)| - \mu(\overline{G})$.
\end{lem}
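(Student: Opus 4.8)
The plan is to set up a correspondence between proper colourings of $G$ and matchings of $\overline{G}$, using the hypothesis $\alpha(G) = 2$ to force colour classes to be small. First I would note that in any proper colouring of $G$, each colour class is an independent set of $G$, hence has size at most $\alpha(G) = 2$. A colour class of size exactly $2$ is a pair of non-adjacent vertices of $G$, which is precisely an edge of $\overline{G}$; and since distinct colour classes are disjoint, the size-$2$ classes of any proper colouring form a matching in $\overline{G}$.

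Next I would count. Suppose a proper colouring of $G$ uses $a$ colour classes of size $2$ and $b$ classes of size $1$. Then $2a + b = |V(G)|$, so the number of colours used is $a + b = |V(G)| - a$. Thus a colouring with $|V(G)| - a$ colours corresponds to a matching of size $a$ in $\overline{G}$, and minimising the number of colours amounts to maximising $a$. Since every proper colouring induces a matching in $\overline{G}$ of size at most $\mu(\overline{G})$, this yields $\chi(G) \ge |V(G)| - \mu(\overline{G})$.

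For the reverse inequality I would build a colouring from a maximum matching. Let $M$ be a maximum matching of $\overline{G}$, of size $\mu(\overline{G})$. Colour the two endpoints of each edge of $M$ with a common colour — this is valid since such endpoints are non-adjacent in $G$ — and give each of the $|V(G)| - 2\mu(\overline{G})$ uncovered vertices its own colour. This is a proper colouring of $G$ using $\mu(\overline{G}) + (|V(G)| - 2\mu(\overline{G})) = |V(G)| - \mu(\overline{G})$ colours, giving $\chi(G) \le |V(G)| - \mu(\overline{G})$. Combining the two inequalities completes the proof.

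There is no substantial obstacle here: the result is essentially a restatement of the colouring problem for graphs with $\alpha(G) = 2$ as a maximum matching problem in the complement. The only point requiring care is to verify both directions of the colouring--matching correspondence, namely that every proper colouring induces a matching of $\overline{G}$ and that every matching of $\overline{G}$ induces a proper colouring of $G$.
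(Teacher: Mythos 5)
Your proposal is correct and follows essentially the same argument as the paper: both directions use the correspondence between size-$2$ colour classes of $G$ and matchings of $\overline{G}$, with the identical counting $2a+b=|V(G)|$ and the same construction of a colouring from a maximum matching of $\overline{G}$. No issues.
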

\begin{proof}
    $\chi(G) \geq |V(G)| - \mu(\overline{G})$: Let $k:= \chi(G)$ and colour $G$ with $k$ colours. Since $\alpha(G) = 2$, each colour class has $1$ or $2$ vertices. Suppose $\ell$ of the colour classes have size $2$ and $m$ of the colour classes have size $1$. Thus, $k = \ell + m$ and $|V(G)| = 2\ell + m = \ell + k$. The colour classes of size $2$ correspond to a matching in $\overline{G}$, so $\ell \leq \mu(\overline{G})$. However, $\ell = |V(G)| - k = |V(G)| - \chi(G)$, so $|V(G)| - \chi(G) \leq \mu(\overline{G})$ and $\chi(G) \geq |V(G)| - \mu(\overline{G})$.

    $\chi(G) \leq |V(G)| - \mu(\overline{G})$: Let $M$ be the maximum matching in $\overline{G}$, and let $\ell = |M| = \mu(\overline{G})$. Let $m$ be the number of vertices in $V(G) \setminus V(M)$, so $|V(G)|=  2 \ell + m$. Use $\ell$ colours to colour $V(M)$ so that the endpoints of each edge of $M$ share the same colour, and endpoints of distinct edges receive distinct colours. Use $m$ different colours for each vertex in $V(G) \setminus V(M)$. This gives a proper colouring of $V(G)$ using $\ell + m$ colours. Therefore, $\chi(G) \leq k = \ell + m = |V(G)| - \mu(\overline{G})$.
\end{proof}
Since $\mu(\overline{G})$ can be computed in polynomial time \citep{Edmonds65a}, \cref{lem:chromatic_matching} implies that one can find the chromatic number of a graph with independence number $2$ in polynomial time. A graph $G$ is \defn{vertex-critical} if $\chi(G - v) < \chi(G)$ for each $v \in V(G)$. For an integer $k \geq 1$, a vertex-critical graph with $\chi(G) = k$ is called \defn{$k$-critical}.

A graph is \defn{decomposable} if $V(G)$ can be partitioned into two non-empty subsets $A$ and $B$ such that each vertex of $A$ is adjacent to each vertex of $B$. Equivalently, $G$ is decomposable if the complement $\overline{G}$ is disconnected. We need the following deep theorem of \citet{gallai_kristische_1963} (see  \citet{stehlik_critical_2003} for a strengthening).

\begin{thm}[\citet{gallai_kristische_1963}]
    \label{thm:deep_theorem_of_gallai}
    Every vertex-critical graph $G$ with $|V(G)| < 2\chi(G) - 1$ is decomposable.
\end{thm}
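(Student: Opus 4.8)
The plan is to pass to the complement and count cliques. Write $H = \overline{G}$ and $k = \chi(G)$; since $G$ is decomposable if and only if $H$ is disconnected, I assume for contradiction that $H$ is connected, and aim to prove $|V(H)| = |V(G)| = n \ge 2k-1$ (contradicting the hypothesis $n \le 2k-2$). The key translation is that a proper $k$-colouring of $G$ is exactly a partition of $V(H)$ into $k$ cliques of $H$, so $\theta(H) = \chi(G) = k$, where $\theta$ denotes the clique-cover number; moreover vertex-criticality of $G$ becomes $\theta(H-v) = k-1$ for every $v \in V(H)$. Fix a minimum clique cover $\mathcal{Q} = \{Q_1, \dots, Q_k\}$ of $H$. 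As $n = \sum_i |Q_i| = k + \sum_i(|Q_i|-1)$, the desired bound $n \ge 2k-1$ is equivalent to $\sum_i(|Q_i|-1) \ge k-1$, for which it suffices to show that \emph{at most one} of the cliques $Q_i$ is a singleton (then at least $k-1$ of them have size $\ge 2$, giving $n \ge 2(k-1)+1$).

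I would first record the tools. Two easy consequences of minimality, used repeatedly: (a) if $\{u\}$ and $\{w\}$ are both singleton cliques then $uw \notin E(H)$, since otherwise $\{u,w\}$ is a clique and merging the two singletons yields a cover of size $k-1$; and (b) if $\{u\}$ is a singleton clique then $u$ is non-adjacent in $H$ to some vertex of every other clique $Q_l$, since otherwise $Q_l \cup \{u\}$ is a clique and we again save a colour. I would also use the standard criticality lemma that $G$ contains no two non-adjacent vertices $x,y$ with $N_G(x) \subseteq N_G(y)$: given a $(k-1)$-colouring of $G-x$, the colour of $y$ is missing on $N_G(x)$, so it extends to $x$, contradicting $\chi(G)=k$. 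In the complement this says $H$ has no edge $xy$ with $N_H(y) \cup \{y\} \subseteq N_H(x) \cup \{x\}$. Finally, criticality gives $\delta(G) \ge k-1$, hence $\Delta(H) \le n-k \le k-2$, so $H$ is connected yet of small maximum degree.

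The heart of the matter, and the step I expect to be the main obstacle, is ruling out two coexisting singleton cliques $\{u\},\{w\}$; by (a) these are non-adjacent in $H$. The natural attempt is to take a minimum cover of $H-u$ (which has $k-1$ cliques by criticality) and reinsert $u$ to contradict $\theta(H)=k$, using (b) and the neighbourhood lemma to control how $u$ meets the cliques and using connectivity of $H$ to route a short $H$-path from $u$ witnessing a forced saving. The genuine difficulty is that such local mergeability arguments do not, by themselves, preclude two ``spread-out'' singletons: one really needs Gallai's structural analysis of the subgraph of $G$ induced by its minimum-degree (``low'') vertices, whose blocks are complete graphs or odd cycles (a Gallai tree), from which the decomposition follows. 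I would therefore either invoke that structural theorem or follow the short self-contained argument of \citet{stehlik_critical_2003}. The extremal case $n = 2k-1$, realised by $\overline{C_5}$ (and its relatives), shows the bound is tight, so any proof must use criticality essentially and not merely the degree bound $\Delta(H) \le k-2$.
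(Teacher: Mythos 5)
There is a genuine gap, and you have in fact identified it yourself. Your proposal sets up a correct reduction (pass to $H=\overline{G}$, note $\theta(H)=\chi(G)=k$ and $\theta(H-v)=k-1$, observe that $n\ge 2k-1$ would follow if some minimum clique cover of $H$ has at most one singleton) and records several correct standard facts (no two singleton cliques of the cover are adjacent in $H$; a singleton cannot be absorbable into another clique; no edge $xy$ of $H$ with $N_H(y)\cup\{y\}\subseteq N_H(x)\cup\{x\}$; $\Delta(H)\le k-2$). But the decisive step --- ruling out two coexisting singletons, or more generally deriving the decomposition from connectivity of $H$ --- is not carried out: you explicitly defer it to ``Gallai's structural analysis of the low-vertex subgraph'' or to Stehl\'ik's argument. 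That is precisely the content of the theorem being proved, so the proposal is a framework plus a citation, not a proof. A further caution: your chosen sufficient condition (``at most one of the $Q_i$ is a singleton'' for a \emph{fixed} minimum cover) is strictly stronger than the bound $n\ge 2k-1$ (a cover with classes of sizes $k,1,\dots,1$ also meets the bound), and you give no reason to believe it holds for an arbitrary minimum cover; so even the target of the missing step may need adjustment.

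For context, the paper does not prove this statement either: it is imported as a black-box ``deep theorem'' of \citet{gallai_kristische_1963}, with \citet{stehlik_critical_2003} cited for a strengthening and, implicitly, for a modern self-contained proof. So your instinct that the elementary mergeability arguments do not suffice, and that one must invoke Gallai's low-vertex (Gallai-tree) machinery or Stehl\'ik's argument, is sound --- but within this paper the correct move is simply to cite the result, and as a standalone proof attempt what you have written is incomplete at its central step.
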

The following results lead to further properties satisfied by a minimal counterexample.

\begin{lem}[Theorem 3.3, \citep{PST03}]
    \label{lem:first_seagull} Let $G$ be a graph with $\alpha(G) = 2$. Then $\had(G) \geq (\omega(G) + |V(G)|) / 3$. Further, for any integer $k \geq 1$, if $|V(G)| \geq 2k - 1$ and $\omega(G) \geq k - 2$, then $\had(G) \geq k$.
\end{lem}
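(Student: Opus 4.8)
My plan hinges on isolating one \emph{auto-domination} principle as the engine of the whole argument: if $\alpha(G)=2$ and a vertex set $B$ contains two non-adjacent vertices $u,u'$, then $B$ is adjacent to every disjoint non-empty set $C$. Indeed, if some $w\in C$ had no neighbour in $B$, then $\{u,u',w\}$ would be an independent set of size $3$, contradicting $\alpha(G)=2$. Consequently, any connected subgraph of $G$ that is \emph{not} a clique (equivalently, contains an induced $P_3$, a \emph{seagull}) can serve as a branch set that is automatically adjacent to every other branch set, and so such sets never impose adjacency constraints.

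For the inequality $\had(G)\ge(\omega(G)+|V(G)|)/3$ I would fix a maximum clique $Q$, so $|Q|=\omega:=\omega(G)$, and greedily extract a \emph{maximal} family of vertex-disjoint seagulls $S_1,\dots,S_p$ from $G-Q$. Taking the $\omega$ vertices of $Q$ as singleton branch sets (pairwise adjacent) together with the $p$ seagulls (each auto-dominating, hence adjacent to everything) yields a $K_{\omega+p}$-model, so $\had(G)\ge\omega+p$. Let $W:=V(G)\setminus(Q\cup\bigcup_i S_i)$. By maximality $G[W]$ contains no seagull, hence is $P_3$-free, hence a disjoint union of cliques; since $\alpha(G)=2$ it is a union of at most two cliques $W_1\sqcup W_2$. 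The crux of the counting is the bound $|W|\le\omega$, which I would obtain from a second use of $\alpha(G)=2$: no $q\in Q$ can have a non-neighbour in both $W_1$ and $W_2$, so $Q$ splits as $Q'\sqcup Q''$ with $Q'\cup W_2$ and $Q''\cup W_1$ both cliques; maximality of $Q$ gives $|Q'|+|W_2|\le\omega$ and $|Q''|+|W_1|\le\omega$, and adding these yields $|W|\le\omega$. Since $|V(G)|=\omega+3p+|W|$, we conclude $3\had(G)\ge 3(\omega+p)=3\omega+3p\ge 2\omega+3p+|W|=|V(G)|+\omega$, as claimed.

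For the second statement, if $\omega\ge k$ then $\had(G)\ge\omega\ge k$ already. Otherwise the first part together with integer rounding settles every case but one: for $\omega=k-1$ and $|V(G)|\ge 2k-1$ it gives $\had(G)\ge\lceil(3k-2)/3\rceil=k$, and for $\omega=k-2$ with $|V(G)|\ge 2k$ it again gives $\lceil(3k-2)/3\rceil=k$. The sole surviving case is $\omega=k-2$ together with $|V(G)|=2k-1$, where the first part returns exactly $k-1$, so a dedicated argument is needed to gain the final unit.

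In this tight case the first-part analysis becomes rigid: forced equality yields $p=1$ seagull $S$, $|W|=\omega$, and a decomposition of $V(G)$ into two maximum cliques together with $S$, where the two cliques $W_1,W_2$ are completely non-adjacent and each vertex of $S$ (and of $Q$) has its non-neighbours confined to a single $W_i$. I would exploit the remaining freedom by choosing the singleton clique so that its complement exposes the genuine non-edge structure between $W_1$ and $W_2$, and then try to build one additional auto-dominating branch set that bridges $W_1$ and $W_2$ through a seagull vertex, reaching $\omega+2=k$ branch sets. The main obstacle lies precisely here: when every seagull vertex is adjacent to all of one side and none of the other, no single bridge exists, and one must instead pair $W_1$ with $W_2$ through a matching of $G$-edges between them. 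Such a matching is forced to be substantial, since complete non-adjacency between $W_1$ and $W_2$ would, via $\alpha(G)=2$, create a clique larger than $\omega$; converting this forced adjacency into the final branch set while keeping all parts pairwise adjacent is the delicate step I expect to demand the most care.
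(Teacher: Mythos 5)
Your proof of the first inequality is complete and correct, and it is essentially the argument of \citet{PST03}: the auto-domination principle for branch sets containing a non-edge, the maximal seagull packing in $G-Q$, and the bound $|W|\le\omega(G)$ obtained by splitting $Q$ into the part complete to $W_2$ and the part complete to $W_1$ all go through (including the degenerate cases $W_2=\varnothing$ or $W=\varnothing$). Your reduction of the second assertion to the single tight case $\omega(G)=k-2$, $|V(G)|=2k-1$, forcing $p=1$ and $|W|=\omega(G)$, is also correct.

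The gap is that this tight case is where the entire content of the ``moreover'' statement lives, and you do not prove it: you describe the rigid structure and then only announce a plan. Moreover, the plan rests on a false premise. You propose to ``pair $W_1$ with $W_2$ through a matching of $G$-edges between them'' and claim that complete non-adjacency between $W_1$ and $W_2$ would, via $\alpha(G)=2$, create a clique larger than $\omega$. But $W_1$ and $W_2$ are distinct components of the $P_3$-free graph $G[W]$, so they are completely non-adjacent in $G$ by construction; this is perfectly compatible with $\alpha(G)=2$ (a non-edge $w_1w_2$ with $w_1\in W_1$, $w_2\in W_2$ only forces every third vertex to see $w_1$ or $w_2$), and there are no $G$-edges between $W_1$ and $W_2$ to form a matching from. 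So the final unit is never gained and the second assertion remains unproved. To close it, note that in the tight case $V(G)=A\sqcup B\sqcup S$, where $A:=Q'\cup W_2$ and $B:=Q''\cup W_1$ are maximum cliques of size $\omega$ and $S$ is an induced $P_3$: if every vertex of $A$ has a neighbour in $B$, then the $\omega$ singletons of $A$ together with the two branch sets $S$ and $B$ already form a $K_{\omega+2}$-model ($S$ contains a non-edge and hence auto-dominates, while $B$ is connected and meets $N_G(a)$ for every $a\in A$); symmetrically if every vertex of $B$ has a neighbour in $A$. The genuinely delicate residual case is when some $a_0\in A$ is anticomplete to $B$ (so $B$ is exactly the non-neighbourhood of $a_0$) and some $b_0\in B$ is anticomplete to $A$; that case needs the kind of explicit analysis carried out in \citet{PST03}, and your proposal does not supply it.
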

\begin{lem}
    \label{lem:non_adj_vtx_crit}
    Let $G$ be a graph with $\alpha(G) = 2$, and suppose $G$ is $k$-critical with $|V(G)| = 2k - 1$. Then the graph $G'$ obtained by deleting two non-adjacent vertices $x$ and $y$  is $(k-1)$-critical.
\end{lem}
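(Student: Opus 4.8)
The plan is to push everything through the complement $\overline{G}$ using \cref{lem:chromatic_matching}, so that chromatic numbers become matching numbers and the criticality hypotheses become statements about which vertices lie in maximum matchings of $\overline{G}$. With $|V(G)| = 2k-1$, \cref{lem:chromatic_matching} gives $\mu(\overline{G}) = (2k-1) - k = k-1$. Since every induced subgraph of $G$ has independence number at most $2$ (and the proof of \cref{lem:chromatic_matching} only uses $\alpha \le 2$), applying the same identity to $G-v$ converts the $k$-criticality of $G$ into $\mu(\overline{G}-v) = (2k-2) - (k-1) = k-1 = \mu(\overline{G})$ for every vertex $v$; that is, every vertex of $\overline{G}$ is missed by some maximum matching. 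The Gallai--Edmonds structure theorem then forces $\overline{G}$ to be a disjoint union of factor-critical graphs, and comparing $\mu(\overline{G}) = (|V(\overline{G})| - c)/2 = (2k-1-c)/2$ with $k-1$ shows there is exactly one component ($c=1$). Hence $\overline{G}$ is connected and factor-critical; establishing this is the structural core of the argument.

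I would next compute $\chi(G') = k-1$. Because $x$ and $y$ are non-adjacent in $G$, the edge $xy$ belongs to $\overline{G}$, so adjoining $xy$ to any maximum matching of $\overline{G'} = \overline{G} - \{x,y\}$ produces a matching of $\overline{G}$; thus $\mu(\overline{G'}) \le \mu(\overline{G}) - 1 = k-2$. Conversely, deleting the single vertex $y$ from $\overline{G}-x$ lowers the matching number by at most one, so $\mu(\overline{G'}) \ge \mu(\overline{G}-x) - 1 = k-2$ using $\mu(\overline{G}-x) = k-1$ from above. Therefore $\mu(\overline{G'}) = k-2$, and \cref{lem:chromatic_matching} applied to $G'$ (which has $2k-3$ vertices and $\alpha \le 2$) yields $\chi(G') = (2k-3) - (k-2) = k-1$.

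It remains to show $G'$ is vertex-critical, i.e. $\chi(G'-w) \le k-2$ for every $w \in V(G')$. Through \cref{lem:chromatic_matching} this is equivalent to $\mu(\overline{G'}-w) \ge k-2$; since $\overline{G'}-w$ has $2k-4$ vertices, it amounts to $\overline{G}-\{x,y,w\}$ admitting a perfect matching, and as $w$ varies this is precisely the statement that $\overline{G'}$ is factor-critical. The natural line of attack starts from a near-perfect matching $M_w$ of the factor-critical graph $\overline{G}$ missing only $w$: if $xy \in M_w$ then $M_w \setminus \{xy\}$ is the required perfect matching of $\overline{G}-\{x,y,w\}$, whereas if $xy \notin M_w$ then $x$ and $y$ are matched to distinct vertices $x'$ and $y'$, and one must still cover $\{x',y'\}$ --- either by the edge $x'y'$ or by an $M_w$-alternating path joining $x'$ and $y'$ inside $\overline{G}-\{x,y,w\}$. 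I expect this final step to be the main obstacle, since deleting the two endpoints of an edge can destroy factor-criticality in general, so one cannot rely on factor-criticality of $\overline{G}$ alone. The argument must instead exploit the additional structure at hand: $\alpha(G)=2$ makes $\overline{G}$ triangle-free, which already forces $x' \not\sim y$ and $y' \not\sim x$ and restricts the alternating structure around $x'$ and $y'$. I would carry out the alternating-path analysis using triangle-freeness together with the connectivity of $\overline{G}$, handling any small or degenerate configurations separately; it is this matching-theoretic case analysis, rather than the chromatic bookkeeping, where I anticipate the real difficulty.
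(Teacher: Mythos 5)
Your complement bookkeeping is correct as far as it goes: $\overline{G}$ is indeed connected and factor-critical, $\mu(\overline{G'})=k-2$, and hence $\chi(G')=k-1$ by \cref{lem:chromatic_matching}. But the step you defer --- that $\overline{G}-\{x,y,w\}$ has a perfect matching for every $w\in V(G')$, i.e.\ that $\overline{G'}$ is factor-critical --- is the entire content of the lemma, and you do not prove it; you only outline an alternating-path strategy and flag it as the anticipated difficulty. That gap cannot be closed, because the claim (and indeed the lemma as stated) is false. Take $G=C_5$ with vertices $v_1,\dots,v_5$ in cyclic order and $k=3$: then $\alpha(G)=2$, $G$ is $3$-critical, and $|V(G)|=5=2k-1$. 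Deleting the non-adjacent pair $x=v_1$, $y=v_3$ leaves $G'\cong K_2\cup K_1$ (the vertex $v_2$ is isolated in $G'$), so $\chi(G')=2$ but also $\chi(G'-v_2)=2$, and $G'$ is not $2$-critical. In your complement picture: $\overline{G}\cong C_5$ is connected, factor-critical and triangle-free, yet $\overline{G}-\{v_1,v_3,v_2\}$ is the edgeless graph on $\{v_4,v_5\}$ and has no perfect matching. So triangle-freeness and connectivity of $\overline{G}$ cannot rescue the alternating-path analysis.

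For what it is worth, the paper's own proof breaks at exactly the point you identified as the real difficulty. From $\mu(\overline{G'-u})=k-3$ it asserts ``therefore $\mu(\overline{G-u})=k-2$,'' but a maximum matching of $\overline{G-u}$ may cover $x$ and $y$ by two distinct edges, so one only obtains $\mu(\overline{G-u})\le \mu(\overline{G'-u})+2=k-1$; in fact $\mu(\overline{G-u})=k-1$ always holds, since $\chi(G-u)=k-1$ by vertex-criticality of $G$ together with \cref{lem:chromatic_matching}. (In the $C_5$ example with $u=v_2$, $\overline{G-u}$ is a path on four vertices with matching number $2=k-1$, not $k-2=1$.) So the intended contradiction never materialises. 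The lemma therefore needs either additional hypotheses or a restriction to the setting in which it is actually invoked (Property \propref{prop:subgraph_vertex_critical} of \cref{thm:minimal_properties}, where further structural properties of a minimal counterexample are available); as written, neither your proposal nor the paper's argument establishes it.
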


\begin{proof}
    Let $G' := G - x - y$.
    Since $G$ is $k$-critical, $\chi(G') \leq \chi(G - x) \leq k -1$. If $\chi(G') \leq k -2$, then by assigning a new colour to $x$ and $y$, we obtain a proper colouring of $G$ with $k-1$ colours, so $\chi(G) \leq k -1$, a contradiction. Hence, $\chi(G') = k-1$. We prove that $G'$ is $(k-1)$-critical. Suppose to the contrary. Then there is a vertex $u \in V(G')$ such that $\chi(G' -u) = k-1$. Since $G$ is vertex-critical, $\chi(G - u) = k-1$.
    Since $\chi(G' - u) = k - 1$ and $|V(G' - u)| = 2k - 4$, by \cref{lem:chromatic_matching}, $\mu(\overline{G' - u}) = 2k - 4 - (k-1) = k- 3$. Therefore, $\mu(\overline{G - u}) = k - 2$. By \cref{lem:chromatic_matching}, $\chi(G - u) = 2k - 2- (k-2) = k$. This contradicts the assumption that $G$ is $k$-critical.
\end{proof}

\begin{lem}
    \label{lem:equiv_triangle_free}
    The following are equivalent for a triangle-free graph $G$ with ${|V(G)| \geq 3}$:
    \begin{enumerate}[(\alph*)]
        \item $G$ is edge-maximal triangle-free (that is, for any edge $xy \in E(\overline{G})$,  $G + xy$ contains a triangle),
        \item $G$ has diameter $2$,
        \item $\overline{G}$ is edge-minimal with $\alpha(\overline{G}) = 2$ (that is, for any $xy \in E(\overline{G})$, $\alpha(\overline{G} - xy) = 3$),
        \item $\overline{G}$ does not contain a dominating edge.
    \end{enumerate}
\end{lem}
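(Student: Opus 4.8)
The plan is to reduce all four conditions to a single combinatorial property of $G$, namely
\[
(\star)\qquad \text{every pair of non-adjacent vertices of } G \text{ has a common neighbour in } G,
\]
and to show that each of (a), (b), (c), (d) is equivalent to $(\star)$, from which the four-way equivalence is immediate. Throughout I would use that, since $G$ is triangle-free with $|V(G)| \ge 3$, it is not complete, so $G$ has at least one non-adjacent pair; and if $(\star)$ holds then $G$ has at least one edge (a common neighbour of a non-adjacent pair supplies two edges).

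Conditions (a) and (d) translate into $(\star)$ by unwinding definitions. For (a): adding a non-edge $xy$ to $G$ creates a triangle exactly when $x$ and $y$ have a common neighbour, so (a) is literally $(\star)$. For (d): for a non-edge $uv$ of $G$ (equivalently an edge $uv$ of $\overline{G}$) one computes $N_{\overline{G}}(u)\cup N_{\overline{G}}(v)=V(G)\setminus\bigl(N_G(u)\cap N_G(v)\bigr)$, using $u\notin N_G(v)$ and $v\notin N_G(u)$; hence $uv$ is a dominating edge of $\overline{G}$ if and only if $u$ and $v$ have no common neighbour in $G$, so (d) (no such edge) is exactly $(\star)$. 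For (b): $(\star)$ forces every non-adjacent pair to be at distance $2$ and every adjacent pair at distance $1$, so $G$ is connected of diameter $2$ (diameter $\ge 2$ because $G$ is not complete); conversely, diameter $2$ forces every non-adjacent pair to have a common neighbour, giving $(\star)$.

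The step I expect to require the most care is (c), because it is phrased through the complement and through edge-deletion. Here I would use the identities $\alpha(\overline{H})=\omega(H)$ and $\overline{G}-xy=\overline{G+xy}$. The premise $\alpha(\overline{G})=2$ reads $\omega(G)=2$, which for the triangle-free graph $G$ holds exactly when $G$ has an edge. For an edge $xy$ of $\overline{G}$ (a non-edge of $G$), the condition $\alpha(\overline{G}-xy)=3$ reads $\omega(G+xy)=3$; the key calculation is that, since $G$ is triangle-free, $\omega(G+xy)\in\{2,3\}$ and equals $3$ precisely when $x$ and $y$ have a common neighbour, because any clique of size $\ge 3$ in $G+xy$ must use the edge $xy$ and a clique of size $4$ would force a triangle in $G$. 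Thus (c) is equivalent to the conjunction ``$G$ has an edge and $(\star)$ holds'', and since $(\star)$ already forces $G$ to have an edge, (c) is equivalent to $(\star)$. The one subtlety to handle carefully is the meaning of edge-minimality: deleting a single edge of $\overline{G}$ raises $\alpha$ by at most one, so $\alpha(\overline{G}-xy)\in\{2,3\}$, and edge-minimality of the property $\alpha=2$ is exactly the requirement that this value always equals $3$, matching the parenthetical definition.
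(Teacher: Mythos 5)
Your proposal is correct and rests on the same observations as the paper's proof (a non-edge $xy$ of $G$ gains a triangle iff $x,y$ have a common neighbour; an edge of $\overline{G}$ is dominating iff its ends have no common neighbour in $G$; deleting an edge of $\overline{G}$ raises $\alpha$ to $3$ iff a common neighbour exists). The only difference is organizational: you route everything through the hub property $(\star)$, whereas the paper proves the cyclic chain (a)$\Rightarrow$(b)$\Rightarrow$(c)$\Rightarrow$(d)$\Rightarrow$(a); both are complete and essentially the same argument.
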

\begin{proof}
    (a) $\Rightarrow $ (b): Say $G$ is edge-maximal triangle-free. If $\diam(G) = 1$, then $G$ is complete, contradicting the assumption that $G$ is triangle-free since $|V(G)| \geq 3$. So $\diam(G)\neq 1$. Suppose for the sake of contradiction that $\diam(G) \geq 3$. Let $v$ and $w$ be vertices at distance at least $3$ in $G$. So $v$ and $w$ have no common neighbour. Then $G + vw$ is triangle-free, contradicting  the edge-maximality of $G$. Thus, $\diam(G) = 2$.

    (b) $\Rightarrow $ (c):
    Say $G$ has diameter 2. So $G$ is connected, and any two non-adjacent vertices in $G$ have a common neighbour.
    Since $G$ is connected, $|E(G)| \neq 0$, implying that $\alpha(\overline{G}) > 1$. Since $G$ is triangle-free, $\alpha(\overline{G}) = 2$. In $\overline{G}$, any two adjacent vertices $x$ and $y$ have a common non-neighbour $u \in V(G)$. Then $\alpha(\overline{G} - xy) = 3$, since $\{x,y,u\}$ is an independent set in $\overline{G} - xy$.

    (c) $\Rightarrow$ (d): Let $xy \in E(\overline{G})$. Since $\alpha(\overline{G}) = 2$ and $\alpha(\overline{G} - xy) = 3$, there is an independent set $S$ of size $3$ in $\overline{G} - xy$ containing $x$ and $y$. Let $u$ be the other element of $S$. In $\overline{G}$, $u$ is not adjacent to $x$ and $y$, and hence $xy$ is not a dominating edge.

    (d) $\Rightarrow$ (a): Since $\overline{G}$ does not contain a dominating edge, in $G$, any two non-adjacent vertices $x$ and $y$ have a common neighbour $u \in V(G)$. Then $G + xy$ contains a triangle $uxy$, which proves (a).
\end{proof}

\begin{thm}[Theorem 3.4, \citep{PST03}]
    \label{thm:connectivity}
    Every connected graph $G$ with $\alpha(G) = 2$ and $\kappa(G) \leq |V(G)| / 2$ has a non-empty connected dominating matching.
\end{thm}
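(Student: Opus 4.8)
The plan is to exploit the cut directly. First I would take a minimum vertex cut $S$, so that $|S| = \kappa(G) \le |V(G)|/2$, and let $A, B$ record the components of $G - S$. Since $\alpha(G) = 2$, taking one vertex from each of three distinct components of $G - S$ would give an independent set of size $3$; hence $G - S$ has exactly two components, and the same argument (a non-edge inside a component, together with a vertex of the other component, is an independent triple) shows that each component is a clique. Writing $n := |V(G)|$, this gives $|A| + |B| = n - |S| \ge n/2 \ge |S|$, and I may assume $|A| \ge |B| \ge 1$, with no edges between $A$ and $B$.

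Next I would record the two structural facts that drive the construction. Since $\{s, a, b\}$ cannot be independent for $s \in S$, $a \in A$, $b \in B$, every vertex of $S$ is adjacent to all of $A$ or to all of $B$. Moreover, by minimality of $S$, every $s \in S$ has at least one neighbour in $A$ and at least one in $B$: otherwise $S \setminus \{s\}$ would already separate $A$ from the rest of $G$, contradicting $\kappa(G) = |S|$. This lets me partition $S$ according to which clique each of its vertices dominates.

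With the structure in hand, I would construct the connected dominating matching. Recall that, after contracting its edges, a connected matching is exactly a complete graph, so two matching edges may be used together only if some edge of $G$ joins them; in particular an edge lying inside $A$ and an edge lying inside $B$ can never coexist, as $A$ and $B$ are non-adjacent. The construction therefore builds a single pairwise-adjacent family of edges routed through $S$: vertices of the cliques are matched to suitable vertices of $S$ — namely those dominating the opposite clique, which exist by the structural facts — so that the two cliques are linked across the cut while every unmatched vertex keeps a neighbour in each matching edge. Throughout I would use that, since $\alpha(G) = 2$, the non-neighbourhood of any vertex is a clique, which controls precisely which edges can fail to dominate a given unmatched vertex. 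When $G$ has a dominating edge the matching is a single edge and we are done, so the substantive case is $\diam(\overline{G}) = 2$ — equivalently, $G$ has no dominating edge, by \cref{lem:equiv_triangle_free} applied to the triangle-free graph $\overline{G}$ — where a single edge never suffices, as the $5$-cycle already shows.

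The main obstacle is exactly this simultaneous requirement. The matching must be connected, meaning every pair of its edges is joined by an edge of $G$ — a strong, clique-like condition that is in direct tension between the $A$-side and the $B$-side across the cut — and at the same time dominating, so that every unmatched vertex, including those in the far clique, has a neighbour in every matching edge. Reconciling these forces a case analysis on the sizes $|A|, |B|, |S|$ and on the adjacency pattern of $S$ to the two cliques; the delicate point is choosing which vertices to leave unmatched so that domination across the cut is guaranteed without breaking pairwise adjacency of the chosen edges.
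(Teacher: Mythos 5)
There is a genuine gap: the proposal carries out only the preparatory structural analysis and then stops exactly where the proof becomes hard. Your setup is correct and matches the standard opening of the argument in \citep{PST03} --- a minimum cut $S$ with $|S|=\kappa(G)\leq |V(G)|/2$, exactly two components $A,B$ of $G-S$, both cliques, every $s\in S$ complete to $A$ or to $B$, and every $s\in S$ having a neighbour on each side by minimality of the cut. But the paragraph that is supposed to produce the matching only describes what the matching should accomplish (``vertices of the cliques are matched to suitable vertices of $S$ \dots so that every unmatched vertex keeps a neighbour in each matching edge''), and your closing paragraph explicitly concedes that the required case analysis and the choice of which vertices to leave unmatched are unresolved. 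That unresolved part is the theorem.

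To see concretely why the remaining step is not routine: suppose all matching edges run from $S$ into $A$. An uncovered vertex $v\in S$ that is complete to $B$ but not to $A$ may be non-adjacent to \emph{both} endpoints of a matching edge $s'a$ (with $s'\in S$, $a\in A$) without violating $\alpha(G)=2$, because $s'a$ is itself an edge; so domination fails unless every such vertex is covered. Hence one must exhibit a matching from $S_B:=\{s\in S:\text{$s$ is complete to $B$ but not to $A$}\}$ into $A$ that \emph{saturates} $S_B$ (after which connectivity comes for free from the clique $A$, and all remaining uncovered vertices are dominated because they are complete to $A$ or to $B$). Proving that this saturating matching exists requires Hall's condition, whose verification needs the minimality of $S$ a second time: if $T\subseteq S_B$ and $D:=A\setminus N(T)\neq\varnothing$, then $N(D)\subseteq (A\setminus D)\cup(S\setminus T)$, so $(A\setminus D)\cup(S\setminus T)$ separates $D$ from $B$ and $|A|-|D|+|S|-|T|\geq |S|$, i.e.\ $|N(T)\cap A|\geq |T|$; the remaining case $A\subseteq N(T)$ forces the inequality $|A|\geq |S_B|$, which one first arranges (up to swapping $A$ and $B$) from $|A|+|B|\geq |S|\geq |S_A|+|S_B|$, and the degenerate cases $S_A=S_B=\varnothing$ etc.\ must be handled separately (there a single dominating edge exists). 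None of this machinery --- the saturation requirement, the Hall/cut argument, or the side-selection via $|A|+|B|\geq|S|$ --- appears in your proposal, so as written it is a plan rather than a proof.
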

For any graph $G$ with $\alpha(G) = 2$, the non-neighbours of each vertex induce a complete graph. Thus,
\begin{obs}
    \label{obs:non-neighbours}
    For every graph $G$ with $\alpha(G) = 2$, we have $\delta(G) \geq |V(G)| - 1 - \omega(G)$.
\end{obs}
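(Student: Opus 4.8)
The plan is to exploit the observation, noted in the sentence immediately preceding the statement, that when $\alpha(G) = 2$ the non-neighbours of any single vertex form a clique. First I would fix an arbitrary vertex $v \in V(G)$ and set $S := V(G) \setminus (N_G(v) \cup \{v\})$, the set of non-neighbours of $v$, so that $|S| = |V(G)| - 1 - \deg_G(v)$. The key step is to show that $G[S]$ is complete: if two vertices $x, y \in S$ were non-adjacent, then $\{v, x, y\}$ would be an independent set of size $3$, since $v$ is adjacent to neither $x$ nor $y$ by the definition of $S$, contradicting $\alpha(G) = 2$. Hence $S$ induces a clique and $\omega(G) \geq |S| = |V(G)| - 1 - \deg_G(v)$.

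To finish, I would apply this bound to a vertex of minimum degree. Choosing $v$ with $\deg_G(v) = \delta(G)$ gives $\omega(G) \geq |V(G)| - 1 - \delta(G)$, and rearranging yields $\delta(G) \geq |V(G)| - 1 - \omega(G)$, as required.

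There is essentially no obstacle here: the statement is a direct unpacking of the definitions of $\alpha$, $\omega$, and $\delta$, and the only point requiring care is the off-by-one bookkeeping, namely that $v$ itself is excluded from $S$ and that it is the non-neighbours alone (not together with $v$) that form the clique, since $v$ is non-adjacent to all of them. One could alternatively run the whole argument in the complement: there $\alpha(G) = 2$ says $\overline{G}$ is triangle-free, so $N_{\overline{G}}(v)$ is an independent set in $\overline{G}$ and hence a clique in $G$ of size $\deg_{\overline{G}}(v) = |V(G)| - 1 - \deg_G(v)$, which converts the triangle-free structure directly into the claimed degree inequality.
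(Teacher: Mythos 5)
Your proof is correct and is exactly the argument the paper intends: the paper derives this observation in one line from the remark that the non-neighbours of each vertex induce a clique when $\alpha(G)=2$, which is precisely the fact you establish and then apply to a minimum-degree vertex. The bookkeeping ($v$ excluded from $S$, so $|S| = |V(G)| - 1 - \deg_G(v)$) is handled correctly.
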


\begin{thm}
    \label{thm:minimal_properties}
    Let $G$ be a graph with $\alpha(G) = 2$. With reference to \cref{tab:minimal_properties}:
    \begin{enumerate}[label=(\alph*),leftmargin=2em]
        \item If $G$ is a minimum counterexample to \nameref{conj:hc_chi}, then $G$ satisfies Properties \propref{prop:vertex_critical} -- \propref{prop:proper_minor_less_chi}.

        \item If $G$ is a minimal counterexample to \nameref{conj:hc_chi}, then $G$ satisfies Properties \propref{prop:vertex_critical} -- \propref{prop:ABC_sizes}.

        \item If $G$ is a minimum counterexample to \nameref{conj:shc_chi}, then $G$ satisfies Properties \propref{prop:vertex_critical} -- \propref{prop:C5_condition}.

        \item If $G$ is a minimal counterexample to \nameref{conj:shc_chi}, then $G$ satisfies Properties \propref{prop:vertex_critical} -- \propref{prop:C5_condition}.
    \end{enumerate}
\end{thm}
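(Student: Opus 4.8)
The plan is to assume $G$ is a counterexample of the relevant type and verify the properties of \cref{tab:minimal_properties} one at a time, each by exhibiting a strictly smaller graph that satisfies the conjecture $\mathfrak{C}$ in question and deriving a contradiction. Throughout I would fix $k:=\chi(G)$, record that $\had(G)\le k-1$ for the $\HC$ variants (respectively, that $G$ has no $K_{k}$-model with all branch sets of size at most $2$ for the $\SHC$ variants), and keep two standing facts in hand: $\overline{G}$ is triangle-free because $\alpha(G)=2$, and $\chi(G)=|V(G)|-\mu(\overline{G})$ by \cref{lem:chromatic_matching}. Since the whole argument is extremal, the hypotheses ``minimal'' (every proper induced subgraph satisfies $\mathfrak{C}$) and ``minimum'' (every smaller graph satisfies $\mathfrak{C}$) are the only source of contradiction, and I would track for each property which of the two it actually uses.

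First I would establish the foundational properties. Vertex-criticality (Property~\propref{prop:vertex_critical}) holds because deleting a vertex cannot increase $\had$ nor enlarge a bounded-branch-set clique model, and cannot raise $\alpha$ above $2$; a non-critical vertex would thus yield a smaller counterexample. Non-decomposability follows from a join argument: if $\overline{G}$ were disconnected then $G=G_1\vee G_2$ with $\chi(G)=\chi(G_1)+\chi(G_2)$ and each $\alpha(G_i)\le 2$, so combining the models (or clique minors) guaranteed for $G_1,G_2$ by minimality—any complete $G_i$ being trivial—produces the forbidden structure in $G$. Feeding the vertex-critical, non-decomposable graph $G$ into \cref{thm:deep_theorem_of_gallai} then gives $|V(G)|\ge 2\chi(G)-1$; together with the trivial $\chi(G)\ge|V(G)|/2$ this pins $\chi(G)=\ceil{|V(G)|/2}$ and forces the clique $V(G)\setminus V(M)$ left uncovered by a maximum matching $M$ of $\overline{G}$ to have size $2\chi(G)-|V(G)|\in\{0,1\}$, yielding the parity statements. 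Bounds on $\omega(G)$ and $\delta(G)$ then come from \cref{lem:first_seagull} and \cref{obs:non-neighbours}.

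Next I would handle connectivity and diameter. A counterexample is connected (an $\alpha=2$ graph with $\ge 2$ components is a disjoint union of two cliques, which satisfies every $\mathfrak{C}$), so if $\kappa(G)\le|V(G)|/2$ then \cref{thm:connectivity} supplies a non-empty connected dominating matching; via the implication that \nameref{conj:cdm} implies \nameref{conj:shc_chi} (\cref{thm:equiv}) and that \nameref{conj:shc_chi} implies \nameref{conj:hc_chi}, this makes $G$ satisfy $\mathfrak{C}$, so in fact $\kappa(G)>|V(G)|/2$. Similarly, a dominating edge is itself a connected dominating matching of size $1$, so $G$ has none, and \cref{lem:equiv_triangle_free} applied to the triangle-free graph $\overline{G}$ upgrades this to $\diam(\overline{G})=2$ and to the twin-free and adjacent-twin-free conclusions. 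In the odd case $|V(G)|=2\chi(G)-1$ I would use \cref{lem:non_adj_vtx_crit} to control how criticality propagates when two non-adjacent vertices are removed, which is the engine behind the finer partition statements.

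The main obstacle is the refined structural properties at the end of the list: the prescribed $A,B,C$ sizes (Property~\propref{prop:ABC_sizes}), the proper-minor condition (Property~\propref{prop:proper_minor_less_chi}), and the $C_5$ condition (Property~\propref{prop:C5_condition}). These demand a delicate case analysis of an optimal colouring—equivalently a maximum matching of $\overline{G}$—after one or two carefully chosen vertex deletions, tracking exactly how the obstruction to a $K_k$-minor or $K_k$-model survives each reduction. Two points need particular care. First, the gap between parts~(a) and~(b): the proper-minor property genuinely uses contractions, hence needs the full ``minimum'' hypothesis (all smaller graphs satisfy $\mathfrak{C}$), whereas every other property only compares against induced subgraphs and so already holds for a merely minimal counterexample. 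Second, the coincidence of parts~(c) and~(d): for the $\SHC$ variants the branch sets must have size at most $2$, so edges may not be contracted freely and every reduction I would use is a vertex-deletion (induced-subgraph) reduction; this is precisely why ``minimal'' is already as strong as ``minimum'' in the $\SHC$ setting, so both reach Property~\propref{prop:C5_condition}. Checking that each reduction stays within the class $\alpha=2$ and within the correct conjecture $\mathfrak{C}$ is the routine but error-prone bookkeeping that underlies the whole theorem.
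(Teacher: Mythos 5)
Your overall strategy matches the paper's, and you correctly identify the two structural subtleties (why the proper-minor property needs the full ``minimum'' hypothesis, and why (c) and (d) coincide because every $\SHC$ reduction is an induced-subgraph reduction). However, there are genuine gaps. The most serious is a circularity: you derive ``no connected dominating matching'' and $\kappa(G)>|V(G)|/2$ by routing through ``\nameref{conj:cdm} implies \nameref{conj:shc_chi}'' from \cref{thm:equiv}. That implication is a statement about the conjectures holding for \emph{all} graphs, not a per-graph implication, and its proof in the paper itself invokes Property \propref{prop:no_connected_dominating_matching} of \cref{thm:minimal_properties}. What you actually need is the direct argument: if $M$ is a non-empty connected dominating matching with $t$ edges, then $G':=G-V(M)$ is a proper induced subgraph with $2(\chi(G)-t)-1$ vertices, so $\chi(G')\ge\chi(G)-t$, minimality gives $\had_2(G')\ge\chi(G')$, and since every edge of $M$ dominates $G'$ one gets $\had_2(G)\ge\had_2(G')+t\ge\chi(G)$, a contradiction.

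Two further points. First, you never supply the input needed for Property \propref{prop:chi_at_least_7}: the paper gets $\chi(G)\ge 7$ for counterexamples to \nameref{conj:hc_chi} from the Robertson--Seymour--Thomas theorem on $K_6$-minor-free graphs, and this is exactly why Properties \propref{prop:chi_at_least_7}--\propref{prop:ABC_sizes} are available in part (b) but unknown in parts (c) and (d) (the $K_6$ result says nothing about $\had_2$); your sketch does not explain this asymmetry, and \propref{prop:delta_geq_chi_plus_1} and \propref{prop:ABC_sizes} depend on it through \propref{prop:omega_at_most_chi_minus_3}. Second, your description of Properties \propref{prop:B_nonempty}--\propref{prop:C5_condition} as ``a delicate case analysis of an optimal colouring after one or two vertex deletions'' does not match how they are actually proved: they are short neighbourhood arguments driven entirely by the absence of dominating edges (Property \propref{prop:no_dominating_edges}) together with $\alpha(G)=2$ --- e.g.\ $B=N_G(x)\cap N_G(y)\ne\varnothing$ because otherwise $A\cup\{x\}$ or $C\cup\{y\}$ is a clique of order $\ge|V(G)|/2$, and the induced $C_5$ is assembled directly from \propref{prop:B_nonempty}, \propref{prop:B_non_neighbours} and \propref{prop:A_C_common_neighbour}. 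Finally, a small point: Gallai's theorem plus $|V(G)|\le 2\chi(G)$ only pins $|V(G)|\in\{2\chi(G)-1,\,2\chi(G)\}$; excluding the even case requires the extra vertex-criticality computation $2\chi(G)-1=|V(G-v)|\le 2\chi(G-v)=2\chi(G)-2$, which your sketch omits.
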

Note that (a) was shown by \citet{PST03}, which is implied by Properties \propref{prop:vertex_critical} -- \propref{prop:ABC_sizes} of (b), since a minimum counterexample to \nameref{conj:hc_chi} is a minimal counterexample to \nameref{conj:hc_chi}.

\renewcommand{\arraystretch}{1.15} 
\begin{table}
    \centering
    \caption{Properties satisfied by minimal and minimum counterexamples to $\mathfrak{C}$, where $\mathfrak{C} \in \{\text{\nameref{conj:hc_chi}, \nameref{conj:shc_chi}}\}$. A \qmark \, indicates that the property is not known.}
    \label{tab:minimal_properties}
    \begin{tabular}{|p{8.1cm}|p{1.4cm}|p{1.4cm}|p{1.4cm}|p{1.4cm}|}
        \hline
        property                                                                                                                                                                                                                               &minimum c.e. to \nameref{conj:hc_chi} &minimal c.e. to \nameref{conj:hc_chi} &minimum c.e. to \nameref{conj:shc_chi} &minimal c.e. to \nameref{conj:shc_chi} \\
        \hline
        \propertylabel{prop:vertex_critical}{$G$ is $\chi(G)$-critical}                                                                                                                                                                        & \cmark                              & \cmark                              & \cmark                               & \cmark                               \\
        \hline

        \propertylabel{prop:not_decomposable}{$G$ is not decomposable}                                                                                                                                                                         & \cmark                              & \cmark                              & \cmark                               & \cmark                               \\
        \hline
        \propertylabel{prop:size_equals_2chi_minus_1}{$|V(G)| = 2 \chi(G) - 1$}                                                                                                                                                                & \cmark                              & \cmark                              & \cmark                               & \cmark                               \\
        \hline
        \propertylabel{prop:subgraph_vertex_critical}{$\forall xy \in E(\overline{G})$, $G-x-y$ is $(\chi(G) -1)$-critical}                                                                                                                           & \cmark                              & \cmark                              & \cmark                               & \cmark                               \\
        \hline
        \propertylabel{prop:complement_minus_x_pm}{$\forall v \in V(G), \overline{G} - v$ has a perfect matching}                                                                                                                              & \cmark                              & \cmark                              & \cmark                               & \cmark                               \\
        \hline

        \propertylabel{prop:no_connected_dominating_matching}{$G$ does not contain a non-empty connected dominating matching}                                                                                                                  & \cmark                              & \cmark                              & \cmark                               & \cmark                               \\
        \hline
        \propertylabel{prop:no_dominating_edges}{$\forall xy \in E(G), \alpha(G -xy) = 3$}                                                                                                  & \cmark                              & \cmark                              & \cmark                               & \cmark                               \\
        \hline
        \propertylabel{prop:kappa_geq_chi}{$\kappa(G) \geq \chi(G)$}                                                                                                                                                                           & \cmark                              & \cmark                              & \cmark                               & \cmark                               \\
        \hline
        \propertylabel{prop:delta_geq_chi}{$\delta(G) \geq \chi(G)$}                                                                                                                                                                           & \cmark                              & \cmark                              & \cmark                               & \cmark                               \\

        \hline
        \propertylabel{prop:hamiltonian}{$G$ is Hamiltonian}                                                                                                                                                                                   & \cmark                              & \cmark                              & \cmark                               & \cmark                               \\
        \hline
        \propertylabel{prop:factor_critical}{$\forall v \in V(G), G - v$ has a perfect matching}                                                                                                                                               & \cmark                              & \cmark                              & \cmark                               & \cmark                               \\
        \hline
        \propertylabel{prop:complement_diameter_2}{$\diam(\overline{G}) = 2$}                                                                                                                                                                  & \cmark                              & \cmark                              & \cmark                               & \cmark                               \\
        \hline
        \propertylabel{prop:B_nonempty}{$\forall xy \in E(\overline{G}), N_G(x) \cap N_G(y) \neq \varnothing$}                                                                                                                                        & \cmark                              & \cmark                              & \cmark                               & \cmark                               \\
        \hline
        \propertylabel{prop:B_non_neighbours}{$\forall xy \in E(\overline{G})$, for each $b \in N_G(x) \cap N_G(y)$, $b$ has a non-neighbour in $N_G(x) \setminus N_G(y)$ and a non-neighbour in $N_G(y) \setminus N_G(x)$}                           & \cmark                              & \cmark                              & \cmark                               & \cmark                               \\
        \hline
        \propertylabel{prop:A_C_common_neighbour}{$\forall xy \in E(\overline{G})$, $\forall a \in N_G(x) \setminus N_G(y)$ and $c \in N_G(y) \setminus N_G(x)$, $ac \in E(G)$ if and only if $a$ and $c$ have a common non-neighbour in $N_G(x) \cap N_G(y)$} & \cmark                              & \cmark                              & \cmark                               & \cmark                               \\
        \hline
        \propertylabel{prop:C5_condition}{Any two non-adjacent vertices are in an induced $C_5$}                                                                                                                                          & \cmark                              & \cmark                              & \cmark                               & \cmark                               \\
        \hline
            \propertylabel{prop:chi_at_least_7}{$\chi(G)  \geq 7$}                                                                                                                                                                                 & \cmark                              & \cmark                              & \qmark                               & \qmark                               \\
        \hline
        \propertylabel{prop:kappa_at_least_7}{$\kappa(G) \geq 7$}                                                                                                                                                                              & \cmark                              & \cmark                            & \qmark                               & \qmark                               \\
        \hline
        \propertylabel{prop:omega_at_most_chi_minus_3}{$\omega(G) \leq \chi(G) - 3$}                                                                                                                                                           & \cmark                              & \cmark                              & \qmark                               & \qmark                               \\
        \hline
        \propertylabel{prop:delta_geq_chi_plus_1}{$\delta(G) \geq \chi(G) + 1$}                                                                                                                                                                & \cmark                              & \cmark                              & \qmark                               & \qmark                               \\
        \hline
        \propertylabel{prop:ABC_sizes}{$\forall xy \in E(\overline{G})$, $2 \leq |N_G(x) \setminus N_G(y)|, |N_G(y) \setminus N_G(x)| \leq \chi(G) -4$, and $5 \leq |N_G(x) \cap N_G(y)| \leq 2\chi(G)-7$}                                                  & \cmark                              & \cmark                              & \qmark                               & \qmark                               \\
        \hline
        \propertylabel{prop:edge_critical}{$\forall xy \in E(G), \chi(G - xy) < \chi(G)$}                                                                                                                                                     & \cmark                              & \qmark                              & \qmark                               & \qmark                               \\

        \hline
        \propertylabel{prop:proper_minor_less_chi}{Every proper minor $H$ of $G$ has $\chi(H) < \chi(G)$}                                                                                                                                      & \cmark                              & \qmark                              & \qmark                               & \qmark                               \\
        \hline

    \end{tabular}
\end{table}

\begin{proof}
    First, we prove (d).
    Let $G$ be a minimal counterexample to \nameref{conj:shc_chi}. Recall that \defn{$\had_2(G)$} is the order of the largest complete graph model of $G$ such that each branch set has one or two vertices. So $\had_2(G) < \chi(G)$ but $\had_2(H) \geq \chi(H)$ for every proper induced subgraph $H$ of $G$.

    \begin{enumerate}[label={}, leftmargin=0em, itemsep=1.5pt]
        \item Property \propref{prop:vertex_critical}: Since $G$ is a minimal counterexample to \nameref{conj:shc_chi}, $\had_2(G) < \chi(G)$, and for each vertex $v \in V(G)$, $\had_2(G - v) \geq \chi(G - v)$. Since $\had_2(G - v) \leq \had_2(G)$, we have $\chi(G - v) \leq \had_2(G - v) \leq \had_2(G) < \chi(G)$, so $\chi(G - v) < \chi(G)$. Thus $G$ is vertex-critical.
        \item Property \propref{prop:not_decomposable}: Suppose $G$ is decomposable. Then $V(G) = A \cup B$, where $A \cap B = \varnothing$, $A \neq \varnothing$ and $B \neq \varnothing$, and each vertex of $A$ is adjacent to each vertex of $B$. Since $G$ is a minimal counterexample, $\had_2(G[A]) \geq \chi(G[A])$ and $\had_2(G[B]) \geq \chi(G[B])$. However, since each vertex of $A$ is adjacent to each vertex of $B$, $\had_2(G) \geq \had_2(G[A]) + \had_2(G[B])$ and $\chi(G) = \chi(G[A]) + \chi(G[B])$. Hence, $\had_2(G) \geq \chi(G)$, a contradiction.
        \item Property \propref{prop:size_equals_2chi_minus_1}: Since $\alpha(G) = 2$, $|V(G)| \leq 2 \chi(G)$ and $|V(G -v)| \leq 2 \chi(G - v)$ for each vertex $v \in V(G)$. If $\left|V(G)\right| = 2 \chi(G)$, then by Property \propref{prop:vertex_critical}, $
                  2\chi(G) - 1 = \left|V(G - v)\right| \leq 2\chi(G - v) = 2 (\chi(G) - 1) = 2\chi(G) - 2$,
              a contradiction. Therefore, $|V(G)| \leq 2 \chi(G) - 1$. If $|V(G)| < 2 \chi(G) - 1$, then by Property \propref{prop:vertex_critical} and \cref{thm:deep_theorem_of_gallai}, $G$ is decomposable, contradicting Property \propref{prop:not_decomposable}. Thus, $|V(G)| = 2\chi(G) - 1$.
        \item Property \propref{prop:subgraph_vertex_critical}: Follows from Property \propref{prop:size_equals_2chi_minus_1} and \cref{lem:non_adj_vtx_crit}.
        \item Property \propref{prop:complement_minus_x_pm}: Let $v \in V(G)$ and let $G' = G - v$. By Property \propref{prop:size_equals_2chi_minus_1}, $|V(G)| = 2\chi(G) - 1$. By Property \propref{prop:vertex_critical}, $\chi(G') = \chi(G) - 1$. By \cref{lem:chromatic_matching}, $
                  \mu(\overline{G}') = |V(G')| - \chi(G') = 2 \chi(G) - 2 - (\chi(G) - 1) = \chi(G) - 1 = \frac{1}{2} |V(G')|$,
              and hence $\overline{G}'$ has a perfect matching.
        \item Property \propref{prop:no_connected_dominating_matching}: Suppose $G$ has a non-empty connected dominating matching $M$ with $t \geq 1$ edges. Let $G' = G - V(M)$, which by Property \propref{prop:size_equals_2chi_minus_1} has $2(\chi(G)-t) - 1$ vertices. This implies $
                  \chi(G') \geq \frac{|V(G')|}{\alpha(G')} \geq \frac{2(\chi(G)-t) - 1}{2} = \chi(G) -t - \frac{1}{2} $.
              Since $\chi(G')$ is an integer, $\chi(G') \geq \chi(G) -t$. Since $G'$ is a proper induced subgraph of $G$, $\had_2(G') \geq \chi(G') \geq \chi(G) -t$. However, each edge of $M$ is adjacent to each vertex of $G'$, implying that $\had_2(G) \geq \had_2(G') + t$. Consequently, $\had_2(G) \geq \chi(G)$, a contradiction.
        \item Property \propref{prop:no_dominating_edges}: By Property~\propref{prop:no_connected_dominating_matching}, $G$ has no dominating edge. The result follows from \cref{lem:equiv_triangle_free}.
        \item Property \propref{prop:kappa_geq_chi}: By Property \propref{prop:no_connected_dominating_matching} and \cref{thm:connectivity}, $\kappa(G) > |V(G)|/2$. By Property \propref{prop:size_equals_2chi_minus_1}, $\kappa(G) > (2\chi(G) - 1)/2 = \chi(G) - 1/2$. Since $\kappa(G)$ is an integer, $\kappa(G) \geq \chi(G)$.
        \item Property \propref{prop:delta_geq_chi}: For every graph $G$, $\delta(G) \geq \kappa(G)$. By Property \propref{prop:kappa_geq_chi}, $\kappa(G) \geq \chi(G)$.
        \item Property \propref{prop:hamiltonian}: From Properties \propref{prop:size_equals_2chi_minus_1} and \propref{prop:delta_geq_chi}, $\delta(G)\geq \chi(G) = (|V(G)| + 1)/2$. By Dirac's Theorem \citep{Dirac52a}, $G$ is Hamiltonian.
        \item Property \propref{prop:factor_critical}: Let $v \in V(G)$ be any vertex. By Property \propref{prop:size_equals_2chi_minus_1}, $|V(G -v)|$ is even. By Property \propref{prop:hamiltonian}, $G - v$ has a Hamiltonian path $P$. We obtain a perfect matching of $G -v$ by taking alternating edges in $P$.
        \item Property \propref{prop:complement_diameter_2}: By Property \propref{prop:no_dominating_edges} and \cref{lem:equiv_triangle_free},
              $\diam(\overline{G}) = 2$.
        \item Property \propref{prop:B_nonempty}: Let $xy \in E(\overline{G})$. Let $B:= N_G(x) \cap N_G(y)$, $A:= N_G(x) \setminus N_G(y)$ and $C:= N_G(y) \setminus N_G(x)$. Since $\alpha(G) = 2$, $A$ and $C$ are cliques. Suppose $B = \varnothing$. Then $|A| \geq (|V(G)| - 2) / 2$ or $|C| \geq (|V(G)| - 2) / 2$. However, then $A \cup \{ x \}$ or $C \cup \{ y \}$ is a clique with at least $|V(G)|/2$ vertices. Therefore, $\had_2(G) \geq |V(G)|/2$. By Property \propref{prop:size_equals_2chi_minus_1}, $\had_2(G) \geq (2\chi(G) - 1)/2 = \chi(G) - 1/2$. Since $\had_2(G)$ is an integer, $\had_2(G) \geq \chi(G)$. However, then $G$ is not a counterexample to \nameref{conj:shc_chi}, a contradiction.
        \item Property \propref{prop:B_non_neighbours}: Let $xy \in E(\overline{G})$. By Property \propref{prop:no_dominating_edges}, $xb$ is not a dominating edge, and there is a vertex $c$ adjacent to neither $x$ nor $b$. Since $\alpha(G) = 2$, $cy \in E(G)$, and $c$ is the desired non-neighbour of $b$. Similarly, $yb$ is not a dominating edge, so there is a vertex $a \in N_G(x) \setminus N_G(y)$ not adjacent to $b$.
        \item Property \propref{prop:A_C_common_neighbour}: Let $xy \in E(\overline{G})$ and $a \in N_G(x) \setminus N_G(y)$ and $c \in N_G(y) \setminus N_G(x)$. If $a$ and $c$ have a common non-neighbour $b$, then $ac \in E(G)$ since $\alpha(G) = 2$. Conversely, if $ac \in E(G)$, then by Property \propref{prop:no_dominating_edges}, $ac$ is not dominating, so there is a common non-neighbour $b$ of $a$ and $c$. Since $\alpha(G) = 2$, $N_G(x) \setminus N_G(y)$ and $N_G(y) \setminus N_G(x)$ are cliques in $G$, so $b \notin (N_G(x) \setminus N_G(y)) \cup (N_G(y) \setminus N_G(x))$. Since $xy \notin E(G)$, $V(G) \setminus \{x, y\} = N_G(x) \cup N_G(y)$, and hence $b \in N_G(x) \cap N_G(y)$.
        \item Property \propref{prop:C5_condition}: Let $xy \in E(\overline{G})$. By Property \propref{prop:B_nonempty}, there is a vertex $b \in N_G(x) \cap N_G(y)$. By Property \propref{prop:B_non_neighbours}, there is a vertex $a \in N_G(x) \setminus N_G(y)$ not adjacent to $b$ and a vertex $c \in N_G(y) \setminus N_G(x)$ not adjacent to $b$. Since $b$ is a common non-neighbour of $a$ and $c$, by Property \propref{prop:A_C_common_neighbour}, $ac \in E(G)$. Then $xacyb$ is an induced $5$-cycle in $G$.
    \end{enumerate}
    This completes the proof of (d). A minimum counterexample to \nameref{conj:shc_chi} is a minimal counterexample to \nameref{conj:shc_chi}, and therefore Properties \propref{prop:vertex_critical} -- \propref{prop:C5_condition} also hold for minimum counterexamples to \nameref{conj:shc_chi}. This completes the proof of (c).
    By replacing $\had_2(G)$ with $\had(G)$, the same arguments show that Properties \propref{prop:vertex_critical} -- \propref{prop:C5_condition} hold for minimal and minimum counterexamples to \nameref{conj:hc_chi}. In what follows, we prove the remaining properties for (b). Let $G$ be a minimal counterexample to \nameref{conj:hc_chi}.
    \begin{enumerate}[label={}, leftmargin=0em, itemsep=1.5pt]
        \item Property \propref{prop:chi_at_least_7}: \citet{RST-Comb93} proved Hadwiger's Conjecture for $K_6$ minor-free graphs. Consequently, since $G$ is a counterexample to \nameref{conj:hc_chi}, $\chi(G) > \had(G) \geq 6$. Thus $\chi(G) \geq 7$. (Note that this argument fails for \nameref{conj:shc_chi}.)
        \item Property \propref{prop:kappa_at_least_7}: This follows from Properties \propref{prop:kappa_geq_chi} and \propref{prop:chi_at_least_7}.
        \item Property \propref{prop:omega_at_most_chi_minus_3}: Follows from Property \propref{prop:size_equals_2chi_minus_1} and \cref{lem:first_seagull} (with $k = \chi(G)$).
        \item Property \propref{prop:delta_geq_chi_plus_1}: By \cref{obs:non-neighbours},
              $\delta(G) \geq |V(G)| - 1 - \omega(G)$.
              By Properties \propref{prop:size_equals_2chi_minus_1} and \propref{prop:omega_at_most_chi_minus_3}:
              $|V(G)| - 1 - \omega(G) \geq 2 \chi(G) - 2 - (\chi(G) - 3) = \chi(G) + 1$.
              Hence, $\delta(G) \geq \chi(G) + 1$.
        \item Property \propref{prop:ABC_sizes}: Let $xy \in E(\overline{G})$. The set $(N_G(x) \setminus N_G(y)) \cup \{x\}$ is a clique in $G$, so by Property \propref{prop:omega_at_most_chi_minus_3}, $|(N_G(x) \setminus N_G(y)) \cup \{x\}| \leq \chi(G) - 3$. Thus, $|N_G(x) \setminus N_G(y)| \leq \chi(G) -4$. Similarly, $|N_G(y) \setminus N_G(x)| \leq \chi(G) -4$.
              We deduce that $|N_G(x) \cap N_G(y)| \geq 2\chi(G) - 3 - 2(\chi(G) - 4) = 5$. By Properties \propref{prop:B_nonempty} and \propref{prop:B_non_neighbours}, $N_G(x) \setminus N_G(y) \neq \varnothing$. Suppose $|N_G(x) \setminus N_G(y)| = 1$, and let $a \in N_G(x) \setminus N_G(y)$. Then by Property \propref{prop:B_non_neighbours}, each vertex $b \in N_G(x) \cap N_G(y)$ is not adjacent to $a$. $N_G(x) \cap N_G(y)$ are non-neighbours of $a$, and since $\alpha(G) = 2$, $N_G(x) \cap N_G(y)$ is a clique. By Property \propref{prop:delta_geq_chi_plus_1}, $|N_G(x)| \geq \chi(G) + 1$, so $|N_G(x) \cap N_G(y)| \geq \chi(G)$. This contradicts Property \propref{prop:omega_at_most_chi_minus_3}. Thus, $|N_G(x) \setminus N_G(y)| \geq 2$. Similarly, $|N_G(y) \setminus N_G(x)| \geq 2$. We deduce that $|N_G(x) \cap N_G(y)| \leq |V(G-x-y)| - 4 = 2\chi(G) - 3 - 4 = 2\chi(G) - 7$.
    \end{enumerate}
    This completes the proof of (b). It remains to show (a), which was shown by \citet{PST03}. We include the proof for completeness. Let $G$ be a minimum counterexample to \nameref{conj:hc_chi}. A graph $H$ is an \defn{induced minor} of $G$ if $H$ can be obtained from $G$ by a sequence of vertex deletions and edge contractions.
    \begin{obs}
        \label{obs:induced_minors}
        \phantom{linebreaker}
        \begin{itemize}
            \item $H$ is an induced minor of $G$ if and only if there is a model $\eta: V(H) \to 2^{V(G)}$ of $H$ in $G$ such that $uv \in E(H)$ if and only if there is an edge of $G$ between a vertex of $\eta(u)$ and a vertex of $\eta(v)$ \citep{InducedMinors24}. (This is called an \defn{induced minor model} in the literature.)
            \item If $H$ is an induced minor of $G$, then $\alpha(H) \leq \alpha(G)$. (If $\eta: V(H) \to 2^{V(G)}$ is an induced minor model of $H$ in $G$, given an independent set $S \subseteq V(H)$, for each $x \in S$, choose a vertex $v_x \in \eta(x)$. Then $\{v_x: x \in S\}$ is an independent set in $G$.)
            \item If $H'$ is a minor of $G$, then there is an induced minor $H$ of $G$ such that $H'$ is a subgraph of $H$ with $V(H) = V(H')$.
        \end{itemize}
    \end{obs}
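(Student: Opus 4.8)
The plan is to establish the three parts of \cref{obs:induced_minors} in order, since each subsequent part leans on the model characterisation supplied by the first. The whole observation is a bridge between the operational notion of an induced minor (vertex deletions and edge contractions, but no edge deletions) and the ``iff'' model description, so I would make that bridge explicit before anything else.

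For the first part I would argue both directions. For the forward direction, I would track, throughout a sequence of vertex deletions and edge contractions that turns $G$ into $H$, the set $\eta(u) \subseteq V(G)$ of original vertices of $G$ merged into each surviving vertex $u$ of $H$. Each $\eta(u)$ is connected because it is built up only by contracting edges inside it, and the sets are pairwise disjoint. The crucial point is that \emph{no edges are deleted}: consequently two vertices $u,v$ of $H$ are adjacent exactly when some edge of $G$ survives between $\eta(u)$ and $\eta(v)$, which is the ``if and only if'' condition. For the backward direction, given a model $\eta$ satisfying the ``iff'' condition, I would realise $H$ by first deleting every vertex of $G$ lying in no branch set, then contracting each $\eta(u)$ to a single vertex along a spanning tree of $G[\eta(u)]$ (which exists since $\eta(u)$ is connected). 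No edge deletions are used, so the output is an induced minor, and the ``iff'' condition guarantees its edge set is \emph{precisely} $E(H)$ (non-adjacency in $H$ forbids any surviving edge between the branch sets, ruling out spurious edges).

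The second and third parts then follow quickly. For the second part I would take the induced minor model $\eta$ from the first part and, given an independent set $S$ in $H$, pick one vertex $v_x \in \eta(x)$ for each $x \in S$. For distinct $x,y \in S$ we have $xy \notin E(H)$, so the ``iff'' condition forces no edge between $\eta(x)$ and $\eta(y)$, whence $v_x v_y \notin E(G)$; thus $\{v_x : x \in S\}$ is independent in $G$ of size $|S|$, giving $\alpha(H) \le \alpha(G)$. For the third part I would start from a model $\eta'$ of the minor $H'$ in $G$ (which exists because $H'$ is a minor) and let $H$ be the induced minor defined by the \emph{same} branch sets but with the ``iff'' adjacency rule. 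Passing from the ``if'' rule to the ``iff'' rule only adds edges, so $E(H') \subseteq E(H)$, while the branch sets are unchanged so $V(H) = V(H')$; hence $H'$ is a spanning subgraph of the induced minor $H$.

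The only genuinely delicate step is the forward direction of the first part, where one must check that an arbitrary interleaving of vertex deletions and contractions can be bookkept as a single family of connected, pairwise-disjoint branch sets. The essential input is simply that edge deletions are forbidden, so adjacency in $H$ is controlled entirely by surviving edges between branch sets; a short induction on the length of the operation sequence makes this precise. This is the standard folklore characterisation of induced minors, exactly what is cited from \citep{InducedMinors24}, so I would either invoke that reference or supply the brief induction. Everything else is immediate once the model picture is in place.
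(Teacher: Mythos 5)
Your proposal is correct and matches the paper's treatment: the paper cites the model characterisation to the literature, proves the independence-number bound by exactly your one-line argument of picking one vertex per branch set, and leaves the third bullet as the standard observation that re-reading a minor model with the ``iff'' adjacency rule yields an induced minor containing $H'$ as a spanning subgraph. You have simply filled in the routine details (the bookkeeping of branch sets under interleaved deletions and contractions) that the paper treats as folklore.
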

    \begin{enumerate}[label={}, leftmargin=0em, itemsep=1.5pt]
        \item Property \propref{prop:edge_critical}: Let $xy \in E(G)$, and let $H$ be the graph obtained from $G$ by contracting $xy$ to a new vertex $z$. Then $H$ is an induced minor of $G$, and by \cref{obs:induced_minors}, $\alpha(H) \leq 2$. Since $G$ is a minimum counterexample to \nameref{conj:hc_chi}, $\had(H) \geq \chi(H)$ and $\had(G) < \chi(G)$. Since $\had(H) \leq \had(G)$, we have $\chi(H) \leq \had(H) \leq \had(G) < \chi(G)$, so $\chi(H) < \chi(G)$. Colour $H$ with $\chi(H)$ colours. We obtain a $\chi(H)$-colouring of $G - xy$ by colouring $x$ and $y$ (which are not adjacent) with the colour given to $z$. Consequently, $\chi(G - xy) < \chi(G)$. (Note that Property \propref{prop:edge_critical} may not be true for minimal counterexamples of \nameref{conj:hc_chi}, since the proof requires the contraction operation. Further Property \propref{prop:edge_critical} may not be true for minimum counterexamples to \nameref{conj:shc_chi}, since $\had_2(H) \leq \had_2(G)$ may not hold if $H$ is a minor of $G$. For example, $\had_2(C_7) = 2$, but $\had_2(K_3) = 3$.)
        \item Property \propref{prop:proper_minor_less_chi}: Let $H'$ be a proper minor of $G$. Suppose $|V(H')| < |V(G)|$.  By \cref{obs:induced_minors}, there is an induced minor $H$ of $G$ such that $H'$ is a subgraph of $H$ with $V(H) = V(H')$. Since $G$ is a minimum counterexample to \nameref{conj:hc_chi}, $\had(H) \geq \chi(H)$ and $\had(G) < \chi(G)$. Since $\had(H) \leq \had(G)$, we have $\chi(H') \leq \chi(H) \leq \had(H) \leq \had(G) < \chi(G)$.
              If $|V(H')| = |V(G)|$, then $H'$ is a subgraph of $G-xy$ for some edge $xy \in E(G)$. By Property \propref{prop:edge_critical}, $\chi(H') \leq \chi(G - xy) < \chi(G)$. In each case, $\chi(H') < \chi(G)$.
    \end{enumerate}
    This completes the proof of \cref{thm:minimal_properties}.
\end{proof}

\begin{thm}
    \label{thm:equiv}
    \begin{itemize}
        \item \nameref{conj:hc_chi} is equivalent to \nameref{conj:hc_half}. More precisely, any counterexample to \nameref{conj:hc_half} is a counterexample to \nameref{conj:hc_chi}, and any minimal counterexample to \nameref{conj:hc_chi} is a counterexample to \nameref{conj:hc_half} \citep{PST03}.
        \item \nameref{conj:shc_chi} is equivalent to \nameref{conj:shc_half}. More precisely, any counterexample to \nameref{conj:shc_half} is a counterexample to \nameref{conj:shc_chi}, and any minimal counterexample to \nameref{conj:shc_chi} is a counterexample to \nameref{conj:shc_half}.
        \item \nameref{conj:cdm} implies \nameref{conj:shc_chi}, which implies \nameref{conj:hc_chi}.
    \end{itemize}
\end{thm}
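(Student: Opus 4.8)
The plan is to treat the three assertions separately; each reduces quickly to the structural facts collected in \cref{thm:minimal_properties}, so the bulk of the work has already been carried out there.

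\textbf{The two equivalences.} For both \nameref{conj:hc_chi} versus \nameref{conj:hc_half} and \nameref{conj:shc_chi} versus \nameref{conj:shc_half}, the forward implication is immediate: since $\alpha(G)=2$ gives $\chi(G)\ge |V(G)|/2$ and $\chi(G)$ is an integer, we have $\chi(G)\ge \ceil{|V(G)|/2}$. Thus $\had(G)\ge\chi(G)$ forces $\had(G)\ge |V(G)|/2$, and $\had_2(G)\ge\chi(G)$ forces $\had_2(G)\ge\ceil{|V(G)|/2}$. Contrapositively, any counterexample to the $n/2$-version is a counterexample to the $\chi$-version. For the reverse direction I would show that a minimal counterexample to the $\chi$-version is automatically a counterexample to the $n/2$-version. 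Let $G$ be such a minimal counterexample; by Property \propref{prop:size_equals_2chi_minus_1} of \cref{thm:minimal_properties}, $|V(G)|=2\chi(G)-1$, so $\chi(G)=(|V(G)|+1)/2=\ceil{|V(G)|/2}$, the last equality because $|V(G)|$ is odd. Since $G$ is a counterexample, $\had(G)<\chi(G)$ (resp.\ $\had_2(G)<\chi(G)$), which rearranges to $\had(G)<|V(G)|/2$ (resp.\ $\had_2(G)<\ceil{|V(G)|/2}$). Hence $G$ also violates the $n/2$-version. Consequently, if the $n/2$-version holds then no minimal counterexample to the $\chi$-version exists, giving equivalence.

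\textbf{The chain of implications.} That \nameref{conj:shc_chi} implies \nameref{conj:hc_chi} is trivial: a $K_{\chi(G)}$-model with branch sets of size at most $2$ is in particular a model of $K_{\chi(G)}$, so $\had(G)\ge\had_2(G)\ge\chi(G)$. For the implication that \nameref{conj:cdm} implies \nameref{conj:shc_chi}, I would argue by minimal counterexample. If $G$ is a minimal counterexample to \nameref{conj:shc_chi}, then by \cref{thm:minimal_properties} it satisfies Property \propref{prop:no_connected_dominating_matching} (no non-empty connected dominating matching), and by Property \propref{prop:kappa_geq_chi} it has $\kappa(G)\ge\chi(G)\ge 1$, so $G$ is connected with $\alpha(G)=2$. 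This directly contradicts \nameref{conj:cdm}, so no such $G$ exists.

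The real content lies upstream: every step above is a short deduction once Property \propref{prop:size_equals_2chi_minus_1} and Property \propref{prop:no_connected_dominating_matching} are in hand, and those in turn rest on \cref{thm:deep_theorem_of_gallai} of Gallai. The only points needing care are the parity bookkeeping ensuring $\chi(G)=\ceil{|V(G)|/2}$ exactly for the relevant $G$, and confirming that connectivity of the minimal counterexample is inherited (here from $\kappa(G)\ge\chi(G)$) before invoking \nameref{conj:cdm}.
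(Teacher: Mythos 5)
Your proof is correct and follows essentially the same route as the paper: both equivalences are read off from Property \propref{prop:size_equals_2chi_minus_1} together with the integrality/parity observation, and the implication from \nameref{conj:cdm} is read off from Property \propref{prop:no_connected_dominating_matching}. The only (immaterial) difference is that you obtain connectivity of a minimal counterexample from Property \propref{prop:kappa_geq_chi}, whereas the paper notes directly that a disconnected graph with $\alpha=2$ is a disjoint union of two complete graphs and hence satisfies \nameref{conj:shc_chi}.
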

\begin{proof}
    \begin{itemize}
        \item Let $G$ be a minimal counterexample to \nameref{conj:hc_chi}. By Property \propref{prop:size_equals_2chi_minus_1} of \cref{thm:minimal_properties}, $|V(G)| = 2\chi(G) - 1$. Since $G$ is a counterexample to \nameref{conj:hc_chi}, $\had(G) < \chi(G) = (|V(G)| + 1)/2$. Since $|V(G)|$ is odd and $\had(G)$ is an integer, $\had(G) < |V(G)|/2$, so $G$ fails \nameref{conj:hc_half}. Conversely, if $G$ is a counterexample to \nameref{conj:hc_half}, then $\had(G) < |V(G)|/2 \leq \chi(G)$, so $G$ is a counterexample to \nameref{conj:hc_chi}.
        \item We obtain the conclusion by replacing $\had(G)$ with $\had_2(G)$ in the above argument.
        \item It is straightforward to see that \nameref{conj:shc_chi} implies \nameref{conj:hc_chi}. A minimal counterexample $G$ to \nameref{conj:shc_chi} is connected, since otherwise $G$ is the disjoint union of two complete graphs. By Property \propref{prop:no_connected_dominating_matching} of \cref{thm:minimal_properties}, $G$ does not have a non-empty connected dominating matching, and thus is a counterexample to \nameref{conj:cdm}. \qedhere
    \end{itemize}
\end{proof}

\subsection{Induced Subgraphs}
\label{ss:induced}
A graph $G$ is \defn{$H$-free} if $G$ does not contain $H$ as an induced subgraph. This section summarises literature on Hadwiger's Conjecture for $H$-free graphs.

\begin{thm}[Modified from Theorem 5.1 of \protect\citep*{PST03}]
    \label{thm:c5}
    A graph $G$ with $\alpha(G) = 2$ is $C_5$-free if and only if every connected induced subgraph $H$ of $G$ with $\alpha(H) = 2$ has a dominating edge.
\end{thm}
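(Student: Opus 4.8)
The plan is to prove the two directions separately, with the forward direction being the substantive one. Throughout I write $F := \overline{H}$, which is triangle-free precisely because $\alpha(H)=2$.

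For the easy direction I argue the contrapositive: if $G$ contains an induced $C_5$, then this copy is itself a connected induced subgraph $H$ with $\alpha(H)=2$, and since every vertex of $C_5$ has degree $2$, each edge $xy$ misses exactly one vertex, so $N_H(x)\cup N_H(y)\neq V(H)$ and $H$ has no dominating edge. Hence the right-hand condition fails, establishing ``every connected induced $H$ with $\alpha(H)=2$ has a dominating edge'' $\Rightarrow$ ``$G$ is $C_5$-free''.

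For the forward direction, suppose $G$ is $C_5$-free and, for contradiction, that some connected induced subgraph $H$ with $\alpha(H)=2$ has no dominating edge. Since $H$ is connected with $\alpha(H)=2$ we have $|V(H)|\geq 3$, so \cref{lem:equiv_triangle_free} applies to $F$: the equivalence between ``$\overline{F}=H$ has no dominating edge'' and ``$F$ has diameter $2$'' shows $\diam(F)=2$. I first show $F$ is non-bipartite. A connected bipartite graph of diameter $2$ must be complete bipartite, because two vertices in distinct parts lie at odd distance, so diameter $2$ forces them to be adjacent; but then $H=\overline{F}$ is a disjoint union of two cliques, hence disconnected, a contradiction. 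Therefore $F$ contains an odd cycle.

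Now take a shortest odd cycle $C=c_1c_2\cdots c_\ell$ in $F$; triangle-freeness gives $\ell\geq 5$. The key step is to rule out $\ell\geq 7$ using diameter $2$. Minimality forces $c_1c_4$ to be a non-edge, since a chord $c_1c_4$ would split $C$ into the odd cycle $c_1c_4c_5\cdots c_\ell$ of length $\ell-2<\ell$; thus $\dist_F(c_1,c_4)=2$ and diameter $2$ supplies a common neighbour $w$. Triangle-freeness forces $w\notin\{c_2,c_3\}$ (otherwise $c_2c_3c_4$ or $c_1c_2c_3$ would be a triangle), so $c_1c_2c_3c_4w$ is an odd cycle of length $5<\ell$, contradicting minimality. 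Hence $\ell=5$, and this shortest odd cycle is chordless (any chord would create a triangle), so it is an induced $C_5$ in $F$. Since $C_5$ is self-complementary, the same five vertices induce a $C_5$ in $H$, and therefore in $G$, contradicting that $G$ is $C_5$-free.

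I expect the main obstacle to be precisely this treatment of long shortest odd cycles: the crux is realising that the diameter-$2$ property extracted from \cref{lem:equiv_triangle_free} lets one ``shortcut'' a distance-$3$ pair on the cycle down to a $C_5$, together with the bookkeeping that the shortcut vertex $w$ is genuinely new. The bipartite case also warrants care, as that is exactly the point where connectivity of $H$ (equivalently, $F$ not being complete bipartite) is indispensable.
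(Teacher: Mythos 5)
Your proof is correct, but it takes a genuinely different route from the paper's. The paper's argument for the substantive direction is direct and stays inside $H$: pick $x,y$ with $\dist_H(x,y)=2$ and a common neighbour $b$, use the fact that neither $xb$ nor $yb$ is dominating to produce $c\in N_H(y)\setminus N_H(x)$ non-adjacent to $b$ and $a\in N_H(x)\setminus N_H(y)$ non-adjacent to $b$, and then force $ac\in E(H)$ from $\alpha(H)=2$, so that $xacyb$ is an induced $C_5$ -- about five lines in total. You instead pass to the complement $F=\overline{H}$, invoke \cref{lem:equiv_triangle_free} to get $\diam(F)=2$, rule out bipartiteness via connectivity of $H$, and run a shortest-odd-cycle argument (shortcutting a distance-$3$ pair on a cycle of length $\geq 7$ down to a $C_5$, then using triangle-freeness to see the $5$-cycle is chordless and self-complementarity to pull it back to $H$). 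All the steps check out: the chord $c_1c_4$ would indeed leave an odd cycle of length $\ell-2\geq 5$, the common neighbour $w$ is genuinely outside $\{c_1,c_2,c_3,c_4\}$, and the $|V(H)|\geq 3$ hypothesis of the lemma is satisfied. What your approach buys is a reusable structural fact -- every triangle-free diameter-$2$ non-complete-bipartite graph has odd girth exactly $5$ -- and it showcases the complement-side viewpoint the paper uses elsewhere; what it costs is length and an extra case (bipartiteness) that the paper's direct construction never has to confront.
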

\citet*{PST03} instead proved that every $C_5$-free graph $G$ with $\alpha(G) = 2$ has $\had(G) \geq |V(G)|/ 2$. \cref{thm:c5} strengthens this result, as we now explain. Suppose $G$ is a $C_5$-free graph with $\had(G) < |V(G)|/2$ and $\alpha(G) = 2$. Thus, $G$ fails \nameref{conj:hc_chi}. Let $H$ be a minimal induced subgraph of $G$ that fails \nameref{conj:hc_chi}. By \cref{thm:equiv}, $H$ fails \nameref{conj:hc_half}, and by Property \propref{prop:no_dominating_edges} of \cref{thm:minimal_properties}, $H$ has no dominating edges. However, $H$ is $C_5$-free, so \cref{thm:c5} implies that $H$ has a dominating edge, a contradiction.

\begin{proof}
    ($\Rightarrow $): Suppose $H$ is a connected induced subgraph of $G$ with no dominating edge. Since $\alpha(H) = 2$ and $H$ is connected, there are two vertices $x, y$ such that $\dist_H(x,y) = 2$. Let $b \in N_H(x) \cap N_H(y)$. The edge $xb \in E(H)$ is not dominating, so there is a vertex $c$ adjacent to neither $x$
    nor $b$. Since $\alpha(H) = 2$ and $xy \notin E(H)$, $N_H(x) \cup N_H(y) = V(H)$. Therefore, $c \in N_H(y) \setminus N_H(x)$. Similarly, since $yb$ is not a dominating edge, there is a vertex $a \in N_H(x) \setminus N_H(y)$. If $ac \notin E(H)$, then $\{a,b,c\}$ is an independent set of size $3$, a contradiction. So $ac \in E(H)$. Then $xacyb$ is an induced $5$-cycle in $H$. Thus $G$ is not $C_5$-free.

    ($\Leftarrow $): $C_5$ is an induced subgraph of $G$ with no dominating edges.
\end{proof}
\noindent
\begin{minipage}{0.55\textwidth}
    Let \defn{$B_7$} be the graph shown in \cref{fig:b7}. \citet{PST03} extended \cref{thm:c5}:
    \begin{thm}[Modified\protect\footnotemark\ from Theorem 5.3 of \protect\citep*{PST03}]
        \label{thm:b7}
        Every $B_7$-free connected graph $G$ with ${\alpha(G) = 2}$ has a non-empty connected dominating matching.
    \end{thm}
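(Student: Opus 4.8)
The plan is to prove the contrapositive: every connected graph $G$ with $\alpha(G)=2$ that has \emph{no} non-empty connected dominating matching must contain an induced copy of $B_7$. So suppose $G$ is such a graph. Since a single dominating edge $xy$ is itself a connected dominating matching of size $1$, $G$ has no dominating edge. As $\alpha(G)=2$ makes $\overline{G}$ triangle-free, applying \cref{lem:equiv_triangle_free} to $\overline{G}$ shows that the absence of a dominating edge in $G$ is equivalent to $\diam(\overline{G})=2$; in particular $G$ has non-adjacent vertices, and any two of them have a common neighbour. Because $G$ is itself a connected induced subgraph with $\alpha=2$ and no dominating edge, the $(\Rightarrow)$ direction of \cref{thm:c5} yields an induced $C_5$ in $G$, say on vertices $p_1p_2p_3p_4p_5$ in cyclic order. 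I would also record, via \cref{thm:connectivity}, that $\kappa(G) > |V(G)|/2$, keeping this high connectivity in reserve for the final step.

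\textbf{Growing the $C_5$ into $B_7$.} Next I would exploit that $G$ has no non-empty connected dominating matching, which rules out far more than dominating edges. Consider the size-$2$ matchings living inside the pentagon, e.g.\ $\{p_1p_2,\,p_3p_4\}$ and $\{p_2p_3,\,p_4p_5\}$. Each is connected (for instance $p_2p_3 \in E(G)$ links the two edges of the first), and each dominates every vertex of the $C_5$ itself; since $G$ satisfies \nameref{conj:cdm} nowhere, neither matching is dominating, so each produces a witness vertex \emph{outside} the pentagon that misses one of its two edges. For such a witness $w$ missing, say, $p_1$ and $p_2$, I would use $\alpha(G)=2$ on the non-adjacent ``diagonal'' pairs of the $C_5$: the independent triples $\{w,p_1,p_3\}$, $\{w,p_2,p_4\}$, and $\{w,p_2,p_5\}$ each force an edge, pinning down $w \sim p_3,p_4,p_5$. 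Thus every witness is adjacent to exactly three consecutive pentagon vertices and non-adjacent to the other two. Two such witnesses $w,w'$ attached to complementary arcs of the $C_5$, together with the pentagon, then reproduce precisely the edge/non-edge pattern of $B_7$ (\cref{fig:b7}), giving the desired contradiction with $B_7$-freeness.

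\textbf{The main obstacle.} The delicate point, and the step I expect to cost the most work, is matching the configuration \emph{exactly} to $B_7$ rather than merely to some $7$-vertex graph on a $C_5$. The adjacencies of each appended vertex to the pentagon are forced, but the adjacency \emph{between} the two new vertices $w$ and $w'$ is not determined by $\alpha(G)=2$ alone, since $w$ and $w'$ have no common non-neighbour among the $p_i$ and so a non-edge $ww'$ creates no independent triple. One must also guard against the witnesses coinciding or landing in degenerate positions. Resolving this—selecting, among the witnesses produced by the various non-dominating size-$2$ matchings, a pair whose mutual adjacency realises the specific $B_7$ of \cref{fig:b7}—is where I would invoke the high connectivity $\kappa(G) > |V(G)|/2$ to guarantee enough candidate vertices and to force the missing adjacency one way. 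Once a genuine induced $B_7$ is exhibited, the contradiction is immediate and the proof is complete.
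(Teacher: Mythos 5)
The paper never proves \cref{thm:b7} itself --- it is imported from Theorem~5.3 of \citet{PST03}, with a footnote recording that the conclusion is strengthened from $\had(G)\geq |V(G)|/2$ to the existence of a connected dominating matching --- so your proposal must stand on its own. Its skeleton is sound and is the standard one: pass to the contrapositive, get an induced $C_5$ from the absence of a dominating edge via \cref{thm:c5}, and use non-dominated connected matchings of size $2$ inside the pentagon to produce vertices adjacent to exactly three consecutive pentagon vertices. But the step that actually exhibits an induced $B_7$ is missing, and you concede as much. Concretely: using only the two matchings $\{p_1p_2,p_3p_4\}$ and $\{p_2p_3,p_4p_5\}$, the witnesses may miss the \emph{disjoint} pairs $\{p_1,p_2\}$ and $\{p_4,p_5\}$, and then, exactly as you note, their mutual adjacency is not forced and the $7$-vertex configuration is not pinned down. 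The proposed rescue via $\kappa(G)>|V(G)|/2$ does not help: high connectivity supplies many vertices and paths but cannot force a specific edge between two specific vertices, and you give no mechanism by which it would. As written, this is a genuine gap, not a routine detail.

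The repair is combinatorial, not connectivity-based. For $j\in\mathbb{Z}_5$ let $W_j$ be the set of vertices outside the pentagon whose pentagon non-neighbours are exactly $\{p_j,p_{j+1}\}$; the witness for $\{p_ip_{i+1},p_{i+2}p_{i+3}\}$ lies outside the pentagon (the only pentagon vertex off that matching, $p_{i+4}$, is adjacent to both its edges), so running your argument for \emph{all five} rotations gives $W_i\cup W_{i+2}\neq\varnothing$ for every $i$. Since the pairs $\{i,i+2\}$ form a $5$-cycle on the index set and its independence number is $2$, at least three of the $W_j$ are non-empty, hence two with \emph{consecutive} indices, say $W_i$ and $W_{i+1}$. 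Witnesses $w\in W_i$ and $w'\in W_{i+1}$ are automatically distinct, and --- this is the observation you are missing --- they share the common non-neighbour $p_{i+1}$ on the pentagon, so $ww'\in E(G)$ is forced by $\alpha(G)=2$. Every adjacency on $\{p_1,\dots,p_5,w,w'\}$ is then determined, and the resulting graph (two adjacent apexes attached to three-vertex arcs sharing two vertices; it contains every $4$-vertex graph of independence number at most $2$ except $C_4$, as the paper says $B_7$ does) is the $B_7$ of \cref{fig:b7}. Note also that ``complementary arcs'' is the wrong target: the configuration with arcs meeting in a single vertex is precisely the one whose apex adjacency is undetermined, which is why your argument stalls there.
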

\end{minipage}%
\hfill
\begin{minipage}{0.4\textwidth}
    \centering
    \includegraphics[scale=1]{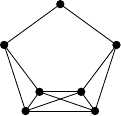}
    \captionof{figure}{The graph $B_7$.}
    \label{fig:b7}
\end{minipage}
\vspace{0.5em}
\footnotetext{For \cref{thm:b7}, \citet*{PST03} proved instead that every $B_7$-free graph $G$ with $\alpha(G) = 2$ has $\had(G) \geq |V(G)|/ 2$.}

This motivates the following definition.
Let $\mathfrak{C} \in \{\text{\nameref{conj:hc_chi}, \nameref{conj:hc_half}, \nameref{conj:shc_chi}, \nameref{conj:shc_half}, \nameref{conj:cdm}}\}$.
A graph $H$ with $\alpha(H) \leq 2$ is \defn{$\mathfrak{C}$-unavoidable} if every connected $H$-free graph $G$ with $\alpha(G) = 2$ satisfies $\mathfrak{C}$. Equivalently, $H$ is $\mathfrak{C}$-unavoidable if any counterexample to $\mathfrak{C}$ contains $H$ as an induced subgraph. In this language, \cref{thm:c5,thm:b7} say that $C_5$ and $B_7$ are \nameref{conj:cdm}-unavoidable. Note that if $H$ is an induced subgraph of $H'$ and $H'$ is $\mathfrak{C}$-unavoidable, then $H$ is $\mathfrak{C}$-unavoidable. By \cref{thm:equiv}, since \nameref{conj:cdm} implies \nameref{conj:shc_chi}, and \nameref{conj:shc_chi} implies \nameref{conj:hc_chi}, any \nameref{conj:cdm}-unavoidable graph is \nameref{conj:shc_chi}-unavoidable, and any \nameref{conj:shc_chi}-unavoidable graph is \nameref{conj:hc_chi}-unavoidable.
\begin{lem}
    Let $H$ be a graph with $\alpha(H) \leq 2$.
    \begin{itemize}
        \item $H$ is \nameref{conj:hc_chi}-unavoidable if and only if $H$ is \nameref{conj:hc_half}-unavoidable.
        \item $H$ is \nameref{conj:shc_chi}-unavoidable if and only if $H$ is \nameref{conj:shc_half}-unavoidable.
    \end{itemize}
\end{lem}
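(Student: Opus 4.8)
The plan is to prove both equivalences by the same argument, invoking the reformulation stated just before the lemma: a graph $H$ with $\alpha(H)\le 2$ is $\mathfrak{C}$-unavoidable exactly when every connected counterexample to $\mathfrak{C}$ (with independence number $2$) contains $H$ as an induced subgraph. The bridge between the $\chi$-version and the $|V|/2$-version is \cref{thm:equiv}, whose precise form I will exploit: every counterexample to \nameref{conj:hc_half} is a counterexample to \nameref{conj:hc_chi}, and every \emph{minimal} counterexample to \nameref{conj:hc_chi} is a counterexample to \nameref{conj:hc_half} (and likewise for the \nameref{conj:shc_chi}/\nameref{conj:shc_half} pair). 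I will also record once that, for all four conjectures, a counterexample is necessarily connected: a disconnected graph with $\alpha=2$ is a disjoint union of two cliques $K_a\sqcup K_b$, for which $\had=\had_2=\chi=\max\{a,b\}\ge|V|/2$, so it satisfies each of \nameref{conj:hc_chi}, \nameref{conj:hc_half}, \nameref{conj:shc_chi}, \nameref{conj:shc_half}. This makes ``connected counterexample'' and ``counterexample'' interchangeable throughout.

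The easy direction is that unavoidability for the $\chi$-version implies unavoidability for the $|V|/2$-version. Assuming $H$ is \nameref{conj:hc_chi}-unavoidable, I take an arbitrary counterexample $G$ to \nameref{conj:hc_half}; by \cref{thm:equiv}, $G$ is also a counterexample to \nameref{conj:hc_chi}, so $G$ contains $H$ as an induced subgraph by hypothesis. As $G$ was arbitrary, $H$ is \nameref{conj:hc_half}-unavoidable. Replacing $\had$ by $\had_2$ verbatim handles the \nameref{conj:shc_chi} $\Rightarrow$ \nameref{conj:shc_half} implication.

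The converse direction carries the content and is where I expect the main obstacle, because a counterexample $G$ to \nameref{conj:hc_chi} need not be a counterexample to \nameref{conj:hc_half}: one only knows $\had(G)<\chi(G)$, whereas $\chi(G)$ may exceed $|V(G)|/2$. To bridge this gap I pass to a minimal counterexample. Assuming $H$ is \nameref{conj:hc_half}-unavoidable, I take a counterexample $G$ to \nameref{conj:hc_chi} and let $H'$ be an induced subgraph of $G$ that is minimal, under the induced-subgraph order, among those that are counterexamples to \nameref{conj:hc_chi}. Then $H'$ is a minimal counterexample to \nameref{conj:hc_chi}, since any proper induced subgraph of $H'$ is an induced subgraph of $G$ and hence, by minimality, satisfies \nameref{conj:hc_chi}; moreover $\alpha(H')=2$ (a complete graph satisfies Hadwiger's Conjecture and so cannot be a counterexample) and $H'$ is connected by the observation of the first paragraph. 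By \cref{thm:equiv}, $H'$ is a counterexample to \nameref{conj:hc_half}, so the hypothesis forces $H'$ to contain $H$ as an induced subgraph; since $H'$ is induced in $G$, so is $H$. Thus $H$ is \nameref{conj:hc_chi}-unavoidable. Replacing $\had$ by $\had_2$ throughout, and using that a minimal counterexample to \nameref{conj:shc_chi} is connected (as already noted in the proof of \cref{thm:equiv}), yields the \nameref{conj:shc_half} $\Rightarrow$ \nameref{conj:shc_chi} direction and completes the proof.
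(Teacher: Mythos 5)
Your proof is correct and follows essentially the same route as the paper's: the forward direction is immediate from the implication between the $\chi$-version and the $|V|/2$-version, and the converse passes to a minimal counterexample to the $\chi$-version and invokes \cref{thm:equiv} to make it a counterexample to the $|V|/2$-version. Your extra care about connectedness and $\alpha=2$ (ruling out disjoint unions of two cliques and complete graphs) fills in details the paper leaves implicit, but the argument is the same.
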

\begin{proof}
    Suppose $H$ is \nameref{conj:hc_chi}-unavoidable. Since \nameref{conj:hc_chi} implies \nameref{conj:hc_half}, $H$ is \nameref{conj:hc_half}-unavoidable. Conversely, suppose $H$ is \nameref{conj:hc_half}-unavoidable, and suppose $H$ is not \nameref{conj:hc_chi}-unavoidable. Let $G$ be an $H$-free graph with $\alpha(G) = 2$ that fails \nameref{conj:hc_chi}. Let $G'$ be a minimal induced subgraph of $G$ such that $G'$ is a counterexample to \nameref{conj:hc_chi}. Then $G'$ is $H$-free, and $\alpha(G') = 2$. Then $G'$ is a minimal counterexample to \nameref{conj:hc_chi}. By \cref{thm:equiv}, $G'$ fails \nameref{conj:hc_half}. This contradicts the assumption that $H$ is \nameref{conj:hc_half}-unavoidable. The analogous equivalence of \nameref{conj:shc_chi}-unavoidability and \nameref{conj:shc_half}-unavoidability follows a similar argument.
\end{proof}

One way to approach Hadwiger's Conjecture is to find $\mathfrak{C}$-unavoidable graphs. Progress in this direction is valuable since one can determine if a graph $G$ satisfies $\mathfrak{C}$ by examining its induced subgraphs. This, in turn, helps identify potential counterexamples. Adjacent-twin-free $\mathfrak{C}$-unavoidable graphs are especially useful, since \cref{lem:blow_up_avoidant} says that if $G$ is $H$-free and $H$ is adjacent-twin-free, then any inflation of $G$ is $H$-free.
\begin{lem}
    \label{lem:blow_up_avoidant}
    The following are equivalent for a graph $H$:
    \begin{enumerate}[(\alph*)]
        \item $H$ is adjacent-twin-free,
        \item $\overline{H}$ is twin-free,
        \item For any graph $G$, if $G$ is $H$-free, then any inflation $G'$ of $G$ is $H$-free,
        \item For any graph $G$, if $G$ is $\overline{H}$-free, then any blow-up $G'$ of $G$ is $\overline{H}$-free.
    \end{enumerate}
\end{lem}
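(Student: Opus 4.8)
The plan is to exploit two easy dualities so that only one substantive implication remains. First, (a)~$\Leftrightarrow$~(b) is precisely the observation recorded immediately before this lemma (that a graph is adjacent-twin-free if and only if its complement is twin-free), applied to $H$. Second, (c)~$\Leftrightarrow$~(d) follows by complementation: $H$ is an induced subgraph of $G$ if and only if $\overline{H}$ is an induced subgraph of $\overline{G}$, so $G$ is $H$-free exactly when $\overline{G}$ is $\overline{H}$-free; and by definition $G'$ is a blow-up of $G$ precisely when $\overline{G'}$ is an inflation of $\overline{G}$. Applying (c) to $\overline{G}$ and taking complements (using $\overline{\overline{H}}=H$) yields (d), and symmetrically (d) yields (c). Thus it suffices to prove (a)~$\Leftrightarrow$~(c).

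For (a)~$\Rightarrow$~(c) I would argue directly. Suppose $H$ is adjacent-twin-free, let $G$ be $H$-free, let $G'$ be an inflation of $G$ with projection $p\colon V(G')\to V(G)$, and assume for contradiction that $G'$ has an induced copy of $H$ on vertices $\{w_x : x\in V(H)\}$, with $w_x$ playing the role of $x$. The key claim is that $p$ is injective on this copy. Indeed, if $p(w_x)=p(w_{x'})=v$ for distinct $x,x'$, then $w_x,w_{x'}$ both lie in the clique $C_v$, so $w_xw_{x'}\in E(G')$ and hence $xx'\in E(H)$; moreover for every third copy-vertex $w_{x''}$, adjacency of $w_{x''}$ to $w_x$ and to $w_{x'}$ coincides --- they are equal when $p(w_{x''})=v$ because $C_v$ is a clique, and otherwise both are governed only by whether $p(w_{x''})v\in E(G)$. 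Hence $N_H(x)\setminus\{x'\}=N_H(x')\setminus\{x\}$, making $x,x'$ adjacent-twins in $H$ and contradicting (a). With $p$ injective on the copy, the images $\{p(w_x)\}$ are distinct, so each $w_x$ lies in its own clique and therefore $p(w_x)p(w_{x'})\in E(G)$ if and only if $w_xw_{x'}\in E(G')$; thus $\{p(w_x)\}$ induces a copy of $H$ in $G$, contradicting $H$-freeness.

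For (c)~$\Rightarrow$~(a) I would use the contrapositive. If $H$ is not adjacent-twin-free, choose adjacent-twins $u,v$ and set $G:=H-v$. Then $G$ is trivially $H$-free since $|V(G)|<|V(H)|$. On the other hand, the inflation of $G$ with $c_u=2$ (expanding $u$ into a $2$-clique) and $c_x=1$ for every other $x$ reconstructs $H$ exactly: the two vertices of $C_u$ are adjacent, matching $uv\in E(H)$, and because $u,v$ were adjacent-twins the adjacency $wx\in E(H)$ agrees for $w\in\{u,v\}$ across all remaining vertices $x$, so this inflation is isomorphic to $H$ and in particular is not $H$-free. This exhibits the failure of (c).

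The step I expect to be the main obstacle is the neighbourhood bookkeeping in the key claim of (a)~$\Rightarrow$~(c): one must verify carefully that two copy-vertices sharing a projection genuinely satisfy the adjacent-twin identity $N_H(x)\setminus\{x'\}=N_H(x')\setminus\{x\}$, which requires separately handling a third copy-vertex that also projects into the clique $C_v$ from one lying in a different clique. Everything else is routine complementation duality.
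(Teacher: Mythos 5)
Your proof is correct and takes essentially the same approach as the paper's: the paper closes the loop by proving (b) $\Leftrightarrow$ (d) on the complement/blow-up side, whereas you prove the mirror-image equivalence (a) $\Leftrightarrow$ (c) on the inflation side, and the two are identical under the complementation duality both invoke for (c) $\Leftrightarrow$ (d). In particular, your key claim (two copy-vertices with the same projection are adjacent-twins) is exactly the paper's observation that two copy-vertices with the same projection in a blow-up are twins, and your construction $G := H - v$ with $u$ re-inflated is the complement of the paper's $G := \overline{H} - v$ with $u$ blown up.
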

\begin{proof}
    The equivalence of (a) and (b) is straightforward. The equivalence of (c) and (d) is straightforward. We show the equivalence of (b) and (d).

    (d) $\Rightarrow $ (b): Suppose $\overline{H}$ is not twin-free. Then there exists $uv \notin E(\overline{H})$ with $N_{\overline{H}}(u) = N_{\overline{H}}(v)$. Consider $G := \overline{H} - v$. Then $G$ is $\overline{H}$-free. However, since $N_{\overline{H}}(u) = N_{\overline{H}}(v)$, $\overline{H}$ is a blow-up of $G$ with $u \in V(\overline{H})$ replaced by an independent set of size $2$. This contradicts (d).

    (b) $\Rightarrow$ (d): Suppose (d) does not hold. Then there exists a graph $G$ and a blow-up $G'$ of $G$ such that $G$ is $\overline{H}$-free, but $G'$ is not $\overline{H}$-free. Let $p: V(G') \to V(G)$ be the projection map. Consider an induced subgraph $J$ of $G'$ isomorphic to $\overline{H}$. Since $G$ is $\overline{H}$-free, there are two non-adjacent vertices $v, v' \in V(J)$ with $p(v) = p(v')$; that is, $N_{G'}(v) = N_{G'}(v')$. However, $N_{\overline{H}}(v) = N_{G'}(v) \cap V(\overline{H}) = N_{G'}(v') \cap V(\overline{H}) = N_{\overline{H}}(v')$, so $\overline{H}$ is not twin-free.
\end{proof}

\begin{lem}
    \label{lem:unavoidable_adj_twin_free}
    Let $\mathfrak{C} \in \{\text{\nameref{conj:hc_chi}, \nameref{conj:hc_half}, \nameref{conj:shc_chi}, \nameref{conj:shc_half}, \nameref{conj:cdm}}\}$, and suppose $H$ is an adjacent-twin-free $\mathfrak{C}$-unavoidable graph. If $G$ is a connected $H$-free graph with $\alpha(G) = 2$, then any inflation of $G$ satisfies $\mathfrak{C}$.
\end{lem}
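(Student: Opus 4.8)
The plan is to argue by contraposition, using the second (equivalent) formulation of $\mathfrak{C}$-unavoidability stated just before the lemma: that any counterexample to $\mathfrak{C}$ contains $H$ as an induced subgraph. Let $G'$ be an arbitrary inflation of $G$. It then suffices to show that $G'$ is not a counterexample to $\mathfrak{C}$, for then $G'$ satisfies $\mathfrak{C}$, as required. Working with this formulation is what makes the argument clean: it lets me avoid having to verify that $G'$ itself is connected and has $\alpha(G') = 2$, conditions that can genuinely fail---for instance when some clique $C_x$ has order $0$ (so that $G'$ is an inflation of a proper induced subgraph of $G$, which need not be connected) or when $G'$ happens to be complete, giving $\alpha(G') = 1$.

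First I would establish that $G'$ is $H$-free. By hypothesis $G$ is $H$-free and $H$ is adjacent-twin-free, so \cref{lem:blow_up_avoidant}, in the form of its condition (c), applies and guarantees that every inflation of $G$---in particular $G'$---is $H$-free. This is the single place where the adjacent-twin-freeness of $H$ enters, and it is essential: were $H$ to have adjacent twins, inflating $G$ could create an induced copy of $H$ even though $G$ itself avoids it, and the conclusion would break down.

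Finally, suppose for contradiction that $G'$ is a counterexample to $\mathfrak{C}$. Since $H$ is $\mathfrak{C}$-unavoidable, $G'$ must then contain $H$ as an induced subgraph, contradicting the previous step. Hence $G'$ is not a counterexample and therefore satisfies $\mathfrak{C}$; as $G'$ was an arbitrary inflation of $G$, the lemma follows. I do not expect a substantive obstacle here: the statement is essentially the composition of \cref{lem:blow_up_avoidant} with the definition of $\mathfrak{C}$-unavoidability, and the only point requiring care is the deliberate choice of the ``counterexample'' characterisation over the ``connected graph with $\alpha = 2$'' one, precisely so as not to impose spurious connectivity or independence-number hypotheses on $G'$.
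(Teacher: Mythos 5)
Your proof is correct and takes essentially the same route as the paper's: apply \cref{lem:blow_up_avoidant} to conclude that every inflation of $G$ is $H$-free, then invoke the $\mathfrak{C}$-unavoidability of $H$ (in its ``any counterexample contains $H$'' form) to conclude that no inflation can be a counterexample. The paper's proof is just the two-sentence version of this argument; your additional care about inflations that are disconnected or have independence number $1$ is a sensible elaboration of the same idea rather than a different approach.
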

\begin{proof}
    Since $H$ is adjacent-twin-free, by \cref{lem:blow_up_avoidant} and the assumption that $G$ is $H$-free, any inflation of $G$ is $H$-free. Since $H$ is $\mathfrak{C}$-unavoidable, any inflation of $G$ satisfies $\mathfrak{C}$.
\end{proof}
\begin{table}[t]
  \centering
  \caption{The \textbf{complements} of all known induced maximal \nameref{conj:hc_chi}-unavoidable graphs. They are $\overline{H_1}, \dots, \overline{H_{33}}$, $\overline{B_7}$, $\overline{K_8}$, and $K_{1,6}$ in order (images from \citet{Carter22}).}
  \label{tab:unavoidable_graphs}
   \begin{tabular}{|c|c|c|c|c|c|}
    \hline
    \includegraphics[scale=0.85]{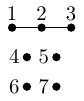}  & \includegraphics[scale=0.85]{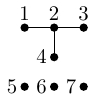} & \includegraphics[scale=0.85]{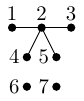}  & \includegraphics[scale=0.85]{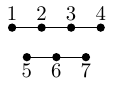} & \includegraphics[scale=0.85]{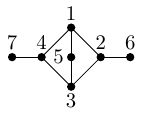} & \includegraphics[scale=0.85]{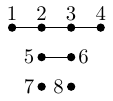} \\ \hline
    \includegraphics[scale=0.85]{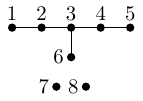}  & \includegraphics[scale=0.85]{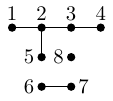} & \includegraphics[scale=0.85]{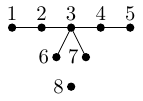}  & \includegraphics[scale=0.85]{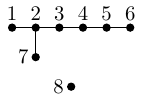} & \includegraphics[scale=0.85]{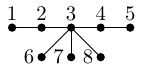} & \includegraphics[scale=0.85]{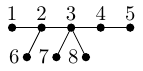} \\ \hline
    \includegraphics[scale=0.85]{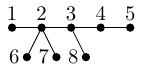} & \includegraphics[scale=0.85]{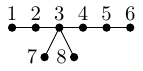} & \includegraphics[scale=0.85]{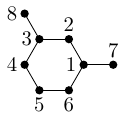} & \includegraphics[scale=0.85]{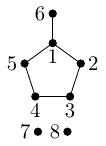} & \includegraphics[scale=0.85]{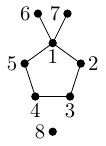} & \includegraphics[scale=0.85]{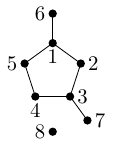} \\ \hline
    \includegraphics[scale=0.85]{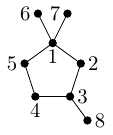} & \includegraphics[scale=0.85]{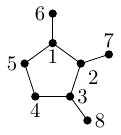} & \includegraphics[scale=0.85]{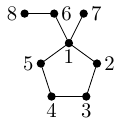} & \includegraphics[scale=0.85]{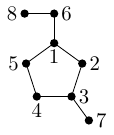} & \includegraphics[scale=0.85]{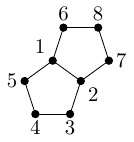} & \includegraphics[scale=0.85]{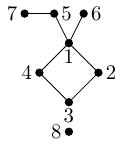} \\ \hline
    \includegraphics[scale=0.85]{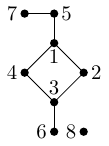} & \includegraphics[scale=0.85]{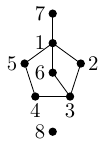} & \includegraphics[scale=0.85]{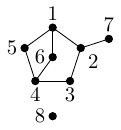} & \includegraphics[scale=0.85]{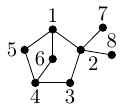} & \includegraphics[scale=0.85]{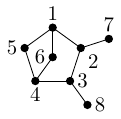} & \includegraphics[scale=0.85]{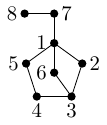} \\ \hline
    \includegraphics[scale=0.85]{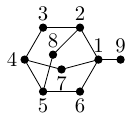} & \includegraphics[scale=0.85]{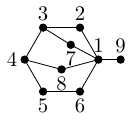} & \includegraphics[scale=0.85]{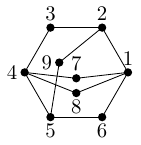} & \includegraphics[scale=0.85]{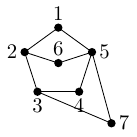} & \includegraphics[scale=0.85]{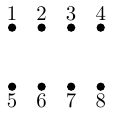} & \includegraphics[scale=0.85]{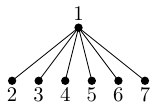} \\ \hline
  \end{tabular}
\end{table}

\citet{Kriesell10} conjectured there are infinitely many \nameref{conj:hc_chi}-unavoidable graphs. This conjecture, if true, would imply that any counterexample to \nameref{conj:hc_chi} contains an infinite number of graphs as induced subgraphs, consequently implying \nameref{conj:hc_chi}. Except for the $4$-cycle $C_4$, $B_7$ contains every $4$-vertex graph with independence number at most $2$ as an induced subgraph. \citet{PST03} showed that $C_4$ is \nameref{conj:cdm}-unavoidable. Combined with \cref{thm:b7}, this implies that each graph $H$ with $|V(H)| \leq 4$ and $\alpha(H) \leq 2$ is \nameref{conj:cdm}-unavoidable. \citet{Kriesell10} extended this result, proving that each graph $H$ with $|V(H)| \leq 5$ and $\alpha(H) \leq 2$ is \nameref{conj:cdm}-unavoidable. It is not known if every graph on six vertices is \nameref{conj:shc_chi}-unavoidable (or even \nameref{conj:hc_chi}-unavoidable).
For \nameref{conj:hc_chi}-unavoidable graphs, let \defn{$W_5$} denote the $5$-wheel, obtained from the $5$-cycle by adding a vertex adjacent to each vertex of the $5$-cycle. \citet{Bosse19} proved that $W_5$ is \nameref{conj:hc_chi}-unavoidable.
Using ideas from \citet{Bosse19}, \citet{Carter22} computationally proved that there is a list $H_1, \dots, H_{33}$ of \nameref{conj:hc_chi}-unavoidable graphs, none of which contains another as an induced subgraph.
The complements of $H_1, \dots, H_{33}$ are shown in \cref{tab:unavoidable_graphs}.

Furthermore, \citet{Carter22} proved that $K_8$ is \nameref{conj:hc_chi}-unavoidable, and
\citet{Zhou23} proved that $\overline{K_{1,6}}$ is \nameref{conj:hc_chi}-unavoidable\footnote{\citet{Zhou23} actually proved that a different graph is \nameref{conj:hc_chi}-unavoidable (see Theorem 1.6, \citep{Zhou23}); however, that graph is an induced subgraph of one of \citet{Carter22}'s $33$ graphs.}.
The joint results in \citep{Bosse19, Carter22, Zhou23,PST03} imply that each graph from \cref{tab:unavoidable_graphs} is \nameref{conj:hc_chi}-unavoidable. Equivalently, if $G$ is a counterexample to \nameref{conj:hc_chi}, then $G$ contains every graph from \cref{tab:unavoidable_graphs} as an induced subgraph.

\subsection{Seagulls, Inflations and Clique Ratios}
\label{ss:seagulls}

This section summarises the literature on Hadwiger's Conjecture for graphs with large \defn{clique ratio}, defined for a graph $G$ to be $\omega(G) / |V(G)|$.
A \defn{seagull} $S$ of a graph $G$ is an induced $3$-vertex-path of $G$. If $\alpha(G) = 2$, then each vertex of $G - S$ is adjacent to $S$.
Thus, if $G$ has $k$ vertex-disjoint seagulls, then $\had(G) \geq k$. \citet{CS12} proved a necessary and sufficient condition for a graph $G$ with $\alpha(G) = 2$ to have $k$ vertex-disjoint seagulls.

\begin{thm}[\citet{CS12}]
    \label{thm:seagulls}
    Let $k \geq 1$ be an integer and let $G$ be a graph with $\alpha(G) = 2$ and $G \ncong W_5$. Then $G$ has $k$ vertex-disjoint seagulls if and only if:
    \begin{enumerate}
        \item $|V(G)| \geq 3k$,
        \item $\kappa(G) \geq k$,
        \item For each clique $C \subseteq V(G)$, if $D$ is the set of vertices in $V(G) \setminus C$ that have both a neighbour and a non-neighbour in $C$, then $|D| + |V(G) \setminus C| \geq 2k$, and
        \item $\mu(\overline{G}) \geq k$.
    \end{enumerate}
\end{thm}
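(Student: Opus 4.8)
The plan is to prove the two directions separately, with necessity being routine and sufficiency carrying the real content.

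\textbf{Necessity.} Suppose $S_1, \dots, S_k$ are vertex-disjoint seagulls, and write $S_i = x_i y_i z_i$ with $x_i z_i \notin E(G)$. Condition (1) is immediate, as the $S_i$ occupy $3k$ distinct vertices. For condition (4), each pair $\{x_i, z_i\}$ is a non-edge of $G$, hence an edge of $\overline{G}$, and these $k$ edges are vertex-disjoint, so $\mu(\overline{G}) \ge k$. For condition (2), I would first note that since $\alpha(G) = 2$, every vertex outside a seagull is adjacent to it (a vertex non-adjacent to both $x_i$ and $z_i$ would form an independent triple). If a set $T$ with $|T| < k$ separated $G$ into nonempty parts $A, B$ with no $A$--$B$ edges, then no seagull could lie entirely in $A$ (a vertex of $B$ would have to be adjacent to it) nor entirely in $B$; since each $S_i$ is connected, it must meet $T$, forcing $k \le |T|$, a contradiction. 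For condition (3), fix a clique $C$; since the endpoints $x_i, z_i$ are non-adjacent, each seagull meets $C$ in at most two vertices, and I would verify case by case that each $S_i$ contributes at least $2$ to $|D| + |V(G) \setminus C|$: if $|S_i \cap C| \le 1$ then $S_i$ has at least two vertices in $V(G)\setminus C$; and if $|S_i \cap C| = 2$, then $C$ contains $y_i$ and one endpoint, say $x_i$, so the remaining vertex $z_i$ is adjacent to $y_i \in C$ but non-adjacent to $x_i \in C$, whence $z_i \in D$ is counted once in $|V(G)\setminus C|$ and once in $|D|$. Summing over the disjoint $S_i$ yields $|D| + |V(G)\setminus C| \ge 2k$.

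\textbf{Sufficiency.} This is the hard direction. The key reformulation is that $\omega(\overline{G}) = \alpha(G) = 2$, so $\overline{G}$ is triangle-free, and a seagull of $G$ is exactly an edge $xz$ of $\overline{G}$ together with a private vertex $y$ that is non-adjacent in $\overline{G}$ to both $x$ and $z$ (equivalently, a common $G$-neighbour of $x$ and $z$). Packing $k$ disjoint seagulls thus amounts to selecting $k$ disjoint edges of $\overline{G}$ and assigning to each a distinct ``middle'' vertex avoiding the chosen edges. Starting from a matching of size $k$ in $\overline{G}$, which exists by condition (4), I would phrase the assignment of middles as a system of distinct representatives and seek it through a Hall-type deficiency argument. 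The genuine difficulty is that a fixed matching may admit no valid assignment, so one must be free to reroute: to change which vertices serve as endpoints versus middles, and which matching of $\overline{G}$ is used.

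To control this, the plan is to set up a min--max (deficiency) formulation for seagull packing and to show that its barriers are governed by the cliques of $G$. This is natural because the non-neighbours of any vertex of $G$ form a clique (\cref{obs:non-neighbours}), so the bottlenecks to finding common neighbours organise into clique structures; the quantity $|D| + |V(G)\setminus C|$ in condition (3) is precisely the size of the barrier attached to a clique $C$. Assuming no $k$ disjoint seagulls exist, the min--max theorem should return an obstruction, which I would argue must be one of: (i) too few vertices, contradicting condition (1); (ii) a separator of size less than $k$, contradicting condition (2); or (iii) a clique $C$ with $|D| + |V(G)\setminus C| < 2k$, contradicting condition (3). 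The single graph for which this dichotomy genuinely fails is $W_5$ (it satisfies all four conditions for $k=2$ yet cannot be split into two disjoint induced $P_3$'s), which is exactly why it is excluded.

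\textbf{Main obstacle.} The principal hurdle is establishing the precise deficiency theorem for seagull packing and proving that every barrier reduces to one of the three obstructions above---in particular, reconciling a failed matching or system of distinct representatives with a clique witnessing the failure of condition (3). I expect this to require the Gallai--Edmonds / Tutte--Berge machinery applied to $\overline{G}$, together with a delicate analysis of the degenerate configurations that isolates $W_5$ as the unique exception.
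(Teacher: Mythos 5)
First, a point of comparison: the paper does not prove \cref{thm:seagulls} at all — it is quoted from \citet{CS12} (the ``Packing seagulls'' paper), so there is no in-paper proof to measure your attempt against. Judged on its own terms, your necessity argument is correct and essentially complete: conditions (1) and (4) are immediate as you say; the connectivity argument correctly exploits the fact that $\alpha(G)=2$ forces every vertex to be adjacent to every seagull, so no seagull can lie entirely on one side of a separator of size less than $k$; and the case analysis showing that each seagull contributes at least $2$ to $|D|+|V(G)\setminus C|$ is exactly right, including the key observation that a seagull meeting a clique $C$ in two vertices must do so in its middle and one endpoint, which places the other endpoint in $D$.

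The sufficiency direction, however, is not a proof but a research plan, and it is where all of the content of the theorem lives. You correctly identify the reformulation (disjoint edges of the triangle-free graph $\overline{G}$, each assigned a distinct common $G$-neighbour of its endpoints) and correctly anticipate that obstructions should organise around cliques of $G$. But ``establish a min--max deficiency theorem for seagull packing and show every barrier reduces to one of the three obstructions'' \emph{is} the theorem, not a step towards it: no off-the-shelf machinery (Hall, Tutte--Berge, Gallai--Edmonds) applies, because the choice of middles is not an independent system of representatives — it interacts with which vertices serve as endpoints, with the choice of matching in $\overline{G}$, and with the other seagulls' middles all at once, which is exactly the ``rerouting'' difficulty you name and then defer. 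The actual argument of \citet{CS12} is the main result of an entire paper, proceeding by an induction that augments a maximum seagull packing through a lengthy structural analysis, with $W_5$ emerging as the unique exception from that analysis rather than from a clean barrier classification. As written, your proposal establishes only the easy direction; to complete it you would need either to reproduce the Chudnovsky--Seymour induction or to actually state and prove the deficiency theorem you postulate, and there is no reason to expect the latter to be any easier.
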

There are several easier-to-use results that arise from \cref{thm:seagulls}. For example, \citet{NS26a} showed that for any integer $k\geq 1$, if a graph $G$ has $\alpha(G) = 2$ and $|V(G)| \geq 3k$ and $\omega(G) \leq k$, then $G$ has $k$ pairwise vertex-disjoint seagulls. As another example \citep{CS12}, for $t := \ceil{|V(G)| / 2}$, if $G$ is $t$-connected and the largest clique $Z$ in $G$ satisfies $\frac{3}{2} \left \lceil \frac{|V(G)|}{2} \right \rceil - \frac{|V(G)|}{2} \leq |Z| \leq t$, then there are $t-|Z|$ pairwise vertex-disjoint seagulls in $V(G) \setminus Z$, and consequently $G$ has a $K_t$ minor, and $G$ satisfies \nameref{conj:hc_half}. Another application of \cref{thm:seagulls} is:
\begin{samepage}
    \begin{thm}[\citet{CS12}]
        \label{thm:seagullsN/4}
        Every graph $G$ with $\alpha(G) = 2$ and
        \begin{equation*}
            \omega(G) \geq
            \begin{cases}
                \left\lceil\frac{|V(G)|}{4}\right\rceil   & \text{if $|V(G)|$ is even,} \\
                \noalign{\vskip1pt}
                \left\lceil\frac{|V(G)|+3}{4}\right\rceil & \text{if $|V(G)|$ is odd,}
            \end{cases}
        \end{equation*}
        has $\had(G) \geq |V(G)|/2$, and consequently $G$ satisfies \nameref{conj:hc_half}.
    \end{thm}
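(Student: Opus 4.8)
Write $n := |V(G)|$, $\omega := \omega(G)$, and $t := \ceil{n/2}$. Since $\had(G)$ is an integer, the conclusion $\had(G) \geq n/2$ is equivalent to $\had(G) \geq t$, so it suffices to exhibit a $K_t$ minor. If $\omega \geq t$ a maximum clique already gives $\had(G) \geq \omega \geq t$; hence assume $\omega < t$, so that the hypothesis reads $\frac{3t}{2} - \frac{n}{2} \leq \omega \leq t-1$. The plan is to build the $K_t$ minor by combining a maximum clique $Z$ (with $|Z|=\omega$) with a packing of $t-\omega$ vertex-disjoint seagulls in $G-Z$. Taking each vertex of $Z$ as a singleton branch set and each seagull as a branch set yields $\omega + (t-\omega) = t$ branch sets. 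These are pairwise adjacent: $Z$ is a clique; any two disjoint seagulls of a graph with $\alpha=2$ are adjacent (every vertex outside a seagull is adjacent to it); and every vertex of $Z$ lies outside each seagull, hence is adjacent to it. As a seagull of $G-Z$ is also a seagull of $G$, this is a valid $K_t$ minor, giving $\had(G) \geq t$.

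So everything reduces to packing $t-\omega$ vertex-disjoint seagulls in $G' := G-Z$, which I would obtain from \cref{thm:seagulls} applied to $G'$ with $k := t-\omega$. First, if $\alpha(G') \leq 1$ then $G'$ is a clique, so $n-\omega \leq \omega$ and $\omega \geq t$, contradicting $\omega < t$; thus $\alpha(G') = 2$. The two ``counting'' conditions are immediate from the hypothesis and the $\alpha = 2$ structure. Condition (1), $|V(G')| \geq 3k$, is precisely $\omega \geq \frac{3t}{2} - \frac{n}{2}$. For condition (4), $\mu(\overline{G'}) \geq k$: in a maximum matching of $\overline{G'}$ the unmatched vertices are pairwise non-adjacent in $\overline{G'}$ and so form a clique of $G'$, whence at most $\omega(G') \leq \omega$ of them are unmatched; therefore $\mu(\overline{G'}) \geq (|V(G')| - \omega)/2 = (n-2\omega)/2$, and rounding up yields $\mu(\overline{G'}) \geq t-\omega = k$ for both parities of $n$ (this is exactly the statement of \cref{lem:chromatic_matching} in disguise). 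Condition (3) reduces similarly to the clique window $\omega(G') \leq \omega \leq t$, since $|V(G') \setminus C| \geq n-2\omega \geq 2k$ when $n$ is even, with only a short extra argument on the set $D$ when $n$ is odd and $C$ is a maximum clique.

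The main obstacle is the connectivity condition (2), $\kappa(G') \geq k = t-\omega$, which is where the absence of a connectivity hypothesis bites, together with ruling out the $W_5$ exception (a finite check). When $G$ is $t$-connected, deleting the $\omega \leq t$ vertices of $Z$ leaves a $(t-\omega)$-connected graph, so condition (2) holds; in fact this case is exactly the $t$-connected consequence of \cref{thm:seagulls} already recorded above (window $\frac{3t}{2} - \frac{n}{2} \leq |Z| \leq t$), which I would invoke directly. The genuinely hard case is therefore $\kappa(G) < t$. If $G$ is disconnected, then $\alpha(G) = 2$ forces $G$ to be a disjoint union of two cliques, whence $\had(G) = \omega \geq t$. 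So assume $G$ is connected and fix a minimum separator $S$ with $|S| < t$. Because $\alpha(G) = 2$, the graph $G-S$ has exactly two components $A$ and $B$, each a clique with no edges to the other, and each vertex of $S$ is complete to $A$ or to $B$.

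I expect the real work to lie in this separator case: from the decomposition $V(G) = A \cup B \cup S$ one must assemble a $K_t$ minor by routing branch sets through $S$ to link the cliques $A$ and $B$, rather than simply reading off disjoint seagulls from $G-Z$ (whose connectivity may now fail). This is precisely the regime that the full generality of \cref{thm:seagulls} is designed to handle via its clique-and-separator conditions, and it cannot be bypassed: the smallest illustrative instance is $G = C_5$, where $n = 5$, $\omega = 2$, and $\kappa(C_5) = 2 < 3 = t$, yet $\had(C_5) = 3 = t$ is attained only by contracting across the cycle. Establishing the minor uniformly in this low-connectivity case — equivalently, carrying out the case analysis underlying \cref{thm:seagulls} — is the step I anticipate to be the crux of the argument.
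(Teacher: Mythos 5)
There is a genuine gap, and you have named it yourself: you never establish condition (2) of \cref{thm:seagulls} (nor rule out the $W_5$ exception) for $G' = G - Z$ when the connectivity is low, and you explicitly defer ``the crux of the argument'' to an unspecified separator analysis. A sketch that ends by anticipating where the real work lies is not a proof. Note also that the relevant requirement is $\kappa(G-Z) \geq t-\omega$, which is weaker than $\kappa(G)\geq t$; your $C_5$ example actually does go through the clique-plus-seagulls route (take $Z$ to be an edge, so $G-Z \cong P_3$ is itself a single seagull and $\kappa(P_3)=1=t-\omega$), so it does not illustrate the difficulty --- but genuine low-connectivity configurations (for instance two large cliques joined through a small cut set, after $Z$ is deleted) do, and for those your outline offers no construction of the $K_t$-model. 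The parenthetical claim that condition (3) needs ``only a short extra argument'' when $|V(G)|$ is odd and $C$ is a maximum clique also folds into the same unresolved case: if every vertex of $V(G')\setminus C$ were anticomplete to $C$, then $G'$ would be disconnected and condition (2) would fail, so (3) cannot be disposed of independently of the connectivity problem.

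For what it is worth, the paper does not prove this statement at all: \cref{thm:seagullsN/4} is quoted from \citet{CS12}, and the surrounding text records only the easy $t$-connected consequence of \cref{thm:seagulls} that you also invoke. So there is no in-paper proof to compare against. Judged on its own terms, your proposal correctly sets up the reduction --- the clique $Z$ together with $t-\omega$ disjoint seagulls does give pairwise adjacent branch sets, and conditions (1) and (4) do follow from the stated counting --- but the deduction of the theorem from \cref{thm:seagulls} in the low-connectivity regime, which is exactly the content Chudnovsky and Seymour supply, is missing.
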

\end{samepage}

\cref{thm:seagullsN/4} has been misunderstood by several authors (\citep[Theorem 1.6]{Bosse19}, \citep[Lemma 2]{Carter22}, \citep[Theorem 2.2]{Zhou23}). They wrongly conclude that $G$ satisfies \nameref{conj:hc_chi} instead of \nameref{conj:hc_half}. This is not implied.

\cref{thm:seagullsN/4} says that graphs with clique ratio roughly at least $1/4$ satisfy \nameref{conj:hc_half}, and are not minimal counterexamples to \nameref{conj:hc_chi} by \cref{thm:equiv}. For a graph $G$, the clique ratios of inflations of $G$ are key to proving that any inflation of $G$ satisfies \nameref{conj:hc_half}. This motivates the following definition. Define the \defn{inflated clique ratio} of a non-empty graph $G$ as:

\begin{equation*}
    \mathdefn{\icr(G)} = \inf \left \{\frac{\omega(G')}{|V(G')|} : \text{$G'$ is an inflation of $G$ with $|V(G')| > 0$}\right \}.
\end{equation*}
For every non-empty graph $G$, $0 \leq \icr(G) \leq \omega(G) / |V(G)| \leq 1$, so $\icr(G)$ is finite and non-negative. If $\icr(G)$ is roughly at least $1/4$, then any inflation of $G$ satisfies \nameref{conj:hc_half}. We thus turn our attention to providing lower bounds on $\icr(G)$.
The inflated clique ratio is related to the fractional clique covering number of $G$, which we now introduce.

For an integer $b \geq 1$, a \defn{$b$-fold colouring} of a graph $G$ is an assignment of sets of size $b$ to each vertex of $G$ such that adjacent vertices are assigned disjoint sets. For an integer $a \geq b$, an \defn{$(a:b)$-colouring} is a $b$-fold colouring using at most $a$ colours in total. The \defn{$b$-fold chromatic number}, denoted \defn{$\chi_b(G)$}, is the least integer $a$ such that an $(a:b)$-colouring exists. The \defn{fractional chromatic number} of $G$ is defined as:
\begin{equation*}
    \mathdefn{\chi_f(G)}:= \lim_{b \to \infty} \frac{\chi_b(G)}{b} = \inf_b \frac{\chi_b(G)}{b}.
\end{equation*}
The \defn{fractional clique covering number} of $G$, denoted $\mathdefn{\theta_f(G)}$, is defined to be $\chi_f(\overline{G})$.

A \defn{clique cover} of $G$ is a partition of $V(G)$ into a collection of cliques. The minimum number of cliques needed to cover $G$ is known as the \defn{clique covering number} of $G$, denoted $\mathdefn{\theta(G)}$. Note that $\theta(G) = \chi(\overline{G})$. The following inequalities are well-known:
\begin{obs}
    \label{obs:fractional}
    For any graph $G$,
    \begin{enumerate}
        \item $\omega(\overline{G}) \leq \theta_f(G) = \chi_f(\overline{G}) \leq \chi(\overline{G})$,
        \item $\theta_f(G) \leq \theta(G)$, and
        \item $\chi_f(\overline{G}) \alpha(\overline{G}) \geq |V(\overline{G})|$, and $\theta_f(G) \omega(G) \geq |V(G)|$.
    \end{enumerate}
\end{obs}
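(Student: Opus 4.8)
The plan is to reduce every inequality to a statement about a single $b$-fold colouring and then pass to the infimum $\chi_f(\cdot) = \inf_b \chi_b(\cdot)/b$. All three parts are standard and follow from the definitions of $\chi_b$ and $\chi_f$ together with elementary counting, so I do not expect a genuine obstacle; the only point requiring care is to verify that each inequality proved for a fixed finite $b$ survives the passage to $\inf_b$.

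For part (1), the middle equality $\theta_f(G) = \chi_f(\overline{G})$ is just the definition of $\theta_f$, so it suffices to sandwich $\chi_f(\overline{G})$. For the upper bound I would note that a proper colouring of $\overline{G}$ with $\chi(\overline{G})$ colours is precisely a $(\chi(\overline{G}):1)$-colouring, so $\chi_1(\overline{G}) = \chi(\overline{G})$, whence $\chi_f(\overline{G}) = \inf_b \chi_b(\overline{G})/b \leq \chi_1(\overline{G}) = \chi(\overline{G})$. For the lower bound, fix a maximum clique $K$ in $\overline{G}$, so $|K| = \omega(\overline{G})$. In any $(a:b)$-colouring the $|K|$ mutually adjacent vertices of $K$ receive pairwise disjoint sets of size $b$, forcing $a \geq b\,\omega(\overline{G})$; hence $\chi_b(\overline{G})/b \geq \omega(\overline{G})$ for every $b$, and taking the infimum gives $\omega(\overline{G}) \leq \chi_f(\overline{G})$.

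Part (2) is then immediate from part (1) and the identity $\theta(G) = \chi(\overline{G})$: indeed $\theta_f(G) = \chi_f(\overline{G}) \leq \chi(\overline{G}) = \theta(G)$.

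For part (3) I would prove the single general fact that $\chi_f(H)\,\alpha(H) \geq |V(H)|$ for every graph $H$, and then specialise to $H = \overline{G}$. Taking a $(\chi_b(H):b)$-colouring of $H$, each colour class is an independent set of $H$ and so contains at most $\alpha(H)$ vertices; counting incidences between vertices and colours, every vertex contributes exactly $b$, so $b\,|V(H)| \leq \chi_b(H)\,\alpha(H)$. Dividing by $b$ and taking $\inf_b$ yields $|V(H)| \leq \chi_f(H)\,\alpha(H)$. Applying this with $H = \overline{G}$ gives the first inequality $\chi_f(\overline{G})\,\alpha(\overline{G}) \geq |V(\overline{G})|$, and since $\theta_f(G) = \chi_f(\overline{G})$, $\omega(G) = \alpha(\overline{G})$, and $|V(G)| = |V(\overline{G})|$, the very same inequality reads $\theta_f(G)\,\omega(G) \geq |V(G)|$, which is the second inequality. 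This completes the three parts.
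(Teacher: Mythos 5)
Your proof is correct. The paper states \cref{obs:fractional} as ``well-known'' and gives no proof, so there is nothing to compare against; your argument (reducing each inequality to a fixed $b$-fold colouring via clique-disjointness, $\chi_1=\chi$, and vertex--colour incidence counting, then passing to the infimum) is the standard one and handles all three parts completely.
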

In \cref{lem:inflation_frac}, we prove that proper inflations preserve the fractional clique covering number. \cref{lem:equiv_frac} is a key step in the proof. Let $\mathdefn{\Q_{\geq 1}}:=\{q \in \Q: q \geq 1\}$.

\begin{lem}
    \label{lem:equiv_frac}
    Let $k \in \Q_{\geq 1}$ and let $G$ be a graph. Then $\theta_f(G) \leq k$ if and only if there are (not necessarily distinct) cliques $X_1, \dots, X_r$ such that each vertex $v \in V(G)$ belongs to at least $r/k$ of $X_1, \dots, X_r$.
\end{lem}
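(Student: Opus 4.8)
The plan is to reduce the statement to the standard characterisation of the fractional chromatic number by covering with independent sets, using the relation $\theta_f(G) = \chi_f(\overline{G})$ together with the fact that the cliques of $G$ are exactly the independent sets of $\overline{G}$. Writing $H := \overline{G}$, it then suffices to prove that $\chi_f(H) \le k$ if and only if there are independent sets $I_1, \dots, I_r$ of $H$ (with repetition allowed) such that each vertex lies in at least $r/k$ of them. The bridge between the two sides is the elementary correspondence between $(a:b)$-colourings of $H$ and lists of independent sets: from an $(a:b)$-colouring, the colour classes $I_1, \dots, I_a$ (where $I_i$ is the set of vertices receiving colour $i$) are independent sets in which every vertex lies in exactly $b$ of them; conversely, a list of $m$ independent sets in which every vertex lies in at least $b$ of them yields an $(m:b)$-colouring, by assigning to each vertex $b$ of the indices whose independent set contains it (validity follows since adjacent vertices never share an independent set).

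For the backward direction, suppose such $I_1, \dots, I_r$ are given and set $b := \ceil{r/k}$. Since each vertex lies in an integer number, hence in at least $\ceil{r/k} = b$, of the $I_j$, the correspondence above produces an $(r:b)$-colouring of $H$. Therefore $\chi_b(H) \le r$, and so
\[
  \chi_f(H) \;\le\; \frac{\chi_b(H)}{b} \;\le\; \frac{r}{\ceil{r/k}} \;\le\; \frac{r}{r/k} \;=\; k,
\]
which gives $\theta_f(G) = \chi_f(H) \le k$.

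For the forward direction, I would invoke that for a finite graph the infimum in $\chi_f(H) = \inf_b \chi_b(H)/b$ is attained: there is a positive integer $b$ with $\chi_f(H) = \chi_b(H)/b \le k$. Taking an $(a:b)$-colouring with $a := \chi_b(H)$, its colour classes give independent sets $I_1, \dots, I_a$ in which each vertex lies in exactly $b$; since $a/b = \chi_f(H) \le k$ gives $b \ge a/k$, setting $r := a$ ensures each vertex lies in at least $r/k$ of the $I_j$. Translating back, $I_1, \dots, I_r$ are cliques of $G$ with the required covering property. The one subtle point, and the main obstacle, is precisely this attainment: if one only knew $\chi_f(H) = \inf_b \chi_b(H)/b \le k$ \emph{without} attainment, the forward direction could fail in the boundary case $\chi_f(H) = k$ not realised by any finite $b$ (the backward direction shows the existence of the cliques forces some $b$ with $\chi_b(H)/b \le k$). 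Attainment does hold because $\chi_f(H)$ is the optimum of a linear program with rational data; this also yields a self-contained alternative that avoids citing attainment: take a rational optimal solution $y^\ast$ of the independent-set covering LP with $\sum_I y^\ast_I = \chi_f(H) \le k$, clear denominators to write $y^\ast_I = m_I/N$ with $m_I \in \N$, and let $I_1, \dots, I_r$ list each independent set $I$ with multiplicity $m_I$. Then $r = N\chi_f(H)$, and for every vertex $v$ we have $\sum_{I \ni v} m_I \ge N \ge r/k$, as required.
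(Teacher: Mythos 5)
Your proof is correct and follows essentially the same route as the paper's: both directions rest on the correspondence between multisets of cliques of $G$ (equivalently, independent sets of $\overline{G}$) covering each vertex many times and $b$-fold colourings of $\overline{G}$, with only bookkeeping differences (you take $b=\ceil{r/k}$ colours per vertex, while the paper duplicates each clique $a$ times to work with integer counts). Your explicit treatment of the attainment of the infimum defining $\chi_f$ is a point the paper uses implicitly, so no gap there.
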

\begin{proof}
    ($\Rightarrow$): Suppose $k = a/b$, where $a, b \geq 1$ are coprime integers. Suppose $\theta_f(G) = \chi_f(\overline{G}) = r/d$, where $r,d \geq 1$ are coprime integers, and $r \geq d$. For each vertex $v \in V(G)$, let $S_v$ be the set of $d$ colours assigned to $v$ out of the colours $\{1, \dots, r\}$ in the $(r:d)$-colouring of $\overline{G}$. For each $i \in \{1, \dots, r\}$, let $X_i:= \{v \in V(G) : i \in S_v\}$. Since adjacent vertices in $\overline{G}$ receive disjoint sets, each $X_i$ is an independent set in $\overline{G}$ and hence a clique in $G$. Since $|S_v| = d$, each vertex is in exactly $d$ of the cliques $X_1, \dots, X_r$. By assumption, $\theta_f(G) = r/d \leq a/b = k$, so $rb \leq ad$ and $d \geq br/a = r/k$, implying that each vertex $v$ is in at least $r/k$ cliques, as desired.

    ($\Leftarrow$): Let $X_1, \dots, X_r$ be cliques such that each vertex $v \in V(G)$ is in at least $r/k$ cliques. Suppose $k = a/b$, where $a, b \geq 1$ are coprime integers. For each $j \in \{1, \dots, r\}$, create $a$ copies of $X_j$ by defining, for each $i \in \{1, \dots, a\}$, $X_j^{(i)}:= X_j$. Then
    $\mathcal{X}:= (X_1^{(1)}, \dots, X_1^{(a)}, \dots, X_r^{(1)}, \dots, X_r^{(a)})$
    is a multiset of $ra$ cliques, where each vertex $v \in V(G)$ is in at least $ra/k = rb$ cliques in $\mathcal{X}$. Assign each clique in $\mathcal{X}$ a distinct colour from $\{1, \dots, ra\}$, and for each vertex $v \in V(G)$, assign $v$ a set $S'_v$ of colours such that the colour $c \in \{1, \dots, ra\}$ is in $S'_v$ if and only if $v$ is contained in the clique of $\mathcal{X}$ coloured $c$. Observe that $|S'_v| \geq rb$. For distinct vertices $v_1, v_2 \in V(G)$, if $S_{v_1}' \cap S_{v_2}' \neq \varnothing$, then $v_1,v_2 \in X_i$ for some $i \in \{1, \dots, r\}$, and $v_1v_2 \notin E(\overline{G})$. In $\overline{G}$, this defines a colouring using $ra$ colours where adjacent vertices receive disjoint sets. For each $v \in V(G)$, choose a subset $S_v \subseteq S'_v$ such that $|S_v| = rb$. This defines a $(ra: rb)$-colouring of $\overline{G}$, so $\theta_f(G) \leq ra/rb = k$.
\end{proof}

\begin{lem}
    \label{lem:inflation_frac}
    Any proper inflation $G'$ of a graph $G$ satisfies $\theta_f(G) = \theta_f(G')$.
\end{lem}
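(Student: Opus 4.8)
The plan is to deduce both inequalities $\theta_f(G') \le \theta_f(G)$ and $\theta_f(G) \le \theta_f(G')$ from the clique-counting characterisation in \cref{lem:equiv_frac}, exploiting the two natural operations relating cliques of $G$ and cliques of $G'$: a clique of $G$ \emph{lifts} to a clique of $G'$ by replacing each vertex with its block, and a clique of $G'$ \emph{projects} to a clique of $G$ via the projection map $p$. Throughout, write $C_x$ for the block of $G'$ replacing $x \in V(G)$; properness guarantees each $C_x \neq \varnothing$. Since $\theta_f(G) = \chi_f(\overline{G})$ and $\theta_f(G')$ are rational and at least $1$ (assuming $G$ non-empty, so that $G'$ is non-empty), \cref{lem:equiv_frac} applies with these values as $k$.

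For $\theta_f(G') \le \theta_f(G)$, set $k := \theta_f(G)$ and take cliques $X_1, \dots, X_r$ of $G$ as in \cref{lem:equiv_frac}, so each $v \in V(G)$ lies in at least $r/k$ of them. Define $X_i' := \bigcup_{x \in X_i} C_x$. Since $X_i$ is a clique of $G$ and distinct blocks $C_x, C_y$ with $xy \in E(G)$ are completely joined in $G'$, each $X_i'$ is a clique of $G'$. A vertex $v' \in V(G')$ lies in $X_i'$ precisely when $p(v') \in X_i$, so $v'$ lies in at least $r/k$ of the $X_i'$. By \cref{lem:equiv_frac}, $\theta_f(G') \le k = \theta_f(G)$.

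For the reverse inequality, set $k' := \theta_f(G')$ and take cliques $Y_1, \dots, Y_s$ of $G'$ as in \cref{lem:equiv_frac}, so each $v' \in V(G')$ lies in at least $s/k'$ of them. Define $Y_i^* := p(Y_i)$. If $u, v \in Y_i$ with $p(u) \ne p(v)$, then $uv \in E(G')$ forces $p(u)p(v) \in E(G)$, so each $Y_i^*$ is a clique of $G$. Fix $x \in V(G)$; by properness choose a representative $v' \in C_x$. Whenever $v' \in Y_i$ we have $x = p(v') \in Y_i^*$, so $x$ lies in at least as many of the $Y_i^*$ as $v'$ lies in of the $Y_i$, namely at least $s/k'$. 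By \cref{lem:equiv_frac}, $\theta_f(G) \le k' = \theta_f(G')$. Combining the two inequalities gives $\theta_f(G) = \theta_f(G')$.

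The argument is largely bookkeeping; the one point requiring genuine care is the reverse direction, where properness ($C_x \neq \varnothing$) is essential. It is what lets us pick a representative $v' \in C_x$ whose membership count bounds that of $x$ from below, and it is precisely the hypothesis that can fail for non-proper inflations, where an empty block would delete a vertex of $G$ and could drop its coverage below the required threshold.
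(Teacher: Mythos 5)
Your proof is correct and follows essentially the same route as the paper's: both directions are obtained from \cref{lem:equiv_frac} by lifting cliques of $G$ to cliques of $G'$ via the blocks, and projecting cliques of $G'$ to cliques of $G$ via $p$, with properness (surjectivity of $p$) used exactly where the paper uses it. Your write-up is slightly more explicit in verifying that the lifted sets $X_i'$ are cliques, but there is no substantive difference.
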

\begin{proof}
    $\theta_f(G') \leq \theta_f(G)$: Set $k:=\theta_f(G)$. \cref{lem:equiv_frac} implies that there are cliques $X_1, \dots, X_{r}$ such that each vertex $v \in V(G)$ is in at least $r/k$ cliques of $X_1, \dots, X_{r}$. For each $i \in \{1, \dots, r\}$, define $
        X_i':= \bigcup_{v \in X_i} V(C_v)$,
    where $C_v$ is the clique in $G'$ that replaces $v$. Each vertex of $G'$ is contained in at least $r/k$ of $X_1', \dots, X_r'$. \cref{lem:equiv_frac} implies that $\theta_f(G') \leq k$.

    $\theta_f(G) \leq \theta_f(G')$: Set $k:=\theta_f(G')$. \cref{lem:equiv_frac} implies that there are cliques $X_1', \dots, X_r'$ such that each vertex $v \in V(G')$ is in at least $r/k$ of $X_1', \dots, X_r'$. Since $G'$ is a proper inflation of $G$, the projection map $p: V(G') \to V(G)$ is surjective, and projects cliques of $G'$ onto (not necessarily distinct) cliques of $G$. For each $i \in \{1, \dots, r\}$, let $X_i:= p(X_i')$ be the projected clique in $G$. Since $p$ is surjective, for each vertex $v \in V(G)$, there is a vertex $v' \in V(G')$ such that $p(v') = v$. By assumption, $v'$ is in at least $r/k$ of $X_1', \dots, X_r'$. This implies that $v$ is in at least $r/k$ of $X_1, \dots, X_r$. \cref{lem:equiv_frac} implies $\theta_f(G) \leq k$.
\end{proof}
Since $\theta_f(G)$ does not increase under taking induced subgraphs, the following is an immediate consequence of \cref{lem:inflation_frac}.
\begin{cor}
    \label{cor:inflation_frac}
    Any inflation $G'$ of a graph $G$ satisfies $\theta_f(G') \leq \theta_f(G)$.
\end{cor}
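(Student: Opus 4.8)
The plan is to deduce the general (not necessarily proper) case from the proper case established in \cref{lem:inflation_frac}, together with the fact that $\theta_f$ does not increase under taking induced subgraphs. The key observation is that allowing some $c_x = 0$ in an inflation amounts to first deleting those vertices from $G$ and then performing a \emph{proper} inflation on what remains.

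Concretely, suppose $G'$ is an inflation of $G$ in which each $x \in V(G)$ is replaced by a clique of order $c_x \geq 0$. Set $S := \{x \in V(G) : c_x \geq 1\}$ and let $H := G[S]$. By construction, $G'$ is a proper inflation of $H$: every vertex of $H$ is replaced by a clique of order at least $1$, and the adjacency rule between these cliques is precisely the one inherited from $G[S]$. \cref{lem:inflation_frac} then gives $\theta_f(G') = \theta_f(H)$, so it remains only to show $\theta_f(H) \leq \theta_f(G)$.

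For this last inequality I would appeal again to \cref{lem:equiv_frac}. Writing $k := \theta_f(G)$, the lemma furnishes cliques $X_1, \dots, X_r$ of $G$ such that every $v \in V(G)$ lies in at least $r/k$ of them. Since $H$ is an induced subgraph, each set $X_i \cap S$ is a clique of $H$, and every vertex of $S$ still lies in at least $r/k$ of the sets $X_1 \cap S, \dots, X_r \cap S$. Hence \cref{lem:equiv_frac} yields $\theta_f(H) \leq k$, and combining gives $\theta_f(G') = \theta_f(H) \leq \theta_f(G)$. (If $G'$ is empty the statement is vacuous.) There is no substantial obstacle here: the argument is a routine reduction, and the only point requiring any care is the observation that an arbitrary inflation of $G$ is a proper inflation of the induced subgraph on the vertices of positive weight, which is exactly what lets \cref{lem:inflation_frac} apply.
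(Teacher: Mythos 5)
Your proposal is correct and follows essentially the same route as the paper: the paper deduces the corollary by observing that an arbitrary inflation of $G$ is a proper inflation of the induced subgraph on the positively weighted vertices, invoking \cref{lem:inflation_frac}, and using that $\theta_f$ does not increase under taking induced subgraphs. The only difference is that you explicitly verify the induced-subgraph monotonicity via \cref{lem:equiv_frac}, which the paper simply asserts as known.
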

\cref{lem:jofre_lemma} follows from \cref{obs:fractional,lem:equiv_frac}.
\begin{lem}
    \label{lem:jofre_lemma}
    Let $X_1, \dots, X_r$ be cliques of a graph $G$ such that each vertex $v \in V(G)$ appears in at least $k$ of $X_1, \dots, X_r$. Then $\omega(G)/ |V(G)| \geq k/r$.
\end{lem}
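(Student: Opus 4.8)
The plan is to route everything through the fractional clique covering number $\theta_f(G)$: I would use \cref{lem:equiv_frac} to convert the covering hypothesis into an upper bound on $\theta_f(G)$, and then use \cref{obs:fractional} to turn that bound into the desired lower bound on the clique ratio. At the outset I may assume $V(G) \neq \varnothing$ (otherwise $\omega(G)/|V(G)|$ is undefined) and $k \geq 1$ (the case $k = 0$ being vacuous). Since every vertex lies in at least $k$ of the $r$ cliques, we automatically have $1 \leq k \leq r$, and as $k$ and $r$ are integers this gives $r/k \in \Q_{\geq 1}$, which is exactly the admissibility condition needed to invoke \cref{lem:equiv_frac}.

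First I would apply \cref{lem:equiv_frac} with the rational parameter $r/k$ playing the role of $k$ in that statement. Its hypothesis asks for cliques $X_1, \dots, X_r$ in which every vertex lies in at least $r/(r/k) = k$ of them, which is precisely our assumption. Using the $(\Leftarrow)$ direction of \cref{lem:equiv_frac}, this yields
\[
    \theta_f(G) \leq \frac{r}{k}.
\]

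Next I would invoke the inequality $\theta_f(G)\,\omega(G) \geq |V(G)|$ from \cref{obs:fractional}. Since $G$ is non-empty we have $\theta_f(G) = \chi_f(\overline{G}) \geq 1 > 0$, so this rearranges to $\omega(G) \geq |V(G)|/\theta_f(G)$. Combining with the bound $\theta_f(G) \leq r/k$ gives
\[
    \omega(G) \geq \frac{|V(G)|}{\theta_f(G)} \geq \frac{k}{r}\,|V(G)|,
\]
and dividing through by $|V(G)| > 0$ produces $\omega(G)/|V(G)| \geq k/r$, as required.

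I do not expect any substantive obstacle here, since the real content is already packaged inside \cref{lem:equiv_frac} and \cref{obs:fractional}. The only point demanding care is the bookkeeping when applying \cref{lem:equiv_frac}: one must read our ``appears in at least $k$'' as ``appears in at least $r/k'$'' with $k' = r/k$, and verify the admissibility condition $k' = r/k \geq 1$, which is exactly why the inequalities $1 \leq k \leq r$ are worth recording explicitly before the reduction.
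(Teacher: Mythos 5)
Your proof is correct and follows exactly the route the paper indicates, which states without further detail that \cref{lem:jofre_lemma} follows from \cref{obs:fractional} and \cref{lem:equiv_frac}: apply the $(\Leftarrow)$ direction of \cref{lem:equiv_frac} with parameter $r/k$ to get $\theta_f(G) \leq r/k$, then use $\theta_f(G)\,\omega(G) \geq |V(G)|$. Your bookkeeping of the admissibility condition $r/k \in \Q_{\geq 1}$ and the degenerate cases is careful and accurate.
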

\cref{cor:inflation_frac}, \cref{lem:jofre_lemma}, and the definition of $\icr(G)$ imply that:
\begin{lem}
    \label{lem:clique_cover}
    For every graph $G$,
    \begin{equation*}
        \icr(G) \geq \frac{1}{\theta_f(G)} \geq \frac{1}{\theta(G)}.
    \end{equation*}
\end{lem}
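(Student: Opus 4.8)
The plan is to prove the two inequalities separately, treating $G$ as non-empty (the quantity $\icr(G)$ is only defined for non-empty graphs). The right-hand inequality is immediate from \cref{obs:fractional}: since $\theta_f(G) \leq \theta(G)$ and both quantities are at least $1$, taking reciprocals reverses the inequality to give $\frac{1}{\theta_f(G)} \geq \frac{1}{\theta(G)}$.

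For the left-hand inequality, I would fix an arbitrary inflation $G'$ of $G$ with $|V(G')| > 0$ and establish the \emph{inflation-independent} bound $\frac{\omega(G')}{|V(G')|} \geq \frac{1}{\theta_f(G)}$; taking the infimum over all such $G'$ then yields $\icr(G) \geq \frac{1}{\theta_f(G)}$ directly from the definition of $\icr$. To prove the bound for a fixed $G'$, set $k := \theta_f(G')$. Since $G'$ is a finite graph, $\theta_f(G') = \chi_f(\overline{G'})$ is rational and is at least $1$, so $k \in \Q_{\geq 1}$ and \cref{lem:equiv_frac} applies, producing cliques $X_1, \dots, X_r$ of $G'$ such that every vertex of $G'$ lies in at least $r/k$ of them. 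Feeding this family into \cref{lem:jofre_lemma} (with threshold $r/k$) gives $\frac{\omega(G')}{|V(G')|} \geq \frac{r/k}{r} = \frac{1}{k} = \frac{1}{\theta_f(G')}$. Finally, \cref{cor:inflation_frac} gives $\theta_f(G') \leq \theta_f(G)$, hence $\frac{1}{\theta_f(G')} \geq \frac{1}{\theta_f(G)}$, completing the chain.

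This argument is short because the real content has been packaged into the preceding lemmas; the remaining points require only bookkeeping care. First, \cref{lem:equiv_frac} is stated only for thresholds in $\Q_{\geq 1}$, so I must invoke the rationality of the fractional chromatic number of a finite graph before applying it to $k = \theta_f(G')$. Second, one must apply \cref{cor:inflation_frac} in the correct direction and remember that passing to reciprocals flips the inequality, so the per-inflation estimate $1/\theta_f(G')$ weakens to the uniform estimate $1/\theta_f(G)$ rather than strengthening. I expect no step to be a genuine obstacle; the only real subtlety is securing the bound \emph{uniformly} in $G'$, which is precisely what allows the infimum in the definition of $\icr(G)$ to be controlled by $1/\theta_f(G)$.
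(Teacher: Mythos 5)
Your proposal is correct and follows essentially the same route the paper intends: the paper derives the lemma directly from \cref{cor:inflation_frac}, \cref{lem:jofre_lemma} and the definition of $\icr(G)$, which is exactly the chain you spell out (with \cref{lem:equiv_frac} supplying the clique family and \cref{obs:fractional} handling $\theta_f(G) \leq \theta(G)$). Your extra care about rationality of $\theta_f(G')$ and the direction of the reciprocal inequality is sound bookkeeping, not a deviation.
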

For upper bounds on $\icr(G)$, note that for every graph $G$, $\icr(G) \leq 1/\alpha(G)$. To see this, let $I \subseteq V(G)$ be a maximum independent set, and let $G' := G[I]$. Then $G'$ has no edges, so $\omega(G') = 1$, but $|V(G')| = \alpha(G)$. Therefore, $\icr(G) \leq 1/\alpha(G)$. For perfect graphs, equality holds. If $G$ is perfect, then by
the Weak Perfect Graph Theorem \citep[Proposition~5.5.3]{Diestel05}, $\overline{G}$ is perfect, so $\theta(G) = \chi(\overline{G}) = \omega(\overline{G}) = \alpha(G)$. The result follows from \cref{lem:clique_cover}.

\cref{conj:inflation,cor:inflation_frac} motivate a natural question.
\begin{quest}
    \label{q:fractional}
    Let $\mathfrak{C} \in \{\text{\nameref{conj:hc_chi}, \nameref{conj:hc_half}, \nameref{conj:shc_chi}, \nameref{conj:shc_half}, \nameref{conj:cdm}}\}$.
    For which values of $k \in \Q_{\geq 1}$ does the following hold: Every graph with $\alpha(G) = 2$ and $\theta_f(G) \leq k$ satisfies $\mathfrak{C}$.
\end{quest}
The next result, due to \citet{Blasiak07}, is the first result in the direction of \cref{q:fractional}. For a graph $G$ with $\theta_f(G) < 3$, by \cref{lem:equiv_frac}, there are cliques $X_1, \dots, X_r$ of $G$ such that each vertex of $G$ belongs to strictly more than $r/3$ of $X_1, \dots, X_r$. If $S \subseteq V(G)$ is an independent set of size $3$, then by the Pigeonhole Principle, at least two vertices in $S$ are in some clique $X_i$, contradicting $S$ being an independent set. Thus, every graph with $\theta_f(G) < 3$ has $\alpha(G) \leq 2$.
\begin{thm}[\citet{Blasiak07}]
    \label{thm:bla}
    Every graph $G$ with $\theta_f(G) < 3$ and $|V(G)|$ even satisfies \nameref{conj:shc_half}.
\end{thm}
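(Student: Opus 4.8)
The plan is to prove the equivalent statement that every such $G$ has $\had_2(G) \ge |V(G)|/2$ (recall $\had_2(G)$ is the order of the largest complete-graph model of $G$ in which every branch set has one or two vertices), and to argue by strong induction on $n := |V(G)|$, which is even. Since $\theta_f$ does not increase under taking induced subgraphs (as noted before \cref{cor:inflation_frac}), every graph I pass to again satisfies $\theta_f < 3$ and $\alpha \le 2$. Three reductions dispose of the easy structure. If $G$ is disconnected then, because $\alpha(G)=2$, it is the disjoint union of two cliques $A$ and $B$; using the vertices of the larger as singleton branch sets gives $\had_2(G) \ge \max(|A|,|B|) \ge n/2$. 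If $G$ has a dominating edge $xy$, then appending $\{x,y\}$ as one branch set to an optimal $\le 2$-model of $G-x-y$ shows $\had_2(G) \ge \had_2(G-x-y)+1$, since $\{x,y\}$ is connected and, being dominating, is adjacent to every other branch set; as $G-x-y$ has even order $n-2$, induction gives $\had_2(G-x-y) \ge (n-2)/2$ and hence $\had_2(G) \ge n/2$. By \cref{lem:equiv_triangle_free} applied to the triangle-free graph $\overline G$, the remaining case is exactly $\overline G$ edge-maximal triangle-free, i.e. $\diam(\overline G)=2$.

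So the entire content reduces to the irreducible case: $G$ connected with $\alpha(G)=2$, $n$ even, $\theta_f(G)<3$, and $\diam(\overline G)=2$. Here I would reformulate the target: it suffices to partition $V(G)$ into singletons and edges (branch sets of size at most $2$) whose quotient graph is complete, because a partition using $s$ singletons then has $s+(n-s)/2 = (n+s)/2 \ge n/2$ parts; in particular a \emph{connected perfect matching} of $G$ already gives exactly $n/2$ parts. To build such a partition I would invoke the hypothesis through \cref{lem:equiv_frac} with $k=\theta_f(G)<3$: there are cliques $X_1,\dots,X_r$ of $G$ such that every vertex lies in strictly more than $r/3$ of them (and, by \cref{obs:fractional}(3), $\omega(G) > n/3$). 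The point of this coverage is that any two vertices sharing some $X_i$ are automatically adjacent, so every vertex has an abundance of clique-partners to be matched with.

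The substance, and what I expect to be \textbf{the main obstacle}, is enforcing that all chosen parts are pairwise adjacent. Translating to $\overline G$, which is triangle-free of diameter $2$, two edge-branch-sets $\{a,b\}$ and $\{c,d\}$ fail to be adjacent precisely when $\{a,b,c,d\}$ induces a $C_4$ in $\overline G$, while a singleton $u$ fails to be adjacent to an edge $\{c,d\}$ precisely when $uc,ud \in E(\overline G)$. The crux is therefore to produce a perfect matching (or matching-plus-clique) of $G$ in which no two chosen pairs span an induced $C_4$ of $\overline G$ and no singleton is joined in $\overline G$ to both endpoints of a chosen pair; the parity of $n$ is what lets the matching saturate $V(G)$. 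I expect resolving this to require an averaging or local-exchange argument over $X_1,\dots,X_r$: the $>r/3$ coverage should force every conflicting $C_4$-configuration to admit a re-pairing that removes it without creating a new one, exactly where $\theta_f<3$ (rather than merely $\omega>n/3$) is essential. A plausible alternative is to pass, via \cref{thm:equiv}, to a minimal counterexample $G'$ to \nameref{conj:shc_chi} lying in this class, apply Properties \propref{prop:vertex_critical}--\propref{prop:C5_condition} of \cref{thm:minimal_properties}, and contradict $\omega(G') > |V(G')|/3$ using the forced induced $C_5$-structure around each non-edge.
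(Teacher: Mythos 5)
Your outer reductions (disconnected, dominating edge, and the passage to $\diam(\overline G)=2$ via \cref{lem:equiv_triangle_free}) are sound and match the spirit of how the paper disposes of the easy outcomes, but the proposal stops exactly where the theorem actually lives. You correctly isolate the irreducible case --- $G$ connected, $\alpha(G)=2$, $n$ even, $\theta_f(G)<3$, $\diam(\overline G)=2$, where one must produce a connected perfect matching --- and then you write that you ``expect'' an averaging or local-exchange argument over $X_1,\dots,X_r$ to resolve it. That expectation is the entire content of the theorem, and it is not supplied. The paper does not improvise this step either: it derives \cref{thm:bla} from Blasiak's structural result \cref{thm:good_bad} by declaring an edge $uv$ \emph{good} when the number of cliques among $X_1,\dots,X_r$ meeting $\{u,v\}$ exceeds $r/2$, verifying by inclusion--exclusion that any two disjoint good edges are adjacent and that two bad edges sharing an endpoint force a triangle, and then reading off the four outcomes of \cref{thm:good_bad} (dominating edge, small connectivity handled via \cref{thm:connectivity}, large clique, or a connected perfect matching of good edges). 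The good/bad dichotomy at threshold $r/2$ is precisely the ``averaging'' mechanism you are gesturing at, but without it --- or without \cref{thm:good_bad} itself --- there is no proof.

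Your fallback suggestion also does not close the gap. From $\theta_f(G)<3$ one gets $\omega(G)>n/3$, so \cref{thm:seagullsN/4} applies, but seagulls are $3$-vertex branch sets: this yields $\had(G)\geq n/2$, i.e.\ the \nameref{conj:hc_half}-type conclusion, not the $\had_2(G)\geq n/2$ statement that \nameref{conj:shc_half} requires. Likewise, passing to a minimal counterexample to \nameref{conj:shc_chi} and invoking Properties \propref{prop:vertex_critical}--\propref{prop:C5_condition} gives no contradiction with $\omega(G')>|V(G')|/3$ by itself (note that the $\omega(G)\leq\chi(G)-3$ bound is only established for \nameref{conj:hc_chi}, not for \nameref{conj:shc_chi}, in \cref{tab:minimal_properties}). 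So the proposal as written is a correct framing plus an unproven core claim.
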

Note that \cref{thm:bla} does not imply that every graph $G$ with $\theta_f(G) < 3$ satisfies \nameref{conj:shc_chi}. \cref{thm:bla} is a special case of a more general result, \cref{thm:good_bad}.
\begin{thm}[\citet{Blasiak07}]
    \label{thm:good_bad}
    Let $G$ be a graph with $\alpha(G) = 2$ and $|V(G)|$ even. Suppose there is a partition of $E(G)$ into `good' and `bad' edges such that:
    \begin{enumerate}[label=(\arabic*)]
        \item any two good edges that do not share an endpoint are adjacent, and
        \item for any two bad edges $uv$ and $vw$ that share an endpoint, $uw \in E(G)$, and
        \item if $u$ and $v$ are adjacent-twins then $uv$ is bad, and
        \item subject to (3), the number of good edges is as large as possible.
    \end{enumerate}
    Then either:
    \begin{enumerate}[label=(\alph*)]
        \item $G$ has a dominating edge, or
        \item $\kappa(G) \leq |V(G)|/2$, or
        \item $\omega(G) \geq |V(G)|/2$, or
        \item $G$ has a perfect matching $M$ of good edges such that $M$ is connected (and hence $M$ is dominating).
    \end{enumerate}
\end{thm}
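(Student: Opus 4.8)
The plan is to observe first that conclusion (d) is equivalent to the good subgraph $G_g := (V(G), \{\text{good edges}\})$ having a perfect matching: any matching consisting of good edges is automatically connected by condition (1), since its edges are pairwise disjoint and hence pairwise adjacent, and a perfect matching is dominating because it covers every vertex. So it suffices to assume that none of (a), (b), (c) holds — that is, $G$ has no dominating edge, $\kappa(G) > |V(G)|/2$, and $\omega(G) < |V(G)|/2$ — and deduce that $G_g$ has a perfect matching. Since $|V(G)|$ is even, if $G_g$ had no perfect matching then by Tutte's theorem there is a set $S \subseteq V(G)$ with $o(G_g - S) \ge |S| + 2$, where $o$ counts odd components; I would fix such a barrier (the Gallai--Edmonds set together with its factor-critical components is a convenient choice) and work towards a contradiction.

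Two structural facts drive the argument. From condition (2), for every vertex $v$ the bad-neighbourhood $\{w : vw \text{ is a bad edge}\}$ is a clique of $G$, since two bad edges at $v$ force an edge between their far endpoints. Because distinct components of $G_g - S$ have no good edge between them, every $G$-edge joining two such components is bad; combined with the previous fact, for $v$ in a component $C_i$ the set $N_G(v) \setminus (S \cup C_i)$ is a clique, hence has fewer than $|V(G)|/2$ vertices by the failure of (c). Finally, because $\alpha(G) = 2$, three pairwise $G$-non-adjacent components would yield an independent set of size $3$; thus the component-adjacency graph $\mathcal H$ on the components of $G_g - S$ satisfies $\alpha(\mathcal H) \le 2$.

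With these in hand I would split into two cases according to $\mathcal H$. If some two components $C_i, C_j$ are $G$-non-adjacent, then $V(G) \setminus (C_i \cup C_j)$ separates $C_i$ from $C_j$, so $\kappa(G) \le |V(G)| - |C_i| - |C_j|$; choosing the pair to maximise $|C_i| + |C_j|$ and using that there are at least $|S| + 2$ odd components should force this cut to have at most $|V(G)|/2$ vertices, contradicting $\kappa(G) > |V(G)|/2$ (this is precisely the regime already covered by \cref{thm:connectivity}). If instead every two components are pairwise $G$-adjacent, then I would exploit $\delta(G) \ge \kappa(G) > |V(G)|/2$: for $v$ in a smallest component $C_i$, the neighbours of $v$ split into those inside $S \cup C_i$ and the clique $K_v := N_G(v) \setminus (S \cup C_i)$, so $\{v\} \cup K_v$ is a clique of size at least $\delta(G) - |S| - |C_i| + 1$, and the fact that the at least $|S| + 2$ odd components partition almost all of $V(G)$ bounds $|S| + |C_i|$ from above, pushing this clique to at least $|V(G)|/2$ and contradicting the failure of (c).

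The delicate point — and the step I expect to be the main obstacle — is making the size bookkeeping tight enough to land below $|V(G)|/2$ for a cut, or at least $|V(G)|/2$ for a clique, while respecting all the constraints simultaneously; this is where the maximality condition (4) must enter. Concretely, (4) says that no bad edge $ab$ with $a, b$ not adjacent-twins can be recoloured good without violating (1), which means every such bad edge lies in an induced $2K_2$ together with some good edge, i.e. a pair of disjoint $G$-non-adjacent edges. I would use this to eliminate the degenerate barrier configurations that escape the crude size estimates, for instance by promoting a between-component bad edge to good to decrease the deficiency whenever no such blocking $2K_2$ exists. The absence of a dominating edge (equivalently $\diam(\overline{G}) = 2$ by \cref{lem:equiv_triangle_free}) is then used to supply the common non-neighbour witnesses that keep these recolourings valid.
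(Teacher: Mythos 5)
The paper does not actually prove this theorem; it is stated as a cited result of \citet{Blasiak07}, so there is no in-paper proof to compare against. Evaluating your proposal on its own terms: the setup is sound. Reducing (d) to a perfect matching in the good subgraph $G_g$ is correct (disjoint good edges are pairwise adjacent by (1), and a perfect matching is vacuously dominating), the Tutte barrier $o(G_g - S) \ge |S| + 2$ is the right starting point, and the two structural facts — bad neighbourhoods are cliques by (2), hence $N_G(v)\setminus(S\cup C_i)$ is a clique for $v$ in a component $C_i$, and the component-adjacency graph has independence number at most $2$ — are correct and are the right ingredients.

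The gap is that neither branch of your case analysis closes. In the first case, two $G$-non-adjacent components $C_i, C_j$ give $\kappa(G) \le |V(G)| - |C_i| - |C_j|$, which contradicts the failure of (b) only if $|C_i| + |C_j| \ge |V(G)|/2$; nothing forces this — the barrier could consist of a large $S$ with many small (even singleton) odd components, and having at least $|S|+2$ of them gives no lower bound on the size of any particular pair. In the second case, the clique $\{v\}\cup(N_G(v)\setminus(S\cup C_i))$ has size at least $\delta(G) + 2 - |S| - |C_i| \ge |V(G)|/2 + 3 - |S| - |C_i|$, which contradicts the failure of (c) only when $|S| + |C_i| \le 3$; again the barrier need not be this degenerate. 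So the "crude size estimates" cover only a thin slice of barriers, and everything else must be handled by the recolouring argument driven by the maximality condition (4) — which you correctly identify as the crux but leave entirely unexecuted. That recolouring step (showing that a between-component bad edge that is not blocked by a disjoint non-adjacent good edge can be promoted, and that this genuinely reduces the Tutte deficiency or restructures the barrier) is where essentially all of the difficulty of Blasiak's theorem lives, so as it stands the proposal is a plausible skeleton with the load-bearing step missing and with the fallback inequalities stated in a form that is false in general.
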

We deduce \cref{thm:bla} from \cref{thm:good_bad}.
Note that if (b), (c) or (d) occurs, then $G$ satisfies \nameref{conj:shc_half}. If (a) occurs, then we deduce that $G$ satisfies \nameref{conj:shc_half} by applying induction to $G - \{u, v\}$, where $uv$ is the dominating edge of $G$.
\cref{thm:good_bad} implies \cref{thm:bla} because for every graph with $\theta_f(G) < 3$, there is a partition of $E(G)$ into good and bad edges. To see this, observe that by \cref{lem:equiv_frac}, there is a multiset $\mathcal{X}:=(X_1, \dots, X_r)$ in $G$ such that each vertex of $G$ is in strictly more than $r/3$ cliques of $\mathcal{X}$. Partition $E(G)$ as follows. For an edge $uv \in E(G)$, let $\mathdefn{n(uv)}$ be the size of the multiset $\mathdefn{N(uv)}:= (X_i \in \mathcal{X}: \text{$u$ or $v$ is in $X_i$})$. An edge $uv$ is good if $n(uv) > r/2$, and bad otherwise. If $uv$ and $xy$ are good edges not sharing an endpoint, then since $n(uv), n(xy) > r/2$, we have $N(uv) \cap N(xy) \neq \varnothing$, so there is a clique in $\mathcal{X}$ containing an endpoint of $uv$ and an endpoint of $xy$. Thus, two good edges that do not share an endpoint are adjacent. If $uv$ and $vw$ are bad, then $n(uv) \leq r/2$ and $n(vw) \leq r/2$. However, since there are strictly more than $r/3$ cliques containing $u$, there are strictly less than $r/6$ cliques of $\mathcal{X}$ containing $v$ but not $u$. Similarly, there are strictly less than $r/6$ cliques of $\mathcal{X}$ containing $v$ but not $w$. However, since $v$ is in strictly more than $r/3$ cliques of $\mathcal{X}$, there is a clique in $\mathcal{X}$ containing $u,v$ and $w$. Thus, $uw \in E(G)$.

It is tempting to conjecture that every graph $G$ with $\alpha(G) = 2$ has a partition of $E(G)$ into good and bad edges satisfying the assumptions in \cref{thm:good_bad}. However, \citet{Blasiak07} demonstrated that this is false.

If $|V(G)| = 2k - 1$ is odd, then \cref{thm:bla} implies that $\had_2(G) \geq k-1$, instead of the desired $\had_2(G) \geq k$. Despite a fair amount of effort \citep{CS12}, the following conjecture is open.
\begin{conj}
    \label{conj:bla}
    Every graph $G$ with $\theta_f(G) < 3$ satisfies \nameref{conj:shc_half}.
\end{conj}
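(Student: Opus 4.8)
The plan is to build on \cref{thm:bla}, which already settles the case $|V(G)|$ even, and to attack the remaining odd case. Since $\theta_f(G) < 3$ forces $\alpha(G) \le 2$, and complete graphs trivially satisfy \nameref{conj:shc_half}, I assume $\alpha(G) = 2$ and write $|V(G)| = 2k-1$, so the target is a $K_k$-model with branch sets of size at most $2$. Counting branch-set sizes shows that when $\omega(G)$ is small this essentially forces a \emph{near-perfect} connected matching: $k-1$ edges forming a connected matching that saturates all but one vertex $v$, where $v$ has a neighbour in each of the $k-1$ edges, so that $\{v\}$ together with these edges induces $K_k$ in the minor. Producing such a matching is the heart of the odd case.

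I would pass to a minimal counterexample $G$ to \nameref{conj:shc_half}; since $\theta_f(G) < 3$ is inherited by induced subgraphs, $G$ exists, and by \cref{thm:equiv} it is simultaneously a minimal counterexample to \nameref{conj:shc_chi}. Hence \cref{thm:minimal_properties} applies, giving Properties \propref{prop:vertex_critical}--\propref{prop:C5_condition}: in particular $|V(G)| = 2\chi(G) - 1$ is odd with $\chi(G) = k$, $G$ has no dominating edge \propref{prop:no_dominating_edges}, and $\kappa(G) \ge \chi(G) = k > |V(G)|/2$ \propref{prop:kappa_geq_chi}. If $\omega(G) \ge |V(G)|/2$, then $\omega(G) \ge k$ and a maximum clique is already a $K_k$-model with singleton branch sets, a contradiction. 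Thus the structural cases corresponding to outcomes (a), (b) and (c) of \cref{thm:good_bad} cannot arise, and everything reduces to outcome (d) --- producing the connected matching --- but now in odd order, where \cref{thm:good_bad} does not apply directly.

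For the matching I would reuse the good/bad edge partition underlying the proof of \cref{thm:bla}: by \cref{lem:equiv_frac} fix a multiset $\mathcal{X} = (X_1, \dots, X_r)$ of cliques with each vertex in strictly more than $r/3$ of them, and call an edge $uv$ good if $n(uv) > r/2$ and bad otherwise. As established there, two good edges with no common endpoint are adjacent, so any matching of good edges is automatically a connected matching, while bad edges are transitive at a common endpoint. The goal becomes to find a matching of good edges saturating all but one vertex $v$ with $v$ adjacent to every chosen edge. I would run a Tutte--Berge / alternating-path argument on the subgraph of good edges, as in the even case, but permitting exactly one unsaturated vertex, and use that the non-neighbourhood of every vertex is a clique (since $\alpha(G) = 2$) to bound how many good edges can avoid any prospective leftover vertex, thereby steering the deficiency onto a vertex that dominates the matching.

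The main obstacle is exactly this final step, which is where the conjecture remains open. The even-order argument yields a \emph{perfect} connected matching through a tight global parity/augmentation count that leaves no room for a deficiency, and the obvious fix of appending a universal vertex to restore even order fails: a universal vertex creates a dominating edge, so outcome (a) of \cref{thm:good_bad} holds trivially and the theorem becomes vacuous. Controlling which vertex is left unsaturated so that it is adjacent to every edge of a connected good-edge matching --- equivalently, excluding the near-miss configurations responsible for the gap between $\had_2(G) \ge k-1$ and the desired $\had_2(G) \ge k$ --- is the crux that has resisted effort, and I expect a proof to need a genuinely new idea here rather than a refinement of the good/bad partition.
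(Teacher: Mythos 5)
This statement is \cref{conj:bla}, which the paper explicitly leaves \emph{open} (``Despite a fair amount of effort \citep{CS12}, the following conjecture is open''), so there is no proof in the paper to compare against. Your proposal is, by your own admission, not a proof: the final step --- steering the deficiency of a good-edge matching onto a single vertex that dominates a connected matching of size $k-1$ in a graph of odd order $2k-1$ --- is precisely the content of the open problem, and everything before it is reduction rather than resolution. The reductions themselves are reasonable and consistent with the paper's surrounding discussion (\cref{thm:bla}, \cref{thm:good_bad}, \cref{thm:minimal_properties}), and your observation that padding with a universal vertex fails because it creates a dominating edge correctly identifies why the even-order theorem does not bootstrap to the odd case.

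One technical slip in the reduction: a minimal counterexample to \nameref{conj:shc_half} is not automatically a \emph{minimal} counterexample to \nameref{conj:shc_chi}. A proper induced subgraph $H$ satisfying $\had_2(H) \geq \lceil |V(H)|/2 \rceil$ need not satisfy $\had_2(H) \geq \chi(H)$, since $\chi(H)$ can exceed $\lceil |V(H)|/2\rceil$. The correct move is to start from a counterexample to \nameref{conj:shc_half} (hence to \nameref{conj:shc_chi}, by \cref{thm:equiv}), pass to a minimal induced subgraph that is a counterexample to \nameref{conj:shc_chi} --- noting that $\theta_f < 3$ is inherited by induced subgraphs via \cref{cor:inflation_frac} --- and then invoke \cref{thm:minimal_properties}(d) and the second bullet of \cref{thm:equiv}. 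This repairs your setup but does not touch the genuine gap, which remains the near-perfect connected dominating matching in odd order.
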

By allowing branch sets of size $3$, we can remove the assumption that $|V(G)|$ is even and improve the bound on $\theta_f(G)$ in \cref{thm:bla} (see \cref{thm:124/33}). We start with the following result, proved by \citet{Carter22}.
\begin{thm}[\citet{Carter22}]
    \label{thm:carter30}
    Let $G$ be a graph with $\alpha(G) = 2$ and $|V(G)| \leq 30$. Then $G$ satisfies \nameref{conj:hc_chi}. Therefore, any minimal counterexample to \nameref{conj:hc_chi} has at least $31$ vertices, and $|V(G)|$ is odd.
\end{thm}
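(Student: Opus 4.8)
The plan is to prove the contrapositive: no \emph{minimal} counterexample to \nameref{conj:hc_chi} has order at most $30$. Since any counterexample contains a minimal counterexample as an induced subgraph of no larger order, this suffices. So suppose $G$ is a minimal counterexample with $|V(G)| \leq 30$. By \cref{thm:minimal_properties}, $G$ satisfies Properties \propref{prop:size_equals_2chi_minus_1}, \propref{prop:chi_at_least_7}, and \propref{prop:omega_at_most_chi_minus_3}; in particular $|V(G)| = 2\chi(G) - 1$ is odd, $\chi(G) \geq 7$ (so $|V(G)| \geq 13$), and $\omega(G) \leq \chi(G) - 3$. Writing $n := |V(G)|$, it remains to rule out a minimal counterexample of each odd order $n \in \{13, 15, \dots, 29\}$, where $\chi(G) = (n+1)/2$.

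The key reduction plays two opposing bounds on $\omega(G)$ against each other. Since $\alpha(G) = 2$, Ramsey's theorem gives $\omega(G) \geq s$ whenever $n \geq R(3, s)$; using the classical values $R(3,4) = 9$, $R(3,5) = 14$, $R(3,6) = 18$, $R(3,7) = 23$, and $R(3,8) = 28$, this forces $\omega(G) \geq 4, 5, 5, 6, 6, 7, 7, 7, 8$ for $n = 13, 15, 17, 19, 21, 23, 25, 27, 29$ respectively. On the other hand, by \cref{thm:equiv} a minimal counterexample to \nameref{conj:hc_chi} fails \nameref{conj:hc_half}, so the clique-ratio bound \cref{thm:seagullsN/4} (odd case) forces $\omega(G) < \lceil (n+3)/4 \rceil$. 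For every odd $n \in \{13, 15, 17, 19, 21, 23, 25, 29\}$ the Ramsey lower bound strictly exceeds this seagull upper bound, a contradiction. Hence the only surviving order is $n = 27$.

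In the residual case $n = 27$, $\chi(G) = 14$, the two bounds pin down $\omega(G) = 7$ exactly, so $\overline{G}$ is a triangle-free graph on $27$ vertices with $\alpha(\overline{G}) = \omega(G) = 7$. Moreover, by Property \propref{prop:complement_diameter_2} and \cref{lem:equiv_triangle_free}, $\overline{G}$ is \emph{edge-maximal} triangle-free (it has diameter $2$), and by Properties \propref{prop:delta_geq_chi_plus_1} and \propref{prop:kappa_geq_chi} its degrees and connectivity are tightly constrained; for instance $\Delta(\overline{G}) = 26 - \delta(G) \leq 11$. Such $\overline{G}$ are precisely the edge-maximal triangle-free graphs on $27$ vertices with independence number $7$ (the maximal $(3,8)$-Ramsey graphs). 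The plan is to enumerate these graphs computationally, using the full list of necessary conditions from \cref{thm:minimal_properties} to prune the search, and for each surviving candidate $G$ to exhibit a $K_{14}$-model, thereby confirming $\had(G) \geq 14$ and contradicting $\had(G) < \chi(G) = 14$.

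The hard part is precisely this last step. Although the Ramsey--seagull reduction isolates a single order, the family of maximal triangle-free graphs on $27$ vertices with independence number $7$ is still sizeable, so one needs an isomorph-free generation of the candidates (exploiting the criticality, diameter-$2$, and degree constraints to cut the search tree) together with a reliable routine for finding a $K_{14}$ minor in each. Once every candidate is shown to satisfy $\had(G) \geq 14$, no minimal counterexample of order at most $30$ can exist; combined with the parity from Property \propref{prop:size_equals_2chi_minus_1}, every minimal counterexample has odd order at least $31$, and the theorem follows.
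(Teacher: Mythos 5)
Your Ramsey--seagull reduction is correct and cleanly executed: combining $R(3,4)=9$, $R(3,5)=14$, $R(3,6)=18$, $R(3,7)=23$, $R(3,8)=28$ with \cref{thm:seagullsN/4} and Properties \propref{prop:size_equals_2chi_minus_1} and \propref{prop:chi_at_least_7} does eliminate every odd order in $\{13,\dots,29\}$ except $n=27$, where the Ramsey bound only yields $\omega(G)\geq 7$ while the seagull threshold is $\lceil 30/4\rceil = 8$. The problem is that the $n=27$ case is then not proved: you describe a plan to enumerate the maximal $(3,8)$-Ramsey graphs on $27$ vertices and exhibit a $K_{14}$-model in each, but you do not carry it out, and you give no argument for why such a model must exist in every candidate. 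That residual case is exactly where the content of the theorem lives --- the paper records that there are $477{,}142$ graphs of maximal order that are $K_8$-free with independence number $2$, and that all of them contain every known unavoidable induced subgraph, so none of the structural tools in \cref{thm:minimal_properties} is known to dispose of them. As written, your argument establishes the theorem only for $|V(G)|\leq 26$ and $|V(G)|\in\{28,29,30\}$ (via the even/parity reduction), and leaves a genuine gap at $27$.

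The paper's proof avoids this case split entirely by invoking the result of \citet{Carter22} that $K_8$ is \nameref{conj:hc_chi}-unavoidable: any minimal counterexample then has $\omega(G)\geq 8$, and for every $|V(G)|\leq 30$ this already meets the hypothesis of \cref{thm:seagullsN/4}, contradicting the failure of \nameref{conj:hc_half} guaranteed by \cref{thm:equiv}. In effect, the computation you are proposing for $n=27$ is (a large part of) the computational content behind Carter's $K_8$-unavoidability theorem, so your route does not bypass that work --- it re-derives the easy cases by Ramsey numbers and then stalls precisely where Carter's contribution is needed. To repair your proof without new computation, replace the Ramsey lower bounds by the single statement $\omega(G)\geq 8$ from $K_8$-unavoidability; alternatively, if you want a self-contained argument, you must actually perform and verify the enumeration and minor search for the $27$-vertex case, which is a substantial computational task that cannot be left as a sketch.
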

\begin{proof}
    Let $G$ be a minimal counterexample to \nameref{conj:hc_chi}. By \cref{thm:equiv}, $G$ fails \nameref{conj:hc_half}.
    Since     $K_8$ is \nameref{conj:hc_chi}-unavoidable~\citep{Carter22}, $\omega(G) \geq 8$. If $|V(G)| \leq 30$, then the assumptions of \cref{thm:seagullsN/4} are satisfied, so $G$ satisfies  \nameref{conj:hc_half}, a contradiction. Therefore, $|V(G)| \geq 31$. By Property \propref{prop:size_equals_2chi_minus_1} of \cref{thm:minimal_properties}, $|V(G)|$ is odd.
\end{proof}
\begin{thm}
    \label{thm:icr1/4}
    Let $G$ be a graph with $\alpha(G) = 2$ and $\icr(G) \geq \frac{1}{4}  + \varepsilon$, where $\varepsilon:=1/62$. Then any inflation of $G$ satisfies \nameref{conj:hc_chi}.
\end{thm}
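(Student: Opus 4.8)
The plan is to argue by contradiction through a minimal counterexample, reducing everything to the odd case of the clique-ratio bound \cref{thm:seagullsN/4}. Suppose some inflation $G'$ of $G$ fails \nameref{conj:hc_chi}, and let $H$ be an induced subgraph of $G'$ of minimum order that fails \nameref{conj:hc_chi}; then $H$ is a minimal counterexample to \nameref{conj:hc_chi}, and in particular $\alpha(H) = 2$. The first point I would record is that $H$ is itself an inflation of $G$: any induced subgraph of an inflation of $G$ is obtained by shrinking each clique $C_x$ to a subclique (possibly empty), so it is again an inflation of $G$. Hence, directly from the definition of the inflated clique ratio, $\omega(H)/|V(H)| \geq \icr(G) \geq \tfrac{1}{4} + \tfrac{1}{62}$.

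Next I would pin down the order of $H$. By \cref{thm:carter30}, every minimal counterexample to \nameref{conj:hc_chi} has at least $31$ vertices and odd order; write $n := |V(H)|$, so $n \geq 31$ is odd. Since $\omega(H)$ is an integer with $\omega(H) \geq n(\tfrac{1}{4} + \tfrac{1}{62})$, we get $\omega(H) \geq \lceil n(\tfrac{1}{4} + \tfrac{1}{62}) \rceil$. The goal is to feed this into the odd case of \cref{thm:seagullsN/4}, which requires $\omega(H) \geq \lceil (n+3)/4 \rceil$.

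The crux is therefore the purely arithmetic claim that
\[
  \Bigl\lceil n\bigl(\tfrac{1}{4} + \tfrac{1}{62}\bigr) \Bigr\rceil \;\geq\; \Bigl\lceil \tfrac{n+3}{4} \Bigr\rceil \qquad\text{for every odd } n \geq 31,
\]
which I would verify by splitting on $n \bmod 4$. If $n \equiv 1 \pmod 4$, write $n = 4m+1$; then $\lceil (n+3)/4 \rceil = m+1$, while $n(\tfrac{1}{4}+\tfrac{1}{62}) = m + \tfrac{1}{4} + \tfrac{n}{62}$ strictly exceeds the integer $m$, so its ceiling is at least $m+1$. If $n \equiv 3 \pmod 4$, write $n = 4m+3$; then $\lceil (n+3)/4 \rceil = m+2$, and $n(\tfrac{1}{4}+\tfrac{1}{62}) = m + \tfrac{3}{4} + \tfrac{n}{62}$, whose ceiling is at least $m+2$ precisely when $\tfrac{3}{4} + \tfrac{n}{62} > 1$, i.e.\ $n > 62/4 = 15.5$, which holds since $n \geq 31$. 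The step that most needs care is exactly this one: the \emph{real} inequality $n(\tfrac{1}{4}+\tfrac{1}{62}) \geq (n+3)/4$ only holds for $n \geq 47$, so the whole argument relies on working with the ceilings together with the facts that $n$ is odd and $n \geq 31$; this is where the choice $\varepsilon = \tfrac{1}{62}$ (so that $\tfrac{1}{4}+\varepsilon = \tfrac{33}{124}$) is calibrated.

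With the claim in hand, $\omega(H) \geq \lceil (n+3)/4 \rceil$, so the odd case of \cref{thm:seagullsN/4} yields $\had(H) \geq |V(H)|/2$, i.e.\ $H$ satisfies \nameref{conj:hc_half}. But $H$ is a minimal counterexample to \nameref{conj:hc_chi}, so by \cref{thm:equiv} it must fail \nameref{conj:hc_half}, that is $\had(H) < |V(H)|/2$ --- a contradiction. Therefore no inflation of $G$ fails \nameref{conj:hc_chi}.
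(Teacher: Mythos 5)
Your proposal is correct and follows essentially the same route as the paper: take a minimal counterexample $H$ inside the inflation, note it is itself an inflation of $G$ so $\omega(H)/|V(H)| \geq \tfrac{1}{4}+\tfrac{1}{62}$, invoke \cref{thm:carter30} to get $|V(H)| \geq 31$ odd, and then do the ceiling arithmetic in the two residue classes mod $4$ to meet the hypothesis of the odd case of \cref{thm:seagullsN/4}, contradicting the failure of \nameref{conj:hc_half} via \cref{thm:equiv}. The only cosmetic difference is that the paper routes the arithmetic through the intermediate bound $\lceil(|V(H)|+2)/4\rceil$ using $|V(H)|/62 \geq 1/2$, whereas you verify the ceiling inequality directly in each case; both are valid.
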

\begin{proof}
    Let $G'$ be an inflation of $G$ that fails \nameref{conj:hc_chi}. Let $H$ be a minimal induced subgraph of $G'$ that is a counterexample to \nameref{conj:hc_chi}. Note that $H$ is also an inflation of $G$, and $H$ is a minimal counterexample to \nameref{conj:hc_chi}.
    By \cref{thm:carter30}, $|V(H)| \geq 31$ and $|V(H)|$ is odd. By  \cref{thm:equiv}, $H$ fails \nameref{conj:hc_half}.

    Since $\icr(G) \geq \frac{1}{4}  + \varepsilon$, $
        \omega(H) \geq \left\lceil \frac{|V(H)|}{4} + \frac{|V(H)|}{62}\right \rceil \geq \left\lceil \frac{|V(H)| + 2}{4} \right\rceil$.
    If $|V(H)| \equiv 1 \pmod 4$, then $|V(H)| = 4k + 1$ for some integer $k$. Then $
        \omega(H) \geq \left\lceil \frac{(4k+1)+ 2}{4} \right\rceil = k + 1$.
    By \cref{thm:seagullsN/4}, since $\omega(H) \geq \left\lceil \frac{|V(H)| + 3}{4} \right\rceil = k + 1$, $H$ satisfies \nameref{conj:hc_half}, a contradiction.
    If $|V(H)| = 4k + 3$ for some integer $k$, then $
        \omega(H) \geq \left\lceil \frac{(4k + 3) + 2}{4} \right\rceil = k + 2$.
    By \cref{thm:seagullsN/4}, since $\omega(H) \geq \left\lceil \frac{|V(H)| + 3}{4} \right\rceil = k + 2$, $H$ satisfies \nameref{conj:hc_half}, a contradiction.
\end{proof}

\begin{thm}
    \label{thm:124/33}
    Let $G$ be a graph with $\alpha(G) = 2$ and $\theta_f(G) \leq 124/33 \approx 3.7575\dots$. Then any inflation of $G$ satisfies \nameref{conj:hc_chi}.
\end{thm}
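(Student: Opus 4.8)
The plan is to obtain this as an essentially immediate corollary of \cref{thm:icr1/4}, using the bridge between the fractional clique covering number and the inflated clique ratio supplied by \cref{lem:clique_cover}. The point is that the hypothesis here is phrased in terms of $\theta_f(G)$, whereas \cref{thm:icr1/4} is phrased in terms of $\icr(G)$; so the first step is simply to translate the one bound into the other.

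First I would invoke \cref{lem:clique_cover}, which gives $\icr(G) \geq 1/\theta_f(G)$. Since $G$ is assumed to satisfy $\theta_f(G) \leq 124/33$, this immediately yields $\icr(G) \geq 33/124$. The whole theorem then hinges on the arithmetic identity $\tfrac{33}{124} = \tfrac{31}{124} + \tfrac{2}{124} = \tfrac{1}{4} + \tfrac{1}{62}$; that is, the threshold $124/33$ in the hypothesis is chosen precisely as the reciprocal of $\tfrac14 + \varepsilon$ with $\varepsilon = 1/62$, the exact constant appearing in \cref{thm:icr1/4}. Hence $\icr(G) \geq \tfrac14 + \varepsilon$.

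With $\alpha(G) = 2$ (given) and $\icr(G) \geq \tfrac14 + \varepsilon$ now established, \cref{thm:icr1/4} applies directly and shows that any inflation of $G$ satisfies \nameref{conj:hc_chi}, completing the proof. There is no genuine obstacle here: the only thing requiring care is confirming that the constant $124/33$ is exactly the reciprocal translate of the $\icr$-bound in \cref{thm:icr1/4}, so that the inequality $\icr(G) \geq \tfrac14 + \varepsilon$ comes out tight rather than off by a sliver in the wrong direction. Once that calibration is checked, the statement is a one-line consequence of the two cited results.
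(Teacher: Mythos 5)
Your proposal is correct and is essentially identical to the paper's own proof: both apply \cref{lem:clique_cover} to convert $\theta_f(G) \leq 124/33$ into $\icr(G) \geq 33/124 = \tfrac14 + \tfrac1{62}$ and then invoke \cref{thm:icr1/4}. The arithmetic calibration you checked is exactly right.
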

\begin{proof}
    By \cref{lem:clique_cover}, $\icr(G) \geq \frac{33}{124} = \frac{1}{4} + \frac{1}{62}$. The result follows from \cref{thm:icr1/4}.
\end{proof}
In some sense, \cref{thm:seagullsN/4} should allow us to push the constant $124/33$ in \cref{thm:124/33} up to $4$. After all, every graph with $\theta_f(G) \leq 4$ has $\omega(G)/|V(G)| \geq 1/4$, and \cref{thm:seagullsN/4} says that \nameref{conj:hc_half} holds for graphs with $\omega(G) / |V(G)|$ slightly larger than $1/4$.
We are unable to push the constant $124/33$ up to $4$ because of the $\varepsilon$ term in \cref{thm:icr1/4}, which depends on the lower bound on the size of a minimum counterexample established in \cref{thm:carter30}.
We set out to prove a qualitative strengthening of \cref{thm:124/33}, introduced by \citet{Carter22}\footnote{In \citet{Carter22}, this is called a `four clique cover'. We avoid the term `clique cover', since the definition typically requires the cliques to be disjoint.}, which turns out to be useful for handling inflations.
\begin{lem}
    \label{lem:cc2}
    Let $G$ be a graph, and let $\mathcal{K}$ be a collection of cliques of $G$ whose union covers $V(G)$. Then any proper inflation $G'$ of $G$ has a collection of cliques $\mathcal{K}'$ whose union covers $V(G')$ such that $|\mathcal{K}| = |\mathcal{K}'|$ and $\sum_{Q \in \mathcal{K'}} |Q| = |V(G')| - |V(G)| + \sum_{Q \in \mathcal{K}} |Q|$.
\end{lem}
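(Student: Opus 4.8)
The plan is to build $\mathcal{K}'$ by an explicit, clique-by-clique construction, exploiting the fact that inflating every vertex of a clique of $G$ to its full blown-up clique again yields a clique of $G'$. Write $C_x$ for the clique of order $c_x \geq 1$ that replaces $x \in V(G)$ in the proper inflation $G'$, so that $|V(G')| = \sum_{x \in V(G)} c_x$ and hence $|V(G')| - |V(G)| = \sum_{x \in V(G)} (c_x - 1)$. For each $x$, fix one representative vertex $r_x \in C_x$ (possible precisely because the inflation is \emph{proper}, so $C_x \neq \varnothing$); call the remaining $c_x - 1$ vertices of $C_x$ the \emph{surplus} of $x$.

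First I would set up the correspondence. Since $\mathcal{K}$ covers $V(G)$, every vertex $x$ lies in at least one clique of $\mathcal{K}$; assign $x$ to one such clique, denoted $Q(x)$. Then, for each $Q \in \mathcal{K}$, define
$$Q' := \{ r_x : x \in Q \} \cup \bigcup_{x \,:\, Q(x) = Q} (C_x \setminus \{r_x\}),$$
and let $\mathcal{K}'$ be the family $\{Q' : Q \in \mathcal{K}\}$, indexed by $\mathcal{K}$ so that $|\mathcal{K}'| = |\mathcal{K}|$ automatically.

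The key verification steps, in order, are: (i) each $Q'$ is a clique; (ii) $\mathcal{K}'$ covers $V(G')$; and (iii) the size identity holds. For (i), note that $Q' \subseteq \bigcup_{x \in Q} C_x$, and since $Q$ is a clique of $G$, any two distinct $x, y \in Q$ satisfy $xy \in E(G)$, so every vertex of $C_x$ is adjacent to every vertex of $C_y$; as each $C_x$ is itself a clique, $\bigcup_{x \in Q} C_x$ is a clique of $G'$, hence so is its subset $Q'$. For (ii), any vertex of $G'$ lies in some $C_x$: the representative $r_x$ is contained in $Q'$ for every $Q \ni x$ (and at least one such $Q$ exists), while the surplus $C_x \setminus \{r_x\}$ lies in the single clique $Q(x)'$. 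For (iii), the representatives contribute $\sum_{Q \in \mathcal{K}} |Q|$ to $\sum_{Q' \in \mathcal{K}'} |Q'|$ (one per vertex of each $Q$, all distinct since they live in distinct $C_x$), and the surplus vertices contribute $\sum_{x \in V(G)}(c_x - 1)$, each counted exactly once because each $x$ is assigned to a unique $Q(x)$. Adding these gives $\sum_{Q' \in \mathcal{K}'} |Q'| = \sum_{Q \in \mathcal{K}} |Q| + \sum_{x}(c_x - 1) = \sum_{Q \in \mathcal{K}} |Q| + |V(G')| - |V(G)|$, as required.

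The only point needing genuine care is ensuring each surplus vertex is absorbed into exactly one clique: neither dropped (which would break the covering property in (ii)) nor duplicated across several cliques (which would overcount the sum in (iii)). The assignment $x \mapsto Q(x)$ is exactly the device that guarantees this, and the observation that fully inflating a clique remains a clique is what lets us legitimately attach all of $x$'s surplus to $Q(x)'$ without destroying (i).
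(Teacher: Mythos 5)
Your proof is correct. It takes a genuinely different (though closely related) route from the paper's: the paper argues by induction on $|V(G')|$, peeling off one surplus vertex at a time --- at each step $G'$ is viewed as an inflation of a smaller proper inflation $G''$ obtained by duplicating a single vertex $w$ into an adjacent-twin pair, and one clique $K''$ of the inductive cover containing $w$ is enlarged to $K'' \cup \{v\}$. Your argument instead gives the whole family $\mathcal{K}'$ in one explicit shot, via the choice of representatives $r_x$ and the assignment $x \mapsto Q(x)$ that absorbs all of $x$'s surplus into a single clique. The two proofs share the same combinatorial kernel (each extra vertex of $C_x$ must be placed into exactly one clique whose projection contains $x$), and indeed unrolling the paper's induction yields a construction of your type, except that surplus vertices of the same $C_x$ may land in different cliques. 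What your version buys is a transparent, non-recursive verification of the cardinality identity and of the covering property, and it sidesteps the (implicit) step in the paper that a proper inflation of a proper inflation is again a proper inflation; what the induction buys is brevity, since only one clique changes per step. One small point you handle correctly but could state more explicitly: the representative part $\{r_x : x \in Q\}$ and the surplus part of $Q'$ are disjoint (a representative $r_y$ can only lie in $C_x$ when $y = x$, and then it is excluded from $C_x \setminus \{r_x\}$), which is what makes $|Q'| = |Q| + \sum_{x : Q(x)=Q}(c_x - 1)$ an equality rather than an inequality.
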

\begin{proof}
    Let $G'$ be a proper inflation of $G$. We induct on $|V(G')|$. Let $V(G) = \{ v_1, \dots, v_n \}$.
    The base case when $|V(G')| = |V(G)|$ is straightforward by taking $\mathcal{K}' = \mathcal{K}$.

    For the induction step, suppose $G'$ is an inflation of $G$ with inflation sizes $k_1, \dots, k_n$, with $k_i \geq 1$ for each $i \in \{ 1, \dots, n \}$. Since $|V(G')| > |V(G)|$, there is an index $i$ such that $k_i \geq 2$. Let $G''$ be an inflation of $G$ with inflation sizes $k_1, \dots k_{i-1},  k_i - 1, k_{i+1} \dots,k_n$. Then $G''$ is a proper inflation of $G$. By induction hypothesis, there is a collection of cliques $\mathcal{K}''$ of $G''$ whose union covers $V(G'')$ such that $\sum_{Q \in \mathcal{K''}} |Q| = |V(G'')| - |V(G)| + \sum_{Q \in \mathcal{K}} |Q|$.

    Note that $G'$ is an inflation of $G''$ obtained from replacing a vertex $w$ with a clique of size $2$. Since the union of $\mathcal{K}''$ covers $V(G'')$, there is a clique $K'' \in \mathcal{K}''$ such that $w \in K''$. Let $v \in V(G') \setminus V(G'')$, and note that $v$ and $w$ are adjacent-twins. Let $K':= K'' \cup \{ v \}$. $K'$ is a clique in $G'$ since $v$ and $w$ are adjacent-twins, so $v$ is adjacent to each vertex in $K''$ in $G'$. Then $\mathcal{K}':= \mathcal{K}'' \cup \{ K' \} \setminus \{ K'' \} $ is a collection of cliques of $G'$ satisfying the desired properties.
\end{proof}
\begin{thm}
    \label{thm:fcc}
    If a graph $G$ with $\alpha(G) = 2$ has four cliques $(Q_1, Q_2, Q_3, Q_4)$ whose union covers $V(G)$ with $\sum_{i=1}^4 |Q_i| \geq |V(G)| + 2$, then any proper inflation of $G$ satisfies \nameref{conj:hc_half}.
\end{thm}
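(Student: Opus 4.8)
The plan is to transport the four-clique cover from $G$ to an arbitrary proper inflation $G'$ via \cref{lem:cc2}, and then invoke \cref{thm:seagullsN/4}. Let $G'$ be a proper inflation of $G$ and set $n := |V(G')|$. Applying \cref{lem:cc2} to the collection $(Q_1,Q_2,Q_3,Q_4)$ of cliques covering $V(G)$ yields cliques $Q_1',Q_2',Q_3',Q_4'$ of $G'$ whose union covers $V(G')$ and satisfies
\begin{equation*}
    \sum_{i=1}^4 |Q_i'| = |V(G')| - |V(G)| + \sum_{i=1}^4 |Q_i| \ge n - |V(G)| + \bigl(|V(G)| + 2\bigr) = n + 2 .
\end{equation*}
By the Pigeonhole Principle the largest of these four cliques has at least $(n+2)/4$ vertices, and since $\omega(G')$ is an integer this gives $\omega(G') \ge \ceil*{\frac{n+2}{4}}$.

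I would then check that this bound meets the hypothesis of \cref{thm:seagullsN/4}. Since $G'$ is a proper inflation of $G$ and $\alpha(G) = 2$, \cref{obs:preserve} gives $\alpha(G') = 2$, so the theorem is applicable once the clique threshold is verified. When $n$ is even the required inequality is $\omega(G') \ge \ceil*{\frac{n}{4}}$, which is immediate from $\ceil*{\frac{n+2}{4}} \ge \ceil*{\frac{n}{4}}$.

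The only step demanding any care is the odd case, where \cref{thm:seagullsN/4} asks for the strictly larger threshold $\omega(G') \ge \ceil*{\frac{n+3}{4}}$, and one must confirm that $\ceil*{\frac{n+2}{4}}$ already reaches it. A residue computation resolves this: if $n = 4k+1$ then $\ceil*{\frac{n+2}{4}} = \ceil*{\frac{n+3}{4}} = k+1$, and if $n = 4k+3$ then $\ceil*{\frac{n+2}{4}} = \ceil*{\frac{n+3}{4}} = k+2$, so in both subcases the two ceilings coincide and the bound $\omega(G') \ge \ceil*{\frac{n+2}{4}}$ suffices. Hence in every parity case $\omega(G')$ meets the threshold of \cref{thm:seagullsN/4}, which yields $\had(G') \ge |V(G')|/2$, so $G'$ satisfies \nameref{conj:hc_half}. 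As $G'$ was an arbitrary proper inflation of $G$, this completes the argument.
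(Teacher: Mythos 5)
Your proposal is correct and follows essentially the same route as the paper: transport the four-clique cover to the inflation via \cref{lem:cc2}, deduce $\omega(G') \geq \ceil*{(|V(G')|+2)/4}$, and finish with \cref{thm:seagullsN/4}. Your explicit residue check that $\ceil*{(n+2)/4} = \ceil*{(n+3)/4}$ for odd $n$ is a detail the paper leaves implicit, and it is verified correctly.
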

\begin{proof}
    Let $G'$ be a proper inflation of $G$. By \cref{lem:cc2}, there are cliques of $G'$ $(Q_1', Q_2', Q_3', Q_4')$ whose union covers $V(G')$ such that
    \begin{equation*}
        \sum_{i=1}^4 |Q_i'| = |V(G')| - |V(G)| + \sum_{i=1}^4 |Q_i| \geq |V(G')| + 2.
    \end{equation*}
    Therefore,
    \begin{equation*}
        4\omega(G') \geq \sum_{i=1}^4 |Q_i'| \geq |V(G')| + 2,
    \end{equation*}
    implying
    $\omega(G') \geq \left\lceil \frac{|V(G')| + 2}{4}\right\rceil$, and \cref{thm:seagullsN/4} gives the result.
\end{proof}

\subsection{Connected Matchings and Weakenings of Hadwiger's Conjecture}
\label{ss:connected_matchings}
This section summarises results for $\mathfrak{C} \in \{\text{\nameref{conj:hc_epsilon}, \nameref{conj:linear_cm}, \nameref{conj:4-CM}}\}$, which are weakenings of \nameref{conj:hc_chi}.
Let $\mathdefn{\cm(G)}$ be the largest size of a connected matching in $G$, and recall that $\mathdefn{\had_2(G)}$ is the order of the largest clique model in $G$ such that each branch set has one or two vertices. It is straightforward to see that $\cm(G) \leq \had_2(G)$. \citet{furedi2005connected} proved that \nameref{conj:4-CM} holds for graphs with small $|V(G)|$.

\begin{thm}[\citet{furedi2005connected}]
    \label{thm:t=16}
    For each $t \in \{1, \dots, 17\}$, every graph $G$ with $\alpha(G) = 2$ and $|V(G)| \geq 4t - 1$ has a connected matching of size $t$. Therefore, every graph $G$ with $\alpha(G) = 2$ and $|V(G)| \leq 67$ satisfies \nameref{conj:4-CM}.
\end{thm}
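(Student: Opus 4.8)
The second sentence follows immediately from the first. If $\alpha(G) = 2$ and $|V(G)| \le 67$, then for any integer $t$ with $|V(G)| \ge 4t-1$ we have $4t - 1 \le 67$, so $t \le 17$, and the first sentence supplies a connected matching of size $t$; this is exactly what \nameref{conj:4-CM} demands. So it suffices to prove the first sentence, which I would do by induction on $t$. The base case $t = 1$ is immediate: a graph with $\alpha(G) = 2$ and $|V(G)| \ge 3$ contains an edge (three vertices cannot be independent), and a single edge is a connected matching of size $1$.

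For the inductive step, fix $t$ with $2 \le t \le 17$ and let $G$ satisfy $\alpha(G) = 2$ and $|V(G)| \ge 4t-1$. The crucial dichotomy is whether $G$ has a dominating edge. Suppose first that $xy \in E(G)$ is dominating, so $N_G(x) \cup N_G(y) = V(G)$. Set $G' := G - x - y$, so $\alpha(G') \le 2$ and $|V(G')| = |V(G)| - 2 \ge 4t - 3 \ge 4(t-1) - 1$. If $\alpha(G') \le 1$ then $G'$ is a clique on at least $2(t-1)$ vertices and contains a connected matching of size $t-1$ trivially; otherwise the induction hypothesis gives a connected matching $M'$ of size $t-1$ in $G'$. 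In either case $M'$ is a connected matching of $G$, and because $xy$ is dominating, every vertex of $V(M')$ is adjacent to $x$ or to $y$; hence $\{x,y\}$ is adjacent to every edge of $M'$, and $M' \cup \{xy\}$ is a connected matching of size $t$.

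It remains to treat graphs with \emph{no} dominating edge, which by \cref{lem:equiv_triangle_free} is precisely the case $\diam(\overline{G}) = 2$. This is the heart of the argument and the source of the restriction $t \le 17$. Here I would argue extremally: take a maximum connected matching $M = \{a_1 b_1, \dots, a_s b_s\}$ and suppose for contradiction that $s \le t-1$. The uncovered set $U := V(G) \setminus V(M)$ then satisfies $|U| = |V(G)| - 2s \ge 2t + 1 \ge 2s + 3$, and since $\alpha(G[U]) \le 2$, the induced subgraph $G[U]$ contains edges. Maximality of $M$ means no edge $uv$ of $G[U]$ can be appended, so each such edge has a \emph{blocking} edge $a_k b_k \in M$ with no edge of $G$ between $\{u,v\}$ and $\{a_k, b_k\}$; since $\alpha(G) = 2$, both $a_k$ and $b_k$ then lie in the (clique of) non-neighbours of $u$ and of $v$, which constrains the blocking pattern sharply.

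The main obstacle is converting these local constraints into a contradiction with $|U| \ge 2s + 3$. The plan is to run exchange (augmentation) steps: swap a blocking edge $a_k b_k$ out of $M$ and an edge of $G[U]$ in, releasing $a_k, b_k$ into the uncovered set and recombining them with vertices of $U$ to reach a connected matching of size $s+1$. Each such exchange is governed by the clique structure of non-neighbourhoods forced by $\alpha(G) = 2$ together with $\diam(\overline{G}) = 2$, and the bookkeeping of which edges stay pairwise adjacent grows in complexity with $s$. This is exactly where the argument becomes finite but intricate, and I expect the case analysis to close the bound only while $s \le t - 1 \le 16$, i.e. $t \le 17$. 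That the threshold $4t - 1$ cannot be lowered is witnessed by the disjoint union of two copies of $K_{2t-1}$: it has $\alpha = 2$, exactly $4t - 2$ vertices, $\diam(\overline{G}) = 2$, and connected-matching number only $t - 1$. This near-extremal example both explains why the no-dominating-edge case is delicate and confirms that the analysis must be pushed essentially to the tight bound.
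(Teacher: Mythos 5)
First, a point of orientation: the paper does not prove \cref{thm:t=16} at all --- it is quoted from \citet{furedi2005connected} (with the later improvement to $t \le 22$ due to \citet{chen2025connected}), so there is no in-paper argument to compare yours against. Judged on its own terms, your proposal correctly disposes of the easy parts. The deduction of the second sentence from the first is right: $|V(G)| \ge 4t-1$ together with $|V(G)| \le 67$ forces $t \le 17$. The base case $t=1$ is right, and the dominating-edge reduction is sound: if $xy$ is dominating, then every edge of a connected matching $M'$ of $G-x-y$ meets $N_G(x) \cup N_G(y)$, so $M' \cup \{xy\}$ is again a connected matching, and the induction hypothesis (or the clique case when $\alpha(G-x-y) \le 1$) supplies $M'$ of size $t-1$ because $|V(G)|-2 \ge 4(t-1)-1$.

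The genuine gap is that the case carrying all of the content --- $G$ with no dominating edge, equivalently $\diam(\overline{G})=2$ by \cref{lem:equiv_triangle_free} --- is not proved. You set up a maximum connected matching $M$ of size $s \le t-1$, observe that the uncovered set $U$ satisfies $|U| \ge 2s+3$, note that every edge of $G[U]$ has a blocking edge in $M$ whose endpoints are non-neighbours of both of its ends, and then state that you ``expect the case analysis to close the bound.'' That expectation \emph{is} the theorem. Nothing in the sketch explains why even a single exchange step can be carried out, why the process terminates in a connected matching of size $s+1$, or why the bookkeeping succeeds precisely up to $s \le 16$; the fact that \nameref{conj:4-CM} is open in general, and that extending $17$ to $22$ required a separate paper, shows that all of the difficulty lives exactly here and cannot be waved through. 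The extremal example of two disjoint copies of $K_{2t-1}$ is correctly analysed, but it only certifies that the threshold $4t-1$ is tight; it does not substitute for the missing argument. As written, the proposal is a plausible proof outline with its central lemma unproven.
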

The bound was later improved to $t \in \{1, \dots, 22\}$ by \citet{chen2025connected}. Therefore, every graph $G$ with $\alpha(G) = 2$ and $|V(G)| \leq 87$ satisfies \nameref{conj:4-CM}. See \citet{chen2025connected} for properties of a counterexample to \nameref{conj:4-CM}.

An unexpected connection between \nameref{conj:hc_half} and \nameref{conj:4-CM} was discovered by \citet{cambie2021hadwiger}. He proved that \nameref{conj:hc_half} implies \nameref{conj:4-CM}. We present the proof here, since it is short and illuminating.
\begin{lem}[\citet{cambie2021hadwiger}]
    \label{lem:cambie1}
    For any integer $t \geq 1$, if $G$ is a graph with $\alpha(G) = 2$, $|V(G)| = 4t - 1$ and $\cm(G) \leq t - 1$, then $\omega(G) \leq \cm(G)$.
\end{lem}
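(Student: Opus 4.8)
The plan is to argue by contradiction: set $c := \cm(G)$ and suppose $\omega(G) \geq c+1$. The structural observation driving everything is that \emph{any} matching of $G$ in which every edge has an endpoint in a single clique is automatically connected, because the clique-endpoints of any two such edges are adjacent. This lets me build connected matchings from two different sources and play their sizes off against the vertex count $4t-1$.

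First I would fix a maximum clique $K$ (so $|K| = \omega := \omega(G)$) and set $R := V(G) \setminus K$. Let $B$ be the bipartite graph of $G$-edges between $K$ and $R$, let $M^*$ be a maximum matching of $B$, and put $c' := |M^*|$. Since every edge of $B$ meets $K$, the matching $M^*$ is connected, so $c' \le c$. Extending $M^*$ by a maximum matching inside the clique $K$ on its $\omega - c'$ vertices not covered by $M^*$ produces a connected matching of size $c' + \floor{(\omega - c')/2}$, which is again at most $c$; rearranging gives the first inequality $\omega \le 2c + 1 - c'$.

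Next I would extract a second clique. By König's theorem, $B$ has a vertex cover $C = C_K \cup C_R$ with $C_K \subseteq K$, $C_R \subseteq R$ and $|C_K| + |C_R| = c'$. Then no edge of $G$ joins $K \setminus C_K$ to $R \setminus C_R$. Because $\omega \ge c+1 > c' \ge |C_K|$, the set $K \setminus C_K$ is non-empty; fixing a vertex $v$ in it and using $\alpha(G) = 2$ (two non-adjacent vertices of $R \setminus C_R$ would form an independent triple with $v$), I conclude that $L := R \setminus C_R$ is a clique, and so $|L| \le \omega$. On the other hand $|L| = |R| - |C_R| \ge (4t - 1 - \omega) - c'$, which yields the second inequality $\omega \ge (4t - 1 - c')/2$.

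Combining the two inequalities gives $(4t-1-c')/2 \le 2c + 1 - c'$, that is $c' \le 4c + 3 - 4t$; since $c \le t-1$ the right-hand side is at most $-1$, contradicting $c' \ge 0$. I expect the main obstacle to be the middle step --- forcing the König cover to make $L = R \setminus C_R$ a clique --- since that is precisely where the hypothesis $\alpha(G) = 2$ and the maximality of $K$ are used; the rest is bookkeeping with the two matching bounds and the count $|V(G)| = 4t-1$.
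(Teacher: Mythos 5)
Your proof is correct, but it takes a genuinely different route from the paper's. The paper also starts from a maximum clique $A$ and a maximum matching $M$ between $A$ and $V(G)\setminus A$, but then argues directly: assuming $\omega(G) > \cm(G) \geq |M|$, some $x \in A$ is unmatched, maximality of $M$ confines $N_G(x)$ to $A \cup V(M)$, and the bound $\delta(G) \geq |V(G)| - 1 - \omega(G) \geq 2t-1$ (from \cref{obs:non-neighbours}, after first noting $\omega(G) \leq 2t-1$ via $\cm(G) \geq \lfloor \omega(G)/2\rfloor$) forces $|A \setminus V(M)| \geq 2t - 2|M|$; pairing up these leftover clique vertices extends $M$ to a connected matching of size $t$, a contradiction. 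You instead derive two inequalities on $\omega$ --- an upper bound $\omega \leq 2c+1-c'$ from augmenting the bipartite matching inside $K$, and a lower bound $\omega \geq (4t-1-c')/2$ from a K\H{o}nig cover making $R \setminus C_R$ a second clique --- and squeeze them against $c \leq t-1$. Both arguments ultimately rest on the same fact that non-neighbourhoods are cliques when $\alpha(G)=2$ (the paper channels it through the minimum degree, you through $R\setminus C_R$), but your version replaces the degree count with K\H{o}nig duality, handles the disconnected case automatically rather than as a separate preliminary step, and never explicitly constructs the size-$t$ connected matching; the paper's version is more constructive and slightly shorter. All the steps in your argument check out: the extended matching $M^* \cup (\text{pairs in } K\setminus V(M^*))$ is indeed connected and vertex-disjoint, $K\setminus C_K \neq \varnothing$ follows from $|C_K| \leq c' \leq c < \omega$, and the final arithmetic $c' \leq 4c+3-4t \leq -1$ is right.
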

\begin{proof}
    If $G$ is disconnected, then $G$ is the union of two complete graphs. One component of $G$ has order at least $2t$, implying $\cm(G) \geq t$.
    Hence $G$ is connected.
    Note that $\cm(G) \geq \floor{\omega(G) /2}$, so $\omega(G) \leq 2t-1$. By \cref{obs:non-neighbours}, $\delta(G) \geq |V(G)|-1 - \omega(G)\geq  2t-1$.
    Let $A$ be a clique of order $\omega(G)$ and let $B=V(G) \setminus A.$
    Let $M$ be the largest matching in the bipartite subgraph induced by the edges between $A$ and $B$. Any matching from $A$ to $B$ is connected since $A$ is a clique, so $\left|M\right| \leq \cm(G) \leq t-1$.
    Suppose for contradiction that $\omega(G) > \cm(G) \geq |M|$.
    This implies there is a vertex $x \in A$ unmatched by $M$. However, there is no neighbour of $x$ contained in $B - V(M)$, otherwise this contradicts the maximality of $M$. Therefore, all neighbours of $x$ are contained in $A \cup V(M)$. Since $\deg_G(x) \geq 2t - 1$, $|A \cup V(M)| \geq 2t$. Observe that
    $2|M| + |A \setminus V(M)| = |V(M) \cup (A \setminus V(M))| = |A \cup V(M)| = |A| + |M| \geq 2t$.
    Therefore, $|A \setminus V(M)| \geq 2t - 2|M|$, implying there are at least $2t - 2|M|$ vertices in $A$ left unmatched by $M$. We can extend the matching $M$ by pairing the remaining unmatched vertices of $A$, obtaining a connected matching of size $t$, a contradiction.
\end{proof}

\begin{lem}[\citet{cambie2021hadwiger}]
    \label{lem:cambie2}
    For every integer $t \geq 1$, if $G$ is a graph with $\alpha(G) = 2$, $|V(G)| = 4t - 1$, and $\cm(G) \leq t - 1$, then $\had_2(G) \leq \cm(G)$.
\end{lem}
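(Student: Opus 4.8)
The plan is to prove the nontrivial inequality by construction. Since $\cm(G)\le\had_2(G)$ always holds, it suffices to exhibit a connected matching of size $\had_2(G)$, which then forces $\cm(G)\ge\had_2(G)$ and hence equality. First I would record the consequences of the hypotheses together with \cref{lem:cambie1}. Exactly as in the proof of \cref{lem:cambie1}, $G$ is connected, since a disconnected graph with $\alpha(G)=2$ is the union of two cliques, one of which has order at least $2t$ and so yields $\cm(G)\ge t$. By \cref{lem:cambie1}, $\omega(G)\le\cm(G)\le t-1$, and hence by \cref{obs:non-neighbours}, $\delta(G)\ge|V(G)|-1-\omega(G)\ge 3t-1$.

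Next I would fix a model realizing $\had_2(G)$ and partition its branch sets into the singletons $S$ and the pairs. Since any two branch sets of a complete-graph model are adjacent, $S$ is a clique in $G$, so $s:=|S|\le\omega(G)\le\cm(G)$; and the $p$ pairs form a connected matching $M$, so $p\le\cm(G)$ and $\had_2(G)=s+p$. Writing $F:=V(G)\setminus(S\cup V(M))$ for the free vertices, the goal reduces to matching each singleton of $S$ to a private partner in $F$. Indeed, if $N=\{v\,w_v:v\in S\}$ is a matching saturating $S$ with the $w_v\in F$ distinct, then $N\cup M$ is a matching of size $s+p=\had_2(G)$, and it is connected: two edges of $N$ are adjacent because $S$ is a clique, each edge of $N$ is adjacent to each pair of $M$ because each singleton is adjacent to each pair (branch-set adjacency), and $M$ is connected by construction.

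The technical heart is producing this matching $N$ saturating $S$ into $F$, which I would set up via Hall's theorem on the bipartite graph between $S$ and $F$. The cardinality check is immediate: from $s+p\le 2t-2$ one gets $|F|=4t-1-s-2p\ge t+2>s$. When $p$ is small the degree bound does the rest, since $|N_G(v)\cap F|\ge\delta(G)-(s-1)-2p\ge 2t+1-2p>s$ for each $v\in S$, so Hall's condition holds outright. The difficulty is the regime of large $p$, where a Hall-deficient set $S'\subseteq S$ could exist; here the set $T$ of free vertices with no neighbour in $S'$ is a clique (any two of its vertices, with a fixed $v\in S'$, would otherwise form an independent triple), whence $|T|\le\omega(G)\le t-1$ by \cref{lem:cambie1}.

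This bounds the deficiency but does not by itself eliminate it, and converting a deficiency into a contradiction is the step I expect to require the most care. The useful leverage is that a deficient singleton has at least $3t+1-2s\ge t+3$ neighbours inside $V(M)$, so the deficient singletons can be rerouted through endpoints of the pairs, with the lost pair-edges recovered using the freed endpoints together with the clique $T$, the entire count being controlled by $|T|\le\omega(G)\le\cm(G)$ and $|V(G)|=4t-1$. The main obstacle is carrying out this rerouting while preserving the pairwise adjacency demanded of a connected matching, since the edges recovered from $T$ are, by construction, non-adjacent to the very singletons in $S'$ that forced the rerouting; reconciling these two competing constraints is the crux of the argument.
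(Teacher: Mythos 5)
There is a genuine gap, and you have in fact flagged it yourself: the Hall-deficiency case is never resolved. Your preliminary reductions are fine ($G$ connected, $\omega(G)\le\cm(G)\le t-1$ via \cref{lem:cambie1}, $\delta(G)\ge 3t-1$ via \cref{obs:non-neighbours}, and the observation that a matching $N$ saturating the singleton set $S$ into $F$ would complete $M$ to a connected matching of size $\had_2(G)$). But the verification of Hall's condition only goes through when $p$ is small, and your fallback for large $p$ is circular: the common non-neighbourhood $T$ of a deficient set $S'$ is indeed a clique of size at most $t-1$, but that yields $|N(S')\cap F|\ge |F|-|T|\ge 3t-s-2p$, which is exactly the quantity that was already too small. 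The proposed ``rerouting'' through $V(M)$ is then left as an unreconciled sketch, and as you note, the edges recovered from $T$ are non-adjacent to precisely the singletons in $S'$ you are trying to serve, so the construction as described cannot produce a connected matching. As it stands, the argument does not close.

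The missing idea is an extremal choice that makes all of this unnecessary: the paper takes a $K_{\had_2(G)}$-model with $k_1$ singletons and $k_2$ pairs chosen so that $k_2$ is \emph{maximised}, and performs a single exchange rather than a simultaneous saturation. If $\had_2(G)>\cm(G)$ then $k_1\ge 1$; the singletons form a clique, so $k_1\le\omega(G)\le t-1$, and the pairs form a connected matching, so $k_2\le\cm(G)\le t-1$, whence the model occupies at most $k_1+2k_2\le 3t-3$ vertices. Since $\delta(G)\ge 3t-1$, any singleton $x$ has a neighbour $w$ outside the model, and replacing the branch set $\{x\}$ by $\{x,w\}$ preserves the model while increasing $k_2$, contradicting maximality. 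No Hall condition, no private partners, no rerouting: the extremal choice converts your ``match every singleton at once'' problem into a one-step degree count. If you want to salvage your framework, replace the arbitrary model by the max-$k_2$ one (or iterate the exchange, noting that the pair count stays at most $\cm(G)\le t-1$ throughout, so the $3t-3$ bound persists); the Hall machinery should be discarded.
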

\begin{proof}
    Choose a $K_{\had_2(G)}$-model $\mathcal{M}$ of $G$ with $k_1$ singletons and $k_2$ edges so that $k_2$ is maximised. Thus, $k_1 + k_2 = \had_2(G)$.
    Since $\cm(G) \leq t-1$, $k_2 \leq t- 1$.
    Suppose to the contrary that $\left|\mathcal{M}\right| = k_1 + k_2 > \cm(G)$. Then $k_1 \geq 1$, and let $x$ be one of the $k_1$ singletons in $\mathcal{M}$.
    \cref{lem:cambie1} asserts that $\omega(G) \leq \cm(G) \leq t -1$, so by \cref{obs:non-neighbours} $ \delta(G) \geq |V(G)|-1 - \omega(G)\geq  3t-1$. However, the collection $\mathcal{M}$ contains at most $k_1+2k_2\leq 3(t-1) = 3t - 3$ vertices, implying that $x$ has a neighbour outside $\mathcal{M}$. Pairing $x$ with this neighbour contradicts the maximality of $k_2$.
\end{proof}
\begin{thm}[\citet{cambie2021hadwiger}]
    \label{thm:cambie}
    \nameref{conj:hc_half} implies \nameref{conj:4-CM}.
\end{thm}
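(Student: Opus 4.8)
The plan is to prove the contrapositive via a minimal-counterexample argument: assuming \nameref{conj:hc_half}, I squeeze the Hadwiger number of a putative counterexample to \nameref{conj:4-CM} between the upper bounds coming from \cref{lem:cambie1,lem:cambie2} and the lower bound coming from \nameref{conj:hc_half}. First I would reduce to the tight case $|V(G)| = 4t-1$. Suppose \nameref{conj:4-CM} fails: there is an integer $t \ge 1$ and a graph $G$ with $\alpha(G) = 2$, $|V(G)| \ge 4t-1$, and $\cm(G) \le t-1$. Since any connected matching of an induced subgraph $H \subseteq G$ is a connected matching of $G$, we have $\cm(H) \le \cm(G) \le t-1$ for every such $H$. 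Choosing $H$ with exactly $4t-1$ vertices, if $\alpha(H) = 1$ then $H = K_{4t-1}$, which has a connected matching of size $2t-1 \ge t$, contradicting $\cm(H) \le t-1$; hence $\alpha(H) = 2$. Replacing $G$ by $H$, I may assume $|V(G)| = 4t-1$, $\alpha(G) = 2$, and $\cm(G) \le t-1$.

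Now I would apply the lemmas and the hypothesis. By \cref{lem:cambie2}, $\had_2(G) \le \cm(G) \le t-1$, and by \cref{lem:cambie1}, $\omega(G) \le \cm(G) \le t-1$. On the other hand, assuming \nameref{conj:hc_half}, $\had(G) \ge |V(G)|/2 = (4t-1)/2 = 2t - \tfrac12$. Since $2t - \tfrac12 > t-1 \ge \had_2(G)$, the proof reduces entirely to closing the gap $\had(G) > \had_2(G)$, that is, to bounding the \emph{ordinary} Hadwiger number by the size-$2$ one.

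The crux is therefore the purely structural claim that \emph{if $\alpha(G) = 2$ then $\had(G) = \had_2(G)$}: every clique minor of an $\alpha \le 2$ graph can be realised with all branch sets of size at most $2$. Granting this, $\had(G) = \had_2(G) \le t-1 < 2t - \tfrac12 \le \had(G)$, the desired contradiction. (It would in fact suffice to prove only the weaker inequality $\had(G) \le \had_2(G) + \omega(G) \le 2t-2$, which already contradicts $\had(G) \ge 2t-\tfrac12$ and uses both lemmas; the two routes share the same difficulty.) To prove the claim I would take a $K_{\had(G)}$-model minimising the total number of vertices used and argue that no branch set $B$ has $|B| \ge 3$. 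Since $G[B]$ is connected with $\alpha(G[B]) \le 2$, the Chv\'{a}tal--Erd\H{o}s condition gives a Hamiltonian path $v_1 \cdots v_k$ of $G[B]$ with $k \ge 3$. Minimality forces each endpoint to be the unique vertex of $B$ adjacent to some other branch set, say $v_1$ to $B_a$ and $v_k$ to $B_b$ with $a \ne b$. Picking $w \in B_a \cap N_G(v_1)$ and $w' \in B_b \cap N_G(v_k)$, the internal vertex $v_2$ is adjacent to neither $w$ nor $w'$ (only $v_1$ touches $B_a$, and only $v_k \ne v_2$ touches $B_b$), so $\alpha(G) = 2$ forces $w \sim w'$: the sets $B_a$ and $B_b$ are adjacent directly, not only through $B$.

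The hard part will be converting this adjacency into an actual decrease of $|B|$: one wants to delete an endpoint of the path from $B$, or reroute it into $B_a$ or $B_b$, while preserving adjacency of the shrunken set to \emph{every} remaining branch set. The delicate point is that a single endpoint may be the private connector to several branch sets simultaneously, so the rerouting must respect all of these dependencies at once; organising this case analysis and verifying that it strictly decreases the total size (hence terminates) is where the real work lies. Once $\had(G) = \had_2(G)$ (or the weaker additive bound) is secured, the theorem follows immediately from the chain of inequalities above.
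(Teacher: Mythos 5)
Your reduction to the tight case $|V(G)| = 4t-1$ and your use of \cref{lem:cambie1,lem:cambie2} together with the lower bound $\had(G) \ge 2t - \tfrac{1}{2}$ from \nameref{conj:hc_half} are all fine, but the step you yourself identify as the crux --- that $\had(G) = \had_2(G)$ whenever $\alpha(G) = 2$, or even the weaker $\had(G) \le \had_2(G) + \omega(G)$ --- is a genuine gap, and not one that can be patched along the lines you sketch. The identity $\had = \had_2$ for graphs with $\alpha = 2$ would make \nameref{conj:hc_chi} and Seymour's strengthening \nameref{conj:shc_chi} trivially equivalent, whereas the latter is treated throughout the paper and the literature as a strictly stronger open conjecture; concretely, the known constructions of large clique minors in this setting (the seagull arguments behind \cref{thm:seagullsN/4}) genuinely use branch sets of size $3$, and nobody knows how to convert them into models with branch sets of size at most $2$. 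Your minimal-model argument only shows that the two branch sets attached to the endpoints of the Hamiltonian path are directly adjacent; it gives no mechanism for deleting an endpoint, precisely because that endpoint may be the unique connector to several other branch sets, and you acknowledge this is unresolved. The additive bound $\had \le \had_2 + \omega$ is likewise unsupported: there is no reason the number of branch sets of size at least $3$ should be bounded by $\omega(G)$.

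The fix is a short counting argument that avoids comparing $\had(G)$ with $\had_2(G)$ altogether, and this is what the paper does. Take the $K_{2t}$-model guaranteed by \nameref{conj:hc_half}, let $k_1$ and $k_2$ be the numbers of branch sets of size $1$ and $2$, and let $k$ be the number of branch sets of size at least $3$. The branch sets of size at most $2$ themselves form a clique model witnessing $k_1 + k_2 \le \had_2(G) \le t-1$ by \cref{lem:cambie2}, while counting vertices gives $k_1 + 2k_2 + 3k \le 4t-1$. Then $6t = 3(k_1+k_2+k) \le (k_1+2k_2+3k) + 2(k_1+k_2) \le (4t-1)+2(t-1) = 6t-3$, a contradiction. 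In this route \cref{lem:cambie1} is needed only inside the proof of \cref{lem:cambie2}, not as a separate ingredient. I would suggest keeping your careful reduction to $|V(G)| = 4t-1$ (which the paper glosses over) and replacing everything after the invocation of \cref{lem:cambie2} with this count.
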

\begin{proof}
    We show the contrapositive. Suppose \nameref{conj:4-CM} is false and \nameref{conj:hc_half} holds. Let $G$ be a graph with $4t - 1$ vertices with $\cm(G) \leq t- 1$. Since \nameref{conj:hc_half} holds, there is a $K_{2t}$-model of $G$. For each $i \geq 1$, let $k_i$ be the number of branch sets of size $i$, and let $k:= \sum_{i \geq 3} k_i$. \cref{lem:cambie2} implies $k_1 + k_2 \leq \had_2(G) \leq t-1$. However, $k_1 + k_2 + k = 2t$ and $k_1 + 2k_2 + 3k \leq 4t - 1$. Thus, $
        6t = 3(k_1+k_2+k) \leq k_1+2k_2+3k + 2(k_1+k_2) \leq 4t-1 +2(t-1)=6t-3$,
    a contradiction.
\end{proof}
We end this section with asymptotics. Let $\mathdefn{f}: \N \to \N$ be a function such that every graph $G$ with $\alpha(G) = 2$ and $|V(G)| \geq t$ has a connected matching of size at least $f(t)$. \nameref{conj:hc_epsilon}, \nameref{conj:linear_cm} and \nameref{conj:4-CM} assert $f \in \Omega(t)$. The asymptotic magnitude of the Ramsey number $R(3,k)$ was determined by \citet{Kim95}, who proved that a $t$-vertex graph $G$ with $\alpha(G) = 2$ has $\omega(G) \in \Omega(\sqrt{t \log t})$. Since the complete graph on $2t$ vertices contains a connected matching of size $t$, $f(t) \in \Omega(\sqrt{t \log t})$. There have been several subsequent improvements. \citet{furedi2005connected} showed $f(t) \in \Omega(t^{2/3})$. Later, \citet{Blasiak07} showed $f(t) \in \Omega(t^{4/5})$. \citet{Fox10} currently holds the record, proving $f(t) \in \Omega(t^{4/5} \log^{1/5}t)$.

\subsection{Other Results}
Instead of asking for complete minors of order $\ceil{|V(G)|/ 2}$, what if we allow some missing edges? \citet{NS26a} proved that every graph $G$ with $\alpha(G) = 2$ contains a minor $H$ with $|V(H)| = \ceil{|V(G)|/ 2}$ and  $ |E(H)| \geq (\gamma - o(1)) \binom{|V(H)|}{2}$, where $\gamma \approx 0.986882$. Using ideas from \citep{NS26a}, \citet{yip2025dense} proved a dense variant of \nameref{conj:linear_cm}: for any $\varepsilon > 0$, there exists $c := c(\varepsilon)$ such that for every integer $t \geq 1$, every graph $G$ with $\alpha(G) \leq 2$ and $|V(G)| \geq ct$ contains a matching $M$ with $|M| = t$ such that the density of adjacent edge pairs is at least $1 - \varepsilon$.

A graph $G$ is \defn{claw-free} if it is $K_{1,3}$-free (that is, no vertex has three pairwise non-adjacent neighbours). Although every graph with $\alpha(G) = 2$ is claw-free, \nameref{conj:cdm} is false for the larger class of claw-free graphs (e.g., $C_7$). For claw-free graphs, \citet{chudnovsky_approximate_2010} proved a linear version of Hadwiger's Conjecture: every claw-free graph $G$ with no $K_{t+1}$-minor is $\floor{3t/2}$-colourable. For claw-free graphs, \citet{FradkinClique12} proved that the only exceptions to \cref{conj:indep} are graphs with $\alpha(G) = 2$.
\begin{thm}[\citet{FradkinClique12}]
    Every connected claw-free graph $G$ with $\alpha(G) \geq 3$ satisfies $\had(G) \geq |V(G)|/\alpha(G)$.
\end{thm}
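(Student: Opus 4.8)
The plan is to extract a clique minor directly from a maximum independent set, exploiting the defining local structure of claw-free graphs. Fix a maximum independent set $I$ with $|I|=\alpha(G)=\alpha$, write $n:=|V(G)|$, and record the key consequence of claw-freeness: no vertex has three pairwise non-adjacent neighbours, so every vertex has \emph{at most two} neighbours in $I$ (three would form a claw), and, dually, $G[N(v)]$ has independence number at most $2$ for every $v$. Since $I$ is maximal, each vertex of $V(G)\setminus I$ has one or two neighbours in $I$. I would encode this as an auxiliary multigraph $H$ on vertex set $I$ in which each ``double'' vertex (two neighbours in $I$) is an edge joining its two neighbours and each ``single'' vertex is pinned to its unique neighbour; since $\lceil n/\alpha\rceil=1+\lceil (n-\alpha)/\alpha\rceil$, the target is roughly one branch set per node of $H$ plus one extra per $\alpha$ incident vertices.

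First I would build branch sets around the hubs $v_1,\dots,v_\alpha$: assign each double vertex to one of its two endpoints, so that $G_i:=G[\{v_i\}\cup W_i']$ is connected, where $W_i'$ is the set of vertices assigned to $v_i$. Crucially $\alpha(G_i)\le 2$, because $v_i$ dominates $W_i'$ and $W_i'\subseteq N(v_i)$, so each $G_i$ is a dense graph to which \cref{thm:dm} applies internally. The naive attempt --- take a clique minor inside each $G_i$ and form the union --- fails, because branch sets living in different hubs need not be adjacent (the $v_i$ are pairwise non-adjacent), so the local pieces do not assemble into a single clique minor; moreover the within-hub bound from \cref{thm:dm} is only $|V(G_i)|/3$, which would lose a further factor. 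This is the crux of the difficulty, and it is exactly where claw-freeness must be used globally rather than hub-by-hub.

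To repair this I would adopt the extremal framework underlying \cref{thm:dm}: among all families of pairwise-disjoint, pairwise-adjacent connected subgraphs that dominate $G$, choose one, $\mathcal{B}=\{B_1,\dots,B_k\}$, with $k$ maximum and total size minimum, and then bound the average number of vertices each part is charged with. The Duchet--Meyniel argument loses a factor of two essentially because an independent set can meet the neighbourhood of a part twice; the plan is to show that claw-freeness forecloses this, since each vertex of an independent set has at most two neighbours into any part and the minimal parts can be taken to have small radius. Converting ``at most two neighbours in $I$'' into the sharp statement that each part is charged at most $\alpha$ (rather than $2\alpha-1$) vertices is the main obstacle, and I expect it to require either a delicate analysis of how the minimal parts attach to a maximum independent set, or the structure theory of claw-free graphs to reduce to line graphs and circular-interval graphs --- where the bound becomes a concrete edge-covering statement on the root structure --- together with a finite check of the exceptional configurations and of the base case $\alpha=3$. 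Connectivity of $G$ would be invoked here to rule out degenerate decompositions into components.
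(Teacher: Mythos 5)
The paper does not prove this statement at all: it is quoted verbatim from \citet{FradkinClique12} as an external result, so there is no in-paper argument to compare against. Judged on its own terms, your proposal is not a proof but a plan whose decisive step is explicitly left open. You correctly set up the Duchet--Meyniel framework (a maximal family of pairwise-adjacent connected dominating subgraphs, each charged at most $2\alpha-1$ vertices) and correctly identify that the whole content of the theorem is improving the per-part charge from $2\alpha-1$ to $\alpha$; but at exactly that point you write that you ``expect it to require either a delicate analysis \ldots or the structure theory of claw-free graphs,'' which is a statement of the problem, not a solution. This is the entire theorem: everything before it is the standard proof of \cref{thm:dm}, which only yields $\had(G)\ge |V(G)|/(2\alpha(G)-1)$.

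Two of the intermediate claims are also shaky. First, ``each vertex of an independent set has at most two neighbours into any part'' is false as stated: claw-freeness bounds the number of pairwise \emph{non-adjacent} neighbours of a vertex, so a vertex of $I$ may be adjacent to every vertex of a part $B_j$ whenever $B_j$ is (say) a clique; what you can legitimately say is that each vertex has at most two neighbours \emph{in $I$}, which is a different and much weaker statement for the charging argument. Second, the hub decomposition $G_i = G[\{v_i\}\cup W_i']$ with $\alpha(G_i)\le 2$ does not help even locally, since (as you note) \cref{thm:dm} applied inside $G_i$ gives only $|V(G_i)|/3$, and the hubs $v_i$ are pairwise non-adjacent, so the local models cannot be glued. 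The actual proof of \citet{FradkinClique12} does go through the Chudnovsky--Seymour structure theory of claw-free graphs and the quasi-line case of \citet{CO08}, so your final guess about the needed machinery is reasonable --- but invoking the existence of such machinery is not the same as supplying the argument. As it stands the proposal establishes nothing beyond \cref{thm:dm}.
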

A graph $G$ is \defn{quasi-line} if the neighbourhood of each vertex is the union of two cliques. Line graphs are quasi-line, and all quasi-line graphs are claw-free. However, there are graphs with $\alpha(G) = 2$ (e.g., the $5$-wheel) that are not quasi-line. \citet{reed_hadwigers_2004} first proved Hadwiger’s Conjecture for line graphs of multigraphs. This was later extended to quasi-line graphs by \citet{CO08}, who proved $\had(G) \geq \chi(G)$ for any quasi-line graph $G$. Observe that any inflation of a quasi-line graph is a quasi-line graph.

Hadwiger's Conjecture has been verified for several special classes of graphs with $\alpha(G) = 2$. \citet{PST03} proved \nameref{conj:shc_half} for inflations of small graphs: any inflation of $G$ with $|V(G)| \leq 11$ and $\alpha(G) = 2$ satisfies \nameref{conj:shc_half}.
For $d \geq 1$, the \defn{Andr\'asfai graph with parameter $d$}, denoted \defn{$\Gamma_d$}, is the $d$-regular triangle-free diameter-$2$ graph with $V(\Gamma_d) = \{v_1, \dots, v_{3d-1}\}$, and two vertices $v_i$ and $v_j$ are adjacent if and only if $|i - j| \equiv 1 \pmod{3}$. \citet{PST03} proved \nameref{conj:cdm}\footnotemark\footnotetext{They actually showed that any inflation of $G$ satisfies \nameref{conj:shc_half}. However, their proof can be modified to establish this.} for inflations of Andr\'asfai graphs, which includes the $5$-cycle.
Note that $\Gamma_2 \cong C_5$.
Lastly, \citet{PPT16} showed that for each $p \geq 1$, if $G$ is an inflation of $\overline{C_{2p+1}}$, then $G$ satisfies \nameref{conj:hc_chi}. We generalise this result in \cref{cor:anticycles}.

\section{Results for Special Graphs}
\label{section:results_special}
As \citet{Blasiak07} noted, it is difficult to identify possible counterexamples to Hadwiger's Conjecture for the $\alpha(G) = 2$ case. The complement of a minimal counterexample is triangle-free with diameter $2$ (\cref{thm:minimal_properties}), and the structure of these graphs is surprisingly rich. However, if we impose some additional conditions, then the diversity of examples quickly drops.
This section establishes Hadwiger's Conjecture for several interesting classes of graphs with $\alpha(G) = 2$.

\subsection{\boldmath Complements of Graphs with Girth At Least \texorpdfstring{$5$}{5}}
\label{ss:girth5}
Graphs with girth $5$ are triangle-free, and \citet{HS60} famously proved that there are finitely many graphs with diameter $2$ and girth $5$. In fact, the only known diameter-$2$ girth-$5$ graphs are the $5$-cycle, the Petersen graph, and the Hoffman-Singleton graph.
Any other diameter-$2$ girth-$5$ graph is $57$-regular with $3250$ vertices, and it is open if such a graph exists.
It is therefore natural to verify Hadwiger's Conjecture for the complements of these graphs and their inflations. We prove that \nameref{conj:cdm} holds for inflations of these graphs. Moreover, we establish a stronger result without the diameter $2$ condition and allowing larger girths:

\begin{thm}
    \label{thm:girth_5}
    Any inflation of the complement of a graph with girth at least $5$ satisfies \nameref{conj:cdm}.
\end{thm}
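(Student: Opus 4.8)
Write $\overline F$ for the complement of a graph $F$ with girth at least $5$, and let $G$ be a connected inflation of $\overline F$ with $\alpha(G)=2$. Restricting to the parts with $c_x\ge 1$, I may assume $G$ is a proper inflation of $\overline F$ with every vertex of $F$ used, since an induced subgraph of $F$ still has girth at least $5$. Write $C_x$ for the clique replacing $x\in V(F)$, so that $u\in C_x$ and $v\in C_y$ with $x\ne y$ are adjacent in $G$ exactly when $xy\notin E(F)$. Call an edge of $G$ joining two distinct parts a cross-edge. The plan is to build a non-empty connected dominating matching out of cross-edges, exploiting two consequences of girth at least $5$: that $F$ is triangle-free and has no $4$-cycle, so any two vertices of $F$ have at most one common neighbour.

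The first observation is that \emph{any matching of cross-edges is automatically connected}. Indeed, take cross-edges $e$ over parts $\{x,y\}$ and $f$ over parts $\{p,q\}$. If the part-sets meet, say $x=p$, then the two endpoints lying in $C_x$ are distinct vertices of a clique, hence adjacent; if $x,y,p,q$ are distinct, then $e$ and $f$ being non-adjacent would force all four of $xp,xq,yp,yq$ into $E(F)$, i.e.\ a $4$-cycle $xpyq$ in $F$, which is impossible. The second observation pins down domination: a cross-edge $e$ over $\{x,y\}$ fails to dominate a vertex $w\in C_c$ only when $c$ is a common neighbour of $x$ and $y$ in $F$ distinct from $x,y$, and since $x,y$ are non-adjacent there is at most one such $c$. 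Thus the set of vertices \emph{not} dominated by $e$ is a single part $C_{z(e)}$ (empty if $x,y$ have no common neighbour). Consequently a cross-edge matching $M$ is dominating if and only if $C_{z(e)}\subseteq V(M)$ for every $e\in M$; in particular any perfect, or suitably chosen near-perfect, cross-edge matching is a connected dominating matching.

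These two facts reduce the theorem to a matching problem, which I would handle by a dichotomy. If some non-adjacent pair $x,y$ of $F$ has no common neighbour (for instance whenever $\diam(F)\ge 3$), then any cross-edge over $\{x,y\}$ is already a dominating edge, hence a connected dominating matching of size one. Otherwise $F$ has diameter $2$, being connected, triangle-free and not complete, so every non-adjacent pair has exactly one common neighbour and every adjacent pair none. In this remaining case I split on connectivity: if $\kappa(G)\le |V(G)|/2$ then \cref{thm:connectivity} already yields a connected dominating matching. So assume $\kappa(G)>|V(G)|/2$, whence $\delta(G)>|V(G)|/2$; since each $u\in C_x$ has exactly $\sum_{y\in N_F(x)}c_y$ non-neighbours in $G$, every $F$-neighbourhood has clique-mass below $|V(G)|/2$, and as $F$ is connected this forces $c_x<|V(G)|/2$ for every part.

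It remains to produce a connected dominating matching in this balanced, highly connected regime, and this is where the real work lies. The aim is a cross-edge matching $M$ whose unmatched set $R$ meets no part $C_{z(e)}$. Taking $M$ maximum, $R$ is independent in the cross-edge graph $G'$ (the blow-up of $\overline F$), hence a clique of $F$, hence contained in at most two $F$-adjacent parts $C_a\cup C_b$; if $M$ is perfect ($R=\varnothing$) we are done at once, so the matching is near-perfect and only the parts $a,b$ can cause trouble. The main obstacle is to show that the balance $c_x<|V(G)|/2$ together with the structure that each non-adjacent pair of $F$ has a \emph{unique} common neighbour forces, via a Tutte--Berge / Gallai--Edmonds exchange argument on $G'$, that $M$ can be chosen so that no edge $e\in M$ has its common-neighbour part $z(e)$ among $\{a,b\}$ --- equivalently that $G'$ has a near-perfect matching whose at-most-one leftover vertex lies in a part that is never a common-neighbour part. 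Establishing this Hall/Tutte-type condition for the blow-up $G'$ is the crux; the preceding reductions are comparatively routine.
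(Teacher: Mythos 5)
Your set-up is correct and is a clean reformulation of the problem: because $F$ has no $4$-cycle, any matching of cross-edges is automatically connected, and a cross-edge over parts $\{x,y\}$ fails to dominate at most the single part $C_{z(e)}$ indexed by the unique common $F$-neighbour of $x$ and $y$. The two easy branches are also fine (a non-adjacent pair with no common neighbour yields a dominating edge; $\kappa(G)\le|V(G)|/2$ is handled by \cref{thm:connectivity}). But the proof stops exactly where it needs to start. In the main case you only assert that a maximum cross-edge matching can be modified, ``via a Tutte--Berge / Gallai--Edmonds exchange argument,'' so that its leftover vertices avoid every common-neighbour part $z(e)$ --- and you yourself label this step ``the crux'' and ``where the real work lies.'' No such argument is supplied, and it is not routine: what you need is not a Hall/Tutte condition on the blow-up but a simultaneous constraint linking each matched edge $e$ to the coverage of a \emph{different} part $C_{z(e)}$, and nothing you have established excludes the possibility that every maximum matching strands a vertex in some $C_{z(e)}$. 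As written, the proposal is an incomplete proof with a genuine gap at its central step.

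The paper's proof shows that chasing a maximum or near-perfect matching is the wrong target. It splits on whether the underlying $\alpha=2$ graph contains an induced $C_5$: if not, \cref{lem:blow_up_avoidant} and \cref{thm:c5} give a dominating edge; if so, the absence of $4$-cycles in $F$ forces every vertex outside the $C_5$ to be adjacent (on the $\alpha=2$ side) to at least four of its five vertices, so an edge between consecutive cliques $C_a$ and $C_e$ of the $C_5$ fails to dominate only $C_c$, and symmetrically an edge between $C_c$ and $C_d$ fails to dominate only $C_a$. After relabelling so that $|C_a|\le|C_e|$ and $|C_c|\le|C_d|$, a matching from $C_a$ into $C_e$ saturating $C_a$ together with one from $C_c$ into $C_d$ saturating $C_c$ is a non-empty connected dominating matching, even though most of $G$ is left unmatched. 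If you want to complete your approach, redirect it toward finding a \emph{small} matching $M$ that saturates its own set of danger parts $\{z(e):e\in M\}$, which is precisely what the paper's $C_5$ construction achieves.
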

Graphs with girth at least $5$ are $C_4$-free, so their complements are $\overline{C_4}$-free, and therefore they satisfy \nameref{conj:cdm}, since every graph $H$ with $|V(H)| \leq 4$ and $\alpha(H) \leq 2$ is \nameref{conj:cdm}-unavoidable (see \cref{ss:induced}). Therefore, the main claim of \cref{thm:girth_5} is that \nameref{conj:cdm} continues to hold for their inflations, which may contain $\overline{C_4}$ as a subgraph.
\begin{proof}
    We may assume that $G$ is a connected inflation of a graph $H$, where $\overline{H}$ has girth at least $5$. By definition, $G$ is obtained by replacing each vertex $x \in V(H)$ with a clique $C_x$. Let $H'$ be the subgraph of $H$ induced by the set $\{x \in V(H): |C_x| > 0\}$. Then $G$ is a proper inflation of $H'$. Since $G$ is connected, $H'$ is connected. If $H'$ is $C_5$-free, then because $\overline{C_5} \cong C_5$ and $C_5$ is adjacent-twin-free, by \cref{lem:blow_up_avoidant}, $G$ is $C_5$-free as well. Therefore, by \cref{thm:c5}, $G$ satisfies \nameref{conj:cdm}. Therefore, we may assume $H'$ contains an induced $C_5$. Label its vertices $a,b,c,d,e$ clockwise.

    By \cref{obs:preserve}, $\alpha(H') = 2$, and thus the neighbourhood of each vertex $x \in V(H') \setminus \{a,b,c,d,e\}$ contains three consecutive vertices of $C_5$. Suppose for contradiction that $|N_{H'}(x) \cap \{a,b,c,d,e\}| = 3$ for some $x \in V(H') \setminus \{a,b,c,d,e\}$. Without loss of generality, $N_{H'}(x) \cap \{a,b,c,d,e\} = \{a,b,c\}$. Then $de, bx \in E(H')$ are non-adjacent edges in $H'$, so $dxeb$ is a $4$-cycle in $\overline{H'}$, which contradicts that the girth of $\overline{H'}$ is at least $5$.
    Therefore, every vertex $x \in V(H') \setminus \{a,b,c,d,e\}$ is adjacent to at least four vertices in $\{a,b,c,d,e\}$.

    Relabelling the vertices of $C_5$ anticlockwise and mirroring the vertices if needed, we may assume that $|C_a| = \min\{|C_a|, \dots, |C_e|\}$ and $|C_c| \leq |C_d|$. Let $M_1$ be a matching from $C_a$ to $C_e$ saturating $C_a$, and let $M_2$ be a matching from $C_c$ to $C_d$ saturating $C_c$. Since $G$ is a proper inflation of $H'$, $|M_1| = |C_a| \geq 1$ and $|M_2| = |C_c| \geq 1$, so $M_1 \cup M_2$ is a non-empty matching of $G$. For an edge $uv \in M_1$, since each vertex $x \in V(H') \setminus \{a,b,c,d,e\}$ is adjacent to at least four vertices in $\{a,b,c,d,e\}$, only vertices in $C_c$ are non-adjacent to $uv$. However, $M_2$ saturates $C_c$. Similarly, for an edge $uv \in M_2$, only vertices in $C_a$ are non-adjacent to $uv$, but $M_1$ saturates $C_a$. Thus, $M_1 \cup M_2$ is a non-empty connected dominating matching of $G$, as desired.
\end{proof}
\begin{cor}
    \label{thm:girth_5_eg}
    Let $G$ be the complement of the $5$-cycle, or the complement of the Petersen graph, or the complement of the Hoffman-Singleton graph, or the complement of a hypothetical $57$-regular graph with girth $5$. Then any inflation of $G$ satisfies \nameref{conj:cdm}.
\end{cor}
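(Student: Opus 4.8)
The plan is to obtain this statement as an immediate consequence of \cref{thm:girth_5}. That theorem already shows that any inflation of the complement of a graph with girth at least $5$ satisfies \nameref{conj:cdm}, so all that remains is to verify that each of the four graphs named in the hypothesis — the $5$-cycle, the Petersen graph, the Hoffman-Singleton graph, and the hypothetical $57$-regular graph — has girth at least $5$; then \cref{thm:girth_5} can be applied to each one separately to conclude.

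First I would invoke the classification of \citet{HS60}: a diameter-$2$ girth-$5$ graph must be $k$-regular for $k \in \{2, 3, 7, 57\}$, and these four cases are realised precisely by the $5$-cycle, the Petersen graph, the Hoffman-Singleton graph, and the (open) $57$-regular graph, respectively. In particular each of these graphs has girth exactly $5$ (trivially for $C_5$, and by the defining Moore-graph property for the other three), and hence girth at least $5$. Consequently, for every choice of $G$ in the statement — $G = \overline{C_5}$, $G$ the complement of the Petersen graph, $G$ the complement of the Hoffman-Singleton graph, or $G$ the complement of the hypothetical $57$-regular graph — the graph $G$ is the complement of a graph with girth at least $5$. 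Applying \cref{thm:girth_5} verbatim then gives that any inflation of $G$ satisfies \nameref{conj:cdm}.

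There is no genuine obstacle to overcome here: all of the difficulty has already been absorbed into the proof of \cref{thm:girth_5}, which carries out the blow-up analysis and the case split according to whether the underlying graph contains an induced $C_5$. The only point requiring attention is the girth bound for the four specific graphs, and this is immediate from their definitions together with the Hoffman-Singleton classification. Notably, because \cref{thm:girth_5} does not assume diameter $2$, the argument applies uniformly to all four graphs regardless of any further structural properties.
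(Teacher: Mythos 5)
Your proposal is correct and matches the paper exactly: the paper states this as an immediate corollary of \cref{thm:girth_5} with no separate proof, since each of the four named graphs has girth $5$ (for the hypothetical $57$-regular graph this is part of the hypothesis, so the Hoffman--Singleton classification you cite is not even needed). Nothing further is required.
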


\begin{cor}
    \label{thm:girth_5_hc}
    Any inflation of the complement of a graph with girth at least $5$ satisfies \nameref{conj:shc_chi}.
\end{cor}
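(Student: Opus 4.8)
The plan is to deduce \cref{thm:girth_5_hc} from \cref{thm:girth_5} by a minimal-counterexample argument, essentially localising the implication $\nameref{conj:cdm} \Rightarrow \nameref{conj:shc_chi}$ of \cref{thm:equiv} to the class of inflations of complements of girth-$\geq 5$ graphs. The prerequisite observation I would establish first is that this class is closed under induced subgraphs: if $G$ is an inflation of $\overline{K}$ with $K$ of girth at least $5$, and $S \subseteq V(G)$, then $G[S]$ is again an inflation of $\overline{K}$, obtained by replacing each inflation size $c_x$ with $|C_x \cap S|$ (zeros allowed). Hence every induced subgraph of a member of the class is again a member of the class.

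Suppose for contradiction that some $G$ in the class fails \nameref{conj:shc_chi}. First I would pass to an induced subgraph $G'$ of $G$ that is minimal with respect to failing \nameref{conj:shc_chi}. By the closure property $G'$ is still an inflation of $\overline{K'}$ for some graph $K'$ of girth at least $5$, and by the choice of $G'$ every proper induced subgraph of $G'$ satisfies \nameref{conj:shc_chi}; thus $G'$ is a genuine minimal counterexample to \nameref{conj:shc_chi}. Next I would record two structural facts. Writing $G'$ as a proper inflation of the complement of some triangle-free graph $K''$ (an induced subgraph of $K'$, hence of girth at least $5$), \cref{obs:preserve} gives $\alpha(G') = \alpha(\overline{K''}) = \omega(K'') \leq 2$, with equality because a complete graph is not a counterexample. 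Moreover $G'$ is connected, since a disconnected graph with $\alpha = 2$ is a disjoint union of two cliques and so plainly satisfies \nameref{conj:shc_chi}.

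Now the two halves of the contradiction can be assembled. On one hand, $G'$ is a minimal counterexample to \nameref{conj:shc_chi} with $\alpha(G') = 2$, so \cref{thm:minimal_properties}(d) applies; in particular Property \propref{prop:no_connected_dominating_matching} tells us that $G'$ contains no non-empty connected dominating matching. On the other hand, $G'$ is a connected inflation of the complement of a girth-$\geq 5$ graph with $\alpha(G') = 2$, so \cref{thm:girth_5} guarantees that $G'$ \emph{does} contain a non-empty connected dominating matching. This contradiction shows that no counterexample exists, which proves \cref{thm:girth_5_hc}.

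The main obstacle, and the only step requiring care, is the localisation itself: \cref{thm:equiv} establishes $\nameref{conj:cdm} \Rightarrow \nameref{conj:shc_chi}$ only for the fully quantified conjectures, so one cannot simply quote it when \nameref{conj:cdm} is known merely for a subclass. The argument succeeds precisely because the class is closed under induced subgraphs, which is what keeps the minimal counterexample $G'$ inside the class where \cref{thm:girth_5} can be applied. Everything else, namely the independence-number computation via \cref{obs:preserve} and the connectivity reduction, is routine bookkeeping.
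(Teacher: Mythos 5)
Your proposal is correct and follows essentially the same route as the paper: pass to a minimal induced subgraph $G'$ failing \nameref{conj:shc_chi}, note it is still an inflation of the complement of a girth-$\geq 5$ graph, and derive a contradiction between Property \propref{prop:no_connected_dominating_matching} of \cref{thm:minimal_properties} and \cref{thm:girth_5}. The only cosmetic difference is that you establish connectivity of $G'$ directly (a disconnected graph with $\alpha=2$ is a union of two cliques), whereas the paper routes this through \cref{thm:equiv} and \nameref{conj:shc_half}; both work.
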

\begin{proof}
    Let $G$ be an inflation of $H$ with $\alpha(G) = 2$, where $H$ is the complement of a graph with girth at least $5$. Suppose for contradiction that $G$ fails \nameref{conj:shc_chi}. Let $G'$ be a minimal induced subgraph of $G$ such that $G'$ fails \nameref{conj:shc_chi}. Then $G'$ is an inflation of $H$ and is a minimal counterexample to \nameref{conj:shc_chi}. By \cref{thm:equiv}, $G'$ fails \nameref{conj:shc_half}, implying that $G'$ is connected. By \cref{thm:girth_5}, $G'$ contains a non-empty connected dominating matching, a contradiction to Property \propref{prop:no_connected_dominating_matching} of \cref{thm:minimal_properties}.
\end{proof}

\citet{PPT16} showed that for each $p \geq 1$, if $G$ is an inflation of $\overline{C_{2p+1}}$, then $G$ satisfies \nameref{conj:hc_chi}. Their proof uses the Strong Perfect Graph Theorem \citep{CRST-AM06}. We strengthen this result without using the Strong Perfect Graph Theorem.
\begin{cor}
    \label{cor:anticycles}
    For each $k \geq 3$, if $G$ is an inflation of $\overline{C_k}$, then $G$ satisfies \nameref{conj:shc_chi}.
\end{cor}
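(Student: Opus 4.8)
The plan is to reduce everything to \cref{thm:girth_5_hc} except for two degenerate values of $k$, which I would then dispatch by hand. First I would note that $C_k$ has girth exactly $k$. Hence, whenever $k \geq 5$, the graph $\overline{C_k}$ is precisely the complement of a graph (namely $C_k$) of girth at least $5$. \cref{thm:girth_5_hc} therefore applies verbatim and shows that every inflation of $\overline{C_k}$ satisfies \nameref{conj:shc_chi}; there is nothing further to prove in this range.

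The remaining cases are $k = 3$ and $k = 4$, where the girth hypothesis fails ($C_3$ and $C_4$ have girth $3$ and $4$). Here I would argue that every inflation is already a disjoint union of cliques. Indeed, $\overline{C_3}$ is an independent set on three vertices, so any inflation $G$ is the disjoint union of three cliques. And $\overline{C_4} \cong 2K_2$, so an inflation completely joins two disjoint pairs of cliques; within each pair the complete join of two cliques is again a clique, so $G$ collapses to a disjoint union of two cliques. In either case $G$ is a vertex-disjoint union of cliques, and no blow-up structure survives.

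I would finish with the trivial observation that a disjoint union of cliques satisfies \nameref{conj:shc_chi}. If $\alpha(G) \neq 2$ the statement holds vacuously, so assume $\alpha(G) = 2$; then at most two of the cliques are non-empty, say $G = K_m \sqcup K_n$ with $m \geq n \geq 1$. Directly (or via \cref{lem:chromatic_matching}, since $\mu(\overline{G}) = \mu(K_{m,n}) = n$), we have $\chi(G) = m$, and the clique $K_m$ itself is a $K_m$-model with singleton branch sets, so $\had_2(G) \geq m = \chi(G)$. Thus $G$ satisfies \nameref{conj:shc_chi}, completing all cases.

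The only mild obstacle is the bookkeeping for the small cases: one must notice that the girth-$5$ machinery of \cref{thm:girth_5_hc} silently excludes $k \in \{3,4\}$, and then verify that these degenerate blow-ups collapse to disjoint unions of cliques rather than producing any genuine counterexample-like structure. Once that collapse is observed, the argument is immediate.
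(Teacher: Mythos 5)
Your proof is correct and takes essentially the same approach as the paper: the cases $k \geq 5$ are dispatched by \cref{thm:girth_5_hc}, and $k \in \{3,4\}$ are handled by observing that the inflations degenerate to disjoint unions of cliques. The paper treats these small cases more tersely (``the case $k=3$ is straightforward''), and your explicit verification, including the vacuous case $\alpha(G) \neq 2$ and the computation $\chi(K_m \sqcup K_n) = m \leq \had_2$, fills in exactly what the paper leaves implicit.
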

\begin{proof}
    The case when $k = 3$ is straightforward.
    If $G$ is an inflation of $\overline{C_4}$, then $G$ is a disjoint union of two non-adjacent complete graphs, so $G$ satisfies \nameref{conj:shc_chi}. If $G$ is an inflation of $\overline{C_k}$ for $k \geq 5$, then by \cref{thm:girth_5_hc}, $G$ satisfies \nameref{conj:shc_chi}.
\end{proof}

\subsection{Complements of Triangle-Free Kneser Graphs}
\label{ss:kneser_graphs}

Complements of triangle-free Kneser graphs (see \cref{ss:goals_outline} for a definition) satisfy many of the properties of minimal counterexamples to Hadwiger's Conjecture in the $\alpha=2$ case (see \cref{tab:minimal_properties}). In particular, when $2k \leq n \leq 3k-1$, $K(n,k)$ is triangle-free, implying $\alpha(\overline{K(n,k)}) = 2$.
When $n = 3k - 1$, $\diam(K(n,k)) = 2$ (\citep{valencia2005diameter}, see Property \propref{prop:complement_diameter_2}). \citet{merino2023kneser} proved that for all values of $n$ and $k$, $K(n,k)$ is Hamiltonian. If $n$ and $k$ are chosen so that $|V(K(n,k))|$ is odd, then by taking alternate edges of a Hamiltonian cycle, for each vertex $v \in V(K(n,k))$, $K(n,k) - v$ has a perfect matching. By \cref{lem:chromatic_matching}, this implies that $\mu(K(n,k)) = (|V(K(n,k))| - 1) / 2$, so $|V(K(n,k))| = 2 \chi(\overline{K(n,k)}) - 1$ and $\overline{K(n,k)}$ is vertex-critical (see Properties \propref{prop:vertex_critical} and \propref{prop:size_equals_2chi_minus_1}).
For sufficiently large $k$, \citet{Thomassen05} proved that $\overline{K(3k-1,k)}$ is a counterexample to Haj{\'o}s’ Conjecture. Moreover, he wrote that `it does not seem obvious' that these graphs satisfy Hadwiger's Conjecture.
Motivated by this comment, \citet{XZ17} proved that Hadwiger's Conjecture holds for all complements of Kneser graphs, including when $\alpha(\overline{K(n,k)}) = 2$. This raises the question of whether Hadwiger's Conjecture holds for inflations of $\overline{K(n,k)}$. We show that inflations of $\overline{K(n,k)}$ for $2k \leq n \leq 3k - 1$ satisfy \nameref{conj:hc_chi}, strengthening the result of \citet{XZ17} in this special case.

\begin{lem}[\citep{SU97}]
    \label{lem:frac_chromatic_kneser}
    For integers $n,k \geq 1$ such that $2k \leq n \leq 3k - 1$, $\theta_f(\overline{K(n,k)}) = \chi_f(K(n,k))  = n/k$.
\end{lem}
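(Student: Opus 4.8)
The first equality is immediate from the definition $\theta_f(G) := \chi_f(\overline{G})$, since $\overline{\overline{K(n,k)}} = K(n,k)$; so the task reduces to showing $\chi_f(K(n,k)) = n/k$. The plan is to establish matching upper and lower bounds. Note that the argument works for all $n \ge 2k$; the hypothesis $n \le 3k-1$ is only needed elsewhere to guarantee that $K(n,k)$ is triangle-free.

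For the upper bound $\chi_f(K(n,k)) \le n/k$, I would exhibit an explicit fractional colouring built from ``stars''. For each $i \in [n]$, let $S_i := \{A \in \binom{[n]}{k} : i \in A\}$. Any two $k$-sets in $S_i$ share the element $i$ and are therefore non-adjacent in $K(n,k)$ (whose edges join disjoint $k$-sets), so each $S_i$ is an independent set. Assign weight $1/k$ to each of $S_1, \dots, S_n$ and weight $0$ to every other independent set. A fixed $k$-set $A$ lies in exactly the $k$ stars indexed by its own elements, so the total weight covering $A$ is $k \cdot (1/k) = 1$. This is a valid fractional colouring of total weight $n/k$, giving $\chi_f(K(n,k)) \le n/k$.

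For the lower bound I would invoke \cref{obs:fractional}, applied with $\overline{G} = K(n,k)$, which gives $\chi_f(K(n,k)) \cdot \alpha(K(n,k)) \ge |V(K(n,k))| = \binom{n}{k}$. Since $n \ge 2k$, the Erd\H{o}s--Ko--Rado theorem yields $\alpha(K(n,k)) = \binom{n-1}{k-1}$, a maximum independent set being a largest family of pairwise-intersecting $k$-sets. Hence
\[
    \chi_f(K(n,k)) \ge \frac{\binom{n}{k}}{\binom{n-1}{k-1}} = \frac{n}{k},
\]
which matches the upper bound and completes the proof.

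The only nontrivial input is the determination of $\alpha(K(n,k))$: the lower bound on $\chi_f$ requires $\alpha(K(n,k)) \le \binom{n-1}{k-1}$, which is precisely the hard direction of Erd\H{o}s--Ko--Rado, whereas the upper bound is elementary via the star colouring. An alternative packaging observes that $K(n,k)$ is vertex-transitive, so the identity $\chi_f = |V|/\alpha$ holds automatically and the whole computation again reduces to Erd\H{o}s--Ko--Rado; I prefer the explicit star construction, as it makes the upper bound self-contained and isolates the one place where a genuine extremal result is needed.
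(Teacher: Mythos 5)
Your proof is correct. Note that the paper does not prove this lemma at all---it is stated as a known result with a citation to \citep{SU97}---so there is no in-paper argument to compare against; what you have written is the standard proof from the literature (the star cover $S_1,\dots,S_n$ giving an $(n:k)$-colouring for the upper bound, and $\chi_f(K(n,k))\,\alpha(K(n,k)) \geq \binom{n}{k}$ together with Erd\H{o}s--Ko--Rado for the lower bound), and your remarks that the first equality is definitional and that $n \leq 3k-1$ is irrelevant to the computation are both accurate.
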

\begin{lem}
    \label{lem:kneser_graphs}
    For integers $n,k \geq 1$ such that $2k \leq n \leq 3k - 1$, any inflation of $\overline{K(n,k)}$ with an even number of vertices satisfies \nameref{conj:shc_half}.
\end{lem}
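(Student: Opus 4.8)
The goal is to show that any inflation $G$ of $\overline{K(n,k)}$ with an even number of vertices satisfies \nameref{conj:shc_half}, i.e.\ has a $K_{|V(G)|/2}$-model with branch sets of size at most $2$. The natural route is to invoke \cref{thm:bla}, which states that every graph $G$ with $\theta_f(G) < 3$ and $|V(G)|$ even satisfies \nameref{conj:shc_half}. So the plan is to verify the two hypotheses of \cref{thm:bla} for our inflation $G$: that $|V(G)|$ is even (given) and that $\theta_f(G) < 3$.

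First I would compute $\theta_f(\overline{K(n,k)})$ using \cref{lem:frac_chromatic_kneser}, which gives $\theta_f(\overline{K(n,k)}) = n/k$. Since the hypothesis is $2k \le n \le 3k-1$, we have $n/k \le (3k-1)/k = 3 - 1/k < 3$. Thus $\theta_f(\overline{K(n,k)}) < 3$ strictly. Next, I would transfer this bound to the inflation $G$. By \cref{cor:inflation_frac}, any inflation $G'$ of a graph $H$ satisfies $\theta_f(G') \le \theta_f(H)$; applying this with $H = \overline{K(n,k)}$ gives
\begin{equation*}
    \theta_f(G) \le \theta_f(\overline{K(n,k)}) = n/k < 3.
\end{equation*}

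With both hypotheses of \cref{thm:bla} established---namely $\theta_f(G) < 3$ and $|V(G)|$ even---the theorem immediately yields that $G$ satisfies \nameref{conj:shc_half}, completing the proof. I would also note at the outset that $\alpha(G) = 2$: since $2k \le n \le 3k-1$ forces $K(n,k)$ to be triangle-free, $\alpha(\overline{K(n,k)}) = \omega(K(n,k)) = 2$, and by \cref{obs:preserve} any proper inflation preserves the independence number (with non-proper inflations giving $\alpha(G) \le 2$); this is the regime in which \nameref{conj:shc_half} is the relevant conjecture.

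I do not anticipate a substantive obstacle here, as the lemma is essentially a direct composition of three previously established facts: the exact fractional value from \cref{lem:frac_chromatic_kneser}, the monotonicity of $\theta_f$ under inflation from \cref{cor:inflation_frac}, and Bla\v{s}iak's theorem \cref{thm:bla}. The only point demanding a moment's care is confirming the inequality is \emph{strict}: the bound $n \le 3k-1$ is exactly what forces $n/k < 3$ rather than $n/k \le 3$, and strictness is precisely what \cref{thm:bla} requires. The evenness hypothesis cannot be dropped---this is the same parity limitation flagged after \cref{thm:bla}---which is why this lemma only reaches \nameref{conj:shc_half} and why the companion result \cref{lem:kneser_graphs2} (reaching \nameref{conj:hc_chi} for all inflations) requires the separate machinery that allows branch sets of size $3$.
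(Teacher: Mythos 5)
Your proposal is correct and follows essentially the same route as the paper: compute $\theta_f(\overline{K(n,k)}) = n/k < 3$ via \cref{lem:frac_chromatic_kneser}, transfer the bound to the inflation via \cref{cor:inflation_frac}, and conclude with \cref{thm:bla}. The paper's proof is simply a terser version of the same argument (it leaves the appeal to \cref{cor:inflation_frac} implicit), and your remarks on strictness of the inequality and the necessity of the parity hypothesis are accurate.
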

\begin{proof}
    By \cref{lem:frac_chromatic_kneser}, $\theta_f(\overline{K(n,k)})  = n/k < 3$. Applying \cref{thm:bla} gives the result.
\end{proof}
\begin{lem}
    \label{lem:kneser_graphs2}
    For integers $n,k \geq 1$ such that $2k \leq n \leq 3k - 1$, any inflation of $\overline{K(n,k)}$ satisfies \nameref{conj:hc_chi}.
\end{lem}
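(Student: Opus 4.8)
The plan is to reduce immediately to the fractional-clique-cover machinery of \cref{ss:seagulls}. The key observation is that the hypothesis $2k \le n \le 3k-1$ forces the fractional clique covering number of $\overline{K(n,k)}$ to be strictly below $3$: by \cref{lem:frac_chromatic_kneser}, $\theta_f(\overline{K(n,k)}) = n/k \le (3k-1)/k = 3 - 1/k < 3$. Since $3 < 124/33$, the graph $\overline{K(n,k)}$ meets the hypothesis $\theta_f \le 124/33$ of \cref{thm:124/33}. As $\alpha(\overline{K(n,k)}) = 2$ (the bounds on $n$ make $K(n,k)$ triangle-free but nonempty, so $\omega(K(n,k)) = 2$), \cref{thm:124/33} applies directly and shows that every inflation of $\overline{K(n,k)}$ satisfies \nameref{conj:hc_chi}. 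This is the entire proof; the substantive work has already been packaged into \cref{thm:124/33}.

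It is worth recording why \cref{lem:kneser_graphs}, which handled only the even-order case via \cref{thm:bla}, does not already suffice, and where the difficulty hides. Blasiak's result \cref{thm:bla} gives $\had_2(G') \ge |V(G')|/2$ only when $|V(G')|$ is even, whereas the relevant obstruction---a minimal counterexample to \nameref{conj:hc_chi}---always has odd order, since $|V(G')| = 2\chi(G')-1$ by Property \propref{prop:size_equals_2chi_minus_1} of \cref{thm:minimal_properties}. Thus the real obstacle is the odd case, and \cref{thm:124/33} is precisely the tool engineered to cross it: by permitting a branch set of size $3$ it removes the parity restriction and strengthens the conclusion from \nameref{conj:shc_half} to \nameref{conj:hc_chi}, at the mild cost of requiring $\theta_f \le 124/33$ rather than $\theta_f < 3$. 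Since $n/k$ lies comfortably below both thresholds, this costs nothing here.

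If one prefers a self-contained argument that avoids quoting \cref{thm:124/33}, I would instead work through a minimal counterexample. Suppose some inflation of $\overline{K(n,k)}$ fails \nameref{conj:hc_chi}, and let $G'$ be a minimal induced subgraph that does so; being an induced subgraph of an inflation, $G'$ is itself an inflation of $\overline{K(n,k)}$, so $\alpha(G')=2$ and $\theta_f(G') \le n/k$ by \cref{cor:inflation_frac} and \cref{lem:frac_chromatic_kneser}. By \cref{thm:carter30}, $|V(G')| \ge 31$ and $|V(G')|$ is odd, and by \cref{obs:fractional} the inequality $\theta_f(G')\,\omega(G') \ge |V(G')|$ gives $\omega(G') \ge |V(G')|\,k/n > |V(G')|/3 \ge \lceil (|V(G')|+3)/4\rceil$, where the last step uses $|V(G')| \ge 9$. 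The odd case of \cref{thm:seagullsN/4} then yields $\had(G') \ge |V(G')|/2$, so $G'$ satisfies \nameref{conj:hc_half}; but a minimal counterexample to \nameref{conj:hc_chi} must fail \nameref{conj:hc_half} by \cref{thm:equiv}, a contradiction. Either route works, and the first is the one I would present.
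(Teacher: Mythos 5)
Your primary argument is exactly the paper's proof: invoke \cref{lem:frac_chromatic_kneser} to get $\theta_f(\overline{K(n,k)}) = n/k < 3 < 124/33$ and apply \cref{thm:124/33}. The ``self-contained'' alternative you sketch is essentially an unpacking of the proof of \cref{thm:124/33} (via \cref{thm:carter30} and \cref{thm:seagullsN/4}) and is also sound, though the claimed threshold $|V(G')|\ge 9$ for $|V(G')|/3 \ge \lceil(|V(G')|+3)/4\rceil$ should be $|V(G')|\ge 15$ when $|V(G')|\equiv 3 \pmod 4$ --- harmless here since $|V(G')|\ge 31$.
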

\begin{proof}
    By \cref{lem:frac_chromatic_kneser}, $\theta_f(\overline{K(n,k)})  = n/k < 3$. Applying \cref{thm:124/33} gives the result.
\end{proof}
Note that when $2k \leq n \leq 3k - 2$, $\overline{K(n,k)}$ has a dominating edge. However, this does not imply \cref{lem:kneser_graphs,lem:kneser_graphs2} for $2k \leq n \leq 3k - 2$, since there are induced subgraphs of $\overline{K(n,k)}$ with no dominating edge.

The problem becomes harder if we deal with complements of generalised Kneser graphs. For integers $n,k,t \geq 1$, let \defn{$K(n,k,\geq t)$} be the graph with vertex set $\binom{[n]}{k}$, and two vertices are adjacent if and only if their corresponding subsets intersect in at least $t$ elements. Define $\mathdefn{K(n,k, \leq t)}:= \overline{K(n,k, \geq t+1)}$. So $K(n,k, \geq 1) \cong \overline{K(n,k)}$ and $K(n,k,\leq 0) \cong K(n,k)$. For integers $n, k, t \geq 1$ satisfying $2k - t \leq n < 3k - 3t + 3$, $\alpha(K(n, k, \geq t)) \leq 2$.
This is because if there are $k$-sets $S_1, S_2, S_3$ such that $|S_i \cap S_j| \leq t - 1$ for each distinct $i,j \in \{1, 2, 3\}$, then by the inclusion-exclusion principle,
\begin{equation*}
    |S_1 \cup S_2 \cup S_3| = |S_1| + |S_2| + |S_3| - |S_1 \cap S_2| - |S_1 \cap S_3| - |S_2 \cap S_3| + |S_1 \cap S_2 \cap S_3| \geq 3k - 3(t-1).
\end{equation*}
The graph $K(n, k, \leq t)$ is known as a \defn{generalised Kneser graph} in the literature.

A \defn{$t$-intersecting} family $\mathcal{F} \subseteq \binom{[n]}{k}$ is a collection of subsets such that if $A, B \in \mathcal{F}$, then $\left|A \cap B\right| \geq t$. Define, for each $r$ such that $t + 2r \leq n$,
\begin{equation*}
    \mathdefn{\mathcal{F}_{n,k,t,r}} := \left\{ A \in \binom{[n]}{k}: \left|A \cap \{ 1, \dots, t+2r \}\right| \geq t+ r \right\}.
\end{equation*}
$\mathcal{F}_{n,k,t,r}$ is a $t$-intersecting family of sets, since if $S = \{1, \dots, t + 2r\}$, then for any $A_1, A_2 \in \mathcal{F}_{n,k,t,r}$,
\begin{equation*}
    t + 2r \geq |(A_1 \cap S) \cup (A_2 \cap S)| = |A_1 \cap S| + |A_2 \cap S| - |A_1 \cap A_2 \cap S| \geq 2(t+r) - |A_1 \cap A_2 \cap S|.
\end{equation*}
Therefore, $|A_1 \cap A_2| \geq t$. For a fixed $r \geq 0$,
\begin{equation}
    \left|\mathcal{F}_{n,k,t,r}\right| = \sum_{i=t+r}^{t+2r} \binom{t+2r}{i} \binom{n - t - 2r}{k - i}. \tag{1} \label{eq:frac}
\end{equation}
\citet{ahlswede1997complete} proved that the maximum size of a $t$-intersecting family is achieved by one of the $\mathcal{F}_{n,k,t,r}$ families.
\begin{thm}[\citet{ahlswede1997complete}]
    \label{thm:akt}
    The maximum size of a $t$-intersecting family in $\binom{[n]}{k}$ is at most
    $\max\limits_{r: t + 2r \leq n} \left|\mathcal{F}_{n,k,t,r}\right|$.
\end{thm}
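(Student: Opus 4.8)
This is the Ahlswede–Khachatrian Complete Intersection Theorem, conjectured by Frankl and settled only after considerable effort; the known proofs are long, so I will describe the strategy rather than attempt the full argument. The plan is to first reduce to highly structured families via compression, and then run a pushing–pulling induction that drives any extremal $t$-intersecting family toward one of the canonical Frankl families $\mathcal{F}_{n,k,t,r}$.

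First I would apply the shifting (compression) operation. For $1 \le i < j \le n$, the $(i,j)$-shift $S_{ij}$ replaces each $A \in \mathcal{F}$ with $j \in A$, $i \notin A$ by $(A \setminus \{j\}) \cup \{i\}$ whenever the latter set is not already present. A standard check shows that $S_{ij}$ preserves $|\mathcal{F}|$ and the $t$-intersecting property, and that iterating all shifts terminates at a left-compressed (shifted) family. Since every $\mathcal{F}_{n,k,t,r}$ is already shifted and shifting never decreases $|\mathcal{F}|$, it suffices to bound $|\mathcal{F}|$ for shifted $t$-intersecting families.

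The core of the argument is the pushing–pulling method applied to shifted families: one performs local modifications on pairs of coordinates that preserve $t$-intersection and do not decrease the size, steadily forcing membership to be governed by the intersection with an initial segment $[t+2r]$. This is run inside an induction on $n$ (carrying $k$ and $t$ along), in which one restricts $\mathcal{F}$ to the links of a fixed coordinate, obtaining $t$-intersecting families on a smaller ground set, applies the inductive hypothesis, and reassembles the bounds. The final step is the optimisation: among the Frankl families, whose sizes are given explicitly by the sum in \eqref{eq:frac}, the maximum over admissible $r$ yields the claimed bound.

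The main obstacle — and the reason the conjecture stood open for decades — is the pushing–pulling step: one must verify that these local modifications genuinely converge to a Frankl family while never being forced to shrink the family, and the inductive bookkeeping is delicate because the optimal radius $r$ jumps as the ratio $n/k$ varies, producing a case-heavy analysis across the parameter ranges. I expect essentially all of the difficulty to reside here; the compression reduction and the final optimisation over $r$ are comparatively routine. Since the present paper only invokes this theorem as a black box to control $t$-intersecting families arising from generalised Kneser graphs, I would in practice simply cite \citet{ahlswede1997complete} rather than reproduce the argument.
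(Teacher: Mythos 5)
The paper does not prove this statement at all --- it is imported verbatim as a black-box citation to \citet{ahlswede1997complete} --- and your proposal, which accurately identifies it as the Ahlswede--Khachatrian Complete Intersection Theorem, sketches the standard shifting plus pushing--pulling strategy, and concludes that one should simply cite the original, matches the paper's treatment exactly. Nothing further is required here.
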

The case when $t = 1$ is a special case of the Erd\H{o}s-Ko-Rado theorem, where the maximum is achieved when $r = 0$. Define:
\begin{equation*}
    \mathdefn{f_{n,k,t}}:= \max_{r: t + 2r \leq n} \left|\mathcal{F}_{n,k,t,r}\right|.
\end{equation*}
By \cref{thm:akt}, $\omega(K(n,k,\geq t)) = f_{n,k,t}$.

\begin{thm}
    \label{thm:fractional_kneser_graphs}
    For integers $n,k,t \geq 1$, $
        \theta_f(K(n,k,\geq t)) = \frac{\binom{n}{k}}{f_{n,k,t}} $.
\end{thm}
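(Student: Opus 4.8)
The plan is to pin down $\omega(K(n,k,\geq t))$ and then exploit the vertex-transitivity of the graph. Write $G := K(n,k,\geq t)$ and $N := \binom{n}{k} = |V(G)|$. Since a clique of $G$ is exactly a $t$-intersecting family in $\binom{[n]}{k}$, the Ahlswede--Khachatrian theorem (\cref{thm:akt}) gives $\omega(G) = f_{n,k,t}$; this is the one genuinely deep input, and everything else is bookkeeping around it.

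For the lower bound, I would simply invoke \cref{obs:fractional}\,(3), which gives $\theta_f(G)\,\omega(G) \geq |V(G)| = N$, and hence $\theta_f(G) \geq N/f_{n,k,t}$.

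For the matching upper bound, I would use the symmetry of $G$ together with \cref{lem:equiv_frac}. The symmetric group $S_n$ acts on $[n]$ and hence on $\binom{[n]}{k}$, and since $|\sigma(A)\cap\sigma(B)| = |A\cap B|$ for every $\sigma\in S_n$, this action is by automorphisms of $G$; it is transitive on $V(G)$ because any $k$-set can be mapped to any other. Let $Q$ be a maximum clique, so $|Q| = f_{n,k,t}$, and consider the multiset of cliques $\{\sigma(Q) : \sigma\in S_n\}$, which has $r := n!$ members. Counting incidences $(\sigma, A)$ with $A\in\sigma(Q)$ two ways: there are $r\cdot f_{n,k,t}$ of them in total, and by transitivity each vertex lies in the same number of the $\sigma(Q)$, so each vertex lies in exactly $r f_{n,k,t}/N$ of them. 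Setting $k' := N/f_{n,k,t}\in\Q_{\geq 1}$ (note $f_{n,k,t}\leq N$ since a clique has at most $N$ vertices), each vertex lies in exactly $r/k'$ of these cliques, so \cref{lem:equiv_frac} yields $\theta_f(G)\leq k' = N/f_{n,k,t}$.

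Combining the two bounds gives $\theta_f(K(n,k,\geq t)) = \binom{n}{k}/f_{n,k,t}$. I do not expect a serious obstacle beyond correctly invoking \cref{thm:akt} to identify $\omega(G)$; the only point requiring a little care is the double-counting that establishes uniform coverage of the vertices by the orbit $\{\sigma(Q)\}$, which rests on vertex-transitivity and feeds directly into \cref{lem:equiv_frac} (I rename the lemma's parameter to $k'$ to avoid clashing with the $k$ in $K(n,k,\geq t)$).
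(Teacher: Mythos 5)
Your proof is correct, and the upper bound is obtained by a genuinely more general route than the paper's. The paper constructs an explicit covering family of cliques: it fixes the optimal parameter $r$ with $\left|\mathcal{F}_{n,k,t,r}\right| = f_{n,k,t}$, takes one clique $\mathcal{F}_{n,k,t,r}^S$ for each $s$-subset $S$ of $[n]$ (where $s = t+2r$), computes via binomial coefficients that every vertex lies in exactly $\sum_{i=t+r}^{t+2r}\binom{k}{i}\binom{n-k}{s-i}$ of them, and then needs a binomial identity to massage the resulting ratio $\binom{n}{s}\big/\sum_i\binom{k}{i}\binom{n-k}{s-i}$ into the form $\binom{n}{k}/f_{n,k,t}$. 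You instead observe that $K(n,k,\geq t)$ is vertex-transitive under the $S_n$-action and run the standard orbit/double-counting argument showing that for any vertex-transitive graph $\theta_f(G) \leq |V(G)|/\omega(G)$; combined with \cref{obs:fractional}(3) and $\omega(G)=f_{n,k,t}$ from \cref{thm:akt}, this is really the general fact $\theta_f(G)=|V(G)|/\omega(G)$ for vertex-transitive graphs, specialized. The two constructions are secretly the same family (the paper's collection is the $S_n$-orbit of $\mathcal{F}_{n,k,t,r}$, with multiplicities collapsed), but your version buys a cleaner proof with no binomial identity and no need to know the structure of the extremal family beyond its size, while the paper's buys an explicit, self-contained clique cover. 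Your incidence count is sound (transitivity does force every vertex to lie in the same number of translates $\sigma(Q)$, and \cref{lem:equiv_frac} permits repeated cliques), and you correctly check $k' = \binom{n}{k}/f_{n,k,t} \in \Q_{\geq 1}$ before invoking it. The only shared caveat with the paper is the implicit assumption $k \geq t$ so that $f_{n,k,t} > 0$ and the displayed ratio is defined.
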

\begin{proof}
    Let $r \geq 0$ be an integer such that $t + 2r \leq n$ and $\left|\mathcal{F}_{n,k,t,r}\right| = f_{n,k,t}$, and let $s := t + 2r$. Fix a subset $S$ of $[n]$ of size $s$. Define:
    \begin{equation*}
        \mathcal{F}_{n,k,t,r}^S := \left\{ A \in \binom{[n]}{k}: \left|A \cap S \right| \geq t+ r \right\}.
    \end{equation*}
    $\mathcal{F}_{n,k,t,r}^S $ is a $t$-intersecting family, and the corresponding vertices induce a complete graph in $K(n, k, \geq t)$.
    Let $\mathcal{K}:= \{ \mathcal{F}_{n,k,t,r}^S: S \in \binom{[n]}{s} \}$ be a collection of subsets of $\binom{[n]}{k}$.

    Let $v \in V(K(n,k,\geq t))$, and let $K \in \binom{[n]}{k}$ be the corresponding $k$-subset. The number of subsets in $\binom{[n]}{s}$ that intersect $K$ at exactly $i$ elements is $\binom{k}{i} \binom{n-k}{s-i}$. Therefore, there are
    $
        \sum_{i=t+r}^{t+2r} \binom{k}{i} \binom{n-k}{s-i}
    $
    subsets $S \in \binom{[n]}{s}$ such that $t + r \leq |K \cap S| \leq t + 2r$. For each such subset $S$, note that $K \in \mathcal{F}_{n,k,t,r}^S$. Therefore, each vertex $v \in V(K(n,k,\geq t))$ is in $\sum_{i=t+r}^{t+2r} \binom{k}{i} \binom{n-k}{s-i}$ cliques of $\mathcal{K}$.
    By \cref{lem:equiv_frac},
    \begin{align*}
        \theta_f(K(n,k,\geq t)) & \leq \frac{\binom{n}{s}}{\sum_{i=t+r}^{t+2r} \binom{k}{i} \binom{n-k}{s-i}}.
    \end{align*}
    Note that
    \begin{align*}
        \frac{\binom{n}{s}}{\binom{k}{i} \binom{n-k}{s-i}} & = \frac{i! (k-i)! (s-i)! (n-k-s+i)! n!}{k! (n-k)! s! (n-s)!} = \frac{\binom{n}{k}}{\binom{s}{i} \binom{n-s}{k-i}}.
    \end{align*}
    Consequently,
    \begin{align*}
        \theta_f(K(n,k,\geq t)) & \leq \frac{\binom{n}{k}}{\sum_{i=t+r}^{t+2r} \binom{s}{i} \binom{n-s}{k-i}} = \frac{\binom{n}{k}}{f_{n,k,t}},
    \end{align*}
    where we have used \cref{eq:frac} for the last equality. Since $\alpha(G) \chi_f(G) \geq |V(G)|$ for every graph $G$, by \cref{thm:akt}, $\chi_f(\overline{K(n,k,\geq t)}) \geq \binom{n}{k} / f_{n,k,t}$, as desired.
\end{proof}

\cref{thm:fractional_kneser_graphs} provides a sufficient criterion for $K(n, k, \geq t)$ to satisfy \nameref{conj:hc_chi}.

\begin{thm}
    \label{thm:generalised_kneser_graphs}
    Let $n, k, t \geq 1$ be integers with $2k - t \leq n < 3k - 3t + 3$. If
    $
        f_{n,k,t} \geq \left(\frac{1}{4}  + \varepsilon\right) \binom{n}{k},
    $
    where $\varepsilon:=1/62$, then any inflation of $K(n, k, \geq t)$ satisfies \nameref{conj:hc_chi}.
\end{thm}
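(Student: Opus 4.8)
The plan is to reduce the statement to \cref{thm:icr1/4}, which already converts a lower bound on the inflated clique ratio into \nameref{conj:hc_chi} for all inflations. Thus it suffices to show that, under the given hypotheses, the graph $G := K(n,k,\geq t)$ has $\alpha(G) = 2$ and $\icr(G) \geq \tfrac14 + \varepsilon$ with $\varepsilon = 1/62$. The independence number is controlled by the range hypothesis: the discussion preceding the definition of generalised Kneser graphs shows $\alpha(G) \leq 2$ whenever $2k - t \leq n < 3k - 3t + 3$, and when $n = 2k - t$ every two $k$-subsets already meet in at least $2k - n = t$ elements, so $G$ is complete and every inflation of $G$ is complete, trivially satisfying \nameref{conj:hc_chi}. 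Hence I may assume $2k - t < n$, in which case some pair of $k$-subsets meets in fewer than $t$ elements and $\alpha(G) = 2$ exactly, as required by \cref{thm:icr1/4}.

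For the clique-ratio bound I would chain the fractional computation with \cref{lem:clique_cover}. By \cref{thm:fractional_kneser_graphs}, $\theta_f(G) = \binom{n}{k}/f_{n,k,t}$, and by \cref{lem:clique_cover}, $\icr(G) \geq 1/\theta_f(G) = f_{n,k,t}/\binom{n}{k}$. The hypothesis $f_{n,k,t} \geq (\tfrac14 + \varepsilon)\binom{n}{k}$ then gives $\icr(G) \geq \tfrac14 + \varepsilon$ immediately. Applying \cref{thm:icr1/4} to $G$ concludes that any inflation of $K(n,k,\geq t)$ satisfies \nameref{conj:hc_chi}.

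There is essentially no hard step remaining once \cref{thm:fractional_kneser_graphs,thm:icr1/4} are in hand: the argument is a one-line substitution into the inflated-clique-ratio bound. The only points requiring care are bookkeeping ones --- confirming that the hypothesis is phrased in terms of $\omega(G) = f_{n,k,t}$ (via \cref{thm:akt}) rather than the fractional quantity directly, and separating out the degenerate complete-graph boundary $n = 2k - t$ so that the $\alpha(G) = 2$ hypothesis of \cref{thm:icr1/4} is genuinely met. The substantive difficulty of the result lies entirely upstream, in the exact determination of $\theta_f(K(n,k,\geq t))$ carried out in \cref{thm:fractional_kneser_graphs} and in the minimum-counterexample analysis underlying \cref{thm:icr1/4}.
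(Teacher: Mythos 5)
Your proposal is correct and follows essentially the same route as the paper: the paper's proof is the one-liner $\theta_f(K(n,k,\geq t)) = \binom{n}{k}/f_{n,k,t} \leq 124/33$ by \cref{thm:fractional_kneser_graphs}, followed by \cref{thm:124/33}, and your argument merely inlines the proof of \cref{thm:124/33} (i.e.\ passes through \cref{lem:clique_cover} and \cref{thm:icr1/4} explicitly). Your extra care about the degenerate boundary $n = 2k-t$, where the graph is complete and $\alpha(G) = 1$, is a legitimate bookkeeping point that the paper glosses over, but it does not change the substance of the argument.
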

\begin{proof}
    By \cref{thm:fractional_kneser_graphs}, $\theta_f(K(n,k,\geq t)) = \frac{\binom{n}{k}}{f_{n,k,t}} \leq 1/(\frac{1}{4} + \frac{1}{62}) = 124/33$. By \cref{thm:124/33}, any inflation of $K(n, k, \geq t)$ satisfies \nameref{conj:hc_chi}.
\end{proof}
The \defn{odd girth} of a graph $G$ is the length of the shortest odd cycle in $G$.
\begin{thm}[\citet{denley1997odd}]
    \label{thm:odd_girth_of_kneser_graph}
    For integers $n, k, t \geq 1$, the odd girth of the generalised Kneser graph $K(n,k, \leq t)$ is
    $
        2 \left\lceil\frac{k-t}{n - 2(k-t)} \right\rceil + 1.
    $
\end{thm}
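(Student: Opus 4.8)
The plan is to prove both inequalities directly from the combinatorial description of the graph, using the standard fact that a graph is non-bipartite with odd girth $g$ if and only if $g$ is the smallest odd length of a closed walk. Write $m:=k-t$ and $p:=n-2m=n-2(k-t)$, and note that (in the meaningful range $1\le t\le k-1$, $n\ge 2k-t$) we have $k<n$ and $p\ge t\ge 1$. Two vertices $A,B\in\binom{[n]}{k}$ are adjacent in $K(n,k,\le t)$ exactly when $|A\cap B|\le t$, equivalently $|A\setminus B|=k-|A\cap B|\ge m$. I would show the odd girth is at least $2\ceil{m/p}+1$ by a counting argument, and at most $2\ceil{m/p}+1$ by exhibiting an explicit odd closed walk built from a sliding circular window.

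For the lower bound, suppose $A_0,A_1,\dots,A_{2s}$ (indices read mod $2s+1$) is a shortest odd cycle, so its length is the odd girth $2s+1$. For each element $x\in[n]$, record the cyclic binary sequence $\chi_i(x)=\mathbf 1[x\in A_i]$ of length $2s+1$, and let $d(x)$ be its number of descents (positions $i$ with $x\in A_i$, $x\notin A_{i+1}$). Two observations drive the bound. First, for each edge $|A_i\setminus A_{i+1}|\ge m$, and since $\sum_x d(x)=\sum_{i=0}^{2s}|A_i\setminus A_{i+1}|$, we get $\sum_x d(x)\ge(2s+1)m$. Second, the number of descents of a cyclic binary word equals its number of maximal blocks of $1$s, which on a cycle of odd length $2s+1$ is at most $\floor{(2s+1)/2}=s$; hence $d(x)\le s$ for every $x$ and $\sum_x d(x)\le ns$. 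Combining, $(2s+1)m\le ns$, i.e.\ $m\le s(n-2m)=sp$, so $s\ge m/p$ and therefore $s\ge\ceil{m/p}$, giving odd girth $\ge 2\ceil{m/p}+1$.

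For the upper bound I would construct an odd closed walk of length $2s+1$ with $s=\ceil{m/p}$, which forces odd girth $\le 2s+1$. Place $[n]$ cyclically as $\Z/n\Z$ and use windows $A_i=\{c_i,c_i+1,\dots,c_i+k-1\}\pmod n$ of $k$ consecutive elements, where $c_0=0$ and $c_{i+1}=c_i+\delta_i$. The key computation is that for any offset $\delta\in[m,n-m]$, two such length-$k$ arcs overlap in at most $t$ elements: the ``front'' overlap is $\max(0,k-\delta)\le k-m=t$ and the ``wrap'' overlap is $\max(0,k-(n-\delta))\le k-m=t$, and when both are nonzero (only possible if $n<2k$) their sum is $2k-n\le t$ by the hypothesis $n\ge 2k-t$. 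I then need the window to close up after an odd number of steps: choosing $w=s$ full loops, so $\sum_{i=0}^{2s}\delta_i=sn$, makes $c_{2s+1}\equiv 0$ and hence $A_{2s+1}=A_0$. Such integer offsets $\delta_i\in[m,n-m]$ summing to $sn$ exist precisely because the average $sn/(2s+1)$ lies in $[m,n-m]$: the inequality $(2s+1)m\le sn$ is $m\le sp$ (true since $s\ge m/p$), and $sn\le(2s+1)(n-m)$ reduces to $(2s+1)m\le(s+1)n$, which holds because $m<n/2$ (as $n\ge 2m+t>2m$). Each consecutive pair is then adjacent, so we obtain an odd closed walk of length $2s+1$, hence an odd cycle of length at most $2s+1$, completing the matching bound.

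The two halves then combine to give odd girth exactly $2\ceil{m/p}+1=2\ceil{\frac{k-t}{n-2(k-t)}}+1$. I expect the main obstacle to be the upper-bound construction rather than the counting lower bound: the delicate point is verifying the overlap bound uniformly across all admissible offsets, since when $n<2k$ both the front and wrap overlaps are simultaneously active and one must invoke $n\ge 2k-t$ to control their sum, and one must separately confirm that admissible integer step sizes summing to $sn$ actually exist for the minimal value $s=\ceil{m/p}$.
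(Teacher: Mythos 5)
The paper does not prove this statement---it is quoted verbatim from \citet{denley1997odd} and used as a black box in the proof of \cref{thm:generalised_kneser_graphs2}---so there is no in-paper argument to compare against. Your proof is self-contained and, as far as I can check, correct. The lower bound is sound: with $m=k-t$ and $p=n-2m$, the identity $\sum_x d(x)=\sum_i |A_i\setminus A_{i+1}|$ together with the per-edge bound $|A_i\setminus A_{i+1}|\ge m$ and the per-element bound $d(x)\le \lfloor (2s+1)/2\rfloor=s$ (each maximal cyclic block of $1$s needs a following $0$, and the constant words have $d(x)=0$) gives $(2s+1)m\le ns$, hence $s\ge \lceil m/p\rceil$. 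The upper bound via circular $k$-windows also checks out: the front and wrap overlaps are each at most $t$ for $\delta\in[m,n-m]$, their sum when both are active is $2k-n\le t$ by $n\ge 2k-t$, and the feasibility of integer offsets in $[m,n-m]$ summing to $sn$ reduces exactly to $(2s+1)m\le sn\le (2s+1)(n-m)$, both of which you verify. A sanity check on the Petersen graph $K(5,2)=K(5,2,\le 0)$ gives $2\lceil 2/1\rceil+1=5$, as expected.

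Two minor points worth making explicit. First, the theorem as stated in the paper (``for integers $n,k,t\ge 1$'') is imprecise: if $n<2k-t$ the graph is edgeless and the odd girth is infinite, and if $t\ge k$ the graph is complete and the formula degenerates; your restriction to $1\le t\le k-1$ and $n\ge 2k-t$ (which forces $p\ge t\ge 1$) is the right reading, and it covers the range in which the paper applies the result. Second, in the lower bound you should state that you are taking a shortest odd \emph{closed walk} (which is automatically an odd cycle), so that the counting applies to the wrap-around edge as well; you do index modulo $2s+1$, so this is only a matter of phrasing.
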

\begin{thm}
    \label{thm:generalised_kneser_graphs2}
    For integers $n,k,t \geq 1$ with $2k - t \leq n < \frac{5}{2} (k-t)$, any inflation $G$ of $K(n, k,\geq t+1)$ satisfies \nameref{conj:cdm}.
\end{thm}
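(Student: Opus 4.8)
The plan is to prove that $K(n,k,\geq t+1)$ is $C_5$-free and then invoke the \nameref{conj:cdm}-unavoidability of $C_5$. The crucial observation is that the hypothesis $n < \tfrac{5}{2}(k-t)$ is \emph{exactly} the condition under which the odd girth of the complementary graph $\overline{K(n,k,\geq t+1)} = K(n,k,\leq t)$ is at least $7$. So the first step is to translate the numeric hypothesis into a statement about $K(n,k,\leq t)$ via \cref{thm:odd_girth_of_kneser_graph}.

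For the odd girth computation, note first that the lower bound $2k-t \leq n$ together with $t \geq 1$ gives $n - 2(k-t) \geq (2k-t) - 2(k-t) = t \geq 1 > 0$, so the expression in \cref{thm:odd_girth_of_kneser_graph} is well defined, and the interval is non-empty only when $k > 3t$. Then
\[
    2\left\lceil \tfrac{k-t}{n-2(k-t)}\right\rceil + 1 \geq 7 \iff \left\lceil \tfrac{k-t}{n-2(k-t)}\right\rceil \geq 3 \iff \tfrac{k-t}{n-2(k-t)} > 2 \iff n < \tfrac{5}{2}(k-t).
\]
Thus the odd girth of $K(n,k,\leq t)$ is at least $7$, so $K(n,k,\leq t)$ contains no cycle of length $3$ or $5$; in particular it has no induced $C_5$. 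Since $\overline{C_5} \cong C_5$, taking complements shows $K(n,k,\geq t+1)$ is $C_5$-free. (The same interval also forces $\alpha(K(n,k,\geq t+1)) \leq 2$, as computed in \cref{ss:kneser_graphs}, so these graphs genuinely lie in the regime of interest; this fact is not needed for the logic below, since a hypothetical counterexample supplies $\alpha = 2$ by definition.)

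The final step packages the unavoidability machinery. The graph $C_5 = \overline{C_5}$ is twin-free, hence adjacent-twin-free, and \cref{thm:c5} shows that $C_5$ is \nameref{conj:cdm}-unavoidable. Let $G$ be any inflation of $K(n,k,\geq t+1)$. By \cref{lem:blow_up_avoidant}, since $K(n,k,\geq t+1)$ is $C_5$-free and $C_5$ is adjacent-twin-free, $G$ is $C_5$-free. Suppose for contradiction that $G$ is a counterexample to \nameref{conj:cdm}: then $G$ is connected, $\alpha(G) = 2$, and $G$ has no non-empty connected dominating matching, in particular no dominating edge. But by \cref{thm:c5}, a $C_5$-free graph with independence number $2$ has the property that every connected induced subgraph with independence number $2$ has a dominating edge; applying this to $G$ itself (a connected induced subgraph of itself with $\alpha = 2$) yields a dominating edge of $G$, a contradiction. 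Hence every inflation $G$ satisfies \nameref{conj:cdm}.

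The main obstacle is conceptual rather than computational: recognising that the seemingly ad hoc bound $n < \tfrac{5}{2}(k-t)$ encodes precisely the $C_5$-freeness of $K(n,k,\geq t+1)$, via the odd girth of its complement. Once this reduction is in place, the theorem follows immediately from \cref{thm:c5} together with the inflation-invariance of $C_5$-freeness in \cref{lem:blow_up_avoidant}, and no structural analysis of the inflation itself is required, since we may reason directly about a hypothetical connected counterexample with $\alpha = 2$.
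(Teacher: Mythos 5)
Your proof is correct and follows essentially the same route as the paper's: translate the hypothesis $n < \tfrac{5}{2}(k-t)$ into odd girth at least $7$ for $K(n,k,\leq t)$ via \cref{thm:odd_girth_of_kneser_graph}, conclude $C_5$-freeness of $K(n,k,\geq t+1)$ and of all its inflations via \cref{lem:blow_up_avoidant}, and finish with the \nameref{conj:cdm}-unavoidability of $C_5$ from \cref{thm:c5}. Your version is slightly more explicit about the positivity of $n-2(k-t)$ and about unwinding the unavoidability argument, but the substance is identical.
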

\begin{proof}
    Let $G$ be an inflation of $K(n, k,\geq t+1)$.
    Since $2k - t \leq n < \frac{5}{2} (k-t)$, by \cref{thm:odd_girth_of_kneser_graph}, the odd girth of $K(n,k, \leq t)$ is at least $7$. Therefore, $K(n, k,\leq t)$ is $C_5$-free. Since $C_5$ is adjacent-twin-free, and since $\overline{C_5} \cong C_5$, by \cref{lem:blow_up_avoidant}, $G$ is $C_5$-free. By \cref{thm:c5}, $C_5$ is \nameref{conj:cdm}-unavoidable. Therefore, $G$ satisfies \nameref{conj:cdm}.
\end{proof}

\subsection{Complements of Strongly Regular Triangle-Free Graphs}
\label{ss:srg}
For integers $\lambda \geq 0$ and $\mu \geq 1$, a $k$-regular graph $G$ on $n$ vertices is \defn{$(\lambda ,\mu)$-strongly regular} if every pair of adjacent vertices has $\lambda$ common neighbours, and every pair of distinct non-adjacent vertices has $\mu$ common neighbours. Our notation for such a graph is \defn{$\srg(n, k, \lambda, \mu)$}. A regular graph is \defn{strongly regular} if it is $(\lambda, \mu)$-strongly regular for some integers $\lambda \geq 0$ and $\mu \geq 1$.

The case $\lambda = 0$ corresponds to triangle-free strongly regular graphs. Complete bipartite graphs $K_{n,n}$ are \defn{trivial} examples. There are seven known non-trivial strongly regular triangle-free graphs: the $5$-cycle, Petersen graph, the Clebsch graph, the Hoffman-Singleton graph, the Gewirtz graph, the Mesner graph, and the Higman-Sims graph. It is open whether infinitely many non-trivial triangle-free strongly regular graphs exist \citep{devillers2024triangle}. \citet{Biggs71a} showed that for any $\mu \notin \{2, 4, 6\}$, only finitely many $(0, \mu)$-strongly regular graphs exist; see
\citep{Biggs09a,Biggs09,Biggs11,Elzinga03} for related results. If $\mu \neq 0$, then $G$ has diameter $2$. All the graphs in \cref{ss:girth5} are strongly regular, so it is natural to prove Hadwiger's Conjecture for the complements of triangle-free strongly regular graphs. \citet{xu_hadwigers_2025} proved Hadwiger's Conjecture for the complement of the Clebsch and Petersen graphs. Below, we extend this to inflations of complements of all known strongly regular triangle-free graphs, except the Higman-Sims graph.

\subsubsection{Complement of the Clebsch Graph}
\label{sss:clebsch}
For an integer $n \geq 1$, an \defn{$n$-dimensional hypercube $Q_n$} is the graph whose vertex set is $\{0,1\}^n$, with two vertices adjacent if and only if they differ in exactly one coordinate. Two vertices are \defn{opposite} in an $n$-dimensional hypercube if their corresponding vectors differ in all $n$ coordinates. The \defn{Clebsch graph} is obtained from a $4$-dimensional hypercube by adding edges between all pairs of opposite vertices. It is a strongly regular graph with parameters $\srg(16,5,0,2)$ \citep{Clebsch}.
\begin{lem}
    \label{lem:fraction_clebsch}
    If $G$ is the complement of the Clebsch graph, then $\theta_f(G) \leq 16/5$.
\end{lem}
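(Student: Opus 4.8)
The plan is to reduce the statement to a fractional clique-cover computation and then exploit the symmetry of the Clebsch graph. Since $\theta_f(G) = \chi_f(\overline{G})$ and $\overline{G}$ is the Clebsch graph, and since the cliques of $G$ are precisely the independent sets of the Clebsch graph, \cref{lem:equiv_frac} tells us it suffices to exhibit a list $X_1, \dots, X_r$ of independent sets of the Clebsch graph such that every vertex lies in at least $r/k$ of them, where $k = 16/5$. I would obtain such a list essentially for free from vertex-transitivity: if the Clebsch graph has an independent set $I$ of size $5$, then spreading $I$ over a transitive group of automorphisms yields a perfectly uniform cover.

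Concretely, I would realise the Clebsch graph as the Cayley graph $\Cay(\F_2^4, S)$ with connection set $S = \{0001, 0010, 0100, 1000, 1111\}$, i.e.\ the four standard unit vectors together with the all-ones vector $\mathbf{1}$. This matches the stated construction, since the unit vectors give the hypercube edges and $\mathbf{1}$ joins each vertex to its opposite. Every translation $x \mapsto x + g$ is then an automorphism, so the group $\F_2^4$ acts transitively by translation. I would take $I = \{0000, 0111, 1011, 1101, 1110\}$, the zero vector together with the four vectors of weight $3$, and verify it is independent: the pairwise differences all have weight $2$ or $3$, and none of these lies in $S$ (whose elements have weight $1$ or $4$), so no two of the five vertices are adjacent. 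Forming the $r = 16$ translates $\{I + g : g \in \F_2^4\}$ as a list (repetitions allowed, which \cref{lem:equiv_frac} permits), a vertex $v$ lies in $I + g$ exactly when $g \in I + v$, i.e.\ for exactly $|I| = 5$ values of $g$. Hence every vertex lies in exactly $5 = r / (16/5)$ of the translates, and \cref{lem:equiv_frac} with $k = 16/5$ yields $\theta_f(G) \le 16/5$.

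I do not expect a genuine obstacle: the only real content is the finite verification that the five chosen vectors are pairwise non-adjacent (equivalently, that $\alpha$ of the Clebsch graph is at least $5$), plus the remark that translation acts transitively, both of which are routine. As a consistency check, the third inequality of \cref{obs:fractional} gives $\chi_f(\overline{G}) \ge |V(\overline{G})| / \alpha(\overline{G}) = 16/5$, so the bound is in fact an equality; only the upper bound is needed for the lemma, and this lower bound plays no role in the proof.
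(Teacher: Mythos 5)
Your proof is correct and is essentially the paper's proof in concrete coordinates: the paper takes the $16$ cliques $S_v = V(G)\setminus(N_G(v)\cup\{v\})$, i.e.\ the neighbourhoods in the Clebsch graph, which are cliques of $G$ of size $5$ by triangle-freeness and $5$-regularity, with each vertex lying in exactly $5$ of them, and then applies \cref{lem:equiv_frac} exactly as you do. Your translates $I+g$ are in fact the very same sets (your $I$ is the Clebsch-neighbourhood of $1111$), so the two arguments produce the identical uniform cover; the only difference is that the paper verifies the sets are cliques abstractly from $\alpha(G)=2$ rather than by a coordinate computation.
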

\begin{proof}
    For each vertex $v \in V(G)$, let $S_v = V(G) \setminus (N_G(v) \cup \{v\})$. Since $G$ is $10$-regular, $|S_v| = 5$.
    Since $\alpha(G) = 2$, each $S_v$ is a clique in $G$. Each vertex $w \in V(G)$ is in exactly $5$ of $\{S_v : v \in V(G)\}$, corresponding to the $5$ non-neighbours of $v$ in $G$. By \cref{lem:equiv_frac}, $\theta_f(G) \leq 16/5$.
\end{proof}
\begin{lem}
    \label{lem:inflation_clebsch}
    Every inflation of the complement of the Clebsch graph satisfies \nameref{conj:hc_chi}.
\end{lem}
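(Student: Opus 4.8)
**The plan is to combine the fractional bound from \cref{lem:fraction_clebsch} with the machinery already established for inflated clique ratios.**

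The statement to prove is \cref{lem:inflation_clebsch}: every inflation of the complement $G$ of the Clebsch graph satisfies \nameref{conj:hc_chi}. The key numerical input is \cref{lem:fraction_clebsch}, which gives $\theta_f(G) \leq 16/5$. The crucial observation is that $16/5 = 3.2$, which is comfortably below the threshold $124/33 \approx 3.7575$ appearing in \cref{thm:124/33}. Since \cref{thm:124/33} states that any graph $H$ with $\alpha(H) = 2$ and $\theta_f(H) \leq 124/33$ has the property that \emph{any inflation} of $H$ satisfies \nameref{conj:hc_chi}, the entire result follows essentially in one line.

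Concretely, the proof proceeds as follows. First, I would note that $G$, being the complement of the triangle-free Clebsch graph, has $\alpha(G) = 2$ (since the Clebsch graph is triangle-free, its complement has independence number $2$; this uses that any three pairwise non-adjacent vertices of $G$ would form a triangle in the Clebsch graph). Then I invoke \cref{lem:fraction_clebsch} to obtain $\theta_f(G) \leq 16/5 \leq 124/33$. Finally, I apply \cref{thm:124/33} directly to conclude that any inflation of $G$ satisfies \nameref{conj:hc_chi}. The verification that $16/5 \leq 124/33$ amounts to checking $16 \cdot 33 = 528 \leq 5 \cdot 124 = 620$, which is immediate.

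**There is essentially no obstacle here**, since the hard work has been front-loaded into the earlier results: \cref{lem:fraction_clebsch} (which packages the clever clique-covering argument using the complementary-neighbourhood cliques $S_v$) and \cref{thm:124/33} (which itself rests on \cref{thm:icr1/4}, \cref{thm:seagullsN/4}, and the computational base case \cref{thm:carter30}). The only point requiring any care is confirming that the threshold $124/33$ strictly exceeds $16/5$, so that the Clebsch complement genuinely falls within the scope of \cref{thm:124/33}. I would therefore write the proof as a short two-sentence deduction rather than reproving any of the underlying inflation or seagull machinery.
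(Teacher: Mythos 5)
Your proposal is correct and matches the paper's proof essentially verbatim: both deduce $\theta_f$ of the Clebsch complement is at most $16/5 < 124/33$ from \cref{lem:fraction_clebsch} and then invoke \cref{thm:124/33}. The only cosmetic difference is that the paper applies \cref{cor:inflation_frac} to bound $\theta_f$ of the inflation itself before citing \cref{thm:124/33}, whereas you apply \cref{thm:124/33} directly to the base graph; both are valid.
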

\begin{proof}
    Let $G$ be an inflation of the complement of the Clebsch graph.
    By \cref{cor:inflation_frac,lem:fraction_clebsch}, $\theta_f(G) \leq 16/5 < 124/33$. \cref{thm:124/33} gives the result.
\end{proof}

\subsubsection{Complement of the Mesner Graph}
\label{sss:mesner}
A \defn{Steiner system} with parameters $t, k, n \geq 1$, denoted $\mathdefn{S(t,k,n)}$, is a family of $k$-subsets of $[n]$, called \defn{blocks}, such that each $t$-element subset of $[n]$ is contained in exactly one block.
For example, the Fano plane is a Steiner system with parameters $S(2,3,7)$.
The total number of $t$-element subsets of $[n]$ is $\binom{n}{t}$, and each block contains $\binom{k}{t}$ of these subsets.
Since each $t$-subset is contained in exactly one block, the number of blocks is $\binom{n}{t}/\binom{k}{t}$.

The \defn{Mesner graph}, or the \defn{$M_{22}$} graph, is the unique graph generated by the Steiner system $S(3,6,22)$.
Its vertices correspond to the blocks of the Steiner system, and two vertices are adjacent if and only if their corresponding blocks are disjoint.
It is a strongly regular graph with parameters $\srg(77, 16, 0, 4)$ \citep{Mesner}.
Note that $\left|V(M_{22})\right| = 77$, which is the number of blocks of $S(3,6,22)$, and $M_{22}$ is a triangle-free graph with diameter $2$.
We verified computationally that $M_{22}$ contains every graph in \cref{tab:unavoidable_graphs} as an induced subgraph, so one cannot apply the techniques described in \cref{ss:induced}.

\begin{thm}
    \label{fractional_m22} $\theta_f(\overline{{M}_{22}}) \leq 22/6$.
\end{thm}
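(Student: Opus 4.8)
The plan is to apply \cref{lem:equiv_frac} directly to the graph $\overline{M_{22}}$, exhibiting a small family of cliques of $\overline{M_{22}}$ that covers every vertex with high multiplicity. Since a clique of $\overline{M_{22}}$ is an independent set of $M_{22}$, and two vertices of $M_{22}$ are non-adjacent exactly when their blocks intersect, a clique of $\overline{M_{22}}$ is precisely a pairwise-intersecting family of blocks of the Steiner system $S(3,6,22)$. The design supplies such families for free: for each point $p \in [22]$, the collection $X_p$ of all blocks containing $p$ is pairwise-intersecting (they all share $p$), hence an independent set of $M_{22}$ and so a clique of $\overline{M_{22}}$.

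First I would record the one incidence count that matters. Each block is a $6$-subset of $[22]$, so each vertex of $\overline{M_{22}}$ lies in exactly $6$ of the $22$ cliques $X_1, \dots, X_{22}$ (one for each of its points). Thus I have $r = 22$ cliques, with each vertex belonging to exactly $6$ of them. Setting $k := 22/6$, each vertex lies in at least $r/k = 6$ of the cliques, so \cref{lem:equiv_frac} yields $\theta_f(\overline{M_{22}}) \le 22/6$, as required. Note that the replication number of $S(3,6,22)$ (each point lies in $21$ blocks, so each $X_p$ has size $21$) is a useful sanity check but is not actually needed for the bound; only the block size $6$ and the number of points $22$ enter.

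The one genuine decision is choosing the right clique family, and this is where I expect a naive approach to fall short. The obvious analogue of the Clebsch argument (\cref{lem:fraction_clebsch}), using the non-neighbourhood cliques $S_v = V(\overline{M_{22}}) \setminus (N_{\overline{M_{22}}}(v) \cup \{v\})$, would take $r = 77$ cliques each of size $16$ (since $M_{22}$ is $16$-regular) with each vertex covered $16$ times, giving only $\theta_f(\overline{M_{22}}) \le 77/16 \approx 4.81$, far from the target. The improvement comes from exploiting the design structure: the point-cliques $X_p$ overlap heavily, covering each vertex $6$ times using only $22$ cliques, which is exactly what drives the ratio down to $22/6 \approx 3.67$. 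Beyond this choice, the proof is essentially a two-line bookkeeping argument, with the only things to verify being that blocks sharing a point are non-adjacent in $M_{22}$ and that each block meets exactly $6$ of the point-cliques.
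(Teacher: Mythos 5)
Your proof is correct and is essentially identical to the paper's: both take the $22$ point-cliques $\mathcal{S}_i$ (blocks containing point $i$), note that each vertex lies in exactly $6$ of them since each block has $6$ points, and apply \cref{lem:equiv_frac}. The extra observations (why these are cliques, and why the non-neighbourhood cliques would only give $77/16$) are accurate but not needed beyond what the paper states.
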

\begin{proof}
    For each $i \in \{1, \dots, 22\}$, let $\mathcal{S}_i \subseteq V(\overline{{M}_{22}})$ be the set of vertices corresponding to blocks containing $i$. Then $\mathcal{S}_i$ is a clique in $\overline{M_{22}}$. Each vertex of $\overline{M_{22}}$ is in exactly $6$ of these cliques. \cref{lem:equiv_frac} gives the result.
\end{proof}
\begin{lem}
    \label{lem:inflation_m22}
    Every inflation of $\overline{M_{22}}$ satisfies \nameref{conj:hc_chi}.
\end{lem}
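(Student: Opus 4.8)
The plan is to follow the template already used for the Clebsch graph in \cref{lem:inflation_clebsch}, since \cref{fractional_m22} has supplied the essential fractional-clique-cover bound $\theta_f(\overline{M_{22}}) \le 22/6$. First I would let $G$ be an arbitrary inflation of $\overline{M_{22}}$. By \cref{cor:inflation_frac}, inflation cannot increase the fractional clique covering number, so $\theta_f(G) \le \theta_f(\overline{M_{22}}) \le 22/6$.

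Next I would reduce to the threshold established in \cref{thm:124/33}. The only computation required is the arithmetic comparison of $22/6 = 11/3 \approx 3.667$ against $124/33 \approx 3.758$; since $11 \cdot 33 = 363 < 372 = 3 \cdot 124$, we have $22/6 < 124/33$. Hence $\theta_f(G) < 124/33$, and \cref{thm:124/33} immediately yields that $G$ satisfies \nameref{conj:hc_chi}.

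The argument has essentially no obstacle: all the substantive work was front-loaded into \cref{fractional_m22} (the explicit family of $22$ cliques $\mathcal{S}_i$ coming from the $6$-element blocks of the Steiner system $S(3,6,22)$, with each vertex lying in exactly $6$ of them) and into \cref{thm:124/33}. The only point worth double-checking is that the bound $22/6$ genuinely sits below $124/33$ rather than merely below $4$; had it instead fallen into the interval $(124/33, 4)$, one would have to retreat to \cref{thm:bla} (which only handles even order and gives \nameref{conj:shc_half}) or else exhibit a more efficient clique cover of $\overline{M_{22}}$. Here the margin is comfortable, so no such complication arises.
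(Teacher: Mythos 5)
Your proof is correct and follows essentially the same route as the paper: combine the bound $\theta_f(\overline{M_{22}}) \le 22/6 < 124/33$ from \cref{fractional_m22} with \cref{thm:124/33}. The only cosmetic difference is that you invoke \cref{cor:inflation_frac} explicitly before applying \cref{thm:124/33} to the inflation, whereas the paper applies \cref{thm:124/33} directly to $\overline{M_{22}}$ (whose statement already quantifies over all inflations), so that intermediate step is harmless but redundant.
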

\begin{proof}
    By \cref{fractional_m22}, $\theta_f(\overline{M}_{22}) \leq 22/6 < 124/33$. \cref{thm:124/33} gives the result.
\end{proof}

\subsubsection{Complement of the Gewirtz Graph}
\label{sss:gewirtz}
The Gewirtz graph is constructed as follows. Consider the Steiner system $S(3,6,22)$ defined in \cref{sss:mesner}. Choose an element of $\{1, \dots, 22\}$, and let the vertices of the Gewirtz graph be the blocks not containing this element. Two blocks are adjacent if and only if they are disjoint. The Gewirtz graph is strongly regular with parameters $\srg(56, 10, 0, 2)$ \citep{Gewirtz}.

By construction, the Gewirtz graph is an induced subgraph of the Mesner graph. Therefore, an inflation of the complement of the Gewirtz graph is an inflation of the complement of the Mesner graph. \cref{cor:gewirtz} follows directly from \cref{lem:inflation_m22}.

\begin{cor}
    \label{cor:gewirtz}
    Every inflation of the complement of the Gewirtz graph satisfies \nameref{conj:hc_chi}.
\end{cor}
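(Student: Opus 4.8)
The plan is to reduce the statement directly to \cref{lem:inflation_m22}, which already establishes that every inflation of $\overline{M_{22}}$ satisfies \nameref{conj:hc_chi}. The crucial structural fact is that the Gewirtz graph was defined as an induced subgraph of the Mesner graph $M_{22}$: it consists of exactly those blocks of $S(3,6,22)$ avoiding a fixed element of $\{1,\dots,22\}$, with the same disjointness adjacency. Writing $\mathrm{Gew}$ for the Gewirtz graph, this means $\mathrm{Gew}$ is an induced subgraph of $M_{22}$, and hence $\overline{\mathrm{Gew}}$ is an induced subgraph of $\overline{M_{22}}$.

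First I would make precise the passage from inflations of an induced subgraph to inflations of the ambient graph, using the convention (noted in \cref{ss:inflations}) that an inflation may assign clique size $c_x = 0$ to a vertex. Concretely, if $G$ is an inflation of $\overline{\mathrm{Gew}}$ with clique sizes $(c_x)_{x \in V(\overline{\mathrm{Gew}})}$, I would extend this assignment to all of $V(\overline{M_{22}})$ by setting $c_x := 0$ for every $x \in V(\overline{M_{22}}) \setminus V(\overline{\mathrm{Gew}})$. Because $\overline{\mathrm{Gew}}$ is an \emph{induced} subgraph of $\overline{M_{22}}$, the adjacencies among the retained vertices agree in both graphs, so the resulting inflation of $\overline{M_{22}}$ is isomorphic to $G$. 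Thus every inflation of $\overline{\mathrm{Gew}}$ is also an inflation of $\overline{M_{22}}$.

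With this in hand the conclusion is immediate: by \cref{lem:inflation_m22} every inflation of $\overline{M_{22}}$ satisfies \nameref{conj:hc_chi}, and since $G$ is such an inflation, $G$ satisfies \nameref{conj:hc_chi}. There is no genuine obstacle here; the entire content lies in recognising the induced-subgraph relationship between the Gewirtz and Mesner graphs, after which the corollary follows formally. The only point requiring a moment's care is the $c_x = 0$ convention, which is what lets an inflation of an induced subgraph be realised as an inflation of the whole graph.
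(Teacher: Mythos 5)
Your proposal is correct and follows exactly the paper's argument: the Gewirtz graph is by construction an induced subgraph of the Mesner graph, so (via the $c_x = 0$ convention) every inflation of its complement is an inflation of $\overline{M_{22}}$, and \cref{lem:inflation_m22} applies. You simply spell out the induced-subgraph-to-inflation step in more detail than the paper does.
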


\subsubsection{Complement of the Higman-Sims Graph}
\label{sss:higman_sims}
The Higman-Sims graph, introduced by \citet{higman1968simple}, is constructed by slightly modifying the Mesner graph. Take the Mesner graph described in \cref{sss:mesner} and add $22$ new vertices, each corresponding to an element of $\{1, \dots, 22\}$. Each new vertex is adjacent to a block in $M_{22}$ if its corresponding element is in the block. Finally, add a vertex adjacent to all $22$ new vertices. It is a strong regular graph with parameters $\srg(100, 22, 0, 6)$ \citep{HigmanSims}. Its complement has chromatic number $50$ and clique number $22$. Due to the small size of the clique number relative to the order of the graph, \cref{thm:icr1/4} or \cref{thm:fcc} cannot be applied to prove Hadwiger's Conjecture for inflations of the complement of the Higman-Sims graph. Moreover, we computationally checked that the Higman-Sims graph contains every graph in \cref{tab:unavoidable_graphs} as an induced subgraph.

Nonetheless, we found a $K_{50}$-model with branch sets of size $2$. Using the labelling of the vertices provided by the \texttt{SageMath} command,  {\texttt{graphs.HigmanSimsGraph().complement()}}, the branch sets are:
\begin{center}
    \vspace{-0.5em}
    \small{
        \texttt{
            [(55, 57), (81, 82), (90, 91), (50, 52), (59, 61), (51, 53), (26, 27), (44, 45), (60, 62), (86, 87), (0, 2), (40, 41), (10, 12), (1, 3), (36, 37), (83, 85), (64, 65), (6, 8), (32, 33), (96, 97), (4, 5), (20, 22), (46, 47), (70, 72), (14, 15), (78, 79), (66, 68), (74, 75), (11, 13), (28, 29), (92, 93), (7, 9), (24, 25), (71, 73), (88, 89), (16, 18), (42, 43), (58, 63), (98, 99), (21, 23), (67, 69), (38, 39), (48, 49), (80, 84), (17, 19), (34, 35), (94, 95), (76, 77), (54, 56), (30, 31)]
        }
    }
\end{center}

\begin{thm}
    \label{thm:higman_sims}
    The complement of the Higman-Sims graph satisfies \nameref{conj:shc_half}.
\end{thm}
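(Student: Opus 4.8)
The plan is to verify that the $50$ vertex-pairs displayed above are the branch sets of a $K_{50}$-model of $G$, where $G$ denotes the complement of the Higman-Sims graph. Since the Higman-Sims graph is triangle-free we have $\alpha(G) = \omega(\overline{G}) = 2$, and $|V(G)| = 100$; thus \nameref{conj:shc_half} asks for precisely a $K_{50}$-model of $G$ in which every branch set has at most two vertices.

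First I would record the translation between models and connected matchings: a connected matching of size $t$ is exactly a $K_t$-model all of whose branch sets have size $2$. Indeed, the $t$ matching edges serve as the branch sets (each inducing a connected---in fact complete---subgraph), and the defining property that every two edges be adjacent furnishes an edge between each pair of branch sets, hence all $\binom{t}{2}$ edges of $K_t$. So it suffices to show that the $50$ listed pairs form a connected matching of size $50$ in $G$, giving $\had_2(G) \geq \cm(G) \geq 50 = \lceil |V(G)|/2 \rceil$.

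The verification then splits into three finite checks against the adjacency of $G$, using the vertex labelling supplied by the \texttt{SageMath} command \texttt{graphs.HigmanSimsGraph().complement()}: (i) the $50$ pairs are pairwise disjoint and cover all $100$ vertices, which is immediate by inspection; (ii) each listed pair is an edge of $G$, so that each branch set is connected; and (iii) every two of the $50$ pairs are joined by an edge of $G$, so that the matching is connected. Checks (ii) and (iii) I would carry out computationally.

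The main obstacle is combinatorial, not conceptual. Condition (iii) ranges over $\binom{50}{2} = 1225$ pairs of edges, and for a graph with $\alpha(G) = 2$ two disjoint matching edges are not automatically joined by an edge: the Higman-Sims parameters $\srg(100, 22, 0, 6)$ (so $\mu = 6$) allow disjoint edges $ab, cd$ of $G$ with all four cross-pairs $ac, ad, bc, bd$ being non-edges of $G$. Hence no structural shortcut replaces the pairwise check, and the genuine work was to locate a perfect matching for which condition (iii) holds throughout; the explicit list resolves this, and the theorem follows by direct verification.
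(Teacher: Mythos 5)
Your proposal is correct and matches the paper's own argument: the paper likewise proves \cref{thm:higman_sims} by exhibiting the explicit list of $50$ vertex-pairs as a computationally verified $K_{50}$-model with branch sets of size $2$ (equivalently, a connected perfect matching of $\overline{G}$). Your additional observations --- that $\alpha(G)=2$ alone does not make disjoint edges adjacent, so the $\binom{50}{2}$ pairwise checks are genuinely necessary --- are accurate and consistent with the paper's treatment.
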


Interestingly, we cannot prove Hadwiger's Conjecture for inflations of the complement of the Higman-Sims graph, despite it being only a slight modification of the Mesner graph.

\subsection{Complements of Eberhard Graphs}
\label{ss:eberhard}
We started our investigation by considering graphs with diameter $2$ and girth $5$, and there are finitely many such graphs \citep{HS60}. In fact, there are no graphs with diameter $2$ and girth $5$ with strictly more than $3250$ vertices.
Put another way (since every tree with diameter $2$ is a star and $K_{2,2} \cong C_4$), the only diameter $2$ triangle-free graph not containing $K_{2,2}$ as a subgraph with strictly more than $3250$ vertices is the star graph $K_{1, n-1}$.
This observation led Wood~\citep{devillers2024triangle} to make the following conjecture:

\begin{conj}[\citep{devillers2024triangle}]
    \label{conj:wood}
    For every integer $t \geq 2$, there exists an integer $n_0$ such that if $G$ is a triangle-free diameter $2$ graph that does not contain $K_{2,t}$ as a subgraph with $n \geq n_0$ vertices, then $G$ is the star graph $K_{1, n-1}$.
\end{conj}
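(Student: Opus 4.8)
The plan is to prove the contrapositive in the form: a triangle-free diameter-$2$ graph $G$ that contains no $K_{2,t}$ subgraph and is not a star has at most $f(t)$ vertices, whence one may take $n_0 = f(t)+1$. First I would dispose of the degenerate cases. If $G$ has a universal vertex $v$, then triangle-freeness forces $N_G(v) = V(G)\setminus\{v\}$ to be independent, so $G \cong K_{1,n-1}$ is a star. If instead $G$ has a vertex $u$ of degree $1$ with neighbour $x$, then every other vertex must be at distance $2$ from $u$, and since $x$ is the only neighbour of $u$, this forces $x$ to be adjacent to all of $V(G)\setminus\{u,x\}$; thus $x$ is universal and $G$ is again a star. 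Hence for a non-star $G$ we may assume $\delta(G)\ge 2$ and $\Delta(G)\le n-2$, and the goal becomes to bound $n$ by a function of $t$.

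Second, I would reduce the problem to bounding the maximum degree $\Delta := \Delta(G)$ via the Moore-type inequality for triangle-free diameter-$2$ graphs. Fixing a vertex $v$ with $\deg_G(v)=\Delta$, the set $N_G(v)$ is independent and every remaining vertex is adjacent to some vertex of $N_G(v)$; since each $a\in N_G(v)$ has at most $\Delta-1$ neighbours other than $v$, this yields $n \le 1 + \Delta + \Delta(\Delta-1) = \Delta^2+1$. Consequently it suffices to prove $\Delta \le g(t)$ for some function $g$.

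Third -- the heart of the matter -- I would bound $\Delta$ algebraically. Writing $A$ for the adjacency matrix, the three hypotheses say precisely that $(A^2)_{xy}=0$ whenever $xy\in E(G)$ (triangle-free) and $1\le (A^2)_{xy}\le t-1$ whenever $x\ne y$ and $xy\notin E(G)$ (diameter $2$ and $K_{2,t}$-freeness). Thus $M := A^2 + A - \operatorname{diag}(\deg_G)$ has zero diagonal and all off-diagonal entries in $\{1,\dots,t-1\}$, i.e. $J-I \le M \le (t-1)(J-I)$ entrywise, so $M$ is a bounded perturbation of $J-I$. The plan is first to establish approximate regularity: the global cherry count $\sum_v \binom{\deg_G(v)}{2}$, which tallies each non-adjacent pair with multiplicity its number of common neighbours, lies between $\binom{n}{2}-|E(G)|$ and $(t-1)\bigl(\binom{n}{2}-|E(G)|\bigr)$, and combining this with the local constraint that two vertices of $N_G(v)$ share at most $t-2$ common neighbours outside $v$ should force the degree sequence to concentrate. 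With approximate regularity in hand, an eigenvalue-interlacing argument applied to $A^2+A$ would push $G$ toward a strongly regular graph $\srg(n,k,0,\mu)$ with $1\le\mu\le t-1$, whose feasibility conditions (integrality of the eigenvalue multiplicities) bound $k=\Delta$ -- recovering, when $t=2$ and hence $\mu=1$, exactly the Moore-graph restriction $\Delta\in\{2,3,7,57\}$ of \citet{HS60}.

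The decisive obstacle is this last bound on $\Delta$ for $t\ge 3$, and it appears genuinely hard, because the conjecture is at least as strong as open finiteness questions for triangle-free strongly regular graphs. Indeed, any $\srg(n,k,0,\mu)$ is a non-star triangle-free diameter-$2$ graph containing no $K_{2,\mu+1}$, so an infinite family of $(0,\mu)$-strongly regular graphs would refute the conjecture for $t=\mu+1$. For $t=2$ the statement is precisely the classical diameter-$2$, girth-$5$ (Moore) theorem and is known; but by \citet{Biggs71a} the $(0,\mu)$-strongly regular graphs are finite in number for every $\mu\notin\{2,4,6\}$, while the cases $\mu\in\{2,4,6\}$ -- which contain the Clebsch, Gewirtz and Higman-Sims graphs -- remain open. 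Hence a complete proof of \cref{conj:wood} for $t\in\{3,5,7\}$ would in particular settle these open cases, so the realistically attainable contribution is the reduction of the first two paragraphs together with the approximate-regularity step, isolating the residual difficulty as an algebraic finiteness statement that strictly generalises the Moore bound.
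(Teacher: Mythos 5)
There is a fundamental problem here that is not a gap in a step but a mismatch with the nature of the statement: \cref{conj:wood} is a \emph{conjecture}, the paper offers no proof of it, and — more importantly — the paper explicitly records that it is \emph{false}. \citet{ETT25} disproved it by showing that $\mathcal{W}_t$ (non-star triangle-free diameter-$2$ graphs with no $K_{2,t}$-subgraph) is infinite for each $t \in \{3,5,7\}$, using crooked-graph constructions for $t\in\{3,5\}$ and the Eberhard Cayley graphs $\Cay(\F_p \times \F_p, S)$ with $S = \{(x,\pm x^2) : x \in \F_p\setminus\{0\}\}$ for $t=7$. So no proof strategy can succeed, and your third step — bounding $\Delta(G)$ by a function of $t$ — is precisely where the attempt must break: the Eberhard graphs are $2(p-1)$-regular on $p^2$ vertices, triangle-free, of diameter $2$, and $K_{2,7}$-free, so $\Delta$ is unbounded over $\mathcal{W}_7$ even though these graphs are nowhere near strongly regular.

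Your first two paragraphs are sound as far as they go (a universal or degree-$1$ vertex forces a star; the Moore-type bound $n \le \Delta^2+1$ reduces the problem to bounding $\Delta$), and your observation that an infinite family of $(0,\mu)$-strongly regular graphs would refute the conjecture for $t=\mu+1$ is correct and correctly identifies that the approach would have to resolve the open finiteness questions for $\mu \in \{2,4,6\}$. But you stop at ``genuinely hard and open,'' whereas the truth is stronger: the counterexamples of \citet{ETT25} are not strongly regular, so the ``approximate regularity $\Rightarrow$ near-SRG'' heuristic in your third paragraph is also not salvageable — $K_{2,t}$-freeness with $t\ge 3$ only pins the codegrees into the window $\{1,\dots,t-1\}$, which is far too loose to force concentration of the degree sequence or eigenvalue rigidity. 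The honest conclusion is that the statement should not be proved but refuted, and the paper cites exactly that refutation.
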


In essence, \cref{conj:wood} asks if the number of interesting diameter $2$ triangle-free graphs not containing $K_{2,t}$ as a subgraph is finite. The condition that $G$ does not contain $K_{2,t}$ as a subgraph is a relaxation of the girth $5$ condition. For an integer $t \geq 2$, let $\mathdefn{\mathcal{W}_t}$ denote the class of graphs other than stars that have diameter $2$ and contain neither a triangle nor a $K_{2,t}$-subgraph.
If \cref{conj:wood} were true, then for any fixed $t$, one could enumerate all graphs in $\mathcal{W}_t$, take their complements, and prove \nameref{conj:hc_chi} for those graphs.

\cref{conj:wood} was disproved by \citet{ETT25}. They showed that $\mathcal{W}_t$ is infinite for $t \in \{3,5,7\}$. Their examples for $\mathcal{W}_3$ and $\mathcal{W}_5$ are based on so-called crooked graphs, while their examples for $\mathcal{W}_7$ are Cayley graphs, which we now describe.

Let $(\Gamma, +)$ be an abelian group, and suppose $S \subseteq \Gamma$ is inverse-closed\footnote{$S \subseteq \Gamma$ is \defn{inverse-closed} if for each $x \in S$, $-x \in S$. Some definitions of a Cayley graph require that $S$ generates $\Gamma$, but we do not require it here.}. The \defn{Cayley graph} of $\Gamma$ on $S$ is the graph $\mathdefn{\Cay(\Gamma, S)}$ with vertex set $\Gamma$, and two vertices $a,b \in \Gamma$ are adjacent if and only if $a - b \in S$. For a prime $p \equiv 11 \pmod{12}$, the \defn{Eberhard graph with parameter $p$} is the Cayley graph $\Cay(\F_p \times \F_p, S)$, where $S:= \{ (x, \pm x^2): x \in \F_p \setminus \{0\} \}$.
\begin{obs}
    \label{obs:cay}
    For each $a, b \in \F_p$, we have $(a,0) \notin S$, $(0,b) \notin S$. If $a \in \F_p \setminus \{0\}$, then $|S \cap \{(a, x) : x \in \F_p\} | = 2$ and if $(a,b_1), (a,b_2) \in S$, then $b_1 \equiv \pm b_2 \pmod{p}$.
\end{obs}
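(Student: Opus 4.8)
The plan is to verify each of the four assertions by directly unpacking the definition $S = \{(x, \pm x^2) : x \in \F_p \setminus \{0\}\}$. The single structural fact I would isolate first is that every element of $S$ has a \emph{nonzero} first coordinate: any member of $S$ is of the form $(x, x^2)$ or $(x, -x^2)$ with $x \neq 0$. This observation alone disposes of two of the claims, and the remaining two reduce to the elementary fact that $p$ is odd (indeed $p > 3$, since $p \equiv 11 \pmod{12}$), so that $2a^2 \neq 0$ whenever $a \neq 0$.

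For the non-membership statements, I would argue as follows. Since every element of $S$ has nonzero first coordinate, $(0, b) \notin S$ for every $b \in \F_p$, which is the second claim. For the first claim, $(a, 0) \in S$ would force the second coordinate $\pm x^2$ to vanish for some $x \neq 0$ with $x = a$; but $x^2 = 0$ implies $x = 0$ in the field $\F_p$, a contradiction, so $(a, 0) \notin S$ for every $a$.

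For the counting claim, I would fix $a \in \F_p \setminus \{0\}$ and note that an element $(x, \pm x^2) \in S$ has first coordinate equal to $a$ precisely when $x = a$, leaving exactly the two candidates $(a, a^2)$ and $(a, -a^2)$. These are distinct because $a^2 = -a^2$ would give $2a^2 = 0$, impossible since $p$ is odd and $a \neq 0$; hence $|S \cap \{(a,x) : x \in \F_p\}| = 2$. The final $\pm$ statement then follows immediately: if $(a, b_1), (a, b_2) \in S$, then each of $b_1, b_2$ lies in $\{a^2, -a^2\}$, so $b_1 \equiv \pm b_2 \pmod{p}$. I do not anticipate any genuine obstacle here; the entire content is a direct reading of the definition of $S$, with the only nontrivial input being that $\operatorname{char}(\F_p) \neq 2$, which guarantees the two candidate second coordinates are distinct.
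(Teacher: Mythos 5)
Your proposal is correct and follows the direct unpacking of the definition of $S$ that the paper leaves implicit (the observation is stated without proof). The only substantive inputs — that every element of $S$ has nonzero first coordinate, and that $a^2 \neq -a^2$ for $a \neq 0$ since $p$ is odd — are exactly the right ones, and your case handling is complete.
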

The graph $\Cay(\F_p \times \F_p, S)$ has several interesting properties; see \citep{ETT25}. For example, it is triangle-free with diameter $2$, and does not contain $K_{2,7}$ as a subgraph. Therefore, $\Cay(\F_p \times \F_p, S)$ is connected and\footnote{$\Cay(\Gamma, S)$ is connected if and only if $S$ generates $\Gamma$.} $S$ generates $\F_p \times \F_p$. For the rest of this section, let $G:=\Cay(\F_p \times \F_p, S)$. The complement $\overline{G}$ has independence number $2$, but has small clique number relative to its order, making the application of \cref{thm:icr1/4,thm:fcc} unsuitable, and we computationally verified that $G$ contains every graph in \cref{tab:unavoidable_graphs} as an induced subgraph.

However, despite $\overline{G}$ having many of the properties of a minimal counterexample to \nameref{conj:shc_chi}, we now prove that $\overline{G}$ satisfies \nameref{conj:shc_chi}.

\begin{lem}
    \label{lem:chromatic_ebhard}
    $\chi(\overline{G}) = \frac{p^2+1}{2}$.
\end{lem}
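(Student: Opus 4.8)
The plan is to compute $\chi(\overline{G})$ via \cref{lem:chromatic_matching}, which tells us that $\chi(\overline{G}) = |V(\overline{G})| - \mu(G) = p^2 - \mu(G)$, since $\alpha(\overline{G}) = 2$ and $\overline{G}$ is the complement of $G = \Cay(\F_p \times \F_p, S)$ on $p^2$ vertices. Thus the statement $\chi(\overline{G}) = (p^2+1)/2$ is equivalent to showing that the maximum matching in $G$ has size $\mu(G) = (p^2-1)/2$. Since $|V(G)| = p^2$ is odd, a matching can saturate at most $p^2 - 1$ vertices, so $\mu(G) \leq (p^2-1)/2$ always holds; the content is the lower bound, namely that $G$ has a \emph{near-perfect} matching leaving exactly one vertex uncovered.

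First I would establish that $G$ has a near-perfect matching. The natural route is to use the vertex-transitivity of the Cayley graph together with a classical result guaranteeing that a connected vertex-transitive graph has a matching missing at most one vertex. Indeed, it is known (a theorem attributed to various authors, following from the fact that vertex-transitive graphs have no efficiently exploitable odd components) that every connected vertex-transitive graph on an odd number of vertices has a matching saturating all but one vertex. Since $G$ is connected (as noted in the excerpt, $S$ generates $\F_p \times \F_p$ because $G$ has diameter $2$ and hence is connected) and vertex-transitive via the translations $v \mapsto v + g$ of the abelian group $\F_p \times \F_p$, this yields $\mu(G) \geq (p^2-1)/2$, and combined with the trivial upper bound gives $\mu(G) = (p^2-1)/2$.

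Substituting into \cref{lem:chromatic_matching} then gives
\begin{equation*}
    \chi(\overline{G}) = p^2 - \frac{p^2-1}{2} = \frac{p^2+1}{2},
\end{equation*}
as required. I would also verify that $\alpha(\overline{G}) = 2$ (equivalently that $G$ is triangle-free with diameter $2$, so that \cref{lem:equiv_triangle_free} applies and $\overline{G}$ has independence number exactly $2$), which the excerpt has already recorded as a property of the Eberhard graph.

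\textbf{The main obstacle} is justifying the near-perfect matching cleanly. One must either invoke the vertex-transitive near-perfect matching theorem as a black box, or prove the matching exists directly. If a direct construction is preferred, I would exhibit an explicit near-perfect matching using the group structure: pick a fixed nonzero element $s \in S$ and attempt to match vertices in pairs $\{v, v+s\}$, which requires decomposing the Cayley graph into a union of cycles (or a single Hamiltonian-like structure) coming from the translation action of the subgroup generated by $s$; since $\F_p \times \F_p$ under addition by $s$ partitions into cosets of the cyclic group $\langle s \rangle$ of order $p$, each coset gives a $p$-cycle in $G$, and pairing alternate edges within each odd cycle leaves exactly one unmatched vertex per cycle — this would overcount the deficiency, so the cosets must instead be merged using a second generator, which is where the argument becomes delicate. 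For this reason I expect the transitivity argument to be the more economical route, and the real care lies in citing the correct form of the vertex-transitive matching theorem and confirming its hypotheses (connectivity and odd order) are met by $G$.
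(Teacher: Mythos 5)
Your proposal is correct, and it reduces the problem exactly as the paper does: apply \cref{lem:chromatic_matching} to convert the claim into $\mu(G) = (p^2-1)/2$, note the trivial upper bound from $|V(G)| = p^2$ being odd, and then supply a near-perfect matching of $G$. Where you diverge is in the source of that matching. The paper invokes the theorem (cited there as \citet{stelow2017hamiltonicity}) that a connected Cayley graph on an abelian group with an inverse-closed generating set is Hamiltonian, and takes alternating edges of the Hamilton cycle; you instead invoke the classical result that a connected vertex-transitive graph of odd order is factor-critical, hence has a matching missing exactly one vertex. Both routes rest on the same verified hypothesis (connectivity of $G$, which follows from $\diam(G)=2$, equivalently from $S$ generating $\F_p\times\F_p$), and both are legitimate black boxes, so your argument is complete as stated. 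Your version asks for strictly less from the external theorem --- factor-criticality is much weaker than Hamiltonicity --- which is a mild advantage in robustness; the paper's version gets the matching with essentially no case analysis once the Hamilton cycle is in hand. The ``main obstacle'' you flag (building the matching by hand from cosets of $\langle s\rangle$) is a non-issue once either black box is accepted, and you correctly identify the transitivity argument as the economical route, so nothing further is needed.
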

\begin{proof}
    \citet{stelow2017hamiltonicity} proved that if $\Gamma$ is abelian and $S$ is a generating inverse-closed subset of $\Gamma$, then $\Cay(\Gamma, S)$ is Hamiltonian. Taking alternating edges of the Hamiltonian cycle gives a matching of size $\frac{p^2 - 1}{2} $ in $G$. Therefore, $\mu(G) = \frac{p^2 - 1}{2} $. By \cref{lem:chromatic_matching},  $\chi(\overline{G}) = \frac{p^2+1}{2}$.
\end{proof}

\begin{thm}
    \label{thm:eberhard}
    $\overline{G}$ satisfies \nameref{conj:shc_chi}.
\end{thm}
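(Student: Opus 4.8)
The plan is to exhibit an explicit $K_{\chi(\overline G)}$-model in $\overline G$ in which every branch set has size at most $2$. By \cref{lem:chromatic_ebhard}, $\chi(\overline G)=(p^2+1)/2$, and $|V(\overline G)|=p^2$ is odd, so such a model must consist of exactly one singleton together with $(p^2-1)/2$ size-$2$ branch sets, each an edge of $\overline G$, that are pairwise adjacent in $\overline G$. Producing such a family shows $\had_2(\overline G)\ge(p^2+1)/2=\chi(\overline G)$, i.e.\ $\overline G$ satisfies \nameref{conj:shc_chi}.

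The starting point is that the columns $C_a:=\{(a,y):y\in\F_p\}$ are cliques of $\overline G$: by \cref{obs:cay} no element of $S$ has first coordinate $0$, so no two vertices of $C_a$ are $G$-adjacent. These $p$ columns partition $V(\overline G)$. I would build the model as follows. From each column delete its row-$0$ vertex $(a,0)$ and take a perfect matching of the remaining $p-1$ vertices inside the clique $C_a$, contributing $p(p-1)/2$ edges. The deleted vertices form the row $R_0:=\{(x,0):x\in\F_p\}$, which is also a clique (again by \cref{obs:cay}, no element of $S$ has second coordinate $0$); I designate $(0,0)$ as the singleton and take a perfect matching of the other $p-1$ vertices of $R_0$, giving $(p-1)/2$ further edges. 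The total number of branch sets is $p(p-1)/2+(p-1)/2+1=(p^2+1)/2$, as needed, and the branch sets are clearly vertex-disjoint.

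The crux is pairwise adjacency in $\overline G$. Two branch sets lying in a common clique are automatically adjacent; this covers two within-column edges of the same column, and any two of the $R_0$-pieces (including the singleton). For two within-column edges in \emph{different} columns $C_a,C_b$ I would prove a clean lemma: they are always adjacent. Indeed, non-adjacency forces all four differences to lie in $S$ with common first coordinate $a-b\ne0$, so by \cref{obs:cay} each of the four second-coordinate differences equals $\pm(a-b)^2$; since the two rows of this $2\times2$ array differ by $y_1-y_1'=\pm2(a-b)^2\ne0$ (using $p$ odd), the two columns of the array must have identical sign patterns, forcing $y_2=y_2'$, a contradiction. Hence the only pairs that can fail to be adjacent are a within-column edge versus an $R_0$-edge, or versus the singleton.

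For these remaining cases each imposes only a small, avoidable constraint. A within-column edge $e\subseteq C_a$ is non-adjacent to $(0,0)$ only if $e=\{(a,a^2),(a,-a^2)\}$ (the two $G$-neighbours of $(0,0)$ in $C_a$), and non-adjacent to a row edge $\{(x_1,0),(x_2,0)\}$ only when $x_1+x_2=2a$ and $e=\{(a,r^2),(a,-r^2)\}$ with $r=a-x_1$; here I use $p\equiv3\pmod4$, so $-1$ is a non-residue, to discard the other sign case. In each column all these forbidden pairs have the shape $\{(a,q),(a,-q)\}$ with $q$ a nonzero square, and a short check shows they are pairwise disjoint, hence form a partial matching of $C_a\setminus\{(a,0)\}$. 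Since deleting a matching from the clique $K_{p-1}$ (and $p\ge11$) still leaves a graph with a perfect matching, I can choose each within-column matching to avoid its forbidden pairs, making the whole collection pairwise adjacent. I expect the main obstacle to be exactly this adjacency bookkeeping — isolating precisely which column/row pairs can be non-adjacent and verifying the forbidden pairs form a matching so that they can be avoided simultaneously; the within-column lemma and the branch-set count are comparatively routine.
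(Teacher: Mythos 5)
Your proposal is correct, and it takes a genuinely different route from the paper. The paper builds an explicit partition of $V(\overline G)$ into $(p^2+p-2)/2$ parts: singletons along row $0$ (except one pair), the fixed vertical pairs $\{(j,i),(j,p-1-i)\}$, and the leftover pairs $\{(j,\tfrac{p-1}{2}),(j,p-1)\}$; it then verifies pairwise adjacency by a type-by-type case analysis whose engine is \cref{lem:minorConnection} (a vertex non-adjacent to such a vertical pair must have second coordinate $\tfrac{p-1}{2}$). You instead aim for a spanning model with exactly $\chi(\overline G)=(p^2+1)/2$ branch sets, and your engine is the stronger and cleaner observation that \emph{any} two vertical edges in distinct columns are adjacent in $\overline G$ (your $2\times 2$ sign-pattern argument via \cref{obs:cay} is sound: all four second-coordinate differences would lie in $\{\pm(a-b)^2\}$, and the constraints from $y_1\neq y_2$ and $y_1'\neq y_2'$ are incompatible over odd $p$). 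This reduces everything to the interaction of vertical edges with the row-$0$ pieces, where your identification of the forbidden pairs $\{(a,q),(a,-q)\}$ (with $q$ a nonzero square, so distinct forbidden pairs are disjoint because $-1$ is a non-residue for $p\equiv 3\pmod 4$) and the observation that $K_{p-1}$ minus a matching still has a perfect matching are both correct. What each approach buys: the paper's model is fully explicit but oversized and requires more case checking; yours is tight and conceptually cleaner in the cross-column step, at the cost of a non-constructive choice of the within-column matchings and the (harmless, since $p\geq 11$) avoidance argument. Two cosmetic slips that don't affect correctness: a $K_{(p^2+1)/2}$-model with branch sets of size at most $2$ need not be spanning, so it need not have exactly one singleton (you merely construct one of that form, which suffices); and in the cross-column lemma the quantity by which the two rows of your array differ is $y_1-y_2$, not $y_1-y_1'$.
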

\begin{proof}
    Recall that $p$ is a prime with $p \equiv 11 \pmod{12}$. Note that this implies $p \equiv 3 \pmod{4}$, so $-1$ is not a quadratic residue modulo $p$. The $p \equiv 11 \pmod{12}$ condition implies that $p > 3$. This ensures that $\frac{p-1}{2} \not\equiv \pm 1 \pmod{p}$. We use this fact repeatedly. Partition $V(\overline{G})$ into the following connected sets of size at most $2$:
    \begin{itemize}
        \item Type 1 sets: these are singleton sets of the form $\{ (i, 0) \}$ for $i \in \F_p \setminus \{\frac{p-1}{2}, p-1\}$.
        \item A Type 2 set: a single $2$-element set: $\{ (\frac{p-1}{2} , 0), (p-1, 0) \}$.
        \item Type 3 sets: these are subsets $\{ (j, i), (j, p-1-i) \}$ for $j \in \F_p$ and $i \in \{ 1, \dots, \frac{p-1}{2} - 1 \}$.
        \item Type 4 sets: these are the subsets $\{ (j, \frac{p-1}{2} ), (j, p-1) \}$ for $j \in \F_p$.
    \end{itemize}

    \begin{lem}
        \label{lem:minorConnection}
        Suppose there is a vertex $(a,b) \in V(\overline{G})$ that is not adjacent to a Type $3$ set $\{ (j, i), (j, p-1-i) \}$ in $\overline{G}$ for some $j \in \F_p$ and $i \in \{ 1, \dots, \frac{p-1}{2} - 1 \}$. Then $b \equiv \frac{p-1}{2} \pmod{p}$.
    \end{lem}
    \begin{proof}
        Since $(a,b)(j,i) \in E(G)$ and $(a,b)(j, p-1-i) \in E(G)$, we have $(a-j, b-i), (a-j, b-p+1+i) \in S$. This implies $a-j \neq 0$, and by definition of $S$, we have $b-i \equiv \pm (b+i+1) \pmod{p}$. If the sign is positive, then $2i \equiv -1 \pmod{p}$, but this does not occur because $i \in \{ 1, \dots, \frac{p-1}{2} - 1 \}$. If the sign is negative, then $b-i \equiv -(b+i+1) \pmod{p}$. Thus $2b \equiv -1 \pmod{p}$, so $b \equiv \frac{p-1}{2} \pmod{p}$.
    \end{proof}
    We check that the sets are pairwise adjacent in $\overline{G}$.
    \begin{itemize}
        \item Two Type $1$ sets are adjacent: if $(a,0)$ is not adjacent to $(b,0)$ for distinct $a, b \in \F_p \setminus \{\frac{p-1}{2}, p -1\}$, then $(b-a, 0) \in S$, but this contradicts \cref{obs:cay}.
        \item Two Type $2$ sets are adjacent: vacuously true as there is only one Type $2$ set.
        \item Two Type $3$ sets are adjacent: take a vertex $(a,b)$ in a Type $3$ set. Since $b \not\equiv \frac{p-1}{2} \pmod{p}$, \cref{lem:minorConnection} implies $(a,b)$ is adjacent to all Type $3$ sets.
        \item Two Type $4$ sets are adjacent: straightforward by \cref{obs:cay}.
        \item A Type $1$ set and a Type $2$ set are adjacent: straightforward by \cref{obs:cay}.
        \item A Type $1$ set and a Type $3$ set are adjacent: by \cref{lem:minorConnection}, any vertex $(a,b)$ not adjacent to a Type $3$ set has $b \equiv \frac{p-1}{2} \pmod{p}$. Therefore $(a,b)$ is not in a Type $1$ set.
        \item A Type $1$ set and a Type $4$ set are adjacent: if there is $i \in \F_p \setminus \{\frac{p-1}{2}, p -1\}$ and $j \in \F_p$ such that $(i,0)$ is not adjacent to $(j, \frac{p-1}{2})$ and $(j, p-1)$, then $(j-i, \frac{p-1}{2}), (j-i, p-1) \in S$. By \cref{obs:cay}, $\frac{p-1}{2} \equiv \pm (p-1) \equiv \pm 1  \pmod{p}$. This is a contradiction since $p > 3$.
        \item A Type $2$ set and a Type $3$ set are adjacent: by \cref{lem:minorConnection}, any vertex $(a,b)$ not adjacent to a Type $3$ set has $b \equiv \frac{p-1}{2} \pmod{p}$, implying that $(a,b)$ is not in a Type $2$ set.
        \item A Type $2$ set and a Type $4$ set are adjacent: consider the vertex $(j, p-1)$ in a Type $4$ set for some $j \in \F_p$. If $(j, p-1)$ is not adjacent to $(\frac{p-1}{2} , 0)$ and $(p-1, 0)$ in $\overline{G}$, then $(j - \frac{p-1}{2} , p-1) \in S$ and $(j - (p-1), p-1) \in S$. As $p \equiv 3 \pmod{4}$, $-1$ is not a quadratic residue mod $p$, which implies that $(j - \frac{p-1}{2} , p-1)$ is of the form $(a, -a^2)$ for some $a \in \F_p$, and $(j - (p-1), p-1)$ is of the form $(b, -b^2)$ for some $b \in \F_p$. Hence, $(j- \frac{p-1}{2})^2 \equiv (j - (p-1))^2 \equiv -(p-1) \equiv 1 \pmod{p}$. Thus, $j - \frac{p-1}{2}$ and $j - (p-1)$ are roots of the polynomial $x^2 - 1 \equiv 0 \pmod{p}$. Therefore, $j - \frac{p-1}{2}, j - (p-1) \in \{1, -1\}$. Since $p > 3$, $j - \frac{p-1}{2} \not\equiv j - (p-1) \pmod{p}$.

              Consider two cases. If $j - \frac{p-1}{2} \equiv 1 \pmod{p}$ and $j - (p-1) \equiv -1 \pmod{p}$, then the latter equation implies that $j \equiv p - 2 \pmod{p}$. The former equation implies that $j \equiv 1 + \frac{p-1}{2} \pmod{p}$. This implies that $p - 2 \equiv 1 + \frac{p-1}{2} \pmod{p}$, and thus $5 \equiv 0 \pmod{p}$, a contradiction.

              If $j - \frac{p-1}{2} \equiv -1 \pmod{p}$ and $j - (p-1) \equiv 1 \pmod{p}$, then the latter equation implies that $j \equiv 0 \pmod{p}$. Setting $j \equiv 0$ in the first equation implies that $\frac{p-1}{2} \equiv 1 \pmod{p}$, which is a contradiction since $p > 3$.

        \item A Type $3$ set and a Type $4$ set are adjacent: by \cref{lem:minorConnection}, any vertex $(a,b)$ not adjacent to a Type $3$ set has $b \equiv \frac{p-1}{2} \pmod{p}$. Hence a Type $4$ set contains a vertex adjacent to the Type $3$ set.
    \end{itemize}
    The sets form a $K_{(p^2 + p - 2) / 2}$-model of $\overline{G}$, which contains a $K_{(p^2 + 1) / 2}$-model. By \cref{lem:chromatic_ebhard}, $\overline{G}$ satisfies \nameref{conj:shc_chi}.
\end{proof}

\section{Open Questions}
\label{s:op}
\label{ss:potential_counteg}

This section lists classes of graphs that might be considered potential counterexamples to one of the variants of Hadwiger's Conjecture, or one of the related conjectures.

As discussed in \cref{ss:kneser_graphs}, while we established \nameref{conj:hc_chi} for inflations of $\overline{K(n,k)}$ when $\alpha(\overline{K(n,k)}) = 2$, we did not prove:
\begin{quest}
    \label{q:kneser}
    For integers $n,k \geq 1$ such that $2k \leq n \leq 3k - 1$, does any inflation of $\overline{K(n,k)}$ satisfy \nameref{conj:shc_chi} or \nameref{conj:cdm}?
\end{quest}
The following connection provides further motivation for \cref{q:kneser}.
Given graphs $G$ and $H$, a \defn{homomorphism} from $G$ to $H$ is a function $h: V(G) \to V(H)$ such that $h(u) h(v) \in E(H)$ for each $uv \in E(G)$. Note that for each $x \in V(H)$, $h^{-1}(x)$ is empty or an independent set. When such a homomorphism exists, we say that $G$ is \defn{homomorphic} to $H$. It is well-known \citep{SU97} that there is a $(n,k)$-colouring of $G$ if and only if $G$ is homomorphic to $K(n,k)$.
A positive answer to \cref{q:kneser} implies that every graph $G$ with $\theta_f(G) < 3$ satisfies \nameref{conj:shc_chi} or \nameref{conj:cdm}. This would answer a question of \citet{Blasiak07}  and \citet{CS12}.

For generalised Kneser graphs $G$ (\cref{ss:kneser_graphs}), we proved \nameref{conj:hc_chi} only when the inflated clique ratio of $G$ is roughly greater than $1/4$. It would be interesting to prove \nameref{conj:hc_chi} for generalised Kneser graphs with small inflated clique ratios.
\begin{quest}
    \label{q:g_kneser}
    For integers $n,k,t \geq 1$ such that $2k - t \leq n < 3k -3t + 3$, does $K(n,k,\geq t)$ satisfy \nameref{conj:hc_chi}?
\end{quest}
For complements of the Clebsch graph, the Mesner graph, and the Gewirtz graph (\cref{ss:srg}), we proved \nameref{conj:hc_chi} but not \nameref{conj:shc_chi}.
For the complement of the Higman-Sims graph, we proved no result about its inflations.
\begin{quest}
    \label{q:srg}
    Let $G$ be the complement of the Clebsch graph, the complement of the Mesner graph, or the complement of the Gewirtz graph. Does any inflation of $G$ satisfy \nameref{conj:shc_chi}?
\end{quest}
\begin{quest}
    \label{q:other_vtx_transitive}
    Does any inflation of the complement of the Higman-Sims graph satisfy \nameref{conj:hc_chi}?
\end{quest}
Recall that $\mathcal{W}_t$ is the class of graphs other than stars that have diameter $2$ and contain neither a triangle nor a $K_{2,t}$-subgraph.
\citet{ETT25} proved that $\mathcal{W}_t$ is infinite for $t \in \{3,5,7\}$. We proved that inflations of graphs in $\mathcal{W}_2$ satisfy \nameref{conj:cdm} (\cref{thm:girth_5}), and proved that complements of Eberhard graphs satisfy \nameref{conj:shc_chi} (\cref{thm:eberhard}).
\begin{quest}
    \label{q:eberhard}
    For an integer $t \geq 3$, let $G$ be the complement of a graph in $\mathcal{W}_{t}$. Does any inflation of $G$ satisfy \nameref{conj:hc_chi}?
\end{quest}

Cayley graphs provide a way to construct triangle-free vertex-transitive graphs of diameter $2$. If $(\Gamma, +)$ is an abelian group and $S$ generates $\Gamma$, then $\Cay(\Gamma, S)$ is connected. If $0 \notin S$, then $\Cay(\Gamma, S)$ does not have self-loops, and $\overline{\Cay(\Gamma, S)} \cong \Cay(\Gamma, \Gamma \setminus (S \cup \{0\}))$. A subset $S \subseteq \Gamma$ with $0 \notin S$ is \defn{sum-free} if there does not exist (possibly non-distinct) $x, y, z \in S$ such that $x+y=z$. $S \subseteq \Gamma$ is \defn{sum-free-maximal} if for any non-zero $z \in \Gamma \setminus S$, $S \cup \{z\}$ is not sum-free.
\begin{lem}[Lemma 5, \citep{devillers2024triangle}]
    Let $(\Gamma, +)$ be an abelian group and $S$ be an inverse-closed subset such that $0 \notin S$. The following are equivalent for $G:= \Cay(\Gamma, S)$.
    \begin{itemize}
        \item $G$ is triangle-free if and only if $S$ is sum-free.
        \item $G$ has diameter $2$ if and only if $S$ is sum-free-maximal.
    \end{itemize}
\end{lem}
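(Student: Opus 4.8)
The plan is to prove each of the two equivalences by translating the combinatorial conditions on $\Cay(\Gamma, S)$ into purely additive conditions on $S$. Throughout I would use the adjacency rule $a \sim b \iff a - b \in S$, noting that inverse-closedness ($-S = S$) makes this symmetric and $0 \notin S$ rules out loops, and that each translation $x \mapsto x + t$ is a graph automorphism, so all statements may be normalised by differences. These are the only structural facts needed.

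For the first equivalence I would set up a dictionary between triangles of $G$ and Schur triples of $S$. Given three distinct pairwise-adjacent vertices $a, b, c$, put $x := a - b$ and $y := b - c$; then $x, y \in S$ and $a - c = x + y \in S$, so $(x, y, x + y)$ witnesses that $S$ is not sum-free. Conversely, from $x, y \in S$ with $x + y \in S$, the vertices $c,\ c + y,\ c + x + y$ are pairwise adjacent for any $c$, and are distinct because $x, y, x+y$ are all nonzero (as $0 \notin S$), giving a triangle. Hence $G$ contains a triangle iff $S$ fails to be sum-free. This part is routine.

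For the second equivalence the key step is a common-neighbour criterion. Two distinct non-adjacent vertices $a, b$ have a common neighbour $c$ iff $a - c \in S$ and $b - c \in S$; writing $s := a - c$ and $s' := b - c$ gives $a - b = s - s'$, and since $-s' \in S$ this says $a - b \in S + S$. Conversely, if $a - b = s_1 + s_2$ with $s_i \in S$, then $c := a - s_1$ is a common neighbour. Thus $G$ has diameter at most $2$ iff every nonzero $z \in \Gamma \setminus S$ lies in $S + S$ (and the diameter is then exactly $2$ as soon as $G$ is not complete). I would match this against maximality: if $z = s + s'$, then $s + s' = z$ is a Schur triple in $S \cup \{z\}$, so $S \cup \{z\}$ is not sum-free. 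In particular the genuinely clean direction is that diameter $2$ forces sum-free-maximality, since every nonzero $z \notin S$ is then a sum of two elements of $S$ and so cannot be adjoined without creating a Schur triple.

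The main obstacle is the reverse direction: from "$S \cup \{z\}$ is not sum-free" I must recover specifically $z \in S + S$, i.e. a common neighbour of the pair at difference $z$. Adjoining a nonzero $z \notin S$ can create a Schur triple in two essentially different ways — as $s + s' = z$ with $s, s' \in S$, which yields $z \in S + S$ as desired, or as a \emph{doubling} triple $z + z = s \in S$, which on its own does \emph{not} place $z$ in $S + S$. The delicate work is therefore to rule out (or otherwise absorb) the doubling triples, so that maximality really does force $z \in S + S$; this is the step I would treat most carefully, since it is precisely here that the additive structure of $S$ — and not merely inclusion-maximality as a sum-free set — must be invoked.
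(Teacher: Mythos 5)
The paper does not actually prove this lemma: it is imported verbatim from \citet{devillers2024triangle}, so there is no in-house argument to compare yours against. On the merits, your first equivalence is correct and complete, and so is the direction of the second that you call clean (diameter $2$ implies sum-free-maximal, via the common-neighbour criterion ``$a,b$ have a common neighbour iff $a-b\in S+S$'', which you set up correctly using inverse-closedness).

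The doubling case you flag in the reverse direction is not merely a delicate step to be treated carefully later --- under the paper's literal definition of \emph{sum-free-maximal} it cannot be closed, because that implication is false as stated. Take $\Gamma=\Z_{13}$ and $S=\{5,6,7,8\}$, which is inverse-closed with $0\notin S$. Every sum of two elements of $S$ lies in $\{0,1,2,3,10,11,12\}$, so $S$ is sum-free; each of $1,2,3,10,11,12$ is such a sum, while $4+4=8\in S$ and $9+9\equiv 5\in S$, so adjoining any nonzero $z\notin S$ destroys sum-freeness and $S$ is sum-free-maximal in the sense defined in this paper. Yet $4\notin S\cup(S+S)$, so the non-adjacent vertices $0$ and $4$ of $\Cay(\Z_{13},S)$ have no common neighbour and the diameter exceeds $2$. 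This is precisely your doubling triple $z+z=s$ acting as the \emph{only} witness of non-sum-freeness of $S\cup\{z\}$ without placing $z$ in $S+S$. So the right conclusion of your analysis is not that the doubling triples must be ruled out by a cleverer argument, but that the equivalence requires a stronger reading of maximality --- namely that every nonzero $z\notin S$ be a sum of two elements of $S$, i.e.\ $\Gamma=S\cup(S+S)$, which is presumably the intent of the cited source. Against that corrected definition your common-neighbour criterion finishes the second bullet immediately; against the definition printed here, no proof exists.
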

\begin{quest}
    \label{q:cayley}
    Let $(\Gamma, +)$ be an abelian group and $S$ be an inverse-closed generating subset of $\Gamma$ such that $0 \notin S$, and $S$ is sum-free-maximal. Does $\overline{\Cay(\Gamma, S)}$ satisfy \nameref{conj:hc_chi}? \nameref{conj:hc_chi} is open even for $\overline{\Cay(\F_p, S)}$, where $p$ is an odd prime and $S$ is an inverse-closed generating sum-free-maximal subset of $\F_p$.
\end{quest}

The following is known as the \defn{triangle-free process} \citep{Bohman09}. Let $n \geq 1$ be an integer. Start with $G_0$, an empty graph on $n$ vertices. For each $i \geq 1$, let $S_i:=\{uv \in E(\overline{G_{i-1}}): G_{i-1} + uv \text{ does not contain a triangle}\}$. Form $G_i$ by adding a randomly chosen edge $e_i \in S_i$ to $G_{i-1}$. The triangle-free process yields a random edge-maximal triangle-free graph on $n$ vertices. Its complement is an edge-minimal graph $G$ of independence number $2$. Edge-minimality ensures that $G$ has no dominating edge. \citet{Bohman09} proved these graphs provide lower bounds for the Ramsey number $R(3, t)$. In particular, the clique number of $G$ is asymptotically almost surely $\Theta(\sqrt{n \log n})$. As $n \to \infty$, $\omega(G)/|V(G)| \to 0$, which means that \cref{thm:icr1/4,thm:fcc}
cannot be applied. These seem to be promising places to look for counterexamples. It was also noted by \citet{Carter22} that all graphs of maximal order that are $K_8$-free with independence number $2$ contain every graph in \cref{tab:unavoidable_graphs} as an induced subgraph, which is surprising. (There are $477,142$ such graphs on $27$ vertices, which is the maximum order of a $K_8$-free graph with independence number $2$, since $R(3, 8) = 28$; see \citep{mckay_R38_1992}.)

\begin{quest}
    Let $G$ be the complement of a random graph generated by the triangle-free process with $|V(G)| \geq 4t - 1$. As $t \to \infty$, does $G$ contain a connected matching of size $t$ asymptotically almost surely\footnote{The \defn{Erd\H{o}s--R\'enyi random graph $G(n, p)$} is the $n$-vertex graph in which each edge is present independently with probability $p$. \citet{bollobas_hadwigers_1980} proved that for constant $p \in (0,1)$, asymptotically almost surely, $\had(G(n,p)) \geq \chi(G(n,p))$. However, this does not imply results for graphs with independence number $2$, because for constant $p \in (0,1)$, $\lim_{n \to \infty}\PP(\alpha(G(n,p)) = 2) = 0$ (Proposition 11.3.1, \citep{Diestel05}).}?
\end{quest}

\fontsize{10pt}{11pt}
\selectfont
\bibliographystyle{DavidNatbibStyle}
\bibliography{DavidBibliography}
\end{document}